\newtheorem{thm}{Theorem}[section]
\newtheorem{prop}[thm]{Proposition}
\newtheorem{cor}[thm]{Corollary}
\newtheorem{lemma}[thm]{Lemma}
\theoremstyle{definition}
\newtheorem{defn}[thm]{Definition}
\newtheorem{Ex}[thm]{Example}
\theoremstyle{remark}
\newtheorem{Rmk}[thm]{Remark}
\newenvironment{red}
{\relax\color{red}}
{\hspace*{.5ex}\relax}
\newcommand{\ber}{\begin{red}}
\newcommand{\er}{\end{red}}
\newenvironment{verd}
{\relax\color{magenta}}
{\hspace*{.5ex}\relax}
\newcommand{\bg}{\begin{verd}}
\newcommand{\eg}{\end{verd}}
\numberwithin{equation}{subsection}
\newcommand{\rank}{\operatorname{rank}}
\newcommand{\Z}{\mathbb{Z}}
\newcommand{\g}{\mathfrak{g}}
\newcommand{\Hom}{\mathrm{Hom}}
\newcommand{\End}{\mathrm{End}}
\newcommand{\Ext}{\mathrm{Ext}}
\newcommand{\wt}{{\rm wt}}
\newcommand{\Span}{{\rm Span}}
\newcommand{\Top}{{\rm Top}}
\newcommand{\Soc}{{\rm Soc}}
\newcommand{\Rad}{{\rm Rad}}
\newcommand{\Mat}{{\rm Mat}}
\renewcommand{\Im}{\mathrm{Im}}
\newcommand{\Ker}{\mathrm{Ker}}
\newcommand{\Coker}{\mathrm{Coker}}
\newcommand{\id}{\mathrm{id}}
\newcommand{\Tr}{\mathrm{Tr}}
\newcommand{\rlQ}{\mathsf{Q}}   
\newcommand{\wlP}{\mathsf{P}}   
\newcommand{\weyl}{\mathsf{W}}  
\newcommand{\cmA}{\mathsf{A}}  
\newcommand{\ST}{\mathsf{ST}}   
\newcommand{\res}{\mathrm{res}}   
\newcommand{\bR}{\mathbf{k}}   
\newcommand{\fqH}{R^{\Lambda_0}}   
\newcommand{\sBox}[1]
{
\xy
(-2,-2)*{};(2,-2)*{} **\dir{-};
(-2,2)*{};(2,2)*{} **\dir{-};
(-2,-2)*{};(-2,2)*{} **\dir{-};
(2,-2)*{};(2,2)*{} **\dir{-};
(0,0)*{#1};
\endxy
}
\newcommand{\ssBox}[1]
{
\xy
(-1.5,-1.5)*{};(1.5,-1.5)*{} **\dir{-};
(-1.5,1.5)*{};(1.5,1.5)*{} **\dir{-};
(-1.5,-1.5)*{};(-1.5,1.5)*{} **\dir{-};
(1.5,-1.5)*{};(1.5,1.5)*{} **\dir{-};
(0,0)*{ _{#1}};
\endxy
}
\begin{document}

\title[Representation type of finite quiver Hecke algebras of type $A^{(1)}_{\ell}$]
{Representation type of finite quiver \\
Hecke algebras of type $A^{(1)}_{\ell}$ \\
for arbitrary parameters}

\author[Susumu Ariki]{Susumu Ariki $^1$}
\thanks{$^1$ S.A. is supported in part by JSPS, Grant-in-Aid for Scientific Research (B) 23340006.}
\address{Department of Pure and Applied Mathematics, Graduate School of Information
Science and Technology, Osaka University, Toyonaka, Osaka 560-0043, Japan}
\email{ariki@ist.osaka-u.ac.jp}

\author[Kazuto Iijima]{Kazuto Iijima}
\address{Suzuka National College of Technology, Shiroko-cho, Suzuka, Mie 510-0294, Japan}
\email{iijima@genl.suzuka-ct.ac.jp}

\author[Euiyong Park]{Euiyong Park} 
\address{Department of Mathematics, University of Seoul, Seoul 130-743, Korea}
\email{epark@uos.ac.kr}


\begin{abstract}
We give Erdmann-Nakano type theorem for the finite quiver Hecke algebras $\fqH(\beta)$
of affine type $A^{(1)}_{\ell}$. Note that each finite quiver Hecke algebra lies in one parameter family, and
the original Erdmann-Nakano theorem studied the finite quiver Hecke algebra at a
special parameter value. We study the general case in our paper.
Our result shows in particular that their representation type does not depend on the parameter.
Moreover, when the parameter value is nonzero, we show that finite quiver Hecke algebras of tame representation type are biserial algebras.
\end{abstract}

\maketitle


\vskip 2em

\section*{Introduction}

This paper is the third of our series of papers on the representation type of finite quiver Hecke algebras.
The affine type we treat in this paper is the affine type $A$. Thus, it includes the original case of
block algebras of the Hecke algebras associated with the symmetric group. The latter classical Hecke algebras appeared
in many branches of mathematics: knot theory, mathematical physics, number theory, geometric representation theory and so on,
but recent progress reveals new features of the algebras as we explain in the next paragraph.

Let $q\in\bR^\times$ and
$e=\min\{ k \mid 1+q+\cdots+q^{k-1}=0 \}$. Block algebras of the Hecke algebra $\mathcal{H}_n(q)$ associated
with the symmetric group of degree $n$ are labelled by $e$-cores $\kappa$, and we denote them by $B_\kappa(n,q)$.
As is well-known, the Lascoux-Leclerc-Thibon conjecture on the decomposition numbers of the Hecke algebras
inspired the first author and he introduced categorification scheme for integrable highest weight modules $V(\Lambda)$
over the Kac-Moody Lie algebra of affine type $A$. When $\Lambda=\Lambda_0$, the basic module $V(\Lambda_0)$ is categorified by the $B_\kappa(n,q)$'s.
It was vastly generalized and refined after the introduction of
Khovanov-Lauda-Rouquier algebras, which we may call affine quiver Hecke algebras. Cyclotomic quotients of
affine quiver Hecke algebras were introduced by Khovanov and Lauda, and the
integrable module $V(\Lambda)$ is categorified by the cyclotomic quiver Hecke algebras.
The categorification scheme itself was strengthened by Rouquier from our weak form, by generalizing the
Chuang-Rouquier $sl_2$-categorification. Indeed, these results together with Kang and Kashiwara's result on adjoint pairs of
induction and restriction functors have been essentially used in our series of papers. When $\Lambda=\Lambda_0$, we call cyclotomic quiver Hecke algebras
\emph{finite quiver Hecke algebras}. Our interest lies in the study of finite quiver Hecke algebras. 
New features arising from the above mentioned development are two fold:
\begin{itemize}
\item[(i)]
each $B_\kappa(n,q)$ lies in a one-parameter family of finite quiver Hecke algebras,
\item[(ii)]
the finite quiver Hecke algebras are graded algebras, i.e. they have the KLR grading.
\end{itemize}

The second point was already taken up by Brundan and Kleshchev \cite{BK}. They refined the first author's categorification theorem into graded version.
See also Mathas' proof in \cite{Ma13}.

In this paper, we take up the first point.
We consider finite quiver Hecke algebras $\fqH(\beta)$ with parameters $\lambda\in \bR$. They fall into various isomorphism classes. 
Nevertheless, we may show that the representation type of the finite quiver Hecke algebra $\fqH(\beta)$
does not depend on the parameter $\lambda$ (Theorem \ref{main theorem}). This is our first result.
The proof follows our strategy in \cite{AP12} and \cite{AP13},
but we need extra work for the case $\ell=1$. This generalized Erdmann-Nakano theorem tells that
$\fqH(\beta)$ has tame representation type only when $\ell=1$. Thus, we give a detailed study of tame finite quiver Hecke algebras.
Recall that tame block algebras $B_\kappa(n,-1)$ are Morita equivalent to either $B_{(0)}(4,-1)$ or $B_{(1)}(5,-1)$ by Scopes' equivalence. 
We may only show that tame finite quiver Hecke algebras for $\lambda\ne0$ are biserial algebras. However, 
if it is special biserial then it is Morita equivalent to either $B_{(0)}(4,-1)$ or $B_{(1)}(5,-1)$. This is our second result
(Theorem \ref{second classification}). We can also describe possible form of
tame finite quiver Hecke algebras by concrete quiver presentation when $\lambda=0$. 
For the proof of the second result, we first classify two-point symmetric special biserial algebras (Theorem \ref{first classification}). 
It is a bit surprise that this classification also seems to be new. 

The paper is organized as follows. \S1 and \S2 are for preliminaries. We construct irreducible $\fqH(\delta-\alpha_i)$-modules, for $1\le i\le \ell$,
and irreducible $\fqH(\delta)$-modules in \S3. The proof in \S3 follows the same strategy as \cite{AP12} and \cite{AP13}.
Then, we need extra work for $\fqH(2\delta)$ in $\ell=1$ case, which is the topic of \S4.
In \S5, we prove the generalized Erdmann-Nakano theorem, the first main result. In \S6, we classify two-point symmetric special biserial algebras and
then prove the second main result. The appendix is for explaining some interesting results from \cite{Po94}. Although main results in \cite{Po94} are incorrect, 
we may use his setup to prove, at the end of \S6, that tame finite quiver Hecke algebras in affine type A, for parameter values $\lambda\ne0$, are biserial algebras.

\vskip 1em

\section{Preliminaries}

In this section, we briefly recall necessary materials.

\subsection{Cartan datum} \label{Sec: Cartan datum}

Let $I = \{0,1, \ldots, \ell \}$ be an index set, and let $\cmA$ be the {\it affine Cartan matrix} of type $A_{\ell}^{(1)}$ $(\ell\ge2)$
$$ \cmA = (a_{ij})_{i,j\in I} = \left(
                                  \begin{array}{ccccccc}
                                    2  & -1 & 0  & \ldots & 0  & 0  & -1 \\
                                    -1 &  2 & -1 & \ldots & 0  & 0  & 0 \\
                                    0  & -1 & 2  & \ldots & 0  & 0  & 0 \\
                                    \vdots   &  \vdots  &  \vdots  & \ddots &  \vdots  &  \vdots  & \vdots \\
                                    0  & 0  & 0  & \ldots & 2  & -1 & 0 \\
                                    0  & 0  & 0  & \ldots & -1 & 2  & -1 \\
                                   -1  & 0  & 0  & \ldots & 0  & -1 & 2 \\
                                  \end{array}
                                \right).
  $$
When $\ell=1$, the affine Cartan matrix is
$$
\cmA = (a_{ij})_{i,j\in I} = \begin{pmatrix} 2  & -2 \\ -2 & 2 \end{pmatrix}.
$$
We have an affine Cartan datum $(\cmA, \wlP, \Pi, \Pi^{\vee})$, where
\begin{itemize}
\item[(1)] $\cmA$ is the affine Cartan matrix given above.
\item[(2)] $\wlP$ is the weight lattice, a free abelian group of rank $\ell+1$.
\item[(3)] $\Pi = \{ \alpha_i \mid i\in I \} \subset \wlP$, the set of simple roots.
\item[(4)] $\Pi^{\vee} = \{ h_i \mid i\in I\} \subset \wlP^{\vee} := \Hom( \wlP, \Z )$.
\end{itemize}
They satisfy the following properties:
\begin{itemize}
\item[(a)] $\langle h_i, \alpha_j \rangle  = a_{ij}$ for all $i,j\in I$,
\item[(b)] $\Pi$ and $\Pi^{\vee}$ are linearly independent sets.
\end{itemize}

We fix a {\it scaling element} $d$ which obeys the condition $\langle d, \alpha_i\rangle=\delta_{i0}$,
and assume that $\Pi^{\vee}$ and $d$ form a $\Z$-basis of $P^{\vee}$. Then, the fundamental weight $\Lambda_0$ is defined by
$$\langle h_i, \Lambda_0\rangle=\delta_{i0},\quad \langle d,\Lambda_0\rangle=0.$$

The free abelian group $\rlQ = \bigoplus_{i \in I} \Z \alpha_i$ is the root lattice,  and we denote $\rlQ^+ = \sum_{i\in I} \Z_{\ge 0} \alpha_i$.
If $\beta\in\rlQ^+$, we denote the sum of coefficients by $|\beta|$. 
The Weyl group $\weyl$ associated with $\cmA$ is the affine symmetric group, which is generated by $\{r_i\}_{i\in I}$, where
$r_i\Lambda=\Lambda-\langle h_i, \Lambda\rangle\alpha_i$, for $\Lambda\in P$. The null root of type $A^{(1)}_{\ell}$ is
$$ \delta = \alpha_0 + \alpha_1 + \cdots + \alpha_\ell. $$
Note that $ \langle h_i, \delta\rangle = 0$ and $w \delta = \delta$, for $i\in I$ and $w\in \weyl$.

\subsection{Young diagrams}
Let $\lambda= (\lambda_1 \ge \lambda_2 \ge \ldots\ge \lambda_l > 0)$ be a Young diagram of depth $l$.
We denote the depth $l$ by $l(\lambda)$. If $\lambda$ has $n$ nodes, we write $\lambda \vdash n$.
For each $\lambda$, $\ST(\lambda)$ is the set of standard tableaux of shape $\lambda$.
We consider the residue pattern which repeats
\begin{equation} \label{Eq: residue pattern}
0\ 1\ 2\ \dots \ \ell
\end{equation}
in the first row, and we shift the residue pattern to the right by one in the next row. Namely,
the residue of the $(i,j)$-node of $\lambda$ is defined to be
$$\res(i,j)\equiv j-i\mod(\ell+1).$$
For example, if $\ell=3$ and $\lambda = (10,7,3)$, the residues are given as follows:

$$
\xy
(0,12)*{};(60,12)*{} **\dir{-};
(0,6)*{};(60,6)*{} **\dir{-};
(0,0)*{};(42,0)*{} **\dir{-};
(0,-6)*{};(18,-6)*{} **\dir{-};
(0,12)*{};(0,-6)*{} **\dir{-};
(6,12)*{};(6,-6)*{} **\dir{-};
(12,12)*{};(12,-6)*{} **\dir{-};
(18,12)*{};(18,-6)*{} **\dir{-};
(24,12)*{};(24,0)*{} **\dir{-};
(30,12)*{};(30,0)*{} **\dir{-};
(36,12)*{};(36,0)*{} **\dir{-};
(42,12)*{};(42,0)*{} **\dir{-};
(48,12)*{};(48,6)*{} **\dir{-};
(54,12)*{};(54,6)*{} **\dir{-};
(60,12)*{};(60,6)*{} **\dir{-};
(3,9)*{0}; (9,9)*{1}; (15,9)*{2}; (21,9)*{3}; (27,9)*{0}; (33,9)*{1}; (39,9)*{2};(45,9)*{3};(51,9)*{0}; (57,9)*{1};
(3,3)*{3}; (9,3)*{0}; (15,3)*{1}; (21,3)*{2}; (27,3)*{3}; (33,3)*{0}; (39,3)*{1};
(3,-3)*{2}; (9,-3)*{3}; (15,-3)*{0};
\endxy
$$

\bigskip
\noindent
Then, for $T\in \ST(\lambda)$, we define the {\it residue sequence} of $T$ by
$$ \res(T) = (\res_1(T), \res_2(T), \ldots, \res_n(T) )\in I^n,$$
where $\res_k(T)$ is the residue of the box filled with $k$ in $T$, for $1\leq k\leq n$.

\subsection{The combinatorial Fock space} \label{Sec: Fock space}

Let $\mathcal{F}$ be the complex vector space whose basis is given by
$\{ |\lambda \rangle \mid \lambda : \text{Young diagrams} \}$. Then $\mathcal{F}$ is a $\g$-module, where
$\g$ is the Kac-Moody algebra associated with $\mathsf{A}$.
To describe the action, we use the following notation:
\begin{itemize}
\item if we may remove a box of residue $i$ from $\lambda$ and obtain a new Young diagram, then we write $\lambda \nearrow \sBox{i}$ for the resulting diagram,
\item if we may add a box of residue $i$ to $\lambda$ and obtain a new Young diagram, then we write $\lambda \swarrow \sBox{i}$ for the resulting diagram.
\end{itemize}
Then, the action of the Chevalley generators $f_i$ and $e_i$, for $i\in I$, is given as follows.
\begin{align} \label{Eq: Kashiwara operators}
e_i |\lambda\rangle = \sum_{\mu = \lambda \nearrow \ssBox{i} } |\mu\rangle, \qquad f_i |\lambda\rangle = \sum_{\mu = \lambda \swarrow \ssBox{i} } |\mu\rangle,
\end{align}

\vskip 1em

\section{Quiver Hecke algebras}

\subsection{Quiver Hecke algebras}  \label{Sec: quiver Hecke algs}
Let $\bR$ be an algebraically closed field, and let
$(\cmA, \wlP, \Pi, \Pi^{\vee})$ be a Cartan datum. We assume that $\cmA$ is symmetrizable.
We denote the symmetrized bilinear form on $\wlP$ by $(\hphantom{-}| \hphantom{-})$.

We take polynomials $\mathcal{Q}_{i,j}(u,v)\in\bR[u,v]$, for $i,j\in I$,
of the form
\begin{align*}
\mathcal{Q}_{i,j}(u,v) = \left\{
                 \begin{array}{ll}
                   \sum_{p(\alpha_i|\alpha_i)+q (\alpha_j|\alpha_j) + 2(\alpha_i|\alpha_j)=0} t_{i,j;p,q} u^pv^q & \hbox{if } i \ne j,\\
                   0 & \hbox{if } i=j,
                 \end{array}
               \right.
\end{align*}
where $t_{i,j;p,q} \in \bR$ are such that $t_{i,j;-a_{ij},0} \ne 0$ and $\mathcal{Q}_{i,j}(u,v) = \mathcal{Q}_{j,i}(v,u)$.
The symmetric group $\mathfrak{S}_n = \langle s_k \mid k=1, \ldots, n-1 \rangle$ acts on $I^n$ by place permutations.

\begin{defn} \
Let $\Lambda \in \wlP^+$. The {\it cyclotomic quiver Hecke algebra} $R^{\Lambda}(n)$ associated with
$(\mathcal{Q}_{i,j}(u,v))_{i,j\in I}$ is the $\Z$-graded  $\bR$-algebra defined by generators
$e(\nu)$, for $\nu = (\nu_1,\ldots, \nu_n) \in I^n$, $x_1,\dots,x_n$, $\psi_1,\dots, \psi_{n-1}$ and the following relations.

\begin{align*}
& e(\nu) e(\nu') = \delta_{\nu,\nu'} e(\nu),\ \sum_{\nu \in I^{n}} e(\nu)=1,\
x_k e(\nu) =  e(\nu) x_k, \  x_k x_l = x_l x_k,\\
& \psi_l e(\nu) = e(s_l(\nu)) \psi_l,\  \psi_k \psi_l = \psi_l \psi_k \text{ if } |k - l| > 1, \\[5pt]
&  \psi_k^2 e(\nu) = \mathcal{Q}_{\nu_k, \nu_{k+1}}(x_k, x_{k+1}) e(\nu), \\[5pt]
&  (\psi_k x_l - x_{s_k(l)} \psi_k ) e(\nu) = \left\{
                                                           \begin{array}{ll}
                                                             -  e(\nu) & \hbox{if } l=k \text{ and } \nu_k = \nu_{k+1}, \\
                                                               e(\nu) & \hbox{if } l = k+1 \text{ and } \nu_k = \nu_{k+1},  \\
                                                             0 & \hbox{otherwise,}
                                                           \end{array}
                                                         \right. \\[5pt]
&( \psi_{k+1} \psi_{k} \psi_{k+1} - \psi_{k} \psi_{k+1} \psi_{k} )  e(\nu) \\[4pt]
&\qquad \qquad \qquad = \left\{
                                                                                   \begin{array}{ll}
\displaystyle \frac{\mathcal{Q}_{\nu_k,\nu_{k+1}}(x_k,x_{k+1}) -
\mathcal{Q}_{\nu_k,\nu_{k+1}}(x_{k+2},x_{k+1})}{x_{k}-x_{k+2}} e(\nu) & \hbox{if } \nu_k = \nu_{k+2}, \\
0 & \hbox{otherwise}, \end{array}
\right.\\[5pt]
& x_1^{\langle h_{\nu_1}, \Lambda \rangle} e(\nu)=0.
\end{align*}
\end{defn}

\bigskip
The $\Z$-grading on $R^\Lambda(n)$ is given as follows:
\begin{align*}
\deg(e(\nu))=0, \quad \deg(x_k e(\nu))= ( \alpha_{\nu_k} |\alpha_{\nu_k}), \quad  \deg(\psi_l e(\nu))= -(\alpha_{\nu_{l}} | \alpha_{\nu_{l+1}}).
\end{align*}

\vskip 1em

Lemma \ref{Kashiwara isom} ( see \cite[p.25]{Ro08}, for example) below shows that each cyclotomic quiver Hecke algebra for the affine type $A^{(1)}_\ell$
lies in one parameter family. Namely, we may assume without loss of generality that
$$
\mathcal{Q}_{0,1}(u,v)=u^2+\lambda uv+v^2,
$$
for the affine type $A^{(1)}_1$, and
\begin{align*}
\mathcal{Q}_{i,i+1}(u,v)&=u+v\;\;(0\le i\le \ell-1),\\
\mathcal{Q}_{\ell,0}(u,v)&=u+\lambda v,\\
\mathcal{Q}_{i,j}(u,v)&=1\;\;(j\not\equiv i\pm1\mod(\ell+1)),
\end{align*}
for the affine type $A^{(1)}_\ell$ with $\ell\ge2$. Here $\lambda\in\bR$ is a parameter.
In the rest of the paper, we assume that $\mathcal{Q}_{i,j}(u,v)$ are of this form.

\begin{lemma}\label{Kashiwara isom}
Let $C=(c_{ij})_{i,j\in I}\in\Mat(I,I,\bR)$ be a symmetric matrix such that $c_{ij}\ne0$, for all $i$ and all $j$. If we define
$\mathcal{Q}_{i,j}'(u,v)=c_{ij}^2\mathcal{Q}_{i,j}(c_{ii}u, c_{jj}v)$, then
the cyclotomic quiver Hecke algebra associated with $\{\mathcal{Q}_{i,j}'(u,v)\}_{i,j\in I}$ is isomorphic to
the cyclotomic quiver Hecke algebra associated with $\{\mathcal{Q}_{i,j}(u,v)\}_{i,j\in I}$ by the algebra isomorphism
$$
e(\nu)\mapsto e(\nu),\quad x_ke(\nu)\mapsto c_{\nu_k\nu_k}^{-1}x_ke(\nu),\quad \psi_ke(\nu)\mapsto c_{\nu_k\nu_{k+1}}\psi_ke(\nu).
$$
\end{lemma}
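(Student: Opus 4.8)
The plan is to check directly that the displayed formulas give an algebra homomorphism and then to write down its inverse by the same recipe. Denote by $R$ the algebra $R^\Lambda(n)$ built from $\{\mathcal{Q}_{i,j}\}$ and by $R'$ the one built from $\{\mathcal{Q}'_{i,j}\}$, each with its generators $e(\nu),x_k,\psi_l$; since $\sum_\nu e(\nu)=1$, the recipe determines a map $\Phi$ on all the generators, namely $\Phi(e(\nu))=e(\nu)$, $\Phi(x_k)=\sum_\nu c_{\nu_k\nu_k}^{-1}x_ke(\nu)$, $\Phi(\psi_l)=\sum_\nu c_{\nu_l\nu_{l+1}}\psi_le(\nu)$, and I would prove that $\Phi$ respects every defining relation of $R'$, hence extends to a homomorphism $\Phi\colon R'\to R$. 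For invertibility I would note that the matrix $D=(c_{ij}^{-1})_{i,j\in I}$ is again symmetric with all entries nonzero, and that applying the construction of the lemma to $D$ and $\{\mathcal{Q}'_{i,j}\}$ gives back $d_{ij}^2\mathcal{Q}'_{i,j}(d_{ii}u,d_{jj}v)=\mathcal{Q}_{i,j}(u,v)$; so the same formulas with $D$ for $C$ yield a homomorphism $\Phi'\colon R\to R'$, and comparing values on generators shows $\Phi'\circ\Phi=\id_{R'}$ and $\Phi\circ\Phi'=\id_{R}$. Thus everything reduces to the relation check.

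Most relations are essentially free, because $\Phi$ only multiplies each $e(\nu)$-localised generator by a unit: the idempotent relations, $x_kx_l=x_lx_k$, the two commutation relations, and the cyclotomic relation $x_1^{\langle h_{\nu_1},\Lambda\rangle}e(\nu)=0$ all survive at once. For the relations containing a crossing, the point to keep in mind is that symmetry of $C$ gives $c_{\nu_l\nu_{l+1}}=c_{\nu_{l+1}\nu_l}=c_{(s_l\nu)_l(s_l\nu)_{l+1}}$, i.e.\ the scalar carried by $\psi_l$ is unchanged by the place-permutation $s_l$. Using this one gets $\psi_le(\nu)=e(s_l\nu)\psi_l$ and the distant commutation $\psi_k\psi_le(\nu)=\psi_l\psi_ke(\nu)$ for free (the same pair of scalars appears on both sides), and for $\psi_k^2e(\nu)=\mathcal{Q}'_{\nu_k,\nu_{k+1}}(x_k,x_{k+1})e(\nu)$ one finds that $\Phi$ multiplies the left side by $c_{\nu_k\nu_{k+1}}^2$ while, on the component $e(\nu)$, the substitution $x_k\mapsto c_{\nu_k\nu_k}^{-1}x_k$ turns $\mathcal{Q}'_{i,j}(x_k,x_{k+1})e(\nu)=c_{ij}^2\mathcal{Q}_{i,j}(c_{ii}x_k,c_{jj}x_{k+1})e(\nu)$ into $c_{ij}^2\mathcal{Q}_{i,j}(x_k,x_{k+1})e(\nu)$ with $i=\nu_k$, $j=\nu_{k+1}$, so the two sides agree. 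The mixed relations $(\psi_kx_l-x_{s_k(l)}\psi_k)e(\nu)=(\cdots)e(\nu)$ go the same way: using $(s_k\nu)_{k+1}=\nu_k$ one checks that $\Phi$ rescales $\psi_kx_le(\nu)$ and $x_{s_k(l)}\psi_ke(\nu)$ by the common scalar $c_{\nu_k\nu_{k+1}}c_{\nu_l\nu_l}^{-1}$, which equals $1$ in exactly the two cases ($l\in\{k,k+1\}$, $\nu_k=\nu_{k+1}$) where the right side is nonzero, so the relation is preserved.

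The only step with real bookkeeping — and the one I expect to be the main obstacle, though it is purely mechanical — is the deformed braid relation. Writing $(\nu_k,\nu_{k+1},\nu_{k+2})=(a,b,c)$ I would track the residue sequence through the three crossings in $\psi_{k+1}\psi_k\psi_{k+1}e(\nu)$ and in $\psi_k\psi_{k+1}\psi_ke(\nu)$; in each of the two words the pairs of residues that get swapped are $\{a,b\},\{a,c\},\{b,c\}$ (in different orders), so $\Phi$ multiplies both words, hence the left side of the relation, by the single factor $c_{ab}c_{ac}c_{bc}$. If $a\ne c$ both sides vanish. If $a=c$, that factor is $c_{ab}^2c_{aa}$; and writing $\mathcal{Q}_{a,b}(u,v)=\sum_{p,q}t_{p,q}u^pv^q$ and using $\frac{u^p-w^p}{u-w}=\sum_{r=0}^{p-1}u^rw^{p-1-r}$, I would check that $\Phi$ sends the right side $\frac{\mathcal{Q}'_{a,b}(x_k,x_{k+1})-\mathcal{Q}'_{a,b}(x_{k+2},x_{k+1})}{x_k-x_{k+2}}e(\nu)$ to $c_{ab}^2c_{aa}$ times $\frac{\mathcal{Q}_{a,b}(x_k,x_{k+1})-\mathcal{Q}_{a,b}(x_{k+2},x_{k+1})}{x_k-x_{k+2}}e(\nu)$, because each monomial $x_k^rx_{k+2}^{p-1-r}x_{k+1}^qe(\nu)$ is rescaled by $c_{aa}^{-(p-1)}c_{bb}^{-q}$ and the resulting coefficient $c_{ab}^2c_{aa}^pc_{bb}^q\cdot c_{aa}^{-(p-1)}c_{bb}^{-q}=c_{ab}^2c_{aa}$ no longer depends on $p,q$. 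Hence $\Phi(\text{left side})=\Phi(\text{right side})$, which finishes the relation check and with it the proof. One sees along the way that $\Phi$ preserves the $\Z$-grading, since every scalar used has degree $0$, so the isomorphism is in fact graded.
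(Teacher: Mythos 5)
Your verification is correct: all three scalar computations (the $c_{\nu_k\nu_{k+1}}^2$ for $\psi_k^2$, the common scalar $c_{\nu_k\nu_{k+1}}c_{\nu_l\nu_l}^{-1}$ for the mixed relations, and $c_{ab}c_{ac}c_{bc}$ with $c_{ij}^2c_{aa}$ for the braid relation when $a=c$) check out, and your use of the symmetry of $C$ and of $D=(c_{ij}^{-1})$ for the inverse is exactly right. The paper offers no proof for this lemma — it simply cites Rouquier — so there is nothing to compare against, but your direct relation-by-relation check is the standard argument and would be accepted as the intended proof.
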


\begin{Rmk}
We studied affine types $A^{(2)}_{2\ell}$ and $D^{(2)}_{\ell+1}$ in
\cite{AP12} and \cite{AP13}. Lemma \ref{Kashiwara isom} shows that $\fqH(n)$ does not depend on
the polynomials $\{\mathcal{Q}_{i,j}(u,v)\}_{i,j\in I}$ in those types. This is the reason why the choice of the polynomials did not matter 
in \cite{AP12} and \cite{AP13}.
\end{Rmk}

For $\beta\in \rlQ^+$ with $|\beta|=n$, we define the central idempotent $e(\beta)$ of $R^\Lambda(n)$ by
$$
e(\beta) = \sum_{\nu \in I^\beta} e(\nu),\quad \text{where}\;\;
I^\beta = \left\{ \nu=(\nu_1, \ldots, \nu_n ) \in I^n \mid \sum_{k=1}^n\alpha_{\nu_k} = \beta \right\}
$$
Then we denote $R^\Lambda(\beta) = R^\Lambda(n) e(\beta)$.
We will be interested in the case when $\Lambda=\Lambda_0$.
We call $\fqH(\beta)$ \emph{finite quiver Hecke algebras of type $A_{\ell}^{(1)}$}. 
The finite quiver Hecke algebras categorify the highest weight $\g$-module $V(\Lambda_0)$. 
Thus, Chevalley generators $E_i$ and $F_i$ are categorified to exact functors, and irreducible modules over finite quiver Hecke algebras of various rank 
are labelled by the Kashiwara crystal $B(\Lambda_0)$. We use standard notations $(\wt, \tilde{e}_i, \tilde{f}_i, \varepsilon_i, \varphi_i)$ for 
the crystal structure. As this is the third of our series of papers, we assume that the reader is familar with the 
strategy used in our series. The following is a consequence of derived equivalence explained in \cite{AP12}.

\begin{prop} [\protect{\cite[Cor.4.8]{AP12}}] \label{Prop: repn type}
For $w \in \weyl$ and $k\in \Z_{\ge0}$, $R^\Lambda(k\delta)$ and $R^\Lambda(\Lambda - w\Lambda + k\delta)$
have the same number of irreducible modules and the same representation type.
\end{prop}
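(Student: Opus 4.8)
The plan is to derive this from the general machinery of Chuang--Rouquier style derived equivalences attached to the categorification of $V(\Lambda_0)$, exactly as in \cite[Cor.~4.8]{AP12}. The point is that the statement is really about the action of the affine Weyl group $\weyl$ on weight spaces of the crystal $B(\Lambda_0)$, transported to derived equivalences of the corresponding blocks of finite quiver Hecke algebras. First I would reduce to the case where $w = r_i$ is a simple reflection, since a general $w\in\weyl$ is a product of simple reflections and derived equivalences compose; one must only check that the intermediate weights $\Lambda - w'\Lambda + k\delta$ that appear along a reduced expression are again of the form to which the $r_i$-step applies, which follows because $\langle h_j,\delta\rangle = 0$ for all $j$, so $k\delta$ is Weyl-invariant and may be carried along passively.

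For the simple reflection step, the key input is the $\mathfrak{sl}_2$-categorification coming from the functors $E_i$ and $F_i$ on $\bigoplus_n R^{\Lambda_0}(n)\text{-mod}$. On the block $R^{\Lambda_0}(k\delta)$ the relevant $\mathfrak{sl}_2$-weight is $\langle h_i, \Lambda_0 - k\delta\rangle = \langle h_i,\Lambda_0\rangle = \delta_{i0}$; applying the Rickard complex (the Chuang--Rouquier self-equivalence built from the $E_i$, $F_i$ and their units/counits) implements the reflection $r_i$ on the weight and gives a derived equivalence between $R^{\Lambda_0}(k\delta)$ and the block with weight $r_i(\Lambda_0) - k\delta = \Lambda_0 - \delta_{i0}\,\alpha_i - k\delta$, i.e. $R^{\Lambda_0}(\Lambda_0 - r_i\Lambda_0 + k\delta)$. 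Here I would cite the existence of these functors and the adjunctions from Kang--Kashiwara and the $\mathfrak{sl}_2$-categorification theorem of Chuang--Rouquier and Rouquier, exactly as invoked in the introduction; the argument is literally the one written out in \cite{AP12}, so at this point I would simply say "the same proof as \cite[Cor.~4.8]{AP12} applies verbatim" rather than reproduce it.

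Finally, a derived equivalence of finite-dimensional algebras preserves the number of blocks and, for symmetric algebras, the number of isomorphism classes of simple modules (the latter because the Cartan matrices have the same determinant up to sign and the number of simples is an invariant of the derived category for a symmetric algebra; alternatively it follows from the explicit description of the equivalence on the level of the Grothendieck group, where it acts by the Weyl group element and hence by a bijection on crystal-labelled bases). Since finite quiver Hecke algebras are symmetric (being cyclotomic quiver Hecke algebras, by the Kang--Kashiwara / Webster results on their symmetric structure), both conclusions of the Proposition follow. The main obstacle is bookkeeping rather than conceptual: one must make sure that the weight arithmetic $\Lambda_0 - w\Lambda_0 + k\delta$ stays in $\rlQ^+$ so that it names an honest block, and that the chain of simple reflections can be chosen so every intermediate term does too — but this is exactly what was handled in the affine type $A^{(2)}_{2\ell}$ and $D^{(2)}_{\ell+1}$ cases in \cite{AP12,AP13}, and the same reasoning carries over to type $A^{(1)}_\ell$ without change.
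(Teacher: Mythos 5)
Your proposal follows the same route the paper takes (via \cite[Cor.~4.8]{AP12}): write $w$ as a product of simple reflections, apply the Chuang--Rouquier Rickard complexes for the $\mathfrak{sl}_2$-categorification one step at a time using the Kang--Kashiwara adjunctions, note that $\delta$ is $\weyl$-invariant so the $k\delta$ summand is carried along, and conclude via properties of derived equivalence. The weight arithmetic is correct, the intermediate weights are of the form $w'\Lambda_0 - k\delta$ for tails $w'$ of the reduced word, and these remain weights of $V(\Lambda_0)$, so the blocks exist. This is essentially the paper's argument.

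One small imprecision worth noting: you attach the hypothesis ``symmetric'' to the claim that derived equivalence preserves the number of simple modules. That hypothesis is unnecessary for that claim --- for any finite-dimensional algebra, the number of simples equals the rank of $K_0$, which is a derived invariant. Where the symmetric (or more precisely self-injective) structure is genuinely used is in the \emph{representation type} conclusion: by Rickard's theorem, a derived equivalence between self-injective algebras induces a stable equivalence of Morita type, and stable equivalence of Morita type preserves tame/finite/wild representation type. Your proof asserts that ``both conclusions follow'' after the discussion of number of simples, but never explicitly states this chain (derived equivalence $\Rightarrow$ stable equivalence of Morita type $\Rightarrow$ same representation type) which is the actual reason symmetry matters. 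It would be cleaner to swap where the symmetry hypothesis is invoked.
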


\subsection{Dimension formula for $\fqH(\beta)$}

For $\lambda \vdash n$ and $\nu \in I^n$, define a non-negative integer $K(\lambda,\nu)$ by
$$
K(\lambda, \nu) = | \{ T\in \ST(\lambda) \mid \nu = \res(T)  \} |.
$$
Then we may give a formula for $\dim \fqH(\beta)$ in terms of $K(\lambda,\nu)$.
The proof is entirely similar to the cases $A^{(2)}_{2\ell}$ and $D^{(2)}_{\ell+1}$ and
we omit the proof. Note that $\wt(\lambda)=\Lambda_0-\sum_{(i,j)\in\lambda} \alpha_{-i+j}$.

\begin{thm} \label{Thm: dimension formula}
For $\beta \in \rlQ^+$ with $|\beta|=n$ and $\nu, \nu' \in I^\beta$, we have
\begin{align*}
\dim e(\nu')\fqH(n)e(\nu) &= \sum_{\lambda \vdash n} K(\lambda, \nu') K(\lambda, \nu), \\
\dim \fqH(\beta) &= \sum_{\lambda \vdash n,\ \wt(\lambda)= \Lambda_0 - \beta} |\ST(\lambda)|^2, \\
\dim \fqH(n) &= \sum_{\lambda \vdash n} |\ST(\lambda)|^2= n!\,.
\end{align*}
\end{thm}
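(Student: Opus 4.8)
The plan is to establish the three dimension formulas in sequence, with the first being the crucial one from which the other two follow by elementary manipulation. The first identity
$$\dim e(\nu')\fqH(n)e(\nu) = \sum_{\lambda\vdash n} K(\lambda,\nu')K(\lambda,\nu)$$
should be obtained exactly as in the companion papers \cite{AP12} and \cite{AP13}. The essential input is the categorification: the finite quiver Hecke algebras $\fqH(n)$ categorify the basic module $V(\Lambda_0)$, and the combinatorial Fock space $\F$ of \S\ref{Sec: Fock space} provides a concrete model in which the crystal $B(\Lambda_0)$ embeds as the Young diagrams, with the residue pattern \eqref{Eq: residue pattern} matching the weight decomposition. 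Concretely, one uses that $\fqH(n)$ is a graded cellular algebra (the graded cellular basis of Hu--Mathas, or equivalently the Brundan--Kleshchev--Wang construction), whose cell modules $S^\lambda$ are indexed by Young diagrams $\lambda\vdash n$ and have homogeneous bases indexed by $\ST(\lambda)$; the idempotent-truncated dimension $\dim e(\nu)S^\lambda$ is then precisely $K(\lambda,\nu)$, since a standard tableau $T$ contributes a basis vector lying in the $e(\res(T))$-component. Because $\fqH(n)=\bigoplus_{\nu,\nu'} e(\nu')\fqH(n)e(\nu)$ and a cellular algebra satisfies $\dim e(\nu')Ae(\nu)=\sum_\lambda (\dim e(\nu')S^\lambda)(\dim e(\nu)S^\lambda)$ by the standard cellular-basis bilinear-form argument, the first formula drops out. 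I would cite the parallel proof in \cite{AP12}/\cite{AP13} and state that it carries over verbatim, the only type-dependent ingredient being the combinatorics of residues, which is recorded above.

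Granting the first formula, the second is pure bookkeeping. Summing over all $\nu,\nu'\in I^\beta$ and interchanging the order of summation gives
$$\dim\fqH(\beta)=\sum_{\nu,\nu'\in I^\beta}\dim e(\nu')\fqH(n)e(\nu)=\sum_{\lambda\vdash n}\Bigl(\sum_{\nu\in I^\beta}K(\lambda,\nu)\Bigr)^2.$$
Now I observe that for a fixed $\lambda$ the inner sum $\sum_{\nu\in I^\beta}K(\lambda,\nu)$ counts all standard tableaux $T$ of shape $\lambda$ whose residue sequence $\res(T)$ lies in $I^\beta$, i.e. satisfies $\sum_k\alpha_{\res_k(T)}=\beta$. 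But $\res(T)$ is determined by the shape $\lambda$ alone (each box $(i,j)$ contributes $\alpha_{-i+j\bmod(\ell+1)}$ regardless of $T$), so the condition $\res(T)\in I^\beta$ is equivalent to $\sum_{(i,j)\in\lambda}\alpha_{-i+j}=\beta$, i.e. to $\wt(\lambda)=\Lambda_0-\beta$ using the stated formula $\wt(\lambda)=\Lambda_0-\sum_{(i,j)\in\lambda}\alpha_{-i+j}$. Hence the inner sum is $|\ST(\lambda)|$ when $\wt(\lambda)=\Lambda_0-\beta$ and $0$ otherwise, yielding the second formula. Summing the second formula over all $\beta\in\rlQ^+$ with $|\beta|=n$ collects all $\lambda\vdash n$, giving $\dim\fqH(n)=\sum_{\lambda\vdash n}|\ST(\lambda)|^2$, and the classical RSK identity $\sum_{\lambda\vdash n}|\ST(\lambda)|^2=n!$ finishes the third.

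The main obstacle is the first formula, and within it the honest verification that the graded cellular structure of $\fqH(n)$ in type $A^{(1)}_\ell$ has cell modules indexed by $\lambda\vdash n$ with $e(\nu)$-graded dimensions equal to $K(\lambda,\nu)$ for the specific choice of polynomials $\mathcal Q_{i,j}$ fixed above; one must check that the isomorphism of Lemma \ref{Kashiwara isom} lets us reduce to a standard parameter value, or alternatively that the cellular-basis argument is insensitive to the parameter $\lambda$. Everything after that is combinatorial and routine, so in the write-up I would state Theorem \ref{Thm: dimension formula} as a direct analogue of the $A^{(2)}_{2\ell}$ and $D^{(2)}_{\ell+1}$ cases, refer the reader to \cite{AP12} and \cite{AP13} for the proof of the first identity, and then spell out only the short derivations of the second and third identities as above, invoking the weight formula $\wt(\lambda)=\Lambda_0-\sum_{(i,j)\in\lambda}\alpha_{-i+j}$ and RSK.
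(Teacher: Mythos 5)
Your proposal is correct and is essentially what the paper intends: the paper omits the proof with a citation to \cite{AP12} and \cite{AP13} for the first identity, and your derivations of the second identity (summing over $\nu,\nu'\in I^\beta$, interchanging the order, and observing that the multiset of residues of any $T\in\ST(\lambda)$ depends only on $\lambda$, so $\sum_{\nu\in I^\beta}K(\lambda,\nu)=|\ST(\lambda)|$ exactly when $\wt(\lambda)=\Lambda_0-\beta$ and is $0$ otherwise) and of the third (summing over $\beta$ and invoking RSK) are precisely the routine steps the reference to the companion papers leaves implicit. Your cellular-basis sketch for the first identity is a valid self-contained alternative in type $A$, even if the companion papers for types $A^{(2)}_{2\ell}$ and $D^{(2)}_{\ell+1}$ argue via the categorification and the Shapovalov form on the Fock space rather than via a Hu--Mathas style cellular structure.
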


\vskip 1em

\section{Irreducible representations of $\fqH(\delta)$} \label{Sec: repns}

By explicit computation, we have
\begin{align*}
r_{\ell-1}r_{\ell-2}\cdots r_1r_0\Lambda_0&=\Lambda_0-\delta+\alpha_\ell,\\
r_kr_{k-1}(\Lambda_0-\delta+\alpha_k)&=\Lambda_0-\delta+\alpha_{k-1},\;\;\text{for $2\le k \le\ell$.}
\end{align*}
Hence $\Lambda_0-\delta+\alpha_i\in \weyl\Lambda_0$, which implies that
$\fqH(\delta-\alpha_i)$, for $1\le i\le \ell$, are simple algebras. In particular, $\fqH(\delta-\alpha_i)$ has a unique
irreducible module if $i\ne0$.

\subsection{Representations of $\fqH(\delta-\alpha_i)$} \label{Section: repn of R(delta-alpha)}
In this section, we give explicit description of the irreducible $\fqH(\delta-\alpha_i)$-module.
We will use the result to determine the structure of $\fqH(\delta)$.

We consider a hook partition $\lambda(i)= (i, 1^{\ell-i}) \vdash \ell$ of weight $\Lambda_0 - \delta + \alpha_i$,
for $1\le i\le\ell$. We shall define $\fqH(\delta-\alpha_i)$-modules $\mathcal{L}_i$, for $1\le i\le\ell$. Let
$$ \mathcal{L}_i = \bigoplus_{T\in\ST(\lambda(i))} \bR T.$$

\begin{lemma}
We may define an $\fqH(\delta-\alpha_i)$-module structure on $\mathcal{L}_i$, for $1\le i\le\ell$, by
$x_k=0$ and
\begin{equation} \label{Eq: Li}
\begin{aligned}
 e(\nu) T &= \left\{
                \begin{array}{ll}
                  T & \hbox{ if } \nu = \res(T), \\
                  0 & \hbox{ otherwise,}
                \end{array}
              \right. \\
\psi_k T &= \left\{
                         \begin{array}{ll}
                           s_k T  & \hbox{ if $s_k T$ is standard,}   \\
                           0 & \hbox{ otherwise.}
                         \end{array}
                       \right.
\end{aligned}
\end{equation}
\end{lemma}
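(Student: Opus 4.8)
The plan is to verify the defining relations of $\fqH(\delta-\alpha_i)$ one by one on the basis $\{T : T\in\ST(\lambda(i))\}$. Since we set $x_k=0$ for all $k$, every relation involving the $x_k$'s either becomes trivial or imposes a constraint that we must check is compatible with the hook shape. First I would record the combinatorial facts about standard tableaux of a hook shape $\lambda(i)=(i,1^{\ell-i})$: such a tableau is determined by choosing which of $\{2,\dots,\ell\}$ go into the arm (first row) versus the leg (first column), $1$ is always the corner box, and consecutive entries $k,k+1$ always sit in distinct boxes of distinct residues (because the residue along the first row increases by $1$ and along the first column decreases by $1$, and no residue is repeated among the $\ell$ boxes of a hook of size $\ell\le\ell$). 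In particular $\res(T)$ has pairwise distinct-in-the-relevant-sense entries, so in every relation of the form ``... if $\nu_k=\nu_{k+1}$'' the hypothesis $\nu_k=\nu_{k+1}$ never holds; this immediately kills the inhomogeneous terms in the $\psi_k x_l - x_{s_k(l)}\psi_k$ relation and makes $\psi_k^2 e(\nu) = \mathcal{Q}_{\nu_k,\nu_{k+1}}(0,0)e(\nu)$, which we must check equals $0$.

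Next I would go through the relations in order. The idempotent relations $e(\nu)e(\nu')=\delta_{\nu\nu'}e(\nu)$ and $\sum e(\nu)=1$ are immediate since distinct standard tableaux of a hook can have distinct residue sequences — actually one must check $K(\lambda(i),\nu)\le 1$, i.e. $T$ is recovered from $\res(T)$; this follows because knowing the residue of each $k$ tells you whether box $k$ is in the arm or the leg. The commuting relations $x_kx_l=x_lx_k$, $x_ke(\nu)=e(\nu)x_k$ are trivial ($x_k=0$); $\psi_le(\nu)=e(s_l\nu)\psi_l$ holds because $\res(s_kT)=s_k\res(T)$ when $s_kT$ is standard and because if $s_kT$ is not standard then neither side moves $T$ out of its residue eigenspace in a conflicting way (need a short check). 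The braid-type commutation $\psi_k\psi_l=\psi_l\psi_k$ for $|k-l|>1$ reduces to checking that the two swaps $s_k,s_l$ act on standard tableaux in commuting fashion, with care about the cases where one or both of $s_kT$, $s_lT$ fail to be standard — here one uses that $s_kT$ standard and $|k-l|>1$ implies $s_lT$ standard iff $s_l(s_kT)$ standard. The relation $\psi_k^2e(\nu)=\mathcal{Q}_{\nu_k,\nu_{k+1}}(x_k,x_{k+1})e(\nu)$: the right side is $\mathcal{Q}_{\nu_k,\nu_{k+1}}(0,0)e(\nu)$, which is $0$ because for $\ell\ge2$ all our $\mathcal{Q}_{i,j}(u,v)$ are $u+v$, $u+\lambda v$, or $1$, and the constant polynomial $1$ only occurs when $\nu_k\not\equiv\nu_{k+1}\pm1$, a case that cannot arise for adjacent boxes $k,k+1$ of a hook (their residues differ by exactly $1$); so $\mathcal{Q}_{\nu_k,\nu_{k+1}}(0,0)=0$ always, matching $\psi_k^2T$ which is either $T$ (if $s_kT$ and $s_k^2T=T$ are both standard, giving $\psi_k^2T = \psi_k(s_kT)=s_k^2T = T$ — wait, this would be nonzero!) so in fact the key point is that if $s_kT$ is standard then $\psi_k^2 T = T\ne 0$, contradiction — hence one must show $s_kT$ is \emph{never} standard, i.e. boxes $k$ and $k+1$ are never ``exchangeable'' in a hook; indeed swapping them would force both into the same row or column with $k+1$ before $k$, impossible. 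So $\psi_k T=0$ identically on $\mathcal{L}_i$, consistent with $\psi_k^2=0$. (This also trivializes the Serre-type relation $\psi_{k+1}\psi_k\psi_{k+1}-\psi_k\psi_{k+1}\psi_k$, whose right side is again $\mathcal{Q}(0,0)$-type, hence $0$.) Finally the cyclotomic relation $x_1^{\langle h_{\nu_1},\Lambda_0\rangle}e(\nu)=0$ holds since $x_1=0$.

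The main obstacle, and the part I would be most careful about, is the interplay between $\psi_k$ and whether neighbouring entries in a hook tableau can be swapped: one needs the clean combinatorial statement that for a hook partition $\lambda(i)$ of size $\ell$, the operator $s_k$ never preserves standardness, equivalently $\psi_k$ acts as $0$ on $\mathcal{L}_i$. Once that is established, essentially all relations reduce to ``$0=0$'' or to the constant-term computation $\mathcal{Q}_{\nu_k,\nu_{k+1}}(0,0)=0$, which we handle by the residue-difference argument above (residues of adjacent boxes in a hook differ by $\pm1\bmod(\ell+1)$, so $\mathcal{Q}$ is $u+v$ or $u+\lambda v$, with zero constant term). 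I would also double-check the two genuinely content-bearing verifications: that $T\mapsto\res(T)$ is injective on $\ST(\lambda(i))$ (so the $e(\nu)$ act as a complete set of orthogonal idempotents on the span), and — if the module is later claimed to be irreducible — that $\mathcal{L}_i$ is indecomposable, though for the present lemma only the module axioms are required, and $\dim\mathcal{L}_i = |\ST(\lambda(i))| = \ell$, matching $\dim\fqH(\delta-\alpha_i)$ from Theorem \ref{Thm: dimension formula} (since $\fqH(\delta-\alpha_i)$ is simple, hence a matrix algebra of the right size), which is a good consistency check on the construction.
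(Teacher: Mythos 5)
There is a genuine error in the combinatorial heart of the proposal. You assert, after noticing the tension, that ``one must show $s_kT$ is \emph{never} standard, i.e.\ boxes $k$ and $k+1$ are never exchangeable in a hook,'' so that $\psi_k$ acts as $0$ identically on $\mathcal{L}_i$. This is false. Take $\ell=3$, $i=2$, so $\lambda(2)=(2,1)$: the two standard tableaux are obtained from each other by swapping the entries $2$ and $3$, which sit in different rows \emph{and} different columns (one in the arm, one in the leg). So $s_2T$ is standard and $\psi_2$ is nonzero on $\mathcal{L}_2$. The source of the confusion is your earlier statement that the boxes containing the entries $k$ and $k+1$ are ``adjacent boxes of a hook'' whose ``residues differ by exactly $1$'': consecutive entries of a standard tableau need not occupy adjacent boxes of the Young diagram, and when $k$ is in the arm and $k+1$ is in the leg (or vice versa) their residues differ by more than $1\bmod(\ell+1)$, so $\mathcal{Q}_{\nu_k,\nu_{k+1}}(u,v)=1$ rather than $u+v$ or $u+\lambda v$. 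Because the later verifications in your argument all rest on ``$\psi_kT=0$ identically, hence $0=0$,'' the whole proof collapses at this step.

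The correct organizing observation, which is what the paper's proof uses, is a dichotomy. Because the tableau is a hook, either (i) entries $k$ and $k+1$ occupy one box in the arm and one in the leg, which forces $s_kT$ standard and forces $a_{\nu_k\nu_{k+1}}=0$ (the arm carries residues $1,\dots,i-1$ and the leg carries $i+1,\dots,\ell$, and no member of the first set is within $1\bmod(\ell+1)$ of a member of the second), so $\mathcal{Q}_{\nu_k,\nu_{k+1}}(u,v)=1$ and the relation $\psi_k^2T=T=\mathcal{Q}_{\nu_k,\nu_{k+1}}(0,0)\,T$ holds; or (ii) entries $k$ and $k+1$ are in the same row or column, hence in adjacent boxes, which forces $s_kT$ not standard and forces the residues to differ by exactly $\pm 1\bmod(\ell+1)$, so $\deg\mathcal{Q}_{\nu_k,\nu_{k+1}}>0$, giving $\psi_k^2T=0=\mathcal{Q}_{\nu_k,\nu_{k+1}}(0,0)\,T$. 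The braid relation also requires a genuine check that the three-term standardness condition is symmetric under $s_k\leftrightarrow s_{k+1}$; it does not reduce to $0=0$. You should also correct the peripheral sanity check at the end: $\dim\mathcal{L}_i=|\ST(\lambda(i))|=\binom{\ell-1}{i-1}$, not $\ell$, and $\dim\fqH(\delta-\alpha_i)=\binom{\ell-1}{i-1}^2$ by Theorem \ref{Thm: dimension formula}.
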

\begin{proof}
To check the defining relations on $T$, we may assume that
$$\nu=(\nu_1, \nu_2, \ldots, \nu_\ell)=\res(T).$$
The relation for $(\psi_k x_l - x_{s_k(l)} \psi_k)e(\nu)$ is clear because $\nu_k=\nu_{k+1}$ does not occur.
Next observe that one of the following holds.
\begin{itemize}
\item[(i)]
$a_{\nu_k\nu_{k+1}}=0$ and $s_kT$ is standard.
\item[(ii)]
$s_kT$ is not standard and $\deg\mathcal{Q}_{\nu_k,\nu_{k+1}}(u,v)>0$.
\end{itemize}
Further, $x_k=0, x_{k+1}=0$ imply $\mathcal{Q}_{\nu_k,\nu_{k+1}}(x_k, x_{k+1})=0$ in the latter case.
Hence the relation for $\psi_k^2e(\nu)$ holds. Finally, we prove $\psi_{k+1}\psi_{k}\psi_{k+1}T=\psi_{k}\psi_{k+1}\psi_{k}T$,
because $\nu_k=\nu_{k+2}$ does not occur. But it follows from the fact that $s_{k+1}T, s_ks_{k+1}T, s_{k+1}s_ks_{k+1}T$ are all standard
if and only if $s_kT,s_{k+1}s_kT, s_ks_{k+1}s_kT$ are all standard. It is straightforward to check
other relations.
\end{proof}

\begin{lemma} \label{Lem: L_i}
The $\fqH(\delta-\alpha_i)$-module $\mathcal{L}_i$, for $1\le i\le\ell$, is irreducible of dimension $\binom{\ell-1}{i-1}$.
\end{lemma}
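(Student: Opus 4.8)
The dimension claim $\dim \mathcal{L}_i = \binom{\ell-1}{i-1}$ is the easy part: since $\mathcal{L}_i$ has basis indexed by $\ST(\lambda(i))$ where $\lambda(i) = (i, 1^{\ell-i})$ is a hook of $\ell$ boxes, I would count standard tableaux of hook shape. A standard tableau on the hook $(i,1^{\ell-i})$ is determined by choosing which $i-1$ of the entries $\{2,3,\dots,\ell\}$ go into the arm (the first row, after the forced $1$ in the corner), the rest going down the leg in increasing order; this is exactly $\binom{\ell-1}{i-1}$. So $\dim\mathcal{L}_i = |\ST(\lambda(i))| = \binom{\ell-1}{i-1}$, which one can also read off from Theorem~\ref{Thm: dimension formula} together with the fact that $\wt(\lambda(i)) = \Lambda_0 - \delta + \alpha_i$.

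\textbf{Irreducibility.} For irreducibility I would argue that $\mathcal{L}_i$ is already known to be semisimple-of-a-single-type and then pin down its length. Since $\Lambda_0 - \delta + \alpha_i \in \weyl\Lambda_0$, the algebra $\fqH(\delta-\alpha_i)$ is a simple algebra (stated just above in the excerpt), hence a full matrix algebra over $\bR$, so it has a \emph{unique} irreducible module, say $D$. Therefore proving irreducibility of $\mathcal{L}_i$ amounts to showing $\mathcal{L}_i$ is indecomposable — equivalently, that $\End_{\fqH(\delta-\alpha_i)}(\mathcal{L}_i) = \bR$ — since a nonzero module over a matrix algebra is indecomposable iff it is irreducible. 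To compute the endomorphism ring I would use the explicit action in \eqref{Eq: Li}: an endomorphism $\varphi$ commutes with all $e(\nu)$, hence is "diagonal" with respect to the residue-sequence decomposition, and commutes with all $\psi_k$, hence intertwines the graph on $\ST(\lambda(i))$ whose edges are $T - s_k T$ whenever $s_k T$ is standard. The key combinatorial step is that this graph is connected: any two standard tableaux of the hook shape $\lambda(i)$ are linked by a chain of elementary transpositions $s_k$ each of which keeps the tableau standard. Granting connectivity, $\varphi$ is a scalar on each $\bR T$ and the scalars all agree, so $\End = \bR$ and we are done.

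\textbf{Alternative and the main obstacle.} An alternative to the endomorphism-ring computation is to directly exhibit $\mathcal{L}_i$ as irreducible by showing $\fqH(\delta-\alpha_i)\cdot T = \mathcal{L}_i$ for each basis vector $T$: starting from $T$, repeatedly apply the $\psi_k$ that produce standard tableaux and invoke the same connectivity statement to reach every basis vector; since any nonzero submodule, being a sum of $e(\nu)$-eigenspaces, contains some $\bR T$, it must be all of $\mathcal{L}_i$. Either way, the one genuinely non-formal point is the connectivity of the "standard-transposition graph" on $\ST(\lambda(i))$. I expect this to be the main obstacle, though a mild one: given a standard hook tableau, if its arm-set (the set of entries in the first row) is not the lexicographically smallest possibility one can find an adjacent transposition $s_k$ that moves a larger arm-entry down into the leg, or a smaller leg-entry up into the arm, while preserving standardness; iterating sorts any tableau to a fixed normal form, which proves connectivity. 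I would state this as a short combinatorial lemma and give the sorting argument explicitly, since the residue bookkeeping ($\res(i,j) \equiv j - i$, so along the hook the residues are all distinct modulo $\ell+1$, ensuring $\nu_k \neq \nu_{k+1}$ throughout) is what makes the $\psi_k$-action on $\mathcal{L}_i$ match the naive permutation action in the first place.
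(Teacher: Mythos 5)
Your argument is correct. The paper's terse proof --- note that $\dim e(\nu)\mathcal{L}_i\le1$, then invoke the standard argument --- is precisely your alternative route: a nonzero submodule, being a direct sum of its $e(\nu)$-weight spaces, contains some $\bR T$, and $\psi_k$-connectivity of $\ST(\lambda(i))$ then generates everything. Your main route through $\End_{\fqH(\delta-\alpha_i)}(\mathcal{L}_i)=\bR$ together with the matrix-algebra structure of $\fqH(\delta-\alpha_i)$ is the same computation rephrased; both hinge on the connectivity lemma, which you correctly isolate as the nontrivial input and correctly prove for hooks by the sorting argument (the smallest arm-entry $k+1$ whose predecessor $k$ sits in the leg has $k$ and $k+1$ in neither the same row nor the same column, so $s_k$ preserves standardness and strictly decreases the arm-set). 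However, having already invoked the simplicity of $\fqH(\delta-\alpha_i)$ --- a fact the paper records just above but does not use in this particular proof --- you could skip the combinatorics entirely: write $\fqH(\delta-\alpha_i)\simeq\Mat(n,\bR)$ with unique irreducible $D$ of dimension $n$; the nonzero module $\mathcal{L}_i$ is a direct sum of copies of $D$, so $m:=\dim\mathcal{L}_i=|\ST(\lambda(i))|$ is a positive multiple of $n$ and hence $m\ge n$, while Theorem~\ref{Thm: dimension formula} gives $n^2=\dim\fqH(\delta-\alpha_i)\ge|\ST(\lambda(i))|^2=m^2$, so $n\ge m$; thus $m=n$ and $\mathcal{L}_i\simeq D$. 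This yields irreducibility and the dimension count simultaneously, with no need for connectivity nor even for the observation $\dim e(\nu)\mathcal{L}_i\le1$.
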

\begin{proof}
Note that $\dim e(\nu)\mathcal{L}_i\le 1$. Then the standard argument shows that $\mathcal{L}_i$ is irreducible.
Its dimension is $|\ST(\lambda(i))|=\binom{\ell-1}{i-1}$.
\end{proof}

\subsection{Representations of $\fqH(\delta)$} \label{Section: repn of R(2delta)}
In this section, we construct irreducible $\fqH(\delta)$-modules by extending the modules $\mathcal{L}_i$
from Section \ref{Section: repn of R(delta-alpha)}.

\begin{lemma} \label{S for i}
Let $h=\ell+1$. By declaring that $x_{h}$ and $\psi_{h-1}$ act as $0$, and $e(\nu)$, for $\nu\in I^{\delta}$, as
$$ e(\nu) T = \left\{
                           \begin{array}{ll}
                             T & \hbox{ if } \nu = \res(T) * i, \\
                             0 & \hbox{ otherwise,}
                           \end{array}
                         \right.
$$
where $\res(T) * i$ is the concatenation of $\res(T)$ and $(i)$, the irreducible $\fqH(\delta-\alpha_i)$-module
$\mathcal{L}_i$ extends to an irreducible $\fqH(\delta)$-module, for $1\le i\le\ell$.
\end{lemma}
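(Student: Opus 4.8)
The plan is to check that the proposed action of the generators $e(\nu)$, $x_k$, $\psi_k$ ($1 \le k \le h-1 = \ell$) on $\mathcal{L}_i$ satisfies all the defining relations of $\fqH(\delta)$, and then to argue irreducibility by the same ``$\dim e(\nu) M \le 1$'' argument used for Lemma \ref{Lem: L_i}. Since $\mathcal{L}_i$ is already an $\fqH(\delta - \alpha_i)$-module, the relations involving only $e(\nu')$, $x_1, \dots, x_{\ell}$, $\psi_1, \dots, \psi_{\ell-1}$ (restricted to the idempotents $e(\nu)$ with $\nu = \res(T)*i$) are essentially inherited; the genuinely new content is the interaction of the last strand, i.e.\ relations involving $x_{h} = x_{\ell+1}$ and $\psi_{h-1} = \psi_\ell$, both of which act as $0$.

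First I would record what residues can appear in position $h$: for $T \in \ST(\lambda(i))$ with residue sequence $\res(T)$, the last residue of the augmented sequence $\res(T) * i$ is $i$, and (since $\lambda(i) = (i, 1^{\ell-i})$ is a hook of weight $\Lambda_0 - \delta + \alpha_i$) the residue $\nu_\ell$ in position $\ell$ of $\res(T)$ is either $i-1$ or $i+1$ depending on whether the box numbered $\ell$ sits at the end of the arm or the end of the leg. In particular $\nu_\ell \ne \nu_h = i$ always, so the case ``$\nu_{h-1} = \nu_h$'' never occurs. This immediately kills the potentially problematic instances of the relations $(\psi_{h-1} x_l - x_{s_{h-1}(l)}\psi_{h-1})e(\nu)$ and forces the right-hand side of $\psi_{h-1}^2 e(\nu) = \mathcal{Q}_{\nu_{h-1}, \nu_h}(x_{h-1}, x_h) e(\nu)$ to be a polynomial of strictly positive degree in $x_{h-1}, x_h$; since $x_h = 0$ and $x_{h-1}$ acts nilpotently (indeed $x_k = 0$ on $\mathcal{L}_i$), this polynomial acts as $0$, matching $\psi_{h-1}^2 = 0$. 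Similarly the braid-type relation $(\psi_{k+1}\psi_k\psi_{k+1} - \psi_k\psi_{k+1}\psi_k)e(\nu)$ in the ranges $k = \ell-1$ and $k=\ell$ only has a nonzero right-hand side when $\nu_k = \nu_{k+2}$, which again does not occur for these residue sequences; when it does vanish, one checks $\psi_{\ell+1}$ is not in sight (we only have $\psi_1, \dots, \psi_\ell$) and the triple products involving $\psi_\ell$ vanish because $\psi_\ell = 0$. The commuting relations $\psi_k \psi_\ell = \psi_\ell \psi_k$ for $|k - \ell| > 1$, $x_k \psi_\ell = \psi_\ell x_{s_\ell(k)}$, $\psi_\ell e(\nu) = e(s_\ell \nu)\psi_\ell$, and the cyclotomic relation $x_1^{\langle h_{\nu_1}, \Lambda_0\rangle} e(\nu) = 0$ all hold trivially since $x_k = 0$ and $\psi_\ell = 0$, noting $\res(T)_1 = 0$ so the cyclotomic relation is the only nontrivial one and it already held in $\mathcal{L}_i$.

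Once all relations are verified, irreducibility follows exactly as before: each $e(\nu)$ with $\nu = \res(T)*i$ satisfies $\dim e(\nu)\mathcal{L}_i \le 1$ because distinct standard tableaux of a hook shape have distinct residue sequences, so any nonzero submodule contains some basis vector $T$, and then repeated application of the $\psi_k$'s (which permute the standard tableaux transitively up to the ``is standard'' constraint, as in the proof for $\mathcal{L}_i$) generates all of $\mathcal{L}_i$. I expect the main obstacle to be purely bookkeeping: confirming that for \emph{every} $T \in \ST(\lambda(i))$ the residue in position $\ell$ differs from $i$ and that no residue sequence $\res(T)*i$ has a repeat in positions $(k, k+2)$ for $k \in \{\ell-1, \ell\}$ — i.e.\ making the combinatorial ``these bad cases never occur'' claims airtight for the hook shape $(i, 1^{\ell-i})$, including the small edge cases $i = 1$ and $i = \ell$. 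Everything else degenerates because $x_k$ and $\psi_\ell$ act as zero.
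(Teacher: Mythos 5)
Your proposal is correct and follows essentially the same approach as the paper: reduce to the relations involving the new strand, observe $(\nu_{h-1},\nu_h)=(i\pm1,i)$ so $\nu_{h-1}\ne\nu_h$, use positive degree of $\mathcal{Q}_{\nu_{h-1},\nu_h}$ together with $x_{h-1}=x_h=0$ for the $\psi_{h-1}^2$ relation, and use $\nu_{h-2}\ne\nu_h$ for the braid relation. The ``bookkeeping'' you flag at the end closes immediately: $\res(T)$ is a permutation of $\{0,\dots,\ell\}\setminus\{i\}$ because $\wt(\lambda(i))=\Lambda_0-\delta+\alpha_i$, so no entry of $\res(T)$ equals $i$, which gives both $\nu_{h-1}\ne i$ and $\nu_{h-2}\ne i$; the case $k=\ell$ in your braid-relation range is vacuous since $\psi_{\ell+1}$ does not exist.
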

\begin{proof}
We may assume that $\nu = \res(T)*i$. As $(\nu_{h-1},\nu_h)=(i\pm1,i)$, $\nu_{h-1}=\nu_h$ does not occur. Thus,
the relation for $(\psi_{h-1}x_l-x_{s_k(l)}\psi_{h-1})e(\nu)$ holds. Since $\deg\mathcal{Q}_{\nu_{h-1},\nu_h}(u,v)>0$, it also follows
$\mathcal{Q}{\nu_{h-1},\nu_h}(x_{h-1},x_h)=0$. Hence, we have also proved the relation for $\psi_{h-1}^2e(\nu)$. Finally,
$\nu_{h-2}\ne i=\nu_h$ if $\ell\ge2$ implies the relation for $(\psi_{h-1}\psi_{h-2}\psi_{h-1}-\psi_{h-2}\psi_{h-1}\psi_{h-2})e(\nu)$.
It is easy to check the remaining defining relations.
\end{proof}

\begin{defn}
We denote the irreducible $\fqH(\delta)$-module defined in Lemma \ref{S for i} by $\mathcal{S}_i$,
for $i=1, 2, \dots, \ell$.
\end{defn}

By construction, $ E_j(\mathcal{S}_i) = \delta_{ij}$, for $1\le i,j\le \ell$.
Hence $\mathcal{S}_1 \dots \mathcal{S}_\ell$ are pairwise non-isomorphic $\fqH(\delta)$-modules.
But we know from the categorication theorem that the number of irreducible $\fqH(\delta)$-modules is $\ell$.
Hence we have the following lemma.

\begin{lemma}
The modules $\mathcal{S}_i$ $(i=1, 2, \dots, \ell)$ form a complete list of irreducible $\fqH(\delta)$-modules.
\end{lemma}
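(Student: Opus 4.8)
The plan is to show that the $\ell$ modules $\mathcal{S}_1,\dots,\mathcal{S}_\ell$ constructed in Lemma \ref{S for i} exhaust the isomorphism classes of irreducible $\fqH(\delta)$-modules by a counting argument. First I would recall that, as noted just before the statement, the categorification theorem identifies the irreducible $\fqH(\delta)$-modules with the weight-$(\Lambda_0-\delta)$ part of the crystal $B(\Lambda_0)$; since $\langle h_i,\delta\rangle=0$ for all $i$ and $\delta$ is the null root, the Weyl group orbit $\weyl\Lambda_0$ does not reach $\Lambda_0-\delta$, and a direct inspection of $B(\Lambda_0)$ (equivalently, of the Fock space description in \S\ref{Sec: Fock space}) shows there are exactly $\ell$ such crystal elements, corresponding to the $\ell$ Young diagrams of weight $\Lambda_0-\delta$, namely the hook partitions $\lambda(i)=(i,1^{\ell-i})$ for $1\le i\le\ell$. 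Hence the number of irreducible $\fqH(\delta)$-modules is precisely $\ell$.

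Next I would invoke the pairwise non-isomorphism already established in the remark preceding the statement: from the construction one reads off that the restriction functor $E_j$ (the categorified Chevalley generator $e_j$) satisfies $E_j(\mathcal{S}_i)\ne0$ if and only if $j=i$, for $1\le i,j\le\ell$. This is immediate from the defining formula for $e(\nu)$ on $\mathcal{S}_i$, which forces every weight $\nu$ occurring in $\mathcal{S}_i$ to end in $i$, together with $\mathcal{S}_i\ne0$. Therefore $\mathcal{S}_1,\dots,\mathcal{S}_\ell$ are $\ell$ pairwise distinct irreducible $\fqH(\delta)$-modules, and since this matches the total count, they form a complete list.

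I do not anticipate a serious obstacle here; the only point requiring a little care is the crystal-theoretic count of irreducibles, and that can be handled either by citing the categorification theorem in the form already used in our earlier papers \cite{AP12}, \cite{AP13}, or, more concretely, by appealing to Theorem \ref{Thm: dimension formula}: the Young diagrams $\lambda$ with $\wt(\lambda)=\Lambda_0-\delta$ are exactly the hooks $\lambda(i)$, so $\dim\fqH(\delta)=\sum_{i=1}^\ell\binom{\ell-1}{i-1}^2$, and a dimension bookkeeping against the block decomposition confirms that there are $\ell$ blocks each contributing one irreducible. Either route closes the argument in one line once the non-isomorphism of the $\mathcal{S}_i$ is in hand.
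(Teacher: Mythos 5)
Your overall strategy is the same as the paper's: use the categorification theorem to count $\ell$ irreducibles, and use $E_j(\mathcal{S}_i)\ne 0 \iff i=j$ to see that the $\mathcal{S}_i$ are pairwise non-isomorphic. That is exactly how the paper concludes. However, several of the concrete justifications you offer for the count are factually wrong and should not be left as they are.

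First, the Young diagrams of weight $\Lambda_0-\delta$ have $|\delta|=\ell+1$ nodes, so they cannot be the $\lambda(i)=(i,1^{\ell-i})$ — those have only $\ell$ nodes and weight $\Lambda_0-\delta+\alpha_i$; they belong to $\fqH(\delta-\alpha_i)$, not $\fqH(\delta)$. The relevant diagrams are the hooks $(i,1^{\ell+1-i})$ for $1\le i\le \ell+1$, of which there are $\ell+1$, not $\ell$. The one-row hook $(\ell+1)$ fails the $(\ell+1)$-restriction, so exactly $\ell$ of these are crystal vertices of $B(\Lambda_0)$, which does give the count $\ell$ — but by this argument, not by the one you wrote. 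Second, your dimension formula is off: by Theorem~\ref{Thm: dimension formula} one gets $\dim\fqH(\delta)=\sum_{i=1}^{\ell+1}\binom{\ell}{i-1}^2=\binom{2\ell}{\ell}$, summing over all $\ell+1$ hooks (restricted or not), whereas you wrote $\sum_{i=1}^\ell\binom{\ell-1}{i-1}^2$; you appear to have substituted $\dim\mathcal{L}_i$ for $|\ST(\lambda)|$ and dropped a term. Third, the proposed ``dimension bookkeeping against the block decomposition'' does not work: $\fqH(\delta)$ is a single block, not a direct sum of $\ell$ blocks, and it is far from semisimple (indeed Corollary~\ref{Cor: fqH(delta) repn type} shows it is a Brauer tree algebra), so knowing $\dim\fqH(\delta)$ does not by itself yield the number of irreducibles. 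The clean route is what the paper does and what you also propose as the primary route: cite the categorification theorem (equivalently, count $(\ell+1)$-restricted hooks of $\ell+1$ nodes) to get the number $\ell$, then use $E_j$ to distinguish the $\mathcal{S}_i$.
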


We show that $\fqH(\delta) \simeq \bR[x]/(x^2)$ if $\ell=1$. As the Young diagrams $(2)$ and $(1,1)$ are all for contributing to $\dim \fqH(\delta )$ by Theorem \ref{Thm: dimension formula}, we have
$ \dim \fqH(\delta) = 2 $. Then, the map $\fqH(\delta) \rightarrow \bR[x]/(x^2)$ given by
\begin{align*}
e(\nu) \mapsto \left\{
                                  \begin{array}{ll}
                                    1 & \hbox{ if } \nu = (0,1), \\
                                    0 & \hbox{ otherwise, }
                                  \end{array}
                                \right.
\ x_k \mapsto \left\{
                                  \begin{array}{ll}
                                    0 & \hbox{ if } k = 1, \\
                                    x & \hbox{ if } k = 2,
                                  \end{array}
                                \right.
\
\psi_1 \mapsto 0,
\end{align*}
defines a surjective algebra homomorphism, which is an isomorphism by $ \dim \fqH(\delta) = 2 $.

\vskip 1em

\section{Representations of $\fqH(\beta)$ when $\ell=1$} \label{Sec: repns for ell=1}

\subsection{Representations of $\fqH(2\delta - \alpha_i)$ for $\ell = 1$} \label{Sec: repn of R(2delta-alpha_i) l=1}
For the case $\ell = 1$, we have chosen the parameter
$ \mathcal{Q}_{0,1}(u,v)=u^2+\lambda uv+v^2 $ in Section \ref{Sec: quiver Hecke algs}. Note that
$$ \frac{  \mathcal{Q}_{i,j}(u,v) - \mathcal{Q}_{i,j}(w,v) }{u-w} = u + \lambda v + w $$
for $i \ne j \in I$.

As the Young diagram $(2,1)$ is a unique one to contribute to $\dim \fqH(2\delta - \alpha_0)$ by Theorem \ref{Thm: dimension formula}, we have
$$ \dim \fqH(2\delta - \alpha_0) = 4 .$$
Proposition \ref{Prop: repn type} and $ r_1r_0 (\Lambda_0) = \Lambda_0 - 2\delta + \alpha_0 $ imply
$ \fqH(2\delta - \alpha_0) \simeq \Mat(2, \bR)$. More explicitly, it is easy to check that the correspondence
\begin{align*}
& e(\nu ) \mapsto \left\{
                                  \begin{array}{ll}
                                    (\begin{smallmatrix} 1 & 0 \\ 0 & 1 \end{smallmatrix}) & \hbox{ if } \nu = (0,1,1), \\
                                    (\begin{smallmatrix} 0 & 0 \\ 0 & 0 \end{smallmatrix}) & \hbox{ otherwise, }
                                  \end{array}
                                \right.
\  x_k \mapsto \left\{
                                  \begin{array}{ll}
                                    (\begin{smallmatrix} 0 & 0 \\ 0 & 0 \end{smallmatrix}) & \hbox{ if } k = 1, \\
                                    (\begin{smallmatrix} 0 & 1 \\ 0 & 0 \end{smallmatrix}) & \hbox{ if } k = 2, \\
                                    (\begin{smallmatrix} 0 & -1 \\ 0 & 0 \end{smallmatrix}) & \hbox{ if } k = 3,
                                  \end{array}
                                \right.
\
\psi_l \mapsto \left\{
                                  \begin{array}{ll}
                                    (\begin{smallmatrix} 0 & 0 \\ 0 & 0 \end{smallmatrix}) & \hbox{ if } l = 1, \\
                                    (\begin{smallmatrix} 0 & 0 \\ -1 & 0 \end{smallmatrix}) & \hbox{ if } l = 2,
                                  \end{array}
                                \right.
\end{align*}
defines an irreducible representation of $\fqH(2\delta - \alpha_0)$ and dimension counting shows that it
gives an isomorphism between $ \fqH(2\delta - \alpha_0) $ and $\Mat(2, \bR)$. Let
$$\mathcal{M}_0 := \Span_{\bR}\{ \mathsf{v}_1 := (\begin{smallmatrix} 1 \\ 0  \end{smallmatrix}), \mathsf{v}_2 := (\begin{smallmatrix} 0 \\ 1  \end{smallmatrix}) \}$$
be the corresponding irreducible $\fqH(2\delta - \alpha_0)$-module.

Similarly, the Young diagrams $(3)$ and $(1,1,1)$ are all for contributing to $\dim \fqH(2\delta - \alpha_1)$ and we have
$$ \dim \fqH(2\delta - \alpha_1) = 2 .$$ It is straightforward to check that
\begin{align*}
e(\nu) \mapsto \left\{
                                  \begin{array}{ll}
                                    (\begin{smallmatrix} 1 & 0 \\ 0 & 1 \end{smallmatrix}) & \hbox{ if } \nu = (0,1,0), \\
                                    (\begin{smallmatrix} 0 & 0 \\ 0 & 0 \end{smallmatrix}) & \hbox{ otherwise, }
                                  \end{array}
                                \right.
\ x_k \mapsto \left\{
                                  \begin{array}{ll}
                                    (\begin{smallmatrix} 0 & 0 \\ 0 & 0 \end{smallmatrix}) & \hbox{ if } k = 1, \\
                                    (\begin{smallmatrix} 0 & 1 \\ 0 & 0 \end{smallmatrix}) & \hbox{ if } k = 2, \\
                                    (\begin{smallmatrix} 0 & - \lambda \\ 0 & 0 \end{smallmatrix}) & \hbox{ if } k = 3,
                                  \end{array}
                                \right.
\
\hspace{3mm}
\psi_1, \psi_2 \mapsto (\begin{smallmatrix} 0 & 0 \\ 0 & 0 \end{smallmatrix}),
\end{align*}
is a well-defined representation of $\fqH(2\delta - \alpha_1)$. Hence, we have an algebra isomorphism $\fqH(2\delta - \alpha_1) \simeq \bR[x]/(x^2)$.
Let
$$\widehat{\mathcal{M}}_1 := \Span_{\bR}\{ \mathsf{w}_1 := (\begin{smallmatrix} 1 \\ 0  \end{smallmatrix}), \mathsf{w}_2  := (\begin{smallmatrix} 0 \\ 1  \end{smallmatrix})   \}$$
be the corresponding $\fqH(2\delta - \alpha_1)$-module and let $\mathcal{M}_1$ be the irreducible quotient of $\widehat{\mathcal{M}}_1$.
The module $\widehat{\mathcal{M}}_1$ is uniserial of length $2$ and the composition factors are $\mathcal{M}_1$.
Thus, $\widehat{\mathcal{M}}_1$ is an indecomposable projective $\fqH(2\delta - \alpha_1)$-module. Further,
\begin{align} \label{Eq: M0 M1}
E_0 \mathcal{M}_0 = 0, \quad E_1 \widehat{\mathcal{M}}_1 = 0, \quad E_1 \mathcal{M}_0 \simeq  E_0 \widehat{\mathcal{M}}_1,
\end{align}
and $ E_1 \mathcal{M}_0 \simeq  E_0 \widehat{\mathcal{M}}_1$ is the regular representation of $\fqH(\delta) \simeq \bR[x]/(x^2)$ via $x\mapsto x_2$.

\subsection{Representations of $\fqH(2\delta)$ for $\ell = 1$} \label{Sec: repn of R(2delta) ell=1}

\begin{lemma} \label{Lem: repn of R(2delta)}
\begin{enumerate}
\item By declaring that $x_4$ and $\psi_3$ act as $0$, and $e(\nu)$, for $\nu \in I^4$, as
$$ e(\nu) \mathsf{v}_i = \left\{
                           \begin{array}{ll}
                             \mathsf{v}_i & \hbox{ if } \nu = (0110), \\
                             0 & \hbox{ otherwise},
                           \end{array}
                         \right.
 $$
the irreducible $\fqH(2\delta-\alpha_0)$-module $\mathcal{M}_0$ in Section \ref{Sec: repn of R(2delta-alpha_i) l=1} extends to an irreducible $\fqH(2\delta)$-module.
\item
By declaring that $\psi_3$ acts as $0$, and $x_4$, $e(\nu)$, for $\nu \in I^4$, act as
$$ x_4 \mathsf{w}_i = \left\{
                        \begin{array}{ll}
                          0 & \hbox{ if } i = 1, \\
                          (\lambda^2 - 1) \mathsf{w}_1 & \hbox{ if } i =2,
                        \end{array}
                      \right. \quad
 e(\nu) \mathsf{w}_i = \left\{
                           \begin{array}{ll}
                             \mathsf{w}_i & \hbox{ if } \nu = (0101), \\
                             0 & \hbox{ otherwise},
                           \end{array}
                         \right.
 $$
the $\fqH(2\delta-\alpha_1)$-module $\widehat{\mathcal{M}}_1$ in Section \ref{Sec: repn of R(2delta-alpha_i) l=1} extends to an $\fqH(2\delta)$-module.
\end{enumerate}
\end{lemma}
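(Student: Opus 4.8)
The plan is to verify the defining relations of $\fqH(2\delta)$ on the generators $\mathsf{v}_i$ (part (1)) and $\mathsf{w}_i$ (part (2)) of the extended modules directly, using the known module structures on $\mathcal{M}_0$ and $\widehat{\mathcal{M}}_1$ established in Section \ref{Sec: repn of R(2delta-alpha_i) l=1}. Since $\mathcal{M}_0$ and $\widehat{\mathcal{M}}_1$ are already modules over $\fqH(2\delta-\alpha_0)$ and $\fqH(2\delta-\alpha_1)$ respectively, and since the generators $e(\nu)$ with $\nu \in I^3$, $x_1,x_2,x_3$, $\psi_1,\psi_2$ already act, the only new data to check are the actions of $x_4$ and $\psi_3$, together with the idempotent relations $e(\nu)$ for $\nu \in I^4$ and the cyclotomic relation $x_1^{\langle h_{\nu_1},\Lambda_0\rangle}e(\nu) = 0$ (which is automatic here since $x_1$ acts as $0$ on both modules). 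The concatenation structure means that on $e((0110))$ we have $(\nu_3,\nu_4) = (1,0)$, and on $e((0101))$ we have $(\nu_3,\nu_4) = (0,1)$; in both cases $\nu_3 \ne \nu_4$, so several relations simplify.

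Concretely, I would organize the check relation by relation. First, the idempotent and commutation relations $e(\nu)e(\nu') = \delta_{\nu\nu'}e(\nu)$, $\sum e(\nu) = 1$, $x_ke(\nu) = e(\nu)x_k$, $x_kx_l = x_lx_k$ are immediate since each module is concentrated in a single weight space $e((0110))$ or $e((0101))$ and $x_4$ is scalar (zero, or $\lambda^2-1$ times a nilpotent) on that space. Next, $\psi_3 e(\nu) = e(s_3\nu)\psi_3$: since $\psi_3 = 0$, both sides vanish. The relation $\psi_3^2 e(\nu) = \mathcal{Q}_{\nu_3,\nu_4}(x_3,x_4)e(\nu)$: the left side is $0$, so I must verify $\mathcal{Q}_{\nu_3,\nu_4}(x_3,x_4)$ acts as $0$. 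Here $\mathcal{Q}_{1,0}(u,v) = \mathcal{Q}_{0,1}(v,u) = v^2 + \lambda uv + u^2$ when $\ell=1$, and one plugs in the known values of $x_3$ (which is $\begin{smallmatrix} 0 & -1 \\ 0 & 0\end{smallmatrix}$ on $\mathcal{M}_0$, $\begin{smallmatrix} 0 & -\lambda \\ 0 & 0\end{smallmatrix}$ on $\widehat{\mathcal{M}}_1$) and $x_4$ — this is precisely the computation that forces the coefficient $\lambda^2-1$ in part (2): one needs $x_3^2 + \lambda x_3 x_4 + x_4^2 = 0$, and since $x_3^2 = 0$ and $x_3 x_4$, $x_4^2$ land in appropriate nilpotent parts, matching the squared entries gives $x_4 = (\lambda^2-1)$ on the relevant basis vector for $\widehat{\mathcal{M}}_1$, while $x_4 = 0$ suffices for $\mathcal{M}_0$. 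Then the relation $(\psi_3 x_l - x_{s_3(l)}\psi_3)e(\nu) = 0$ (note $\nu_3 \ne \nu_4$ so no $-e(\nu)$ or $+e(\nu)$ term appears) holds trivially since $\psi_3 = 0$. Finally the braid-type relation $(\psi_3\psi_2\psi_3 - \psi_2\psi_3\psi_2)e(\nu)$: the left term vanishes, and since on our weight $\nu_2 = 1 \ne 0 = \nu_4$ in the first case and $\nu_2 = 1 \ne 1$... wait, $\nu = (0101)$ gives $\nu_2 = 1, \nu_4 = 1$, so here $\nu_2 = \nu_4$ and one must check $\frac{\mathcal{Q}_{1,1}(x_2,x_3) - \mathcal{Q}_{1,1}(x_4,x_3)}{x_2-x_4}e(\nu) = 0$; but $\mathcal{Q}_{1,1} = 0$ identically, so this is $0$. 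For $\mathcal{M}_0$, $\nu = (0110)$ gives $\nu_2 = 1 \ne 0 = \nu_4$, so the right side is $0$. I would also double-check the analogous relation $(\psi_2\psi_3\psi_2\cdots)$ and $\psi_2\psi_3 = \psi_3\psi_2$-type far-commutation, all trivial since $\psi_3 = 0$ and no generator with index differing by more than one interacts.

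I expect the one genuinely delicate point to be part (2): getting the exact scalar $x_4 = (\lambda^2-1)\mathsf{w}_1$ on $\mathsf{w}_2$ to be consistent with \emph{all} relations simultaneously — not just $\psi_3^2 e(\nu) = \mathcal{Q}_{\nu_3,\nu_4}(x_3,x_4)e(\nu)$ but also the commutation $x_3 x_4 = x_4 x_3$ and the requirement that $\widehat{\mathcal{M}}_1$ remain a module over the subalgebra generated by $x_1,x_2,x_3$ without the new $x_4$ disturbing anything. Since $x_2 = \begin{smallmatrix} 0 & 1 \\ 0 & 0\end{smallmatrix}$ and $x_3 = \begin{smallmatrix} 0 & -\lambda \\ 0 & 0\end{smallmatrix}$ are nilpotent of square zero and $x_4$ is likewise supported on the $\mathsf{w}_2 \mapsto \mathsf{w}_1$ arrow, all products among $x_2,x_3,x_4$ vanish, so commutativity is automatic; the real content is matching the constant term in $\mathcal{Q}_{1,0}(x_3,x_4) = x_4^2 + \lambda x_3 x_4 + x_3^2$, and since every quadratic monomial in these square-zero matrices is zero, the relation $\psi_3^2 = \mathcal{Q}$ reduces to $0 = 0$ regardless — meaning $x_4$ is actually constrained by a \emph{different} relation. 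I would therefore trace carefully which relation pins down $\lambda^2-1$; the likely culprit is a relation linking $x_4$ back through $\psi_2$ and $x_3, x_2$ via the $(\psi_2 x_l - x_{s_2(l)}\psi_2)$ relations combined with how $\psi_2$ acts nontrivially on $\widehat{\mathcal{M}}_1$. Once that dependency is isolated, the verification is routine $2\times 2$ matrix arithmetic, and I would present it compactly rather than exhaustively.

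Finally, for part (1), irreducibility of the extended $\fqH(2\delta)$-module is automatic because it was already irreducible as an $\fqH(2\delta-\alpha_0)$-module and restriction can only lose, not gain, submodules; so no separate argument is needed there beyond the relation check. I would close by remarking that $\widehat{\mathcal{M}}_1$ in part (2) is \emph{not} claimed irreducible — it is the length-two uniserial module — and that its extension will be used (presumably in the sequel) to build the relevant indecomposable $\fqH(2\delta)$-modules, so the lemma only asserts that the $\fqH(2\delta)$-action is well defined, which is exactly what the relation verification gives.
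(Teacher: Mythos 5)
Your checklist is the right framework, and the verification for part (1) is fine, but the argument for part (2) contains a genuine gap caused by misreading the braid relation. The defining relation
\[
(\psi_{3}\psi_{2}\psi_{3}-\psi_{2}\psi_{3}\psi_{2})e(\nu)=\frac{\mathcal{Q}_{\nu_2,\nu_3}(x_2,x_3)-\mathcal{Q}_{\nu_2,\nu_3}(x_4,x_3)}{x_2-x_4}\,e(\nu)\qquad(\nu_2=\nu_4)
\]
uses the polynomial indexed by the \emph{adjacent} pair $(\nu_2,\nu_3)$, not by $(\nu_2,\nu_4)$. You substituted $\mathcal{Q}_{\nu_2,\nu_4}=\mathcal{Q}_{1,1}=0$ and concluded the right-hand side vanishes identically; with the correct index one gets $\mathcal{Q}_{\nu_2,\nu_3}=\mathcal{Q}_{1,0}(u,v)=u^2+\lambda uv+v^2$, and the right-hand side simplifies to $(x_2+\lambda x_3+x_4)e(0101)$. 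This is precisely the relation that pins down $x_4\mathsf{w}_2=(\lambda^2-1)\mathsf{w}_1$: acting on $\mathsf{w}_2$ gives $\mathsf{w}_1-\lambda^2\mathsf{w}_1+(\lambda^2-1)\mathsf{w}_1=0$.

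Because of this misreading you correctly observed that $\psi_3^2=\mathcal{Q}_{\nu_3,\nu_4}(x_3,x_4)$ does not constrain $x_4$ (all quadratic monomials vanish on the strictly upper-triangular matrices), but were then left without the actual constraint and speculated that it must come from the $(\psi_2 x_l - x_{s_2(l)}\psi_2)$ relations. That guess cannot work either: $\psi_2$ acts as zero on $\widehat{\mathcal{M}}_1$, so those commutator relations hold trivially regardless of the value of $x_4$. The missing ingredient is exactly the braid relation you had dismissed. Once that is corrected, the rest of your reasoning — the triviality of the idempotent, commutation, and $\psi_3 x_l-x_{s_3(l)}\psi_3$ relations, and the automatic irreducibility of the extension in part (1) — agrees with the paper's proof, which checks the same three nontrivial relations ($\psi_3^2$, $\psi_3 x_k - x_{s_3(k)}\psi_3$, and $\psi_3\psi_2\psi_3-\psi_2\psi_3\psi_2$) by direct matrix computation.
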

\begin{proof}
(1) It is straightforward to check the defining relations except for $( \psi_3 x_k - x_{s_3(k)}\psi_3 )e(0110) $, $(\psi_3\psi_2\psi_3 - \psi_2\psi_3\psi_2) e(0110)$ and $\psi_3^2 e(0110)$.
For the remaining relations, we have
\begin{align*}
&( \psi_3 x_k - x_{s_3(k)}\psi_3 )e(0110) \mathsf{v}_i = 0, \\
& (\psi_3\psi_2\psi_3 - \psi_2\psi_3\psi_2) e(0110)  \mathsf{v}_i = 0, \\
& \psi_3^2 e(0110)  \mathsf{v}_i = 0 = (x_3^2+\lambda x_3 x_4 +x_4 ^2)e(0110) \mathsf{v}_i,\quad\text{for $i=1,2$},
\end{align*}
by direct computation, the action is well-defined.

(2) Similarly, it is easy to verify the defining relations except for
$( \psi_3 x_k - x_{s_3(k)}\psi_3 )e(0101) $, $(\psi_3\psi_2\psi_3 - \psi_2\psi_3\psi_2) e(0101)$ and $\psi_3^2 e(0101)$, and we have
\begin{align*}
&( \psi_3 x_k - x_{s_3(k)}\psi_3 )e(0101) \mathsf{w}_i = 0, \\
& (\psi_3\psi_2\psi_3 - \psi_2\psi_3\psi_2) e(0101)  \mathsf{w}_i = 0 = (x_2 + \lambda x_3 + x_4)e(0101) \mathsf{w}_i, \\
& \psi_3^2 e(0101)  \mathsf{w}_i = 0 = (x_3^2+\lambda x_3 x_4 +x_4 ^2)e(0101) \mathsf{w}_i.
\end{align*}
Hence the action is well-defined.
\end{proof}
We denote by $\mathcal{N}_0$ (resp.\ $\widehat{\mathcal{N}}_1$) the $\fqH(2\delta)$-module defined in Lemma \ref{Lem: repn of R(2delta)} (1) (resp.\ (2)),
and let $\mathcal{N}_1$ be the irreducible quotient of $\widehat{\mathcal{N}}_1$. Note that $\mathcal{M}_1$ extends to $\mathcal{N}_1$ by declaring
that $\psi_3$ and $x_4$ and $e(\nu)$, for $\nu\ne(0101)$, act as $0$ and $e(0101)$ acts as $1$. Hence, $\widehat{\mathcal{N}}_1$ is uniserial of length $2$
whose composition factors are $\mathcal{N}_1$.
By construction, $\mathcal{N}_0$ and $\mathcal{N}_1$ are irreducible and they are non-isomorphic
since
\begin{align} \label{Eq: N0 N1}
E_i(\mathcal{N}_j) \simeq \delta_{ij} \mathcal{M}_i.
\end{align}
 As the categorification theorem tells that the number of irreducible $\fqH(2\delta)$-modules is 2, we have the following lemma.

\begin{lemma} \label{Lem: irr of R(2delta)}
The modules $\mathcal{N}_0$ and $\mathcal{N}_1$ form a complete list of irreducible $\fqH(2\delta)$-modules.
\end{lemma}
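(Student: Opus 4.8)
The plan is to deduce the final lemma from the construction already in place together with the categorification theorem, exactly as the paragraph preceding it indicates. First I would recall that the finite quiver Hecke algebras $\fqH(\beta)$ categorify $V(\Lambda_0)$, and that the irreducible $\fqH(2\delta)$-modules are in bijection with the weight space $V(\Lambda_0)_{\Lambda_0-2\delta}$ of the basic representation of the affine Kac--Moody algebra of type $A_1^{(1)}$. So the key input is the combinatorial fact that $\dim V(\Lambda_0)_{\Lambda_0-2\delta}=2$ when $\ell=1$. This can be read off from the Fock space model of \S\ref{Sec: Fock space}: the Young diagrams $\lambda$ with $\wt(\lambda)=\Lambda_0-2\delta$, i.e.\ with $4$ boxes and residue content $\{0,0,1,1\}$ in the $\ell=1$ residue pattern, are exactly $(2,2)$ and $(3,1)$ (equivalently, one reads this off Theorem \ref{Thm: dimension formula} by noting these are the only partitions of $4$ of weight $\Lambda_0-2\delta$). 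Hence there are precisely two irreducible $\fqH(2\delta)$-modules up to isomorphism.

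Next I would invoke Lemma \ref{Lem: repn of R(2delta)} and the discussion following it, which produce two irreducible $\fqH(2\delta)$-modules $\mathcal{N}_0$ and $\mathcal{N}_1$ (the latter as the irreducible head of $\widehat{\mathcal{N}}_1$). It then remains only to check that $\mathcal{N}_0\not\simeq\mathcal{N}_1$, and this is immediate from \eqref{Eq: N0 N1}: applying the exact functor $E_0$ gives $E_0(\mathcal{N}_0)\simeq\mathcal{M}_0\neq0$ while $E_0(\mathcal{N}_1)\simeq 0$ (or symmetrically with $E_1$), so the two modules cannot be isomorphic. Combining the two facts — there are exactly two irreducibles, and $\mathcal{N}_0,\mathcal{N}_1$ are two non-isomorphic irreducibles — forces $\{\mathcal{N}_0,\mathcal{N}_1\}$ to be a complete irredundant list.

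Since every step here is either a direct appeal to the categorification theorem, a small combinatorial count already encapsulated in Theorem \ref{Thm: dimension formula}, or the already-verified non-isomorphism \eqref{Eq: N0 N1}, there is essentially no obstacle: the proof is a two-line bookkeeping argument. If anything deserves a word of care, it is making precise that ``the categorification theorem tells that the number of irreducible $\fqH(2\delta)$-modules is $2$'' — one should note that this number equals $\dim V(\Lambda_0)_{\Lambda_0-2\delta}$, which in the $\ell=1$ case one computes to be $2$ from the weight multiplicities of the level-one basic module (or from the two relevant partitions $(2,2),(3,1)$). With that pinned down, the lemma follows exactly as stated.
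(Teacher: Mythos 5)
Your overall plan coincides with the paper's: $\mathcal{N}_0$ and $\mathcal{N}_1$ are irreducible and non-isomorphic by \eqref{Eq: N0 N1}, and the categorification theorem supplies the count of exactly two simple $\fqH(2\delta)$-modules, so the list is complete. However, the elaboration you insert to justify the number $2$ is incorrect in two ways. First, when $\ell=1$ \emph{every} partition of $4$ — there are five: $(4), (3,1), (2,2), (2,1,1), (1,1,1,1)$ — has residue content $\{0,0,1,1\}$ and hence weight $\Lambda_0-2\delta$, so the assertion that $(2,2)$ and $(3,1)$ are the only such partitions is simply false. Second, Theorem \ref{Thm: dimension formula} computes $\dim\fqH(\beta)=\sum_{\lambda}|\ST(\lambda)|^2$, i.e.\ the dimension of the algebra, not the number of its irreducible modules; and the weight multiplicity you need is $\dim V(\Lambda_0)_{\Lambda_0-2\delta}$ in the \emph{irreducible} highest-weight module, not in the combinatorial Fock space $\mathcal{F}$ of \S\ref{Sec: Fock space}, which is strictly larger. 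The correct combinatorial count is via the crystal $B(\Lambda_0)$, which for type $A_1^{(1)}$ is realized by $2$-regular partitions; the $2$-regular partitions of $4$ are $(4)$ and $(3,1)$, giving multiplicity $2$. The paper simply quotes this count from the categorification theorem without re-deriving it, which is the safe route; the derivation you attempted would need to be corrected along the lines above before the proof stands.
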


We now construct some $\fqH(2\delta)$-modules which will be used for proving that $\fqH(2\delta)$ has tame representation type.

\begin{lemma} \label{Lem: T0}
If $\lambda = 0$, there exists a uniserial $\fqH(2\delta)$-module $\mathcal{T}_0$ whose radical series is
$$ \mathcal{T}_0 / \Rad (\mathcal{T}_0) \simeq \mathcal{N}_0, \quad
\Rad (\mathcal{T}_0) / \Rad^2 (\mathcal{T}_0 ) \simeq \mathcal{N}_1, \quad
\Rad^2 (\mathcal{T}_0) \simeq \mathcal{N}_1.  $$
\end{lemma}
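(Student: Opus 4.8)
The goal is to exhibit, when $\lambda=0$, a uniserial $\fqH(2\delta)$-module $\mathcal{T}_0$ of length $3$ with layers $\mathcal{N}_0$, $\mathcal{N}_1$, $\mathcal{N}_1$ from top to bottom. The natural strategy, consistent with the style used so far in this section, is to write down an explicit representation on a $7$-dimensional space: the $4$ dimensions coming from a copy of (the underlying space of) $\mathcal{N}_0$ and the $3$ dimensions needed for a self-extension of $\mathcal{N}_1$ by $\mathcal{N}_1$ glued under it, or more economically, to build $\mathcal{T}_0$ directly as a quotient of a projective cover. The cleanest route is the latter: first I would identify the underlying dimension of $\mathcal{T}_0$ by Theorem \ref{Thm: dimension formula} — since the relevant partitions of $4$ with weight $\Lambda_0-2\delta$ are $(2,2)$ and $(2,1,1)$ (and $(1^4)$ when $\ell=1$, etc.), one reads off $\dim\mathcal{N}_0$ and $\dim\mathcal{N}_1$ and hence $\dim\mathcal{T}_0 = \dim\mathcal{N}_0 + 2\dim\mathcal{N}_1$. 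Then I would pick a basis adapted to the radical filtration and specify the action of $e(\nu)$, $x_1,\dots,x_4$ and $\psi_1,\psi_2,\psi_3$ by matrices extending those already fixed for $\mathcal{N}_0$ and $\widehat{\mathcal{N}}_1$.

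Concretely, I would realize $\mathcal{T}_0$ on $\mathcal{M}_0 \oplus \mathsf{w}$-space $\oplus\, \mathsf{w}'$-space, where the bottom two layers form a module with the same combinatorics as $\widehat{\mathcal{N}}_1$ (the $(0101)$-isotypic part, with $x_4$ now acting as the nilpotent Jordan block since $\lambda^2-1 = -1 \ne 0$, so one gets a genuine length-$2$ uniserial piece) and the top layer is $\mathcal{N}_0$ (the $(0110)$-isotypic part). The idempotents $e(0110)$ and $e(0101)$ separate the top from the bottom two layers, so most defining relations are block-diagonal and reduce to the verifications already performed in Section \ref{Sec: repn of R(2delta-alpha_i) l=1} and Lemma \ref{Lem: repn of R(2delta)}. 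The genuinely new input is the choice of off-diagonal entries — nonzero components of $\psi_k$ or $x_k$ mapping the $(0110)$-part into the $(0101)$-part — realizing a nonsplit extension; these must be chosen so that $\psi_k e(\nu) = e(s_k\nu)\psi_k$ forces them to live between residue sequences differing by a single transposition, e.g. $s_2\cdot(0110) = (0101)$, so $\psi_2$ is the connecting map. One then checks $\psi_2^2 e = \mathcal{Q}_{1,1}(x_2,x_3)e = 0$, the $\psi x - x\psi$ relations, and the braid relation $\psi_3\psi_2\psi_3 - \psi_2\psi_3\psi_2$ on the relevant idempotent; with $\lambda=0$ the polynomial identity $\tfrac{\mathcal{Q}(u,v)-\mathcal{Q}(w,v)}{u-w} = u+w$ makes these tractable.

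After constructing the representation, I would verify it is uniserial with the stated radical series: the bottom length-$2$ piece is uniserial with both factors $\mathcal{N}_1$ by the remark preceding Lemma \ref{Lem: irr of R(2delta)}, and one shows $\mathcal{T}_0 / \Rad^2$ has top $\mathcal{N}_0$ and socle-quotient $\mathcal{N}_1$ by checking that the connecting $\psi_2$ is nonzero, so the extension of $\mathcal{N}_1$ by $\mathcal{N}_0$ does not split; combined with $\Rad^2(\mathcal{T}_0) = \mathcal{N}_1$ (which has no proper submodule), this gives uniseriality. The main obstacle I expect is the bookkeeping for the braid relation $\psi_3\psi_2\psi_3 = \psi_2\psi_3\psi_2$ on $e(0110)$ together with the $x$-relations once a nonzero $\psi_2$-component into the $(0101)$-block is present: the two blocks interact there, and the off-diagonal entries of $\psi_2$, $\psi_3$, $x_3$, $x_4$ must be tuned simultaneously — precisely the point where $\lambda=0$ is used, since for $\lambda\ne0$ no such self-extension-type gluing is available (and indeed $\fqH(2\delta)$ has a different structure there). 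Everything else is routine matrix computation of the same flavour as Lemma \ref{Lem: repn of R(2delta)}.
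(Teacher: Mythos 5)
Your overall strategy is right and matches the paper: realize $\mathcal{T}_0$ on the underlying vector space of $\mathcal{N}_0\oplus\widehat{\mathcal{N}}_1$, keep the block-diagonal action, and insert a single nonzero off-diagonal generator from the $(0110)$-block to the $(0101)$-block to produce the nonsplit extension. But there is a concrete error in identifying which generator can carry that off-diagonal component. You claim $s_2\cdot(0110)=(0101)$ and hence that $\psi_2$ is the connecting map. This is false: in $(0,1,1,0)$ the entries in positions $2$ and $3$ are equal, so $s_2\cdot(0110)=(0110)$, and the relation $\psi_2 e(0110)=e(s_2(0110))\psi_2=e(0110)\psi_2$ forces $\psi_2$ to preserve the $(0110)$-block. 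Any attempted off-diagonal $\psi_2$-entry would violate the idempotent relations, so your construction would stall at exactly this step. The correct transposition is $s_3$, since $s_3\cdot(0110)=(0101)$; the paper indeed sets $\psi_3\mathsf{v}_i=\mathsf{w}_i$ and then verifies $\psi_3^2 e(0110)=0$, the $\psi_3 x_k-x_{s_3(k)}\psi_3$ relations, and $\psi_3\psi_2\psi_3-\psi_2\psi_3\psi_2$ on the glued module, using $\lambda=0$ in an essential way (e.g.\ $\psi_3 x_3-x_4\psi_3$ applied to $\mathsf{v}_2$ produces $-\lambda^2\mathsf{w}_1$, which must vanish).

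A secondary slip is the dimension count: $\mathcal{N}_0$ is $2$-dimensional and $\mathcal{N}_1$ is $1$-dimensional, so $\mathcal{T}_0$ is $4$-dimensional, not $7$; correspondingly the bottom two layers are exactly the $2$-dimensional module $\widehat{\mathcal{N}}_1$ and you do not need a separate $3$-dimensional self-extension. Your argument for uniseriality (irreducibility of $\Rad^2(\mathcal{T}_0)\simeq\mathcal{N}_1$ plus nonvanishing of the connecting map to rule out a split top) is sound once the connecting generator is corrected to $\psi_3$, and it is essentially what the paper does by noting $\psi_3\mathsf{v}_2=\mathsf{w}_2\notin\Soc(\widehat{\mathcal{N}}_1)$.
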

\begin{proof}
We change the action of $\psi_3$ on
\begin{align} \label{Eq: T0}
\mathcal{T}_0 = \mathcal{N}_0 \oplus \widehat{\mathcal{N}}_1
\end{align}
to $\psi_3 \mathsf{v}_i = \mathsf{w}_i$, for $i=1,2$.
Then, $\lambda = 0 $ in mind, direct computation shows
\begin{align*}
& \psi_3^2 e(0110) \mathsf{v}_i = \psi_3 e(0101) \mathsf{w}_i = 0 =  (x_3^2 + \lambda x_3 x_4 + x_4^2)  e(0110) \mathsf{v}_i, \\
& (\psi_3\psi_2\psi_3 - \psi_2\psi_3\psi_2) e(0110) \mathsf{v}_1 = \psi_3\psi_2 \mathsf{w}_1 - \psi_2\psi_3 (-\mathsf{v}_2) = \psi_2 \mathsf{w}_2  =  0 , \\
& (\psi_3\psi_2\psi_3 - \psi_2\psi_3\psi_2) e(0110) \mathsf{v}_2 = \psi_3\psi_2 \mathsf{w}_2 =  0 , \\
& (\psi_3 x_3 - x_{4}\psi_3) e(0110) \mathsf{v}_1 = - x_{4} \mathsf{w}_1 = 0, \\
& (\psi_3 x_3 - x_{4}\psi_3) e(0110) \mathsf{v}_2 = \psi_3 (- \mathsf{v}_1) - x_{4} \mathsf{w}_2 = -\mathsf{w}_1 - (\lambda^2 - 1) \mathsf{w}_1= - \lambda^2  \mathsf{w}_1 = 0, \\
& (\psi_3 x_4 - x_{3}\psi_3) e(0110) \mathsf{v}_1 = -\, x_{3} \mathsf{w}_1 = 0, \\
& (\psi_3 x_4 - x_{3}\psi_3) e(0110) \mathsf{v}_2 = -\, x_{3} \mathsf{w}_2 = \lambda \mathsf{w}_1 = 0,
\end{align*}
which implies that $\mathcal{T}_0$ is well-defined.
As $\mathcal{N}_0$ is irreducible, $\Rad(\mathcal{T}_0) \subseteq  \widehat{\mathcal{N}}_1 $. Thus, we have either
$\Rad(\mathcal{T}_0)=\Soc(\widehat{\mathcal{N}}_1)$ or $\Rad(\mathcal{T}_0) = \widehat{\mathcal{N}}_1 $.
On the other hand, $\psi_3\mathsf{v}_2=\mathsf{w}_2 \not\in \Soc(\widehat{\mathcal{N}}_1)$ implies that
$\mathcal{T}_0/\Soc(\widehat{\mathcal{N}}_1)$ is not a semisimple module.
Thus, we conclude that $\Rad(\mathcal{T}_0) \simeq  \widehat{\mathcal{N}}_1 $ and $ \Rad^2(\mathcal{T}_0)  \simeq  \mathcal{N}_1$.
\end{proof}
\begin{Rmk}
If $\lambda \ne 0$, then $\mathcal{T}_0$ is not well-defined.
\end{Rmk}

\begin{lemma} \label{Lem: T1}
If $\lambda \ne 0$, there exists a uniserial $\fqH(2\delta)$-module $\mathcal{T}_1$ whose radical series is
$$ \mathcal{T}_1 / \Rad (\mathcal{T}_1) \simeq \mathcal{N}_0, \quad
\Rad (\mathcal{T}_1) / \Rad^2 (\mathcal{T}_1 ) \simeq \mathcal{N}_1, \quad
\Rad^2 (\mathcal{T}_1 ) \simeq \mathcal{N}_0.  $$
\end{lemma}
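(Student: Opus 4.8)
The strategy will be completely parallel to the construction of $\mathcal{T}_0$ in Lemma~\ref{Lem: T0}: I will build $\mathcal{T}_1$ on the underlying space $\mathcal{N}_0 \oplus \widehat{\mathcal{N}}_1$ by gluing the two modules together via a suitable action of $\psi_3$ that sends $\mathcal{N}_0$ into $\widehat{\mathcal{N}}_1$. The difference is that the gluing map must now be chosen so that the composition factor at the bottom is $\mathcal{N}_0$ rather than $\mathcal{N}_1$; concretely, I want $\psi_3$ to carry $\mathsf{v}_i$ to a vector in $\widehat{\mathcal{N}}_1$ that is \emph{not} in the socle, so that $\Rad(\mathcal{T}_1) \simeq \widehat{\mathcal{N}}_1$, but I also need a second, independent map pushing $\widehat{\mathcal{N}}_1$ back toward a copy of $\mathcal{N}_0$ at the bottom of the radical series. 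Since the radical series has length $3$ with top and bottom both $\mathcal{N}_0$, the natural guess is to also perturb the action of some $x_k$ (most likely $x_4$, which already acts nontrivially on $\widehat{\mathcal{N}}_1$ through the factor $\lambda^2-1$) so that applying $x_4$ to $\mathsf{w}_2$ produces something landing in a new $\mathcal{N}_0$-layer. I would write $\mathcal{T}_1 = \mathcal{N}_0 \oplus \widehat{\mathcal{N}}_1$ and set $\psi_3 \mathsf{v}_i = \mathsf{w}_i$ (or some scalar multiple $c\,\mathsf{w}_i$ with $c$ to be pinned down), keeping all other generators acting as in Lemma~\ref{Lem: repn of R(2delta)}, possibly with a $\lambda$-dependent correction.

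\textbf{Verification of relations.} The bulk of the work is the same finite check as in Lemma~\ref{Lem: T0}: all defining relations of $R^{\Lambda_0}(4)$ hold automatically on the summands $\mathcal{N}_0$ and $\widehat{\mathcal{N}}_1$ by Lemma~\ref{Lem: repn of R(2delta)}, so only the relations involving $\psi_3$ (and the perturbed $x_k$) mixing the two summands need checking, namely $\psi_3^2 e(0110)$, $(\psi_3 x_k - x_{s_3(k)}\psi_3)e(0110)$, and the braid relation $(\psi_3\psi_2\psi_3 - \psi_2\psi_3\psi_2)e(0110)$, together with the analogous identities on the $\mathsf{w}_i$ side and the cyclotomic relation $x_1^{\langle h_{\nu_1},\Lambda_0\rangle}e(\nu)=0$. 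Here $\lambda \ne 0$ is used crucially: the computations in Lemma~\ref{Lem: T0} that produced $0$ only because $\lambda = 0$ (for instance the $(\psi_3 x_3 - x_4\psi_3)$ line, which gave $-\lambda^2 \mathsf{w}_1$) will now instead produce nonzero terms, and the point of introducing the extra $\mathcal{N}_0$-layer is precisely to absorb those terms consistently. I expect the scalar $c$ in $\psi_3 \mathsf{v}_i = c\,\mathsf{w}_i$ and whatever correction I put on $x_4$ will be forced by demanding $\psi_3^2 = \mathcal{Q}_{0,1}(x_3,x_4) = x_3^2 + \lambda x_3 x_4 + x_4^2$ and the $\psi_3 x_k$ relations to hold simultaneously.

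\textbf{Identifying the radical series.} Once $\mathcal{T}_1$ is a well-defined module, the radical filtration is read off exactly as in Lemma~\ref{Lem: T0}. Since $\mathcal{N}_0$ is irreducible and sits as a submodule-free quotient, $\Rad(\mathcal{T}_1) \subseteq \widehat{\mathcal{N}}_1$; the vector $\psi_3\mathsf{v}_i$ not lying in $\Soc(\widehat{\mathcal{N}}_1)$ forces $\Rad(\mathcal{T}_1) = \widehat{\mathcal{N}}_1$, hence $\Rad(\mathcal{T}_1)/\Rad^2(\mathcal{T}_1) \simeq \mathcal{N}_1$ and $\Rad^2(\mathcal{T}_1) \simeq \mathcal{N}_1$ — but that would give the same shape as $\mathcal{T}_0$, so in fact the construction must be genuinely different: the socle layer of $\mathcal{T}_1$ must be $\mathcal{N}_0$, which means $\mathcal{T}_1$ cannot be an extension built on $\mathcal{N}_0 \oplus \widehat{\mathcal{N}}_1$ with $\psi_3$ alone. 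I therefore expect the correct underlying space to be $\widehat{\mathcal{N}}_1 \oplus \mathcal{M}_0$ — or rather a length-$4$-dimensional-type space whose layers are $\mathcal{N}_0, \mathcal{N}_1, \mathcal{N}_0$ — and the gluing data will involve both a map $\mathcal{N}_0 \to \widehat{\mathcal{N}}_1$ and, using $\lambda \ne 0$, the nonvanishing of $\lambda$ to realize a further extension of $\widehat{\mathcal{N}}_1$ by $\mathcal{N}_0$. The \textbf{main obstacle} is pinning down this correct ambient module and the precise gluing maps: I would determine them by computing $\mathrm{Ext}^1(\mathcal{N}_1,\mathcal{N}_0)$ via the functors $E_0, E_1$ and relation~\eqref{Eq: N0 N1}, find a nonsplit extension $0 \to \mathcal{N}_0 \to \mathcal{U} \to \mathcal{N}_1 \to 0$ (which should exist precisely when $\lambda \ne 0$, mirroring how $\mathcal{T}_0$ needed $\lambda = 0$), and then stack it on top of the quotient map $\mathcal{T}_1 \twoheadrightarrow \mathcal{N}_0$. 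Once the module is written down explicitly, checking the $\psi_3$-relations and reading off the three radical layers is routine, with $\lambda \ne 0$ entering exactly where $\lambda = 0$ entered in Lemma~\ref{Lem: T0}, but with the roles of the vanishing terms reversed.
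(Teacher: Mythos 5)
You correctly recognize midway through that a naive copy of the $\mathcal{T}_0$ construction on $\mathcal{N}_0 \oplus \widehat{\mathcal{N}}_1$ cannot work, because $\widehat{\mathcal{N}}_1$ has composition factors $\mathcal{N}_1,\mathcal{N}_1$ and can never supply an $\mathcal{N}_0$ socle layer; good. But from there the proposal never actually produces a module. You write that the ``main obstacle is pinning down this correct ambient module and the precise gluing maps,'' and then leave that obstacle unresolved — but that obstacle \emph{is} the proof. What the paper does, and what is genuinely needed, is to write down the $5$-dimensional space
$\mathcal{T}_1 = \mathcal{N}_0 \oplus \mathcal{N}_1 \oplus \mathcal{N}_0 = \Span_\bR\{\mathsf{v}_1,\mathsf{v}_2,\mathsf{w},\tilde{\mathsf v}_1,\tilde{\mathsf v}_2\}$
and impose the specific perturbations
$x_4\mathsf v_i=\tilde{\mathsf v}_i$, $\psi_3\mathsf v_1=0$, $\psi_3\mathsf v_2=-\lambda\mathsf w$, $\psi_3\mathsf w=\tilde{\mathsf v}_1$,
then check each defining relation by hand. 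Your proposal contains none of this; the closest you get is the incoherent guess ``$\widehat{\mathcal{N}}_1\oplus\mathcal{M}_0$'' (which doesn't even type-check — $\mathcal M_0$ is an $\fqH(2\delta-\alpha_0)$-module, not an $\fqH(2\delta)$-module) and a ``length-$4$-dimensional-type space,'' which conflicts with the fact that $\dim\mathcal{N}_0=2$ and $\dim\mathcal{N}_1=1$ forces a $5$-dimensional module.

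There is also a conceptual slip worth flagging. You conjecture that a nonsplit extension $0\to\mathcal N_0\to\mathcal U\to\mathcal N_1\to 0$ ``should exist precisely when $\lambda\neq 0$,'' mirroring the $\lambda=0$ constraint in Lemma~\ref{Lem: T0}. This is not the right dichotomy. In the paper's construction the $5$-dimensional module, and in particular its submodule $\Span\{\mathsf w,\tilde{\mathsf v}_1,\tilde{\mathsf v}_2\}$ realizing that extension, is well-defined for all $\lambda$; the hypothesis $\lambda\neq 0$ is used only to make $\psi_3\mathsf v_2=-\lambda\mathsf w$ nonzero, which is what makes the three layers hang together uniserially rather than letting the middle $\mathcal N_1$ layer split off (this is exactly the content of the Remark after the lemma: for $\lambda=0$ the same space yields $\Rad(\mathcal T_1)\simeq\mathcal N_0$). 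Moreover, even if you did compute $\Ext^1(\mathcal N_1,\mathcal N_0)$ and $\Ext^1(\mathcal N_0,\mathcal N_1)$ to be nonzero, that alone would not guarantee a three-step uniserial module — you would still face a gluing/obstruction question for the two extensions, which in effect brings you back to an explicit construction anyway. So the Ext-based route you sketch is both unfinished and, without a further step, insufficient.
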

\begin{proof}
We choose the basis $\{ \mathsf{v}_1, \mathsf{v}_2 \}$ of $\mathcal{N}_0$ as in the definition, and we denote the basis of
$\mathcal{N}_1$ by $\{ \mathsf{w}\} $. Let
$$ \mathcal{T}_1 = \mathcal{N}_0 \oplus \mathcal{N}_1 \oplus \mathcal{N}_0 . $$
To distinguish the third direct summand $\mathcal{N}_0$ of $\mathcal{T}_1$ from the first direct summand $\mathcal{N}_0$,
we rename the basis $\{ \mathsf{v}_1, \mathsf{v}_2 \}$ for the third direct summand $\mathcal{N}_0$ to $\{ \tilde{\mathsf{v}}_1, \tilde{\mathsf{v}}_2  \}$.
Thus, we have
\begin{align} \label{Eq: T1}
\mathcal{T}_1 = \Span_\bR\{ \mathsf{v}_1, \mathsf{v}_2, \mathsf{w} , \tilde{\mathsf{v}}_1, \tilde{\mathsf{v}}_2 \}.
\end{align}
We keep the action of $e(\nu)$, $x_k$ and $\psi_l$ on $\mathcal{T}_1$ unchanged except
\begin{align*}
x_4 \mathsf{v}_i = \tilde{\mathsf{v}}_i\ (i=1,2),\quad \psi_3 \mathsf{v}_1 = 0, \quad  \psi_3 \mathsf{v}_2 = - \lambda \mathsf{w}, \quad
\psi_3 \mathsf{w} = \tilde{\mathsf{v}}_1.
\end{align*}
To show that $\mathcal{T}_1$ is well-defined, it suffices to check $x_kx_4=x_4x_k$ and the defining relations for $\psi_3^2$, $\psi_3 x_4 - x_3 \psi_3$,
$\psi_3 x_3 - x_4 \psi_3$, $\psi_3 x_2 - x_2 \psi_3$ and $\psi_3\psi_2\psi_3 - \psi_2\psi_3\psi_2$ on $\{ \mathsf{v}_1, \mathsf{v}_2, \mathsf{w} \} $.
It is easy to check $x_kx_4=x_4x_k$. By direct computation, we obtain
\begin{align*}
&\psi_3^2 e(0110) \mathsf{v}_1 = 0 = \lambda x_3 \tilde{\mathsf{v}}_1 + x_4 \tilde{\mathsf{v}}_1 = (x_3^2 + \lambda x_3x_4 + x_4^2) \mathsf{v}_1, \\
&\psi_3^2 e(0110) \mathsf{v}_2 = \psi_3 (-\lambda \mathsf{w}) = -\lambda \tilde{\mathsf{v}}_1 = x_3(-\mathsf{v}_1) + \lambda x_3 \tilde{\mathsf{v}}_2 + x_4 \tilde{\mathsf{v}}_2
 = (x_3^2 + \lambda x_3x_4 + x_4^2) \mathsf{v}_2, \\
& \psi_3^2 e(0101) \mathsf{w} = \psi_3 \tilde{\mathsf{v}}_1 = 0 =  (x_3^2 + \lambda x_3x_4 + x_4^2) \mathsf{w}.
\end{align*}
Hence the relation for $\psi_3^2$ holds. The equations
\begin{align*}
& (\psi_3 x_4 - x_3 \psi_3) e(0110) \mathsf{v}_1 = \psi_3 \tilde{\mathsf{v}}_1 = 0, \\
& (\psi_3 x_3 - x_4 \psi_3) e(0110) \mathsf{v}_1 =  0, \\
& (\psi_3 x_4 - x_3 \psi_3) e(0110) \mathsf{v}_2 = \psi_3 \tilde{\mathsf{v}}_2 - x_3(-\lambda \mathsf{w}) = 0, \\
& (\psi_3 x_3 - x_4 \psi_3) e(0110) \mathsf{v}_2 = \psi_3 (- \mathsf{v}_1) - x_4 (-\lambda \mathsf{w}) = 0, \\
& (\psi_3 x_4 - x_3 \psi_3) e(0101) \mathsf{w} = -\, x_3 \tilde{\mathsf{v}}_1 = 0,  \\
& (\psi_3 x_3 - x_4 \psi_3) e(0101) \mathsf{w} = -\, x_4 \tilde{\mathsf{v}}_1 = 0,  \\
& (\psi_3 x_2 - x_2 \psi_3) e(0101) \mathsf{v}_1 = 0,    \\
& (\psi_3 x_2 - x_2 \psi_3) e(0101) \mathsf{v}_2 = \psi_3 \mathsf{v}_1 - x_2 (-\lambda\mathsf{w}) = 0,  \allowdisplaybreaks   \\
& (\psi_3 x_2 - x_2 \psi_3) e(0101) \mathsf{w} = 0,
\end{align*}
imply that the relations for $\psi_3 x_4 - x_3 \psi_3$, $\psi_3 x_3 - x_4 \psi_3$ and $\psi_3 x_2 - x_2 \psi_3$ hold, and
\begin{align*}
& (\psi_3\psi_2\psi_3 - \psi_2\psi_3\psi_2) e(0110) \mathsf{v}_1 = -\, \psi_2\psi_3 (- \mathsf{v}_2 ) = \psi_2 ( - \lambda \mathsf{w} ) = 0 , \\
& (\psi_3\psi_2\psi_3 - \psi_2\psi_3\psi_2) e(0110) \mathsf{v}_2 =  \psi_3\psi_2 (- \lambda \mathsf{w} ) =  0 , \\
& (\psi_3\psi_2\psi_3 - \psi_2\psi_3\psi_2) e(0101) \mathsf{w} =  \psi_3\psi_2 \tilde{\mathsf{v}}_1  = \psi_3 ( -\tilde{\mathsf{v}}_2 ) = 0
= ( x_2 + \lambda x_3 + x_4 ) e(0101) \mathsf{w}
\end{align*}
verify the relation for $\psi_3\psi_2\psi_3 - \psi_2\psi_3\psi_2$. Thus, $\mathcal{T}_1$ is well-defined.

We show that $\mathcal{T}_1$ is uniserial. It is clear that
$\Rad(\mathcal{T}_1) \subseteq \Span_{\bR}\{  \mathsf{w} , \tilde{\mathsf{v}}_1, \tilde{\mathsf{v}}_2 \}$. Then
$\psi_3\mathsf{w} = \tilde{\mathsf{v}}_1$ implies that $\Span_{\bR}\{  \mathsf{w} , \tilde{\mathsf{v}}_1, \tilde{\mathsf{v}}_2 \}$ is
not semisimple, and we have either
$$
\Rad(\mathcal{T}_1) = \Span_{\bR}\{  \mathsf{w} , \tilde{\mathsf{v}}_1, \tilde{\mathsf{v}}_2 \} \quad \text{or}\quad
\Rad(\mathcal{T}_1) = \Span_{\bR}\{  \tilde{\mathsf{v}}_1, \tilde{\mathsf{v}}_2 \}.
$$
Since $\mathcal{T}_1/\Span_{\bR}\{  \tilde{\mathsf{v}}_1, \tilde{\mathsf{v}}_2 \}$ is not semisimple by $\psi_3\mathsf{v}_2 = -\lambda \mathsf{w}$,
we have
$$ \Rad(\mathcal{T}_1) = \Span_{\bR}\{  \mathsf{w} , \tilde{\mathsf{v}}_1, \tilde{\mathsf{v}}_2 \}, \quad
\Rad^2(\mathcal{T}_1) = \Span_{\bR}\{  \tilde{\mathsf{v}}_1, \tilde{\mathsf{v}}_2 \}, $$
which completes the proof.
\end{proof}

\begin{Rmk}
If $\lambda = 0$, then $\Rad(\mathcal{T}_1) \simeq \mathcal{N}_0$ in the proof of Lemma \ref{Lem: T1}.
\end{Rmk}

\begin{lemma} \label{Lem: hat T1}
If $\lambda \ne 0$, there exists a uniserial $\fqH(2\delta)$-module $\widehat{\mathcal{T}}_1$ whose radical series is
$$
\widehat{\mathcal{T}}_1 / \Rad (\widehat{\mathcal{T}}_1) \simeq \mathcal{N}_1, \;\;
\Rad (\widehat{\mathcal{T}}_1) / \Rad^2 (\widehat{\mathcal{T}}_1 ) \simeq \mathcal{N}_0, \;\;
\Rad^2(\widehat{\mathcal{T}}_1) / \Rad^3 (\widehat{\mathcal{T}}_1 ) \simeq \mathcal{N}_1, \;\;
\Rad^4 (\widehat{\mathcal{T}}_1 ) \simeq \mathcal{N}_0.
$$
\end{lemma}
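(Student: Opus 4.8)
The plan is to realize $\widehat{\mathcal{T}}_1$ as the vector space $\mathcal{T}_1\oplus\bR\mathsf{u}$, where $\mathsf{u}$ lies in the $e(0101)$-weight space and spans a new copy of $\mathcal{N}_1$ sitting on top of $\mathcal{T}_1$. Concretely I would keep the $\fqH(2\delta)$-action on $\mathcal{T}_1$ from Lemma~\ref{Lem: T1} unchanged, put $e(0101)\mathsf{u}=\mathsf{u}$ and $e(\nu)\mathsf{u}=0$ for $\nu\ne(0101)$, and set
\begin{align*}
x_1\mathsf{u}=0,\quad x_2\mathsf{u}=\mathsf{w},\quad x_3\mathsf{u}=-\lambda\mathsf{w},\quad x_4\mathsf{u}=-\mathsf{w},\quad
\psi_1\mathsf{u}=\psi_2\mathsf{u}=0,\quad \psi_3\mathsf{u}=-\lambda\mathsf{v}_1+\tilde{\mathsf{v}}_2 .
\end{align*}
Then $\mathcal{T}_1$ is a submodule and $\widehat{\mathcal{T}}_1/\mathcal{T}_1\simeq\mathcal{N}_1$.

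First I would check the defining relations. Only the residue sequences $(0110)$ and $(0101)$ occur and the $\mathcal{T}_1$-action is untouched, so it suffices to verify the relations applied to $\mathsf{u}$; weight considerations already force $\psi_1\mathsf{u}=\psi_2\mathsf{u}=0$, and the cyclotomic relation forces $x_1\mathsf{u}=0$. The nontrivial checks, all on $e(0101)\mathsf{u}$, are the commutativity of the $x_k$, the relation $\psi_3^2e(0101)=\mathcal{Q}_{0,1}(x_3,x_4)e(0101)$, the relations $(\psi_3x_l-x_{s_3(l)}\psi_3)e(0101)=0$ for $l=1,2,3,4$, and the two braid relations for $k=1,2$; these are short computations using $\psi_3\mathsf{w}=\tilde{\mathsf{v}}_1$, $\psi_3\mathsf{v}_2=-\lambda\mathsf{w}$, $\psi_3\mathsf{v}_1=0$, $x_4\mathsf{v}_i=\tilde{\mathsf{v}}_i$, $\psi_2\mathsf{v}_1=-\mathsf{v}_2$ and $x_3\mathsf{v}_2=-\mathsf{v}_1$. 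The place where the coefficients get pinned down — and the main obstacle — is that the $k=1$ braid relation reads $(\psi_2\psi_1\psi_2-\psi_1\psi_2\psi_1)e(0101)=(x_1+\lambda x_2+x_3)e(0101)$, whose left side kills $\mathsf{u}$, so one is forced to take $x_3\mathsf{u}=-\lambda x_2\mathsf{u}$ with $x_2\mathsf{u}\ne0$; this nonzero value of $x_2\mathsf{u}$ then propagates into the relations $(\psi_3x_l-x_{s_3(l)}\psi_3)e(0101)\mathsf{u}=0$ and forces the stated value of $\psi_3\mathsf{u}$, after which the $k=2$ braid relation comes out consistent (both sides equal $-\lambda^2\mathsf{w}$).

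Finally I would prove uniseriality. Since $\lambda\ne0$, the vector $\mathsf{u}$ generates $\widehat{\mathcal{T}}_1$: from $\mathsf{u}$ one obtains $\mathsf{w}=x_2\mathsf{u}$, then $\tilde{\mathsf{v}}_1=\psi_3\mathsf{w}$, then $\lambda\mathsf{v}_2=\psi_2\psi_3\mathsf{u}$, then $\mathsf{v}_1=-x_3\mathsf{v}_2$ and $\tilde{\mathsf{v}}_2=x_4\mathsf{v}_2$. Hence for any maximal submodule $N$ the simple quotient $\widehat{\mathcal{T}}_1/N$ is generated by the image of $\mathsf{u}\in e(0101)\widehat{\mathcal{T}}_1$, so $\widehat{\mathcal{T}}_1/N\simeq\mathcal{N}_1$; moreover if $N\cap e(0101)\widehat{\mathcal{T}}_1$ contained a vector with nonzero $\mathsf{u}$-component it would contain $\mathsf{w}$, hence $\mathsf{u}$, hence all of $\widehat{\mathcal{T}}_1$, so in fact $N=\mathcal{T}_1$. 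Thus $\mathcal{T}_1$ is the unique maximal submodule, $\Rad(\widehat{\mathcal{T}}_1)=\mathcal{T}_1$, and prepending the head $\mathcal{N}_1$ to the radical series of $\mathcal{T}_1$ from Lemma~\ref{Lem: T1} yields the asserted composition series, with $\widehat{\mathcal{T}}_1$ uniserial.
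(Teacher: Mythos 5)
Your proposal is correct and uses the same module structure $\widehat{\mathcal{T}}_1=\mathcal{T}_1\oplus\bR\mathsf{u}$ with exactly the action chosen in the paper, and the relation-checking is the same in substance. The only small difference is the final step: the paper shows directly that $\widehat{\mathcal{T}}_1/\Rad(\mathcal{T}_1)$ is not semisimple (via $\psi_3\mathsf{u}\notin\Rad(\mathcal{T}_1)$), whereas you argue that $\mathsf{u}$ cyclically generates $\widehat{\mathcal{T}}_1$ with $e(0101)\mathsf{u}=\mathsf{u}$, forcing simple top; both routes give $\Rad(\widehat{\mathcal{T}}_1)=\mathcal{T}_1$ and then invoke Lemma~\ref{Lem: T1}.
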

\begin{proof}
Recall the $\fqH(2\delta)$-module $\mathcal{T}_1 = \Span_\bR\{ \mathsf{v}_1, \mathsf{v}_2, \mathsf{w} , \tilde{\mathsf{v}}_1, \tilde{\mathsf{v}}_2 \}$
given in $\eqref{Eq: T1}$. Let
\begin{align} \label{Eq: hat T1}
\widehat{\mathcal{T}}_1 = \mathcal{T}_1 \oplus \bR \mathsf{u},
\end{align}
where $\mathcal{T}_1$ is its submodule, and define
\begin{align*}
e(\nu) \mathsf{u} = \left\{
           \begin{array}{ll}
             \mathsf{u} & \hbox{ if } \nu = (0101), \\
             0 & \hbox{ otherwise,}
           \end{array}
         \right.
\quad x_k \mathsf{u} = \left\{
           \begin{array}{ll}
             0 & \hbox{ if } k = 1, \\
             \mathsf{w} & \hbox{ if } k = 2, \\
            -\lambda \mathsf{w} & \hbox{ if } k = 3,\\
            -\mathsf{w} & \hbox{ if } k = 4,
           \end{array}
         \right.
\quad \psi_l \mathsf{u} = \left\{
           \begin{array}{ll}
             0 & \hbox{ if } l = 1,2,  \\
             -\lambda \mathsf{v_1} + \tilde{\mathsf{v}}_2 & \hbox{ if } l = 3.
           \end{array}
         \right.
\end{align*}
We show that $\widehat{\mathcal{T}}_1$ is well-defined. It is straightforward to check that
$$ x_jx_ke(0101)u = 0 = x_kx_je(0101)u, $$
for $1\le j, k\le 4$. Then $(x_k^2 + \lambda x_kx_{k+1} + x_{k+1}^2) \mathsf{u} = 0 $, for $k=1,2,3$, and
\begin{align*}
& \psi_1^2 e(0101)\mathsf{u} = \psi_2^2 e(0101)\mathsf{u} = 0, \quad  \psi_3^2 e(0101)\mathsf{u} = \psi_3 ( -\lambda \mathsf{v_1} + \tilde{\mathsf{v}}_2) = 0
\end{align*}
imply that the relation for $\psi_k^2$ holds.
Next, using $\psi_1\mathsf{w}=0$ and $\psi_2\mathsf{w}=0$, we have
$$ (\psi_k x_l - x_{s_k(l)} \psi_k)e(0101)\mathsf{u} = 0 ,$$
for $k=1,2$. When $k=3$, we have
\begin{align*}
& (\psi_3 x_{4} - x_{3} \psi_3)e(0101)\mathsf{u} =  \psi_3 (-\mathsf{w}) - x_{3} (-\lambda \mathsf{v_1} + \tilde{\mathsf{v}}_2)
= - \tilde{\mathsf{v}}_1 - (-\tilde{\mathsf{v}}_1) = 0, \\
& (\psi_3 x_{3} - x_{4} \psi_3)e(0101)\mathsf{u} =  \psi_3 (-\lambda\mathsf{w}) - x_{4} (-\lambda \mathsf{v_1} + \tilde{\mathsf{v}}_2)
= - \lambda \tilde{\mathsf{v}}_1 - (-\lambda \tilde{\mathsf{v}}_1) = 0, \\
& (\psi_3 x_{2} - x_{2} \psi_3)e(0101)\mathsf{u} =  \psi_3 \mathsf{w} - x_{2} (-\lambda \mathsf{v_1} + \tilde{\mathsf{v}}_2)
= \tilde{\mathsf{v}}_1 - \tilde{\mathsf{v}}_1 = 0.
\end{align*}
Thus the relation for $\psi_k x_{l} - x_{s_k(l)} \psi_k$ holds. Finally, we compute
\begin{align*}
& (x_1 + \lambda x_2 + x_3) e(0101)\mathsf{u} = 0 , \quad  (x_2 + \lambda x_3 + x_4) e(0101)\mathsf{u} = -\lambda^2 \mathsf{w}, \\
& (\psi_{2}\psi_1\psi_{2} - \psi_{1}\psi_{2}\psi_{1}) e(0101) \mathsf{u} = 0, \\
& (\psi_{3}\psi_2\psi_{3} - \psi_{2}\psi_{3}\psi_{2}) e(0101) \mathsf{u} = \psi_{3}\psi_2 (-\lambda \mathsf{v_1} + \tilde{\mathsf{v}}_2 ) = \psi_3 (\lambda \mathsf{v}_2) = -\lambda^2 \mathsf{w}.
\end{align*}
They show that the relation for $\psi_{k+1}\psi_k\psi_{k+1} - \psi_{k}\psi_{k+1}\psi_{k}$ holds, and $\widehat{\mathcal{T}}_1$ is well-defined.

We consider the radical series.
For any $\mathsf{y} = \mathsf{t} + \mathsf{u}$ with $\mathsf{t}\in \mathcal{T}_1$, we have
$$
\psi_3\mathsf{u} \not\in \Span_{\bR}\{ \mathsf{w} , \tilde{\mathsf{v}}_1, \tilde{\mathsf{v}}_2 \} = \Rad(\mathcal{T}_1),
$$
which implies that $\widehat{\mathcal{T}}_1/\Rad(\mathcal{T}_1)$ is not semisimple. Hence $\Rad(\widehat{\mathcal{T}}_1) = \mathcal{T}_1 $
and the assertion follows from Lemma \ref{Lem: T1}.
\end{proof}

\subsection{Representations of $\fqH(2\delta+\alpha_0)$ for $\ell = 1$}
We extend the $\fqH(2\delta)$-module $\widehat{\mathcal{T}}_1$ described in the proof of Lemma \ref{Lem: hat T1} to
$\fqH(2\delta+\alpha_0)$. Note that $\widehat{\mathcal{T}}_1$ is well-defined regardless of the choice of $\lambda$.
We write
$$\widehat{\mathcal{T}}_1 = \Span_\bR\{ \mathsf{v}_1, \mathsf{v}_2, \mathsf{w} , \tilde{\mathsf{v}}_1, \tilde{\mathsf{v}}_2, \mathsf{u}\}$$
as $\eqref{Eq: hat T1}$ in the proof of Lemma \ref{Lem: hat T1}.
Let us declare that $x_5$, $\psi_4$ and $ e(\nu)$, for $\nu \in I^{2\delta + \alpha_0}$, act as follows:
\begin{align*}
x_5 \mathsf{v}_k &= - \tilde{ \mathsf{v}}_k, \qquad x_5 \tilde{\mathsf{v}}_k = 0, \qquad \  \ \ x_5 \mathsf{w} = 0,  \qquad x_5 \mathsf{u} = \lambda \mathsf{w},\\
\psi_4 \mathsf{v}_k &= 0, \qquad \  \ \  \psi_4 \tilde{\mathsf{v}}_k = - \mathsf{v}_k, \qquad \psi_4 \mathsf{w} = 0, \qquad \psi_4 \mathsf{u} = 0,\\
e(\nu) \mathsf{v}_k &= \left\{
                        \begin{array}{ll}
                          \mathsf{v}_k & \hbox{ if } \nu = (01100), \\
                          0 & \hbox{ otherwise,}
                        \end{array}
                      \right.
\quad
e(\nu)  \tilde{\mathsf{v}}_k = \left\{
                        \begin{array}{ll}
                           \tilde{\mathsf{v}}_k & \hbox{ if } \nu = (01100), \\
                          0 & \hbox{ otherwise,}
                        \end{array}
                      \right. \\
e(\nu) \mathsf{w} &= \left\{
                        \begin{array}{ll}
                          \mathsf{w} & \hbox{ if } \nu = (01010), \\
                          0 & \hbox{ otherwise,}
                        \end{array}
                      \right.
\quad
e(\nu)  \mathsf{u} = \left\{
                        \begin{array}{ll}
                           \mathsf{u} & \hbox{ if } \nu = (01010), \\
                          0 & \hbox{ otherwise.}
                        \end{array}
                      \right.
\end{align*}
Thus, $x_1=\psi_1=0$, $e(01100)+e(01010)=1$, where
$$
e(01100)=
\begin{pmatrix}
1 & 0 & 0 & 0 & 0 & 0 \\
0 & 1 & 0 & 0 & 0 & 0 \\
0 & 0 & 0 & 0 & 0 & 0 \\
0 & 0 & 0 & 1 & 0 & 0 \\
0 & 0 & 0 & 0 & 1 & 0 \\
0 & 0 & 0 & 0 & 0 & 0
\end{pmatrix},
$$
and the other generators in matrix form are,
$$
x_2=
\begin{pmatrix}
0 & 1 & 0 & 0 & 0 & 0 \\
0 & 0 & 0 & 0 & 0 & 0 \\
0 & 0 & 0 & 0 & 0 & 1 \\
0 & 0 & 0 & 0 & 1 & 0 \\
0 & 0 & 0 & 0 & 0 & 0 \\
0 & 0 & 0 & 0 & 0 & 0
\end{pmatrix}, \quad
x_3=
\begin{pmatrix}
0 & -1 & 0 & 0 & 0 & 0 \\
0 & 0 & 0 & 0 & 0 & 0 \\
0 & 0 & 0 & 0 & 0 & -\lambda \\
0 & 0 & 0 & 0 & -1 & 0 \\
0 & 0 & 0 & 0 & 0 & 0 \\
0 & 0 & 0 & 0 & 0 & 0
\end{pmatrix}
$$
\begin{gather*}
x_4=
\begin{pmatrix}
0 & 0 & 0 & 0 & 0 & 0 \\
0 & 0 & 0 & 0 & 0 & 0 \\
0 & 0 & 0 & 0 & 0 &-1 \\
1 & 0 & 0 & 0 & 0 & 0 \\
0 & 1 & 0 & 0 & 0 & 0 \\
0 & 0 & 0 & 0 & 0 & 0
\end{pmatrix}, \quad
x_5=
\begin{pmatrix}
0 & 0 & 0 & 0 & 0 & 0 \\
0 & 0 & 0 & 0 & 0 & 0 \\
0 & 0 & 0 & 0 & 0 & \lambda \\
-1& 0 & 0 & 0 & 0 & 0 \\
0 & -1& 0 & 0 & 0 & 0 \\
0 & 0 & 0 & 0 & 0 & 0
\end{pmatrix}, \\[5pt]
\psi_2=
\begin{pmatrix}
0 & 0 & 0 & 0 & 0 & 0 \\
-1& 0 & 0 & 0 & 0 & 0 \\
0 & 0 & 0 & 0 & 0 & 0 \\
0 & 0 & 0 & 0 & 0 & 0 \\
0 & 0 & 0 &-1 & 0 & 0 \\
0 & 0 & 0 & 0 & 0 & 0
\end{pmatrix}, \;
\psi_3=
\begin{pmatrix}
0 & 0 & 0 & 0 & 0 &-\lambda \\
0 & 0 & 0 & 0 & 0 & 0 \\
0 &-\lambda&0 & 0 & 0 & 0 \\
0 & 0 & 1 & 0 & 0 & 0 \\
0 & 0 & 0 & 0 & 0 & 1 \\
0 & 0 & 0 & 0 & 0 & 0
\end{pmatrix}, \;
\psi_4=
\begin{pmatrix}
0 & 0 & 0 &-1 & 0 & 0 \\
0 & 0 & 0 & 0 &-1 & 0 \\
0 & 0 & 0 & 0 & 0 & 0 \\
0 & 0 & 0 & 0 & 0 & 0 \\
0 & 0 & 0 & 0 & 0 & 0 \\
0 & 0 & 0 & 0 & 0 & 0
\end{pmatrix}.
\end{gather*}

We check that the action is well-defined. It is easy to verify the defining relations except for $x_kx_5 - x_5x_k$, $\psi_4^2$, $\psi_k x_l - x_{s_k(l)}\psi_k$
and $\psi_4\psi_3\psi_4 - \psi_3\psi_4\psi_3$.

Direct computation shows that we have, for $1 \le k \le 4$ and $1 \le i \le 2$, $x_k x_5 \tilde{\mathsf{v}}_i = x_5 x_k \tilde{\mathsf{v}}_i = 0$,
$x_k x_5 \mathsf{w} = x_5 x_k \mathsf{w} = 0$, $x_k x_5 \mathsf{u} = x_5 x_k \mathsf{u} = 0$ and
$$ x_k x_5 \mathsf{v}_i = x_5 x_k \mathsf{v}_i = \left\{
                                                   \begin{array}{ll}
                                                     -\tilde{\mathsf{v}}_1 & \hbox{ if } i=2, k=2, \\
                                                     \tilde{\mathsf{v}}_1  & \hbox{ if } i=2, k=3, \\
                                                     0 & \hbox{ otherwise, }
                                                   \end{array}
                                                 \right.
 $$
which means that the relation for $x_kx_5 - x_5x_k$ holds. On the other hand,
$$
\psi_4^2 e(01100) \mathsf{v}_i = 0, \quad \psi_4^2 e(01100) \tilde{\mathsf{v}}_i = \psi_4 ( - \mathsf{v}_i) =0,
$$
for $1\le i\le 2$, and
\begin{align*}
\psi_4^2 e(01010) \mathsf{w} &= 0 = (x_4^2 + \lambda x_4 x_5 + x_5^2) \mathsf{w},  \quad
\psi_4^2 e(01010) \mathsf{u} = 0 = (x_4^2 + \lambda x_4 x_5 + x_5^2) \mathsf{u},
\end{align*}
verify the relation for $\psi_4^2$. Direct computation shows
\begin{align*}
(\psi_4x_5 - x_4 \psi_4) e(01100) \mathsf{v}_i &= \psi_4 (- \tilde{\mathsf{v}}_i) = \mathsf{v}_i, \quad \
(\psi_4x_4 - x_5 \psi_4) e(01100) \mathsf{v}_i = \psi_4 (\tilde{\mathsf{v}}_i) = -\mathsf{v}_i, \\
(\psi_4x_5 - x_4 \psi_4) e(01100) \tilde{\mathsf{v}}_i &= -x_4 (- \mathsf{v}_i) = \tilde{\mathsf{v}}_i, \ \
(\psi_4x_4 - x_5 \psi_4) e(01100) \tilde{\mathsf{v}}_i = -x_5 ( -\mathsf{v}_i) = -\tilde{\mathsf{v}}_i, \\
(\psi_4x_5 - x_4 \psi_4) e(01010) \mathsf{w} &= 0,\qquad \qquad \qquad \ \    (\psi_4x_4 - x_5 \psi_4) e(01010) \mathsf{w} = 0, \\
(\psi_4x_5 - x_4 \psi_4) e(01010) \mathsf{u} &= \psi_4 (\lambda \mathsf{w}) = 0, \qquad   \  (\psi_4x_4 - x_5 \psi_4) e(01010) \mathsf{u} = \psi_4 (- \mathsf{w}) = 0,
\end{align*}
and
\begin{align*}
(\psi_3x_5 - x_5 \psi_3) e(01100) \mathsf{v}_1 &= \psi_3 (- \tilde{\mathsf{v}}_1) = 0, \\
(\psi_3x_5 - x_5 \psi_3) e(01100) \mathsf{v}_2 &= \psi_3 (- \tilde{\mathsf{v}}_2) - x_5(-\lambda \mathsf{w}) = 0, \\
(\psi_3x_5 - x_5 \psi_3) e(01100) \tilde{\mathsf{v}}_i &= 0, \\
(\psi_3x_5 - x_5 \psi_3) e(01010) \mathsf{w} &= - x_5 \tilde{\mathsf{v}}_1 = 0, \\
(\psi_3x_5 - x_5 \psi_3) e(01010) \mathsf{u} &= \psi_3 (\lambda \mathsf{w}) - x_5 (-\lambda \mathsf{v}_1 + \tilde{\mathsf{v}}_2 ) = \lambda \tilde{\mathsf{v}}_1 - \lambda \tilde{\mathsf{v}}_1=0.
\end{align*}
It is straightforward to verify the remaining $\psi_k x_l - x_{s_k(l)}\psi_k$. Lastly, we have
\begin{align*}
(\psi_4 \psi_3 \psi_4 - \psi_3 \psi_4 \psi_3) e(01100) \mathsf{v}_1 &= 0, \\
(\psi_4 \psi_3 \psi_4 - \psi_3 \psi_4 \psi_3) e(01100) \mathsf{v}_2 &= - \psi_3 \psi_4 (-\lambda \mathsf{w}) = 0, \\
(\psi_4 \psi_3 \psi_4 - \psi_3 \psi_4 \psi_3) e(01100) \tilde{\mathsf{v}}_1 &= \psi_4 \psi_3 (- \mathsf{v}_1) = 0, \\
(\psi_4 \psi_3 \psi_4 - \psi_3 \psi_4 \psi_3) e(01100) \tilde{\mathsf{v}}_2 &= \psi_4 \psi_3 (- \mathsf{v}_2) = \psi_4(\lambda \mathsf{w})=0, \\
(\psi_4 \psi_3 \psi_4 - \psi_3 \psi_4 \psi_3) e(01010) \mathsf{w} &= -\psi_3 \psi_4 ( \tilde{\mathsf{v}}_1) = -\psi_3(- \mathsf{v}_1)=0
= (x_3 + \lambda x_4 + x_5) \mathsf{w}, \\
(\psi_4 \psi_3 \psi_4 - \psi_3 \psi_4 \psi_3) e(01010) \mathsf{u} &= -\psi_3 \psi_4 ( - \lambda \mathsf{v}_1 + \tilde{\mathsf{v}}_2) = -\psi_3(- \mathsf{v}_2)=- \lambda \mathsf{w}
= (x_3 + \lambda x_4 + x_5) \mathsf{u}.
\end{align*}
Thus, the relation for $\psi_4\psi_3\psi_4 - \psi_3\psi_4\psi_3$ holds. Therefore, it is well-defined as an $\fqH(2\delta+\alpha_0)$-module. We denote this $\fqH(2\delta+\alpha_0)$-module by $\mathcal{V}$.

Let
$$ \mathcal{O}_0 = \left\{
                     \begin{array}{ll}
                       \Span_\bR\{ \mathsf{v}_1, \mathsf{v}_2, \tilde{\mathsf{v}}_1, \tilde{\mathsf{v}}_2  \} & \hbox{ if } \lambda=0, \\
                       \Span_\bR\{ \mathsf{v}_1, \mathsf{v}_2, \mathsf{w}, \tilde{\mathsf{v}}_1, \tilde{\mathsf{v}}_2  \} & \hbox{ if } \lambda \ne 0.
                     \end{array}
                   \right.
 $$

 It is easy to check that $\mathcal{O}_0$ is a submodule of $\mathcal{V}$. If $\lambda\ne0$ then $\mathcal{O}_0$ viewed as an $\fqH(2\delta)$-module is $\mathcal{T}_1$ and it has the simple socle
$\Span \{\mathsf{v}_1, \mathsf{v}_2 \}$. We know that the same is true for $\lambda=0$ by considering the action of $\psi_4$.
Thus, any $\fqH(2\delta+\alpha_0)$-submodule of $\mathcal{O}_0$ contains $\Span \{\mathsf{v}_1, \mathsf{v}_2\}$,
which shows that $\mathcal{O}_0$ is irreducible. Observe that
$$\mathcal{U} = \Span_{\bR}\{ \mathsf{v}_1, \mathsf{v}_2, \mathsf{w}, \tilde{\mathsf{v}}_1, \tilde{\mathsf{v}}_2 \} $$
is a submodule of $\mathcal{V}$. Let
$$ \mathcal{O}_1 = \mathcal{V} / \mathcal{U}. $$
Note that the module $\mathcal{O}_1$ is a 1-dimensional module on which $x_1,\dots,x_5$ and $\psi_1, \psi_2, \psi_3, \psi_4$ act as 0. From the categorification theorem, we know that
the number of irreducible $\fqH(2\delta + \alpha_0)$-modules is 2. Combining $\eqref{Eq: N0 N1}$ with
$$ \varepsilon_0(\mathcal{O}_0) = 2, \quad \varepsilon_0(\mathcal{O}_1) = 1, $$
we have the following lemma.

\begin{lemma} \label{Lem: irr of R(2delta+alpha0)}
The module $\mathcal{O}_0$ and $\mathcal{O}_1$ form a complete list of irreducible $\fqH(2\delta + \alpha_0)$-modules. Moreover,
$\mathcal{O}_0 = \tilde{f}_0 \mathcal{N}_0$ and $\mathcal{O}_1 = \tilde{f}_0 \mathcal{N}_1$.
\end{lemma}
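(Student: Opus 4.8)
The plan is to combine the explicit realizations constructed above with the crystal-theoretic classification of simple modules over finite quiver Hecke algebras. For the first assertion, I would argue as follows. The module $\mathcal{O}_0$ was already shown to be irreducible in the discussion preceding the lemma (its $\fqH(2\delta)$-socle is $\Span_{\bR}\{\mathsf{v}_1,\mathsf{v}_2\}$, so every nonzero $\fqH(2\delta+\alpha_0)$-submodule contains it), and $\mathcal{O}_1$ is irreducible because it is one-dimensional. They are non-isomorphic since $\varepsilon_0(\mathcal{O}_0)=2\ne 1=\varepsilon_0(\mathcal{O}_1)$. Since the categorification theorem says $\fqH(2\delta+\alpha_0)$ has exactly two irreducible modules, $\{\mathcal{O}_0,\mathcal{O}_1\}$ is therefore a complete irredundant list.

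For the ``moreover'' part I would first pin down the relevant $\varepsilon_0$-values. From $\eqref{Eq: N0 N1}$, $E_0\mathcal{N}_1=0$, so $\varepsilon_0(\mathcal{N}_1)=0$; and $E_0\mathcal{N}_0\simeq\mathcal{M}_0$ is irreducible, so $\tilde{e}_0\mathcal{N}_0\simeq\mathcal{M}_0$, while $E_0\mathcal{M}_0=0$ by $\eqref{Eq: M0 M1}$ gives $\varepsilon_0(\mathcal{M}_0)=0$, whence $\varepsilon_0(\mathcal{N}_0)=\varepsilon_0(\mathcal{M}_0)+1=1$. Now, since $\varepsilon_0(\mathcal{O}_j)\ge 1$ we have $\tilde{e}_0\mathcal{O}_j\ne 0$, hence $\mathcal{O}_j=\tilde{f}_0(\tilde{e}_0\mathcal{O}_j)$ with $\tilde{e}_0\mathcal{O}_j$ a simple $\fqH(2\delta)$-module, i.e. isomorphic to $\mathcal{N}_0$ or $\mathcal{N}_1$. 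Comparing invariants, $\varepsilon_0(\tilde{e}_0\mathcal{O}_0)=\varepsilon_0(\mathcal{O}_0)-1=1=\varepsilon_0(\mathcal{N}_0)$ forces $\tilde{e}_0\mathcal{O}_0\simeq\mathcal{N}_0$, and $\varepsilon_0(\tilde{e}_0\mathcal{O}_1)=0=\varepsilon_0(\mathcal{N}_1)$ forces $\tilde{e}_0\mathcal{O}_1\simeq\mathcal{N}_1$; applying $\tilde{f}_0$ yields $\mathcal{O}_0=\tilde{f}_0\mathcal{N}_0$ and $\mathcal{O}_1=\tilde{f}_0\mathcal{N}_1$.

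The conceptual content here is light, so the main (essentially routine) point to take care of is the bookkeeping behind $\varepsilon_0(\mathcal{O}_0)=2$ and $\varepsilon_0(\mathcal{O}_1)=1$: one reads these off the explicit modules, noting that the only idempotents acting nontrivially are $e(01100)$ and $e(01010)$ on $\mathcal{O}_0$ and only $e(01010)$ on $\mathcal{O}_1$, so that the longest trailing run of $0$'s in a surviving residue sequence is $2$, respectively $1$. I would make sure this is stated uniformly in $\lambda$, since $\mathcal{O}_0$ has a different underlying space for $\lambda=0$ and $\lambda\ne 0$ (only $e(01100)$ survives when $\lambda=0$, which gives the same value $\varepsilon_0(\mathcal{O}_0)=2$). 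A slightly more hands-on alternative for the last step is to compute $E_0\mathcal{O}_0$ and $E_0\mathcal{O}_1$ as $\fqH(2\delta)$-modules and identify their cosocles directly --- indeed $E_0\mathcal{O}_1\simeq\mathcal{N}_1$ --- but the crystal argument above is shorter.
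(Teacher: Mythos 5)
Your proposal is correct and follows essentially the same route as the paper: irreducibility and non-isomorphism of $\mathcal{O}_0,\mathcal{O}_1$ plus the categorification count give completeness, and the $\varepsilon_0$-values $2$ and $1$ identify the crystal vertices. You merely spell out the crystal bookkeeping (computing $\varepsilon_0(\mathcal{N}_0)=1$, $\varepsilon_0(\mathcal{N}_1)=0$ from $\eqref{Eq: N0 N1}$ and $\eqref{Eq: M0 M1}$, then matching $\tilde{e}_0\mathcal{O}_j$) that the paper leaves implicit in its one-line statement ``Combining $\eqref{Eq: N0 N1}$ with $\varepsilon_0(\mathcal{O}_0)=2$, $\varepsilon_0(\mathcal{O}_1)=1$, we have the following lemma.''
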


The $\fqH(2\delta)$-module $ \widehat{\mathcal{N}}_1 = \Span_\bR\{ \mathsf{w}_1, \mathsf{w}_2 \}$ in Section \ref{Sec: repn of R(2delta) ell=1}
can also be extended to an $\fqH(2\delta+\alpha_0)$-module. Indeed,
by declaring that $\psi_4$ acts as 0, and $x_5$, $e(\nu)$ act as
$$ x_5 \mathsf{w}_i = \left\{
                        \begin{array}{ll}
                          0 & \hbox{ if } i=1, \\
                          (2\lambda - \lambda^3) \mathsf{w}_1 & \hbox{ if } i=2,
                        \end{array}
                      \right. \quad
e(\nu) \mathsf{w}_i = \left\{
                        \begin{array}{ll}
                          \mathsf{w}_i & \hbox{ if } \nu = (01010), \\
                          0 & \hbox{ otherwise},
                        \end{array}
                      \right.
  $$
we have a well-defined action. We denote this module by $\widehat{\mathcal{O}}_1$. By construction,
$\widehat{\mathcal{O}}_1$ is uniserial of length 2 whose composition factors are $\mathcal{O}_1$.

On the other hand, when $\lambda = 0$, using $\psi_3 \mathsf{w} = \tilde{\mathsf{v}}_1$, we know that the exact sequence
$$ 0  \longrightarrow \mathcal{O}_0 \longrightarrow \mathcal{U} \longrightarrow \mathcal{O}_1 \longrightarrow 0  $$
is non-split. When $\lambda \ne 0$, we use $ x_5 \mathsf{u} = \lambda \mathsf{w} $ to show that the exact sequence
$$ 0  \longrightarrow \mathcal{O}_0 \longrightarrow \mathcal{V} \longrightarrow \mathcal{O}_1 \longrightarrow 0  $$
does not split. Thus, we have the following lemma.

\begin{lemma} \label{Lem: radical series for 2delta+alpha0}
There exist uniserial $\fqH(2\delta+\alpha_0)$-modules whose radical series are
$$ \begin{array}{c}
     \mathcal{O}_1 \\
     \mathcal{O}_1
   \end{array}
\quad \text{ and }\quad
\begin{array}{c}
     \mathcal{O}_1 \\
     \mathcal{O}_0
   \end{array}.
 $$
\end{lemma}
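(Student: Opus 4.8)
The plan is to note that both modules asserted to exist have in fact already been produced in the constructions immediately preceding the statement, so the proof reduces to reading off their radical series. For the first prescribed radical series, with layers $\mathcal{O}_1$ on top of $\mathcal{O}_1$, I would simply take $\widehat{\mathcal{O}}_1$: it was built as a two–dimensional module fitting into a short exact sequence $0\to\mathcal{O}_1\to\widehat{\mathcal{O}}_1\to\mathcal{O}_1\to0$ which is non-split, and any length-two module with isomorphic simple top and socle is automatically uniserial. So nothing beyond pointing to that construction is required here.

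For the second prescribed radical series, with $\mathcal{O}_1$ on top of $\mathcal{O}_0$, I would split into the two parameter cases as in the discussion above. When $\lambda=0$, I would take $M=\mathcal{U}$; the sequence $0\to\mathcal{O}_0\to\mathcal{U}\to\mathcal{O}_1\to0$ is non-split, with witness $\psi_3\mathsf{w}=\tilde{\mathsf{v}}_1\ne0$, and since $\mathcal{O}_0$ is irreducible (recorded before Lemma \ref{Lem: irr of R(2delta+alpha0)}) and $\mathcal{O}_1$ is one-dimensional, $M$ has composition length two. A non-split extension of two simples is indecomposable, hence $\Rad(M)$ is the unique maximal submodule; it lies in $\mathcal{O}_0$ because $M/\mathcal{O}_0\cong\mathcal{O}_1$ is semisimple, and it is nonzero because $M$ is not semisimple, so $\Rad(M)=\mathcal{O}_0$ and $M/\Rad(M)\cong\mathcal{O}_1$. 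Thus $M$ is uniserial with the required layers. When $\lambda\ne0$ one cannot use $\mathcal{U}$, since then $\mathcal{U}=\mathcal{O}_0$; instead I would take $M=\mathcal{V}$, use the non-split sequence $0\to\mathcal{O}_0\to\mathcal{V}\to\mathcal{O}_1\to0$ with witness $x_5\mathsf{u}=\lambda\mathsf{w}\ne0$, and run the identical length-two argument.

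I do not expect a real obstacle here: the substantive inputs, namely that $\mathcal{O}_0$ is irreducible and that the two extensions are non-split, are exactly the facts already established above (with the explicit non-splitting witnesses $\psi_3\mathsf{w}$ and $x_5\mathsf{u}$ supplied there), and the rest is the elementary observation that a non-split extension of simple modules is uniserial of length two. The only point to state carefully is the case distinction in $\lambda$ in the second construction, so that one uses a module in which $\mathcal{O}_0$ genuinely sits as a proper submodule.
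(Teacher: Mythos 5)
Your proposal matches the paper's own proof: for the first radical series the paper also simply points to the construction of $\widehat{\mathcal{O}}_1$ as a length-two self-extension of $\mathcal{O}_1$, and for the second it also splits into cases, using the non-split sequence $0\to\mathcal{O}_0\to\mathcal{U}\to\mathcal{O}_1\to0$ (witness $\psi_3\mathsf{w}=\tilde{\mathsf{v}}_1$) when $\lambda=0$ and $0\to\mathcal{O}_0\to\mathcal{V}\to\mathcal{O}_1\to0$ (witness $x_5\mathsf{u}=\lambda\mathsf{w}$) when $\lambda\ne0$. You supply slightly more detail on why a non-split extension of two simples is uniserial, and you correctly flag the dependence on $\lambda$ (needed because $\mathcal{U}=\mathcal{O}_0$ when $\lambda\ne0$); otherwise the argument is the same.
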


\vskip 1em

\section{Representation type} \label{Sec: repn type}

\subsection{The algebra $\fqH(\delta)$} In this section, we show that $\fqH(\delta)$ is the Brauer tree algebra
whose Brauer tree is the straight line without exceptional vertex.

Recall the irreducible $\fqH(\delta-\alpha_i)$-modules $\mathcal{L}_i$ defined in Section
\ref{Section: repn of R(delta-alpha)}. We define
\begin{align*}
\mathcal{P}_i = F_i \mathcal{L}_i, \;\;\text{for $1\le i \le \ell$}.
\end{align*}
As $\fqH(\delta-\alpha_i)$ is a simple algebra,
$\mathcal{L}_i$ is a projective $\fqH(\delta-\alpha_i)$-module. It follows that
$\mathcal{P}_i$ are projective $\fqH(\delta)$-modules, since
the functor $F_i$ sends projective objects to projective objects.
Recall the irreducible $\fqH(\delta)$-modules $\mathcal{S}_i$ from Lemma \ref{S for i} and
the biadjointness of the functors $E_i$ and $F_i$ \cite[Thm.3.5]{Kash11}. Then, we have
\begin{equation}
\begin{aligned}
\Hom (\mathcal{S}_j, \mathcal{P}_i) &\simeq \Hom (E_i \mathcal{S}_j, \mathcal{L}_i) \simeq \left\{
                                                                                            \begin{array}{ll}
                                                                                              \bR & \hbox{ if } j=i,  \\
                                                                                              0 & \hbox{ if } j \ne i,
                                                                                            \end{array}
                                                                                          \right. \\
\Hom (\mathcal{P}_i, \mathcal{S}_j) &\simeq \Hom (\mathcal{L}_i, E_i \mathcal{S}_j ) \simeq \left\{
                                                                                            \begin{array}{ll}
                                                                                              \bR & \hbox{ if } j=i,  \\
                                                                                              0 & \hbox{ if } j \ne i.
                                                                                            \end{array}
                                                                                          \right.
\end{aligned}
\end{equation}
Thus, $\mathcal{P}_i$ is the projective cover of $\mathcal{S}_i$, for $1\le i\le \ell$.

\begin{thm} \label{Thm: radical series}
If $\ell=1$ then $\mathcal{P}_1$ is uniserial of length $2$ whose composition factors are $\mathcal{S}_{1}$. If $\ell\ge2$, then
the radical series of $\mathcal{P}_i$ are given as follows:

\begin{align*}
\xy
(2,0)*{\mathcal{P}_1 \simeq}; (11,0)*{\mathcal{S}_{2} }; (11,-5)*{\mathcal{S}_{1} }; (11,5)*{\mathcal{S}_{1} };
(15,-2)*{,};
(25,0)*{\mathcal{P}_i\simeq}; (34,0)*{\mathcal{S}_{i-1} }; (43,0)*{\mathcal{S}_{i+1} }; (37.5,5)*{\mathcal{S}_{i} }; (37.5,-5)*{\mathcal{S}_{i} }; (57,0)*{(i\ne 1, \ell),};
(74,0)*{\mathcal{P}_\ell \simeq}; (85,0)*{\mathcal{S}_{\ell-1} }; (85,-5)*{\mathcal{S}_{\ell} }; (85,5)*{\mathcal{S}_{\ell} };
\endxy
\end{align*}
\end{thm}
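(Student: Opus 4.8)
The plan is to feed the categorification data --- the biadjoint pair $(E_i,F_i)$, the $\mathfrak{sl}_2$-commutation relations at each vertex, the relation $E_iF_j\cong F_jE_i$ for $i\ne j$, and the explicit construction of $\mathcal{S}_i$ in Lemma~\ref{S for i} --- into a few elementary observations about radical series over a symmetric algebra, exactly as in \cite{AP12} and \cite{AP13}. The case $\ell=1$ is immediate from what is already proved: $\fqH(\delta)\cong\bR[x]/(x^2)$ is local of dimension $2$, so its unique indecomposable projective module is the regular representation, which is uniserial of length $2$ with both composition factors isomorphic to $\mathcal{S}_1$. From now on assume $\ell\ge2$.

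The first step is to compute the composition multiplicities. Since $\mathcal{P}_j$ is the projective cover of $\mathcal{S}_j$ and $\bR$ is algebraically closed, $[\mathcal{P}_i:\mathcal{S}_j]=\dim\Hom_{\fqH(\delta)}(\mathcal{P}_j,\mathcal{P}_i)=\dim\Hom_{\fqH(\delta)}(F_j\mathcal{L}_j,F_i\mathcal{L}_i)$, and by the adjunction $(F_j,E_j)$ this equals $\dim\Hom_{\fqH(\delta-\alpha_j)}(\mathcal{L}_j,E_jF_i\mathcal{L}_i)$. For $i=j$, the $\mathfrak{sl}_2$-relation at the vertex $i$ applied to $\mathcal{L}_i$, whose weight $\Lambda_0-(\delta-\alpha_i)$ satisfies $\langle h_i,\Lambda_0-(\delta-\alpha_i)\rangle=2$, gives an extension of $\mathcal{L}_i^{\oplus2}$ by $F_iE_i\mathcal{L}_i$; but every standard tableau of the hook $\lambda(i)$ ends in a removable box of residue $\equiv i\pm1\not\equiv i$, so $E_i\mathcal{L}_i=0$ and hence $E_iF_i\mathcal{L}_i\cong\mathcal{L}_i^{\oplus2}$, whence $[\mathcal{P}_i:\mathcal{S}_i]=2$ (recall $\End\mathcal{L}_i=\bR$, as $\fqH(\delta-\alpha_i)$ is a simple algebra). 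For $i\ne j$ we use $E_jF_i\cong F_iE_j$ and then the adjunction $(E_i,F_i)$ to get $[\mathcal{P}_i:\mathcal{S}_j]=\dim\Hom_{\fqH(\delta-\alpha_i-\alpha_j)}(E_i\mathcal{L}_j,E_j\mathcal{L}_i)$. Now $E_j\mathcal{L}_i$ is obtained by deleting the removable box of residue $j$ from each standard tableau of $\lambda(i)$; this is $0$ unless $j\equiv i\pm1$, and when $j\equiv i\pm1$ a direct inspection (uniformly in $2\le i\le\ell-1$, and in the boundary cases $i=1$, $i=\ell$ where $\lambda(i)$ is a column, resp.\ a row) shows that $E_j\mathcal{L}_i$ and $E_i\mathcal{L}_j$ are literally the same $\fqH(\delta-\alpha_i-\alpha_j)$-module, namely the irreducible module (with $\dim e(\nu)\le1$ for all $\nu$) supported on the standard tableaux of the common smaller hook; hence that $\Hom$-space is $1$-dimensional. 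Thus $[\mathcal{P}_i:\mathcal{S}_i]=2$, $[\mathcal{P}_i:\mathcal{S}_{i-1}]=[\mathcal{P}_i:\mathcal{S}_{i+1}]=1$ whenever those indices lie in $\{1,\dots,\ell\}$ (the cyclic neighbour $0$ never does), and $[\mathcal{P}_i:\mathcal{S}_j]=0$ otherwise.

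The second step is to read off the Loewy structure. The top of $\mathcal{P}_i$ is $\mathcal{S}_i$ since $\mathcal{P}_i$ is its projective cover; for the socle, the adjunction $(E_i,F_i)$ gives $\Hom_{\fqH(\delta)}(\mathcal{S}_j,\mathcal{P}_i)\cong\Hom_{\fqH(\delta-\alpha_i)}(E_i\mathcal{S}_j,\mathcal{L}_i)$, and $E_i\mathcal{S}_j\cong\delta_{ij}\mathcal{L}_i$ by the construction in Lemma~\ref{S for i}, so $\Soc\mathcal{P}_i\cong\mathcal{S}_i$. For $i\in\{1,\ell\}$, $\mathcal{P}_i$ has composition length $3$ with simple top and simple socle $\mathcal{S}_i$ and one further factor $\mathcal{S}_2$ (resp.\ $\mathcal{S}_{\ell-1}$); since $\Rad\mathcal{P}_i$ has length $2$ and cannot be semisimple (a semisimple submodule of $\mathcal{P}_i$ lies in the $1$-dimensional socle), it is uniserial, and so is $\mathcal{P}_i$, with the stated layers. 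For $1<i<\ell$, $\mathcal{P}_i$ has composition length $4$ with top $=$ socle $=\mathcal{S}_i$ and the two remaining factors $\mathcal{S}_{i-1},\mathcal{S}_{i+1}$. Using that $\fqH(\delta)$ is a symmetric algebra whose simple modules are self-dual --- standard facts for cyclotomic quiver Hecke algebras of type $A$ --- the module $\mathcal{P}_i$ is self-dual; a self-dual uniserial module has palindromic Loewy layers, which would force $\mathcal{S}_{i-1}\cong\mathcal{S}_{i+1}$, impossible for $\ell\ge2$. Hence $\mathcal{P}_i$ is not uniserial, its Loewy length is therefore $3$, the nonzero semisimple submodule $\Rad^2\mathcal{P}_i$ lies in $\Soc\mathcal{P}_i\cong\mathcal{S}_i$ and so equals it, and the semisimple second layer $\Rad\mathcal{P}_i/\Rad^2\mathcal{P}_i$ accounts for the remaining two factors, i.e.\ equals $\mathcal{S}_{i-1}\oplus\mathcal{S}_{i+1}$. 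This yields precisely the radical series asserted in Theorem~\ref{Thm: radical series}.

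The main obstacle is the combinatorial verification inside the first step for $j\equiv i\pm1$: one must check cleanly, including the degenerate boundary cases, that deleting the removable box of residue $j$ from $\lambda(i)$ and the removable box of residue $i$ from $\lambda(j)$ gives one and the same $\fqH(\delta-\alpha_i-\alpha_j)$-module, so that the relevant $\Hom$-space really is $1$-dimensional, and that $E_i\mathcal{L}_i=0$ always --- i.e.\ that the potentially repeated or near-repeated residues never occur in the hook tableaux, which is exactly where $\ell\ge2$ enters. Everything else is formal once biadjointness of $E_i,F_i$, the $\mathfrak{sl}_2$-decomposition $E_iF_iM\cong F_iE_iM\oplus M^{\oplus\langle h_i,\mathrm{wt}M\rangle}$ in the range $\langle h_i,\mathrm{wt}M\rangle\ge0$, the relation $E_iF_j\cong F_jE_i$ for $i\ne j$, and the self-duality built into a symmetric algebra are taken as granted.
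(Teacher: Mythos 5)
Your proposal is correct and follows essentially the same route as the paper: compute $\dim\Hom(\mathcal{P}_i,\mathcal{P}_j)$ via biadjointness and the $\mathfrak{sl}_2$-relation (using $E_i\mathcal{L}_i=0$ at weight with $\langle h_i,\cdot\rangle=2$ for the diagonal, and $E_jF_i\simeq F_iE_j$ off the diagonal), then invoke the duality coming from the anti-involution to conclude the heart of $\mathcal{P}_i$ is semisimple. The only cosmetic difference is that you phrase the self-duality argument in terms of $\mathcal{P}_i$ itself (ruling out a palindromic uniserial structure) rather than the heart, and you spell out the combinatorial verification that $E_j\mathcal{L}_i$ and $E_i\mathcal{L}_j$ coincide for $j=i\pm1$, which the paper leaves implicit; both of these are fleshed-out details rather than a genuinely different method.
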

\begin{proof}
We compute $\dim \Hom (\mathcal{P}_i, \mathcal{P}_j)$.
Suppose that $i\ne j$. Then $\ell\ge2$ and the isomorphism of functors $E_jF_i\simeq F_iE_j$ implies
$$
\Hom(\mathcal{P}_i, \mathcal{P}_j) \simeq \Hom( E_jF_i \mathcal{L}_i, \mathcal{L}_j)
\simeq \Hom( F_i E_j \mathcal{L}_i, \mathcal{L}_j) \simeq \Hom(  E_j \mathcal{L}_i, E_i \mathcal{L}_j).
$$
Hence, if $i\ne j$ then
$$ \dim \Hom (\mathcal{P}_i, \mathcal{P}_j) = \left\{
                                                \begin{array}{ll}
                                                  1 & \hbox{ if } j = i\pm 1, \\
                                                  0 & \hbox{ otherwise.}
                                                \end{array}
                                              \right.
$$

\medskip
Suppose that $i=j$. Then $E_i \mathcal{L}_i = 0 $ and $ \langle h_i, \Lambda_0 - \delta + \alpha_i \rangle =2$, for $1\le i\le\ell$, imply
$E_iF_i \mathcal{L}_i \simeq \mathcal{L}_i \oplus \mathcal{L}_i$.
Thus, we have
$$ \dim \Hom(\mathcal{P}_i, \mathcal{P}_i) = \dim \Hom( \mathcal{L}_i, E_iF_i \mathcal{L}_i) = 2. $$
Therefore, $[\mathcal{P}_1] = 2[\mathcal{S}_1]$ if $\ell=1$, and if $\ell\ge2$ then
\begin{align*}
[\mathcal{P}_1] &= 2[\mathcal{S}_1] + [\mathcal{S}_2], \\
[\mathcal{P}_i] &= [\mathcal{S}_{i-1}] + 2[\mathcal{S}_i] + [\mathcal{S}_{i+1}]\quad (2\le i\le\ell-1), \\
[\mathcal{P}_{\ell}] &= [\mathcal{S}_{\ell-1}] + 2[\mathcal{S}_{\ell}],
\end{align*}
in the Grothendieck group $K_0(\fqH(\delta)\text{-mod})$.

Recall that the algebra $\fqH(\beta)$ has an anti-involution which fixes all the defining generators elementwise, and we have the
corresponding duality on the category of $\fqH(\beta)$-modules:
$$ M\mapsto M^\vee=\Hom_\bR(M,\bR). $$
It is straightforward to check that $\mathcal{S}_i$ are self-dual, so that
the heart of $\mathcal{P}_i$ is self-dual. Then the self-duality implies that the heart of $\mathcal{P}_i$ is semi-simple.
\end{proof}

\begin{cor} \label{Cor: fqH(delta) repn type}
The algebra $\fqH(\delta)$ is the Brauer tree algebra whose tree is the straight line of length $\ell$ and without
exceptional vertex. In particular, $\fqH(\delta)$ is representation-finite.
\end{cor}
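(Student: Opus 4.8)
The plan is to extract everything from Theorem \ref{Thm: radical series} together with the known fact that finite quiver Hecke algebras are symmetric algebras, and then to identify $\fqH(\delta)$ with a Brauer tree algebra by matching quiver, relations and Loewy structure. The case $\ell=1$ needs no further argument: in \S3 we saw $\fqH(\delta)\simeq\bR[x]/(x^2)$, which is precisely the Brauer tree algebra of the tree consisting of a single edge with no exceptional vertex, and it is representation-finite. So I would assume $\ell\ge2$ from now on.

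First I would record the numerics. The computation of $\dim\Hom(\mathcal{P}_i,\mathcal{P}_j)$ carried out in the proof of Theorem \ref{Thm: radical series} says that the Cartan matrix of $\fqH(\delta)$ with respect to $\mathcal{S}_1,\dots,\mathcal{S}_\ell$ is the tridiagonal $\ell\times\ell$ matrix with $2$ on the diagonal and $1$ on the two neighbouring diagonals, which is exactly the Cartan matrix of the Brauer tree algebra of the straight line of length $\ell$ with no exceptional vertex. Since $\fqH(\delta)$ is symmetric, hence self-injective, each $\mathcal{P}_i$ has simple socle isomorphic to $\mathrm{top}\,\mathcal{P}_i=\mathcal{S}_i$; reading off $\Rad\mathcal{P}_i/\Rad^2\mathcal{P}_i$ from Theorem \ref{Thm: radical series} then shows that the Gabriel quiver of the basic algebra of $\fqH(\delta)$ is the doubled linear quiver $1\rightleftarrows2\rightleftarrows\cdots\rightleftarrows\ell$, with arrows $a_i\colon i\to i+1$ and $b_i\colon i+1\to i$, because $\Ext^1(\mathcal{S}_i,\mathcal{S}_j)\ne0$ exactly when $|i-j|=1$.

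Next I would pin down the relations. All the $\mathcal{P}_i$ have Loewy length $\le3$, so every path of length $\ge3$ vanishes; and since $[\mathcal{P}_i]$ involves neither $\mathcal{S}_{i+2}$ nor $\mathcal{S}_{i-2}$, the ``two steps in the same direction'' products $a_{i+1}a_i$ and $b_{i-1}b_i$ vanish. The two round trips $a_{i-1}b_{i-1}$ and $b_ia_i$ at an interior vertex $i$ both lie in the one-dimensional space $\Soc\mathcal{P}_i$; here I would check that each of them is nonzero, for if, say, $a_{i-1}b_{i-1}=0$ then $\mathcal{S}_{i-1}$ would split off as a direct summand of $\Rad\mathcal{P}_i$ and $\Soc\mathcal{P}_i$ would fail to be simple, a contradiction. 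Hence the two round trips agree up to a unit scalar, and an inductive rescaling of the arrows $b_1,b_2,\dots$ (a normalisation reminiscent of Lemma \ref{Kashiwara isom}) makes all these scalars equal to $1$. The resulting presentation is precisely that of the Brauer tree algebra of the straight line of length $\ell$ with no exceptional vertex, and a dimension count — both sides have basic dimension equal to $\sum_{i,j}C_{ij}$, the sum of the Cartan entries — shows that there are no further relations. Representation-finiteness is then automatic, Brauer tree algebras being representation-finite.

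The step I expect to be the real obstacle is the last one: making sure the scalar data in the relations is forced, i.e.\ that the interior round trips are nonzero and can be simultaneously normalised, so that $\fqH(\delta)$ is genuinely Morita equivalent to the Brauer line algebra rather than merely sharing its Cartan matrix and Loewy series. The essential input is that $\fqH(\delta)$ is symmetric, together with the self-duality of the $\mathcal{S}_i$ already observed in the proof of Theorem \ref{Thm: radical series}. Alternatively, one may invoke the classification of symmetric special biserial (equivalently, Brauer graph) algebras and simply note that the Brauer graph forced by our quiver and relations is the straight line tree.
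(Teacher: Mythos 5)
Your argument is correct, and it is essentially the reading-off that the paper treats as immediate from Theorem \ref{Thm: radical series}: the paper gives no separate proof of the corollary, relying on the computed radical series (together with symmetry and the semisimple heart) to identify the Brauer line structure, which is exactly what you carry out. The one step you rightly flag as needing care — that the interior round trips are nonzero and can be simultaneously rescaled to $1$ — is correctly handled (nonzero because a vanishing round trip would put a simple submodule other than $\mathcal{S}_i$ into $\Soc\mathcal{P}_i$, and the rescalings only need to adjust the products $\lambda_i\mu_i$ of the two arrow scalings between consecutive vertices, so the induction closes), and the final dimension count against $\sum_{i,j}C_{ij}$ legitimately rules out further relations.
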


\subsection{ The algebra $\fqH(2\delta)$} In this section, we show that $\fqH(2\delta)$ is of wild type if $\ell\ge2$.
Let $\nu = (\nu_1,\dots,\nu_{\ell+1}) = (0,1,2, \dots, \ell )$ and
$$ e_1 = e( \nu * \nu ), \quad e_2 = e( s_{\ell}( \nu) *  s_{\ell}(\nu) ) .  $$
It follows from Theorem \ref{Thm: dimension formula} that
\begin{align*}
\dim e_i \fqH(2\delta) e_i = 4, \quad \text{for $i=1,2$},
\end{align*}
because Young diagrams which contribute to $\dim e_1\fqH(2\delta)e_1$ are
\begin{align} \label{Eq: YD for e1Re1}
(2\ell+2),\;\;(2\ell+1,1),\;\;(\ell+1,\ell,1),\;\;(\ell,\ell,2),
\end{align}
and Young diagrams which contribute to $\dim e_2\fqH(2\delta)e_2$ are
\begin{align} \label{Eq: YD for e2Re2}
(\ell+1, \ell+1),\;\;(\ell+1, \ell ,1),\;\;(\ell,\ell-1,2,1),\;\;(\ell-1,\ell-1,2,2).
\end{align}

\begin{lemma} \label{Lem: eR(2delta)e}
\begin{enumerate}
\item
There exists an algebra isomorphism
$$ e_1 \fqH(2\delta) e_1 \simeq \bR[x,y]/(x^2, y^2-axy) $$
for some $a \in \bR$. Under this isomorphism, $x_{\ell+1}$, $x_{2\ell+2}$ correspond to $x$, $y$ respectively, and
$(x_{\ell} + x_{\ell+1}) (x_{2\ell+1} + x_{2\ell+2})\ne 0 $ holds in $e_1 \fqH(2\delta) e_1$.
\item
There exists an algebra isomorphism
$$ e_2 \fqH(2\delta) e_2 \simeq \bR[z,w]/(z^2, w^2-bzw) $$
for some $b \in \bR$. Under this isomorphism, $x_{\ell+1}$, $x_{2\ell+2}$ correspond to $z$, $w$ respectively, and
$(x_{\ell} + x_{\ell+1}) (x_{2\ell+1} + x_{2\ell+2})\ne 0 $ holds in $e_2 \fqH(2\delta) e_2$.
\end{enumerate}
\end{lemma}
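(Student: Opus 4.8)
The two assertions are symmetric in form, so I would treat (1) in detail and indicate how (2) goes the same way. The target algebra $e_1\fqH(2\delta)e_1$ is $4$-dimensional by the dimension formula (Theorem \ref{Thm: dimension formula}), with the Young diagrams in \eqref{Eq: YD for e1Re1} contributing. The first task is to produce enough commuting nilpotent elements inside it to span. The obvious candidates are $x:=x_{\ell+1}e_1$ and $y:=x_{2\ell+2}e_1$; they commute by the defining relation $x_kx_l=x_lx_k$. I would next argue $x^2=0$ and $y^2$ is a multiple of $xy$. For $x^2=0$: since $\nu*\nu$ has $\nu_{\ell+1}=\ell$ and $\nu_{\ell+2}=0$, which are non-adjacent residues when $\ell\ge2$, the element $\psi_{\ell+1}e_1$ is "invertible" in the sense that $\psi_{\ell+1}^2e_1=\mathcal{Q}_{\ell,0}(x_{\ell+1},x_{\ell+2})e_1$ is a unit times a power series; pushing $x_{\ell+1}$ past $\psi_{\ell+1}$ (using $\psi_kx_l=x_{s_k(l)}\psi_k$ when the residues differ) lets me relate $x_{\ell+1}e_1$ to $x_{\ell+2}e_1$, and iterating this "sliding" around the whole word together with the cyclotomic relation $x_1^{\langle h_{\nu_1},\Lambda_0\rangle}e(\nu)=0$ (here $x_1e_1=0$ already, since $\nu_1=0$) forces $x^2=0$. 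This is the standard Brundan–Kleshchev–Ariki–Park style computation; alternatively one can read $x^2=0$ off the explicit action on the irreducibles/projectives constructed in \S3 and \S4, since $\dim e_1\fqH(2\delta)e_1=4$ pins the algebra down once we know it is local commutative generated by $x,y$ with $x^2=0$.

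With $x^2=0$, $xy$ and $y$ are further elements; I would check $1,x,y,xy$ are linearly independent (e.g. by evaluating on the projective covers or by a direct trace/grading argument using the KLR grading: $\deg x=\deg y=2$, $\deg xy=4$, and the graded dimension of $e_1\fqH(2\delta)e_1$ — computable from $K(\lambda,\nu)$ — must be $1+2t^2+t^4$, matching $\bR[x,y]/(x^2,y^2-axy)$ exactly). Hence $\{1,x,y,xy\}$ is a basis, so $y^2=\alpha\cdot 1+\beta x+\gamma y+\delta xy$ for scalars; comparing KLR degrees (both sides degree $4$) kills $\alpha,\beta,\gamma$ and gives $y^2=axy$ for $a:=\delta$. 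This establishes the presentation $e_1\fqH(2\delta)e_1\simeq\bR[x,y]/(x^2,y^2-axy)$ with $x_{\ell+1}\leftrightarrow x$, $x_{2\ell+2}\leftrightarrow y$.

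The last clause, $(x_\ell+x_{\ell+1})(x_{2\ell+1}+x_{2\ell+2})\ne 0$ in $e_1\fqH(2\delta)e_1$, is where I expect the real work. Note $\mathcal{Q}_{\ell,0}(u,v)=u+\lambda v$, so $x_\ell+x_{\ell+1}$ is not literally the relevant $\psi$-square, but $x_\ell+\lambda x_{\ell+1}$ is; still, I would show $(x_\ell+x_{\ell+1})e_1$ is a nonzero degree-$2$ element, hence a nonzero combination of $x$ and $y$ (say $c_1x+c_2y$ with $(c_1,c_2)\ne(0,0)$), by computing its action on $\mathcal{N}_0$ (or on the projective cover of $\mathcal{N}_0$) — on these modules $x_\ell$ acts nontrivially relative to $x_{\ell+1}$. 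Similarly $(x_{2\ell+1}+x_{2\ell+2})e_1=d_1x+d_2y$. Then the product is $(c_1x+c_2y)(d_1x+d_2y)=(c_1d_2+c_2d_1+ac_2d_2)xy$ using $x^2=0$, $y^2=axy$; I need this coefficient nonzero. The cleanest route is again representation-theoretic: exhibit an $e_1\fqH(2\delta)e_1$-module (coming from the $\fqH(2\delta)$-modules $\mathcal{T}_0$, $\mathcal{T}_1$, or $\widehat{\mathcal T}_1$ of \S4 restricted via $e_1$, or their $\ell\ge2$ analogues) of Loewy length $3$ on which both $x_\ell+x_{\ell+1}$ and $x_{2\ell+1}+x_{2\ell+2}$ act nontrivially and whose composite acts nontrivially — the uniserial length-$3$ structure is exactly what makes $xy\ne 0$ visible. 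The main obstacle is thus not the presentation (which is forced by dimension counting plus the KLR grading) but verifying the non-vanishing of this particular product; I would handle it by a direct action computation on a carefully chosen $3$-dimensional local section of a projective, checking that $x_\ell+x_{\ell+1}$ sends the top to the radical and $x_{2\ell+1}+x_{2\ell+2}$ sends the radical to the socle nontrivially. Part (2) is identical with $\nu$ replaced by $s_\ell(\nu)$ and the Young diagrams \eqref{Eq: YD for e2Re2}; the residues $\nu_{\ell+1},\nu_{\ell+2}$ are again non-adjacent, so every step transfers verbatim, yielding $b$.
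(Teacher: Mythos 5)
Your route is genuinely different from the paper's. The paper's engine is Kang--Kashiwara's bimodule theorem \cite[Thm.3.4]{Kash11}: it realises $e_1\fqH(2\delta)e_1$ as a tower
$\bR\hookrightarrow e^{1,0}\fqH(\beta^{1,0})e^{1,0}\simeq\bR[x]/(x^2)\hookrightarrow e_1\fqH(2\delta)e_1\simeq\bR[x,y]/(x^2,y^2-axy)$,
and this same theorem is what simultaneously identifies $x$, $y$ with the specific elements $x_{\ell+1}e_1$, $x_{2\ell+2}e_1$ and forces $x_\ell e_1=0$ and $x_{2\ell+1}e_1$ to be a scalar multiple of $x_{\ell+1}e_1$, so that the final clause collapses to $x_{\ell+1}x_{2\ell+2}e_1=xy\ne0$ with no further work. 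You replace this with dimension counting plus the KLR grading. That substitution creates a real gap at the very first relation, $x_{\ell+1}^2 e_1=0$. Your ``sliding'' argument starts from the assertion that the residues $\nu_{\ell+1}=\ell$ and $\nu_{\ell+2}=0$ are non-adjacent so that $\psi_{\ell+1}$ is invertible on $e_1$; but $\ell$ and $0$ \emph{are} adjacent in type $A^{(1)}_\ell$ (indeed $\mathcal{Q}_{\ell,0}(u,v)=u+\lambda v$ has no constant term), so $\psi_{\ell+1}^2 e_1=(x_{\ell+1}+\lambda x_{\ell+2})e_1$ is nilpotent, not a unit, and the sliding does not run. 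The graded Hilbert series $1+2t^2+t^4$ on its own cannot distinguish $x^2=0,\ y^2=axy$ from $x^2=bxy,\ y^2=axy$ once the generators are pinned to $x_{\ell+1}e_1$ and $x_{2\ell+2}e_1$---you can normalise $b$ away by a linear change of variable, but then the new $x$ is no longer $x_{\ell+1}e_1$, which is precisely what the lemma asserts. Your fall-back of reading $x^2=0$ off from the explicit modules of \S4 is also unavailable here: those modules ($\mathcal{T}_0$, $\mathcal{T}_1$, $\widehat{\mathcal T}_1$, $\mathcal V$, \dots) are constructed only for $\ell=1$, whereas this lemma is invoked to prove wildness for $\ell\ge2$ (Proposition \ref{Prop: fqH(2delta) is wild}).

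The same issue affects your treatment of the clause $(x_\ell+x_{\ell+1})(x_{2\ell+1}+x_{2\ell+2})\ne 0$: you reduce it to a scalar computation in $\bR[x,y]/(x^2,y^2-axy)$ with undetermined coefficients and then propose to settle it by exhibiting a Loewy-length-three local module for $\ell\ge2$, but that module is not constructed, and producing one for general $\ell$ would be at least as much work as invoking the categorification theorem. You yourself flag this step as ``where the real work is''; it is indeed a gap, and $x^2=0$ is a second one hidden earlier in the argument.
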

\begin{proof}
(1) For $t\ge0$ and $0\le k \le \ell$, we set
$$ e^{t,k} = e(  \underbrace{\nu * \cdots * \nu}_{t} * (0)*\cdots *(k-1) ), \qquad  \beta^{t,k} = t \delta +  \alpha_0 + \alpha_1 + \cdots + \alpha_{k-1}.  $$
Note that $ e^{2,0} = e_1$. By Theorem \ref{Thm: dimension formula}, we have
\begin{align} \label{Eq: dimension of eRe}
\dim e^{t,k} \fqH(\beta^{t,k}) e^{t,k} = \left\{
                                           \begin{array}{ll}
                                             1 & \hbox{ if $ t = 0 $ and $ 0\le k\le \ell $}, \\
                                             2 & \hbox{ if $ t = 1 $ and $ 0\le k\le \ell $},  \\
                                             4 & \hbox{ if $ (t,k) = (2,0) $}.
                                           \end{array}
                                         \right.
\end{align}
Note that Young diagram which contributes to $\dim e^{0,k}\fqH( \beta^{0,k} )e^{0,k} $ is $(k+1)$, and
Young diagrams which contribute to $\dim e^{1,k}\fqH( \beta^{1,k} )e^{1,k} $ are $(\ell+k+2)$ and $(\ell,k+2)$, and
if $ (t,k) = (2,0) $ then Young diagrams which contribute to $\dim e^{2,0}\fqH( \beta^{2,0} )e^{2,0} $ are
$(2\ell+2)$, $(2\ell+1,1)$, $(\ell+1,\ell,1)$ and $(\ell,\ell,2)$.

As
$$ \langle h_{\nu_{k+1}}, \Lambda_0 - \beta^{t,k}  \rangle  =
\langle h_k, \Lambda_0 - \beta^{t,k}  \rangle  = \left\{
                                                          \begin{array}{ll}
                                                            1 & \hbox{ if } 0 \le k \le \ell-1, \\
                                                            2 & \hbox{ if } k = \ell,
                                                          \end{array}
                                                        \right.
$$
we have $(e^{t, k}\fqH(\beta^{t, k})e^{t, k}$, $e^{t, k}\fqH(\beta^{t, k})e^{t, k})$-bimodule monomorphisms
\begin{align*}
e^{t, k}\fqH(\beta^{t, k})e^{t, k} &\hookrightarrow e^{t,k+1}\fqH(\beta^{t,k+1})e^{t,k+1}, \\
e^{t, \ell}\fqH(\beta^{t, \ell})e^{t, \ell} \otimes (\bR 1\oplus \bR x) &\hookrightarrow e^{t+1, 0}\fqH(\beta^{t+1, 0})e^{t+1, 0}.
\end{align*}
for $t = 0,1$ and $0 \le k \le \ell-1$. Then $\eqref{Eq: dimension of eRe}$ shows that the above monomorphisms are isomorphisms.
Thus $\bR \simeq e^{0,0} \fqH(\beta^{0,0}) e^{0,0} \simeq e^{0,\ell} \fqH(\beta^{0,\ell}) e^{0,\ell} $. The bimodule isomorphism
$$ e^{0, \ell}\fqH(\beta^{0, \ell})e^{0, \ell} \otimes (\bR 1\oplus \bR x)
 \buildrel \sim \over \longrightarrow e^{1, 0}\fqH(\beta^{1, 0})e^{1, 0} $$
is given by $x \mapsto x_{\ell+1}e^{1,0}$ \cite[Thm.3.4]{Kash11}, and it induces an algebra isomorphism
$$ \bR[x]/(x^2)
 \buildrel \sim \over \longrightarrow e^{1, 0}\fqH(\beta^{1, 0})e^{1, 0} .$$
Similarly, $ \bR[x]/(x^2) \simeq e^{1,0} \fqH(\beta^{1,0}) e^{1,0} \simeq e^{1,\ell} \fqH(\beta^{1,\ell}) e^{1,\ell} $ and
the bimodule isomorphism
$$
\bR[x]/(x^2) \otimes (\bR 1\oplus \bR y) \simeq
e^{1, \ell}\fqH(\beta^{1, \ell})e^{1, \ell} \otimes (\bR 1\oplus \bR y)
 \buildrel \sim \over \longrightarrow e^{2, 0}\fqH(\beta^{2, 0})e^{2, 0} $$
given by $x \mapsto x_{\ell+1} e^{2,0}$, $y \mapsto x_{2\ell+2}e^{2,0}$ induces an algebra isomorphism
$$ \bR[x,y]/(x^2,y^2-a xy )
 \buildrel \sim \over \longrightarrow e_1\fqH(2\delta)e_1,   $$
for some $a\in \bR$.
Taking the grading into consideration, the above argument also implies that
$ x_\ell = 0 $ and $x_{2\ell+1}$ is a scalar multiple of $ x_{\ell+1} $ in $e_1\fqH(2\delta)e_1$.
Thus, $xy\ne0$ maps to $(x_{\ell} + x_{\ell+1}) (x_{2\ell+1} + x_{2\ell+2})$ under the isomorphism.

(2)
We replace $\nu$ with $s_\ell(\nu)$ and follow the argument in (1). Then $e^{t,k}$, $\beta^{t,k}$ are similarly defined,
and $ e^{2,0} = e_2$. Moreover, we have
$\langle h_k, \Lambda_0 - \beta^{t,k}  \rangle = 1$, for $ 0 \le k \le \ell-2 $, and
$$
\langle h_\ell, \Lambda_0 - \beta^{t,\ell-1}  \rangle = 1, \quad
\langle h_{\ell-1}, \Lambda_0 - \beta^{t,\ell}  \rangle = 2.
$$
Thus, Theorem \ref{Thm: dimension formula} gives
\begin{align} \label{Eq: dimension of eRe2}
\dim e^{t,k} \fqH( \beta^{t,k}) e^{t,k} = \left\{
                                           \begin{array}{ll}
                                             1 & \hbox{ if } t = 0 \text{ and $0\le k \le \ell$},\\
                                             2 & \hbox{ if } t = 1 \text{ and $0\le k \le \ell$},\\
                                             4 & \hbox{ if } (t,k) = (2,0),
                                           \end{array}
                                         \right.
\end{align}
and we can conclude that $z \mapsto x_{\ell+1} e^{2,0} ,\ w \mapsto x_{2\ell+2}e^{2,0}$ defines
an algebra isomorphism
$$ \bR[z,w]/(z^2, w^2-b zw )
 \buildrel \sim \over \longrightarrow e_2\fqH(2\delta)e_2,  $$
for some $b\in \bR$. It also follows $(x_{\ell} + x_{\ell+1}) (x_{2\ell+1} + x_{2\ell+2}) \ne 0$ in $e_2\fqH(2\delta)e_2$.
\end{proof}

\begin{prop}\label{Prop: fqH(2delta) is wild}
Suppose $\ell \ge 2$. Then $\fqH(2\delta)$ has wild representation type.
\end{prop}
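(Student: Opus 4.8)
The plan is to isolate a wild corner algebra of $\fqH(2\delta)$, in the spirit of \cite{AP12} and \cite{AP13}. Put $e=e_1+e_2$ and $B=e\fqH(2\delta)e$. By the standard reduction (corner algebras: cf.\ \cite{AP12}), wildness of $B$ implies wildness of $\fqH(2\delta)$, and since any representation embedding into the category of $B/\Rad^2B$-modules may be composed with restriction along $B\twoheadrightarrow B/\Rad^2B$, it even suffices to show that $B/\Rad^2B$ is wild. First I would record that $B$ is basic with exactly two simple modules: $e_1Be_1$ and $e_2Be_2$ are local by Lemma \ref{Lem: eR(2delta)e}, so $e_1,e_2$ are primitive idempotents of $B$; a computation with Theorem \ref{Thm: dimension formula} (the only Young diagram common to \eqref{Eq: YD for e1Re1} and \eqref{Eq: YD for e2Re2} is $(\ell+1,\ell,1)$, which contributes $1$ to both sides) gives $\dim e_1Be_2=\dim e_2Be_1=1$; and together with Lemma \ref{Lem: eR(2delta)e} and the degree count below this shows that $B$ is concentrated in KLR degrees $0,2,4$ with $B_0=\bR e_1\oplus\bR e_2$, so in particular $e_1$ and $e_2$ are non-isomorphic.

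The next step is to read off the Gabriel quiver of $B$. Lemma \ref{Lem: eR(2delta)e} already gives the two ``diagonal'' blocks: each $e_iBe_i$ is a $4$-dimensional local algebra with $\Rad^3=0$ whose radical is generated in degree $2$ by the two elements $x_{\ell+1}e_i$, $x_{2\ell+2}e_i$, so each vertex carries exactly two loops. For the ``off-diagonal'' elements set
\[
p=\psi_{2\ell+1}\psi_{\ell}e_1\in e_2Be_1,\qquad q=\psi_{\ell}\psi_{2\ell+1}e_2\in e_1Be_2,
\]
which make sense since $\res(e_1)$ and $\res(e_2)$ differ precisely by the transpositions of the entries in positions $(\ell,\ell+1)$ and $(2\ell+1,2\ell+2)$. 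Using that $\psi_\ell$ and $\psi_{2\ell+1}$ commute, that $\psi_k^2e(\mu)=\mathcal{Q}_{\mu_k,\mu_{k+1}}(x_k,x_{k+1})e(\mu)$, and that the relevant $\mathcal{Q}$'s equal $u+v$, a short calculation gives
\[
qp=(x_\ell+x_{\ell+1})(x_{2\ell+1}+x_{2\ell+2})e_1,\qquad pq=(x_\ell+x_{\ell+1})(x_{2\ell+1}+x_{2\ell+2})e_2,
\]
which are nonzero by the non-vanishing assertions in Lemma \ref{Lem: eR(2delta)e} and in fact span the one-dimensional socles (equal to the squares of the radicals) of the two local blocks. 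Since $p$ and $q$ have degree $2$ while $\Rad B$ lies in degrees $\ge 2$ and hence $\Rad^2B$ in degrees $\ge 4$, neither $p$ nor $q$ lies in $\Rad^2B$. Therefore the quiver of $B$ has two vertices, two loops at each, and one arrow in each direction between them, and $B$ is connected.

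Finally, $\bar B=B/\Rad^2B$ has $\Rad^2=0$, so its representation type equals that of the hereditary path algebra of its separated quiver; the latter has four vertices, a double edge at each of the two ``loop pairs'', and one further edge joining the two pairs in each direction — a connected graph on four vertices with six edges, which is neither a disjoint union of Dynkin diagrams nor a disjoint union of Euclidean diagrams (the only connected Euclidean graph on four vertices is $\widetilde{A}_3$, which has four edges). By the classical criterion for radical-square-zero algebras this forces $\bar B$, hence $B$, hence $\fqH(2\delta)$, to be of wild representation type. I expect the main obstacle to be the bookkeeping at the junction of the last two steps: checking that $p$ and $q$ genuinely survive in $\Rad B/\Rad^2B$ — so that the two copies of $\widetilde{A}_1$ in the separated quiver are linked into one connected, non-Euclidean graph — is exactly where the KLR grading and the non-vanishing in Lemma \ref{Lem: eR(2delta)e} do the essential work.
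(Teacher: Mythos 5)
Your proposal is correct, and while it begins exactly as the paper does (the corner algebra $B=e\fqH(2\delta)e$ with $e=e_1+e_2$, Lemma \ref{Lem: eR(2delta)e} for the two local blocks, Theorem \ref{Thm: dimension formula} for $\dim e_iBe_j=1$ when $i\ne j$, and the non-vanishing of $qp$ and $pq$), your endgame is genuinely different. The paper pins down the full quiver presentation of $B$ with relations and then invokes the wildness criterion \cite[I.10.8]{Erd90}; you instead pass to $\bar B=B/\Rad^2 B$, observe that as a radical-square-zero algebra its representation type is governed by the underlying graph of its separated quiver, and check that this graph (four vertices, six edges including two double edges, connected) is neither Dynkin nor Euclidean. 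The gain is that you only need the Gabriel quiver of $B$, not the relations, so the step where the paper identifies $\psi_\ell\psi_{2\ell+1}e_i$ as being annihilated by $x_{\ell+1}$ and $x_{2\ell+2}$ becomes unnecessary; in its place you use the KLR grading (all ten basis elements sit in degrees $0,2,4$, so $p,q$ of degree $2$ cannot lie in $\Rad^2B\subseteq B_{\ge 4}$) to see that $p,q$ contribute arrows. Both arguments hinge on the same dimension count $\dim B=10$ and the same non-vanishing from Lemma \ref{Lem: eR(2delta)e}; your route is arguably a bit more economical, whereas the paper's full presentation is reused information-wise nowhere else, so neither has a structural advantage beyond taste.

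One small caution worth keeping in mind if you write this up: when you say the Gabriel quiver of $B$ has exactly two loops at each vertex and exactly one arrow in each direction, you are implicitly computing $e_i(\Rad B/\Rad^2 B)e_j$ for all $i,j$, and for the diagonal blocks this requires knowing that $e_i(\Rad^2 B)e_i$ is exactly one-dimensional (not larger). That is true here because $e_i(\Rad^2 B)e_i = \Rad(e_iBe_i)^2 + \bR\,qp$ (resp.\ $+\bR\,pq$), and $qp$ lands in the one-dimensional socle $\bR\,xy$ of $e_1Be_1$ as you note; but the sentence asserting two loops per vertex should make this explicit rather than appeal only to $\Rad(e_iBe_i)/\Rad(e_iBe_i)^2$ being two-dimensional, since $a\ priori$ $e_i(\Rad^2B)e_i$ could exceed $\Rad(e_iBe_i)^2$.
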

\begin{proof}
Let  $e = e_1 + e_2$. Combining Theorem \ref{Thm: dimension formula} with $\eqref{Eq: YD for e1Re1}$ and $\eqref{Eq: YD for e2Re2}$, we have
$$  \dim e \fqH(2\delta) e = 10. $$
It follows from Lemma \ref{Lem: eR(2delta)e} that
$$ ( \psi_{\ell} \psi_{2 \ell+1} e_2 ) ( \psi_{\ell} \psi_{2\ell+1} e_1 ) =
e_1 \psi_{\ell}^2 \psi_{2 \ell+1}^2 e_1 = (x_\ell + x_{\ell+1} ) (x_{2\ell+1} + x_{2\ell+2} ) e_1 $$
is nonzero, which implies that
$\psi_{\ell} \psi_{2 \ell+1} e_2$ and $\psi_{\ell} \psi_{2\ell+1} e_1$ are nonzero in $e \fqH(2\delta) e$.
Hence,
$$ \{ e_1, e_2, x_{\ell+1}e_1, x_{\ell+1}e_2, x_{2\ell+2}e_1, x_{2\ell+2}e_2, x_{\ell+1}x_{2\ell+2}e_1, x_{\ell+1}x_{2\ell+2}e_2, \psi_{\ell} \psi_{2 \ell+1} e_1, \psi_{\ell} \psi_{2 \ell+1} e_2 \} $$
forms a basis of $e \fqH(2\delta) e$. As $\dim e_1 \fqH(2\delta) e_2 = 1$ and $\dim e_2 \fqH(2\delta) e_1 = 1$,
the degree consideration shows that $\psi_{\ell} \psi_{2\ell+1} e_1$ and $\psi_{\ell} \psi_{2 \ell+1} e_2$ are annihilated by
both $x_{\ell+1}$ and $x_{2\ell+2}$.

Let $ p = \psi_{\ell} \psi_{2\ell+1} e_1 $ and $ q = \psi_{\ell} \psi_{2\ell+1} e_2 $. Using the isomorphisms in Lemma \ref{Lem: eR(2delta)e}, we have
the following quiver presentation of $e \fqH(2\delta) e$:
$$
\xy
(0,0) *{e_1}="A", (20,0) *{e_2}="B"
\ar @/_/_{p} "A";"B"
\ar @/_/_{q} "B";"A"
\ar @(l,u)^x "A";"A"
\ar @(l,d)_y "A";"A"
\ar @(r,u)_z "B";"B"
\ar @(r,d)^w "B";"B"
\endxy
$$
with relations
\begin{align*}
& x^2=0, \quad  y^2 = a xy, \quad xy = yx, \quad z^2 = 0, \quad w^2 = b zw, \quad zw = wz, \\
& pq = xy, \quad qp = zw, \quad xp = yp = pz = pw = 0, \quad  zq = wq = qx = qy = 0,
\end{align*}
where $a, b \in \bR$ are given in Lemma \ref{Lem: eR(2delta)e}. Then, by \cite[I.10.8]{Erd90},
$e \fqH(2\delta) e$ has wild representation type, and so does $\fqH(2\delta)$.
\end{proof}

\subsection{The algebras for $\ell=1$}
Using the $\fqH(2\delta-\alpha_0)$-module $\mathcal{M}_0$ and the
$\fqH(2\delta-\alpha_1)$-module $\widehat{\mathcal{M}}_1$ given in Section \ref{Sec: repn of R(2delta-alpha_i) l=1},
we define
$$ \mathcal{Q}_0 = F_0 \mathcal{M}_0 , \qquad \mathcal{Q}_1 = F_1 \widehat{\mathcal{M}}_1.$$
As the modules $\mathcal{M}_0$ and $\widehat{\mathcal{M}}_1$ are projective, the $\fqH(2\delta)$-modules $\mathcal{Q}_0$ and $\mathcal{Q}_1$ are projective.
Recall the irreducible $\fqH(2\delta)$-modules $\mathcal{N}_0$ and $\mathcal{N}_1$ given in Lemma \ref{Lem: irr of R(2delta)}.
By $\eqref{Eq: N0 N1}$ and the biadjointness \cite[Thm.3.5]{Kash11} of the functors $E_i$ and $F_i$, we have
\begin{align*}
\Hom (\mathcal{N}_i, \mathcal{Q}_0) &\simeq \Hom (E_0 \mathcal{N}_i, \mathcal{M}_0) \simeq \left\{
                                                                                            \begin{array}{ll}
                                                                                              \bR & \hbox{ if } i =0, \\
                                                                                              0 & \hbox{ if } i = 1,
                                                                                            \end{array}
                                                                                          \right. \\
\Hom ( \mathcal{Q}_0, \mathcal{N}_i) &\simeq \Hom (\mathcal{M}_0, E_0 \mathcal{N}_i) \simeq \left\{
                                                                                            \begin{array}{ll}
                                                                                              \bR & \hbox{ if } i =0, \\
                                                                                              0 & \hbox{ if } i = 1,
                                                                                            \end{array}
                                                                                          \right. \\
\Hom (\mathcal{N}_i, \mathcal{Q}_1) &\simeq \Hom (E_1 \mathcal{N}_i, \widehat{\mathcal{M}}_1) \simeq \left\{
                                                                                            \begin{array}{ll}
                                                                                              0 & \hbox{ if } i =0, \\
                                                                                              \bR & \hbox{ if } i =1,
                                                                                            \end{array}
                                                                                          \right. \\
\Hom (\mathcal{Q}_1, \mathcal{N}_i) &\simeq \Hom (\widehat{\mathcal{M}}_1, E_1 \mathcal{N}_i) \simeq \left\{
                                                                                            \begin{array}{ll}
                                                                                              0 & \hbox{ if } i =0, \\
                                                                                              \bR & \hbox{ if } i =1.
                                                                                            \end{array}
                                                                                          \right.
\end{align*}
Hence, the modules $\mathcal{Q}_0$ and $\mathcal{Q}_1$ are projective cover of $\mathcal{N}_0$ and $\mathcal{N}_1$ respectively.
Moreover, since $E_iF_j \simeq  F_jE_i$ for $i\ne j$ and
\begin{align*}
E_0F_0\mathcal{M}_0 &\simeq F_0E_0\mathcal{M}_0 \oplus \mathcal{M}_0^{\oplus\langle h_0, \Lambda_0-2\delta+\alpha_0 \rangle}, \\
E_1F_1\widehat{\mathcal{M}}_1 &\simeq F_1E_1 \widehat{\mathcal{M}}_1 \oplus \widehat{\mathcal{M}}_1^{\oplus\langle h_1, \Lambda_0-2\delta+\alpha_1 \rangle},
\end{align*}
it follows from $\eqref{Eq: M0 M1}$ that
\begin{align*}
\dim \Hom(\mathcal{Q}_0, \mathcal{Q}_0) &= \dim \Hom(\mathcal{M}_0, E_0F_0\mathcal{M}_0) = \dim \Hom(\mathcal{M}_0, \mathcal{M}_0^{\oplus 3} ) = 3, \\
\dim \Hom(\mathcal{Q}_1, \mathcal{Q}_1) &= \dim \Hom(\widehat{\mathcal{M}}_1, E_1F_1 \widehat{\mathcal{M}}_1) =
\dim \Hom(\widehat{\mathcal{M}}_1, \widehat{\mathcal{M}}_1^{\oplus 2} ) = 4, \\
\dim \Hom(\mathcal{Q}_0, \mathcal{Q}_1) &= \dim \Hom(E_1\mathcal{M}_0, E_0\widehat{\mathcal{M}}_1)  = 2, \\
\dim \Hom(\mathcal{Q}_1, \mathcal{Q}_0) &= \dim \Hom(E_0\widehat{\mathcal{M}}_1, E_1\mathcal{M}_0)  = 2.
\end{align*}
Thus, we have
\begin{align} \label{Eq: composition factors of Q}
[\mathcal{Q}_0] = 3[\mathcal{N}_0] + 2[\mathcal{N}_1], \qquad [\mathcal{Q}_1] = 2[\mathcal{N}_0] + 4[\mathcal{N}_1].
\end{align}
in the Grothendieck group $K_0(\fqH(2\delta)\text{-mod})$.

\begin{prop} \label{Prop: radical series of R(2delta) l=1}
Suppose $\ell=1$.
\begin{enumerate}
\item If $\lambda = 0$, then the radical series of $\mathcal{Q}_0$ and $\mathcal{Q}_1$ are given as follows:
$$
\xy
(0,0)*{\mathcal{Q}_0 \simeq}; (9,0)*{\mathcal{N}_{0} }; (16,2.5)*{\mathcal{N}_{1} }; (16,-2.5)*{\mathcal{N}_{1} }; (12.5,7)*{\mathcal{N}_{0} }; (12.5,-7)*{\mathcal{N}_{0} };
(20,-2)*{,};
(40,0)*{\mathcal{Q}_1 \simeq}; (49,2.5)*{\mathcal{N}_{1} }; (49,-2.5)*{\mathcal{N}_{0} }; (56,2.5)*{\mathcal{N}_{0} }; (56,-2.5)*{\mathcal{N}_{1} };
(52.5,7)*{\mathcal{N}_{1} }; (52.5,-7)*{\mathcal{N}_{1} };
(60,-2)*{.};
\endxy
$$
\item If $\lambda \ne 0$, then the radical series of $\mathcal{Q}_0$ and $\mathcal{Q}_1$ are given as follows:
$$
\xy
(0,0)*{\mathcal{Q}_0 \simeq}; (10,0)*{\mathcal{N}_{0} }; (10,5)*{\mathcal{N}_{1} }; (10,10)*{\mathcal{N}_{0} }; (10,-5)*{\mathcal{N}_{1} }; (10,-10)*{\mathcal{N}_{0} };
(15,-2)*{,};
(40,0)*{\mathcal{Q}_1 \simeq}; (49,0)*{\mathcal{N}_{1} }; (56,0)*{\mathcal{N}_{1} }; (56,5)*{\mathcal{N}_{0} }; (56,-5)*{\mathcal{N}_{0} };
(52.5,10)*{\mathcal{N}_{1} }; (52.5,-10)*{\mathcal{N}_{1} };
(60,-2)*{.};
\endxy
$$
\end{enumerate}
\end{prop}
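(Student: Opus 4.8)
The plan is to pin down the two radical series by combining the composition-factor identities \eqref{Eq: composition factors of Q} with the explicit uniserial modules constructed in \S\ref{Sec: repn of R(2delta) ell=1}, together with the self-duality of the projective indecomposables $\mathcal{Q}_0,\mathcal{Q}_1$. First I would record that $\mathcal{Q}_0$ and $\mathcal{Q}_1$ are self-dual: since $\fqH(2\delta)$ carries the anti-involution fixing the generators, and the irreducibles $\mathcal{N}_0,\mathcal{N}_1$ are self-dual (checked directly as in the proof of Theorem \ref{Thm: radical series}, the matrices there being visibly symmetric up to the obvious basis change), the projective cover of a self-dual simple is self-dual. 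Hence the radical series and socle series of each $\mathcal{Q}_i$ coincide up to reversal, and the Loewy diagram is symmetric about its middle layer. This already forces the top and socle of $\mathcal{Q}_0$ to be $\mathcal{N}_0$ and those of $\mathcal{Q}_1$ to be $\mathcal{N}_1$, and it makes the total Loewy length odd for $\mathcal{Q}_0$ (which has $3[\mathcal{N}_0]+2[\mathcal{N}_1]$, i.e.\ $5$ composition factors, with the middle layer necessarily $\mathcal{N}_1$-isotypic) and likewise constrains $\mathcal{Q}_1$ ($2[\mathcal{N}_0]+4[\mathcal{N}_1]$, $6$ factors).

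Next I would determine $\Rad(\mathcal{Q}_0)/\Rad^2(\mathcal{Q}_0)$ and $\Rad(\mathcal{Q}_1)/\Rad^2(\mathcal{Q}_1)$ from the Cartan numbers already computed: $\dim\Hom(\mathcal{Q}_0,\mathcal{Q}_1)=\dim\Hom(\mathcal{Q}_1,\mathcal{Q}_0)=2$ and $\dim\Hom(\mathcal{Q}_0,\mathcal{Q}_0)=3$, $\dim\Hom(\mathcal{Q}_1,\mathcal{Q}_1)=4$. The number of arrows $\mathcal{N}_i\to\mathcal{N}_j$ in the Gabriel quiver is $\dim\Ext^1(\mathcal{N}_i,\mathcal{N}_j)=\dim\bigl(\Rad(\mathcal{Q}_i)/\Rad^2(\mathcal{Q}_i)\bigr)_{\mathcal{N}_j}$; using the composition series \eqref{Eq: composition factors of Q} and the self-dual (hence symmetric) Loewy structure, one reads off that there is exactly one arrow in each direction between $\mathcal{N}_0$ and $\mathcal{N}_1$, and the self-extension multiplicities are pinned down by matching against the total composition-factor count. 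Concretely, for $\lambda=0$ the second radical layer of $\mathcal{Q}_0$ must be $\mathcal{N}_1$ (not $\mathcal{N}_0$, else the uniserial piece $\mathcal{T}_0$ of Lemma \ref{Lem: T0} could not embed as a quotient), and then symmetry plus the composition count leaves only the displayed shape; for $\mathcal{Q}_1$, the middle two layers must absorb the remaining $\mathcal{N}_0$'s, and the existence of $\mathcal{T}_0$ again fixes which of the two a priori possibilities occurs. For $\lambda\neq0$ I would instead use the uniserial modules $\mathcal{T}_1$ (Lemma \ref{Lem: T1}) and $\widehat{\mathcal{T}}_1$ (Lemma \ref{Lem: hat T1}): $\widehat{\mathcal{T}}_1$ is uniserial of length $4$ with layers $\mathcal{N}_1,\mathcal{N}_0,\mathcal{N}_1,\mathcal{N}_0$, so it is a quotient of $\mathcal{Q}_1$; comparing its composition factors with $[\mathcal{Q}_1]=2[\mathcal{N}_0]+4[\mathcal{N}_1]$ shows $\mathcal{Q}_1$ has $\widehat{\mathcal{T}}_1$ as a quotient with a further $\mathcal{N}_1$ on top of the socle layer split off — yielding the claimed picture with a length-$3$ heart $\mathcal{N}_1\oplus(\mathcal{N}_0\ \mathcal{N}_0)\oplus\mathcal{N}_1$ — while $\mathcal{T}_1$ (uniserial, layers $\mathcal{N}_0,\mathcal{N}_1,\mathcal{N}_0$) shows $\mathcal{Q}_0$ contains the displayed uniserial module of length $5$, which by the composition count is all of $\mathcal{Q}_0$.

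The main obstacle I anticipate is the last step: ruling out the ``wrong'' Loewy configuration with the same composition factors. For $\mathcal{Q}_0$ when $\lambda\neq0$ one must exclude the possibility that $\mathcal{Q}_0$ is not uniserial — e.g.\ that $\Rad(\mathcal{Q}_0)/\Rad^2(\mathcal{Q}_0)$ is $2$-dimensional. This is where the fine information from \S\ref{Sec: repn of R(2delta) ell=1} is essential: the Remark after Lemma \ref{Lem: T0} ($\mathcal{T}_0$ is \emph{not} well-defined for $\lambda\neq0$) and the Remark after Lemma \ref{Lem: T1} ($\Rad(\mathcal{T}_1)\simeq\mathcal{N}_0$ when $\lambda=0$) are precisely the obstructions that flip between the two cases, so I would argue that $\Ext^1(\mathcal{N}_0,\mathcal{N}_0)\ne0$ exactly when $\lambda\neq0$ (exhibited by $\mathcal{T}_1$) and $\Ext^1(\mathcal{N}_0,\mathcal{N}_0)=0$ when $\lambda=0$ (forced by the non-existence of $\mathcal{T}_0$-type modules together with the quiver of $\fqH(2\delta)$), and dually for the structure near the socle. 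Once the quiver with relations of $\fqH(2\delta)$ is identified in each case — which the explicit $6$-dimensional module $\mathcal{V}$ and its submodules from \S5 make accessible — the radical series follow by a direct computation of $\Rad^k$ of the projective cover, and one checks the result is consistent with $\dim\fqH(2\delta)=\dim\Hom(\mathcal{Q}_0,\mathcal{Q}_0)+\dim\Hom(\mathcal{Q}_0,\mathcal{Q}_1)+\dim\Hom(\mathcal{Q}_1,\mathcal{Q}_0)+\dim\Hom(\mathcal{Q}_1,\mathcal{Q}_1)=3+2+2+4=11$ against Theorem \ref{Thm: dimension formula}.
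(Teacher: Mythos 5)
Your overall strategy (self-duality of $\mathcal{Q}_i$, the explicit uniserial modules of \S\ref{Sec: repns for ell=1}, and the composition-factor count of \eqref{Eq: composition factors of Q}) matches the paper's, but two links in your chain are wrong as stated.

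First, the self-duality of $\mathcal{Q}_i$ is not a consequence of ``the projective cover of a self-dual simple is self-dual''—that is false for a general algebra, since the $\vee$-dual of a projective is an injective. The paper instead observes that $\mathcal{Q}_0=F_0\mathcal{M}_0$ and $\mathcal{Q}_1=F_1\widehat{\mathcal{M}}_1$ are projective-\emph{injective} (because $\mathcal{M}_0$ and $\widehat{\mathcal{M}}_1$ are projective-injective over the simple algebra $\fqH(2\delta-\alpha_0)\simeq\Mat(2,\bR)$, resp.\ the local Nakayama algebra $\fqH(2\delta-\alpha_1)\simeq\bR[x]/(x^2)$, and the functors $F_i$ preserve projective-injectives by biadjointness). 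Hence $\mathcal{Q}_i^\vee$ is again indecomposable projective-injective, so is $\mathcal{Q}_0$ or $\mathcal{Q}_1$, and then \eqref{Eq: composition factors of Q} forces $\mathcal{Q}_i^\vee\simeq\mathcal{Q}_i$. You need this route; the shortcut you wrote does not hold.

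Second, your identification of the flipping obstruction has the two parameter regimes swapped. You claim $\Ext^1(\mathcal{N}_0,\mathcal{N}_0)\ne 0$ exactly when $\lambda\ne 0$, ``exhibited by $\mathcal{T}_1$''. In fact, for $\lambda\ne 0$ the module $\mathcal{T}_1$ is uniserial with layers $\mathcal{N}_0/\mathcal{N}_1/\mathcal{N}_0$ and $\mathcal{Q}_0$ is uniserial of length $5$ with alternating factors, so $\Rad(\mathcal{Q}_0)/\Rad^2(\mathcal{Q}_0)\simeq\mathcal{N}_1$ and $\Ext^1(\mathcal{N}_0,\mathcal{N}_0)=0$; the loop in the Gabriel quiver sits at the vertex $1$. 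It is for $\lambda=0$ (where $\Rad(\mathcal{T}_1)\simeq\mathcal{N}_0$, per the remark after Lemma \ref{Lem: T1}) that a non-split self-extension of $\mathcal{N}_0$ appears, reflected in the second radical layer $\Rad(\mathcal{Q}_0)/\Rad^2(\mathcal{Q}_0)\simeq\mathcal{N}_0\oplus\mathcal{N}_1$ of the displayed diamond. Related to this, you also claim that for $\lambda=0$ the second radical layer of $\mathcal{Q}_0$ ``must be $\mathcal{N}_1$''—it is not; the $\mathcal{N}_0$ splits off there. Finally, proposing to ``identify the quiver with relations of $\fqH(2\delta)$'' as the last step is circular, since the quiver presentation in Proposition \ref{ell=1 case} is \emph{derived from} the radical series, not the other way around. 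The paper's actual mechanism is cleaner: $\mathcal{T}_0$ (for $\lambda=0$) and $\widehat{\mathcal{T}}_1$, $\widehat{\mathcal{T}}_1^\vee$ (for $\lambda\ne 0$) are quotients of the relevant $\mathcal{Q}_i$; comparing with \eqref{Eq: composition factors of Q} and using self-duality of the heart then forces the displayed Loewy diagrams without any quiver computation.
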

\begin{proof}

Recall the anti-involution of $\fqH(\beta)$ fixing all defining generators elementwise, which yields the duality
$ M \mapsto M^\vee = \Hom_\bR(M, \bR)$ on the category of $\fqH(\beta)$-modules. As $\mathcal{Q}_i$ are indecomposable projective-injective modules,
$\mathcal{Q}_i^\vee$ is isomorphic to $\mathcal{Q}_0$ or $\mathcal{Q}_1$. Then, (\ref{Eq: composition factors of Q}) implies that
$\mathcal{Q}_i$ are self-dual. Thus, $\Soc(\mathcal{Q}_i)\simeq \Top(\mathcal{Q}_i)\simeq\mathcal{N}_i$ and
the heart of $\mathcal{Q}_i$ is self-dual.

Suppose $\lambda = 0$. Then $\Rad(\mathcal{Q}_0)/\Soc(\mathcal{Q}_0)$ has the quotient module
$$
\mathcal{N}'=
\begin{array}{c}
\mathcal{N}_1 \\
\mathcal{N}_1
\end{array}
$$
by Lemma \ref{Lem: T0}. 
Since $[\Rad(\mathcal{Q}_0)/\Soc(\mathcal{Q}_0)]=[\mathcal{N}_0]+2[\mathcal{N}_1]$,  
$\mathcal{N}_0$ appears in $\Soc(\Rad(\mathcal{Q}_0)/\Soc(\mathcal{Q}_0))$. Taking its dual, we know that 
$\mathcal{N}_0$ appears in $\Top(\Rad(\mathcal{Q}_0)/\Soc(\mathcal{Q}_0))$. Hence,
$$
0 \to \mathcal{N}_0 \to \Rad(\mathcal{Q}_0)/\Soc(\mathcal{Q}_0) \to \mathcal{N}' \to 0
$$ 
splits,
and we conclude that $\mathcal{Q}_0$ is as in the assertion.
Similarly, $\Rad(\mathcal{Q}_1)/\Soc(\mathcal{Q}_1)$ has the quotient module
$$
\mathcal{N}''=
\begin{array}{c}
\mathcal{N}_1 \\
\mathcal{N}_0
\end{array}
$$
and its dual as its submodule. Then $\Ext^1(\mathcal{N}_1, \mathcal{N}_i)\ne 0$, for $i=0,1$, implies that
$$ \Top(\Rad(\mathcal{Q}_1)/\Soc(\mathcal{Q}_1))\simeq \mathcal{N}_0 \oplus \mathcal{N}_1. $$
Thus, it suffices to show that $\mathcal{N}''$ appears as a submodule. The definition of $ \mathcal{Q}_1$ implies 
$$
0 \to F_1\mathcal{M}_1 \to \mathcal{Q}_1 \to F_1\mathcal{M}_1 \to 0
$$
and $[F_1\mathcal{M}_1]=[\mathcal{N}_0]+2[\mathcal{N}_1]$. In particular, we have $\Soc(F_1\mathcal{M}_1)\simeq\mathcal{N}_1$, 
$\Top(F_1\mathcal{M}_1)\simeq \mathcal{N}_1$, and $F_1\mathcal{M}_1$ is uniserial. 
Hence, $\Rad(\mathcal{Q}_1)/\Soc(\mathcal{Q}_1)$ has a submodule which is isomorphic to $\mathcal{N}''$. We conclude that
$\mathcal{Q}_1$ is as in the assertion.

Suppose that $\lambda \ne 0$. Lemma \ref{Lem: hat T1} implies that we have uniserial modules
$$
\widehat{\mathcal{T}}_1 = \begin{array}{c}
                     \mathcal{N}_1 \\
                     \mathcal{N}_0 \\
                     \mathcal{N}_1 \\
                     \mathcal{N}_0
                   \end{array},
\qquad
\widehat{\mathcal{T}}_1^\vee = \begin{array}{c}
                     \mathcal{N}_0 \\
                     \mathcal{N}_1 \\
                     \mathcal{N}_0 \\
                     \mathcal{N}_1
                   \end{array} ,
 $$
and epimorphisms $ \mathcal{Q}_0 \twoheadrightarrow \widehat{\mathcal{T}}_1^\vee$ and $\mathcal{Q}_1 \twoheadrightarrow \widehat{\mathcal{T}}_1$.
Therefore, $\mathcal{Q}_0$ is as in the assertion and the assertion for $\mathcal{Q}_1$ follows from $\eqref{Eq: composition factors of Q}$
and the self-duality of $\Rad(\mathcal{Q}_1)/\Soc(\mathcal{Q}_1)$.
\end{proof}

\begin{prop}\label{ell=1 case}
If $\ell=1$, then $\fqH(2\delta)$ is a symmetric algebra of tame representation type.
\end{prop}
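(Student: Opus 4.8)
The plan is to identify the basic algebra of $\fqH(2\delta)$ closely enough to recognise it as a biserial algebra, and then to appeal to general theory. Throughout, recall that $\fqH(\beta)$ is a symmetric algebra (a standard feature of cyclotomic quiver Hecke algebras that has been used throughout this series) and that, by the proof of Proposition~\ref{Prop: radical series of R(2delta) l=1}, the two simple modules $\mathcal{N}_0,\mathcal{N}_1$ are self-dual; hence $\Top(\mathcal{Q}_i)\simeq\Soc(\mathcal{Q}_i)\simeq\mathcal{N}_i$ and the hearts of $\mathcal{Q}_0,\mathcal{Q}_1$ are self-dual. From the $\Hom$-space computations preceding \eqref{Eq: composition factors of Q}, the basic algebra $B:=\End_{\fqH(2\delta)}(\mathcal{Q}_0\oplus\mathcal{Q}_1)^{\mathrm{op}}$ is $11$-dimensional with Cartan matrix $\left(\begin{smallmatrix} 3 & 2 \\ 2 & 4 \end{smallmatrix}\right)$; its Gabriel quiver has two vertices and is read off from the second radical layers of $\mathcal{Q}_0$ and $\mathcal{Q}_1$ displayed in Proposition~\ref{Prop: radical series of R(2delta) l=1}, which differ in the two cases $\lambda=0$ and $\lambda\ne0$. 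In each case one then fixes a minimal set of relations and checks, using the known Loewy lengths, the conditions $\Top=\Soc$, the self-duality of the heart, and the count $\dim B=11$, that these relations suffice.

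Granting such a presentation of $B$, the next step is to observe directly from the Loewy diagrams in Proposition~\ref{Prop: radical series of R(2delta) l=1} that $\mathcal{Q}_0$ and $\mathcal{Q}_1$ are biserial modules: modulo its (simple) socle each is an amalgam along its (simple) top of at most two uniserial modules, and dually each has at most two uniserial submodules meeting in its socle. Since the anti-involution of $\fqH(2\delta)$ fixing the defining generators gives an isomorphism $\fqH(2\delta)\simeq\fqH(2\delta)^{\mathrm{op}}$, the indecomposable projective right modules are biserial as well, so $\fqH(2\delta)$ is a biserial algebra. By the theorem that biserial algebras over an algebraically closed field are never wild, $\fqH(2\delta)$ has finite or tame representation type. (In fact one expects $B$ to be special biserial, which yields the same dichotomy more directly.)

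It remains to exclude the representation-finite case. A symmetric algebra of finite representation type over an algebraically closed field is a Brauer tree algebra; the Brauer tree of an algebra with exactly two simple modules is the path with two edges, whose Cartan matrix, for any exceptional multiplicity $m$, is $\left(\begin{smallmatrix} m+1 & 1 \\ 1 & 2 \end{smallmatrix}\right)$ or $\left(\begin{smallmatrix} m+1 & m \\ m & m+1 \end{smallmatrix}\right)$ — both of odd determinant and with off-diagonal entries incompatible with $\left(\begin{smallmatrix} 3 & 2 \\ 2 & 4 \end{smallmatrix}\right)$ (whose determinant is $8$). Hence $\fqH(2\delta)$ is not representation-finite, and therefore has tame representation type, as claimed. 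A robust alternative for this last step is to exhibit an explicit one-parameter family of pairwise non-isomorphic indecomposables — band modules supported on the cycle of the Gabriel quiver, assembled from the uniserial modules $\mathcal{T}_0$ (for $\lambda=0$) and $\mathcal{T}_1,\widehat{\mathcal{T}}_1$ (for $\lambda\ne0$) built in Section~\ref{Sec: repns for ell=1}.

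The main obstacle is the first step: writing down a correct and complete set of relations for $B$ from the Loewy data — in particular in the case $\lambda\ne0$, where $\mathcal{Q}_0$ is uniserial of length five and $\mathcal{Q}_1$ has a non-uniserial heart, so one must take care that the two uniserial constituents of $\mathcal{Q}_1$ overlap only in the socle — and then verifying that the resulting bound quiver algebra really is biserial and has dimension exactly $11$. Once this presentation is secured, everything else is either a citation or a short check.
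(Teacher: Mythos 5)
Your proposal has two genuine gaps, both of which you partially acknowledge but do not resolve.

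First, you take symmetricity of $\fqH(2\delta)$ as a known ``standard feature,'' but symmetricity is \emph{part of the assertion of the proposition}, and the paper proves it rather than citing it. What the anti-involution fixing the generators gives you for free is only a duality $M\mapsto M^\vee$ and hence self-injectivity; the radical series in Proposition~\ref{Prop: radical series of R(2delta) l=1} then give $\Soc(\mathcal{Q}_i)\simeq\Top(\mathcal{Q}_i)\simeq\mathcal{N}_i$, i.e.\ the algebra is \emph{weakly} symmetric, which is strictly weaker. The paper's actual proof of symmetricity for $\lambda=0$ is to write the basic algebra with an undetermined scalar $c$ in the relation $\beta\alpha\delta=c\,\delta\beta\alpha$ and then to identify $\End(\mathcal{Q}_1)$ with the corner algebra $e(0101)\fqH(2\delta)e(0101)$, which is commutative by Lemma~\ref{Lem: eR(2delta)e}(1), forcing $c=1$; for $\lambda\ne0$ it identifies the basic algebra with the explicit symmetric algebra of \cite[Prop.~(A)]{EN02}. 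Neither of these steps appears in your argument, and without symmetricity your Brauer-tree (Cartan determinant) argument for excluding finite type does not get off the ground either, since the Gabriel--Riedtmann classification you invoke is for symmetric algebras of finite type, not merely self-injective ones.

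Second, you correctly identify that ``writing down a correct and complete set of relations for $B$'' is the main obstacle, and you leave it as an unchecked claim. That is exactly the technical content of the paper's proof: the explicit relations, in each of the two cases $\lambda=0$ and $\lambda\ne0$, are what allow one to recognise the algebra as special biserial and to resolve the unknown scalar. Until those relations are pinned down, the dimension count $\dim B=11$ and the Cartan matrix $\left(\begin{smallmatrix}3&2\\2&4\end{smallmatrix}\right)$ only constrain the answer and do not determine it. Your concluding alternative — invoking Crawley-Boevey's theorem that biserial algebras are not wild, and then ruling out finite type by the parity of $\det C$ — is a legitimate and rather elegant route \emph{once the presentation and symmetricity are secured}, and is genuinely different from the paper's, which instead observes that the basic algebra is special biserial (so tame-or-finite by Wald--Wachsb\"usch) and exhibits infinite type by surjecting $\End(\mathcal{Q}_1)$ onto $\bR[x,y]/(x^2,xy,y^2)$ when $\lambda=0$, and by matching with the known tame algebra of \cite{EN02} when $\lambda\ne0$. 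So the high-level strategy is sound, but as written the proposal omits the two hardest steps.
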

\begin{proof}

Suppose that $\lambda = 0$. By Proposition \ref{Prop: radical series of R(2delta) l=1}(1), the basic algebra of $\fqH(2\delta)$ is
$$
\xy
(0,0) *{\begin{array}{c} 0 \end{array}}="A", (20,0) *{ \begin{array}{c} 1 \end{array} }="B"
\ar @/_/_{\alpha} "A";"B"
\ar @/_/_{\beta} "B";"A"
\ar @(lu,ld)_\gamma "A";"A"
\ar @(rd,ru)_\delta "B";"B"
\endxy
$$
with relations
$$ \alpha\beta = \beta\gamma = \gamma\alpha = \delta^2 = 0, \quad \gamma^2=\alpha\delta\beta,  \quad \beta\alpha\delta = c\delta\beta\alpha, \;\;
\text{for some $c\in \bR^\times$.} $$
Here, we choose $\delta$ to be $\mathcal{Q}_1\twoheadrightarrow\mathcal{Q}_1/F_1\mathcal{M}_1\simeq F_1\mathcal{M}_1\hookrightarrow \mathcal{Q}_1$.

As $e(0101)\mathcal{N}_1\ne0$ and $\fqH(2\delta)e(0101)$ is a projective $\fqH(2\delta)$-module, we have a surjective $\fqH(2\delta)$-module homomorphism
$\fqH(2\delta)e(0101)\to \mathcal{Q}_1$. Then,
$$ \dim \fqH(2\delta)e(0101)=8=\dim \mathcal{Q}_1 $$
implies that $\fqH(2\delta)e(0101)\simeq\mathcal{Q}_1$ and we have $\End(\mathcal{Q}_1)\simeq e(0101)\fqH(2\delta)e(0101)$.
Now we observe that Lemma \ref{Lem: eR(2delta)e}(1) is valid for $\ell=1$. In particular, $\End(\mathcal{Q}_1)$ is commutative.
As the paths $\beta\alpha$ and $\delta$ can be regarded as elements in $\End(\mathcal{Q}_1)$, we have $c=1$, i.e.,
$$ \beta\alpha\delta = \delta\beta\alpha. $$
Thus, $\fqH(2\delta)$ is a symmetric algebra.
The number of outgoing arrows and the number of incoming arrows are $2$ at each vertex, and
$\alpha\beta = \beta\gamma = \gamma\alpha = \delta^2 = 0$ implies that $\fqH(2\delta)$ is a special biserial algebra.
Further, we may define a surjective algebra homomorphism from $\End(\mathcal{Q}_1)$ to $\bR[x,y]/(x^2,xy,y^2)$ by
$\delta\mapsto x, \beta\alpha\mapsto y$. Thus, $\fqH(2\delta)$ is tame if $\lambda=0$.

Suppose that $\lambda \ne 0$. By Proposition \ref{Prop: radical series of R(2delta) l=1}(2),
the basic algebra of $\fqH(2\delta)$ is
$$
\xy
(0,0) *{\begin{array}{c} 0 \end{array}}="A", (20,0) *{ \begin{array}{c} 1 \end{array} }="B"
\ar @/_/_{\alpha} "A";"B"
\ar @/_/_{\beta} "B";"A"
\ar @(rd,ru)_\gamma "B";"B"
\endxy
$$
with relations $ \alpha\gamma = \gamma\beta = 0 $, $(\beta\alpha)^2 = \gamma^2$.
It is a special biserial algebra. Indeed, it is the algebra given in \cite[Prop.(A)]{EN02}.
Thus, it is a symmetric algebra of tame type.
\end{proof}

The reason we need extra task for $\ell=1$ is that the two algebras
$e(0101)\fqH(2\delta)e(0101)$ and $e(01010)\fqH(2\delta + \alpha_0)e(01010)$ are different, which we know from
$$ \dim e(0101)\fqH(2\delta)e(0101) = 4, \quad \dim e(01010)\fqH(2\delta + \alpha_0)e(01010) = 8. $$
Thus, the previous argument used for showing wildness is not valid when we
show the wildness of $\fqH(3\delta)$ for $\ell=1$. Hence, we argue as in the next proposition.

\begin{prop}\label{wild for ell=1}
If $\ell=1$, then $\fqH(3\delta)$ has wild representation type.
\end{prop}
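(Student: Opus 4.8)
The plan is to produce a small idempotent truncation of $\fqH(3\delta)$ and prove that it is wild, in the spirit of the proof of Proposition \ref{Prop: fqH(2delta) is wild}. As the remark preceding the proposition indicates, the obstacle is that the bimodule‑tower argument of Lemma \ref{Lem: eR(2delta)e} — which climbs through the algebras $e^{t,k}\fqH(\beta^{t,k})e^{t,k}$ one box at a time and identifies them via Kashiwara's isomorphisms — no longer terminates in a manageable corner: for $\ell=1$ the dimensions $\dim e(0101)\fqH(2\delta)e(0101)=4$ and $\dim e(01010)\fqH(2\delta+\alpha_0)e(01010)=8$ already show the tower does not stabilise, and the naive corner at $3\delta$ is $16$‑dimensional, too large to read off a wild presentation directly. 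So the multiplicative structure must be determined by hand.

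Concretely, I would first fix an idempotent $e=e(\nu^{(1)})+e(\nu^{(2)})$ for two well‑chosen residue sequences $\nu^{(1)},\nu^{(2)}\in I^6$ of content $3\delta$ (necessarily beginning with $0$, since for $\ell=1$ the relation $x_1^{\langle h_{\nu_1},\Lambda_0\rangle}e(\nu)=0$ forces $e(\nu)=0$ unless $\nu_1=0$) for which Theorem \ref{Thm: dimension formula} makes $\dim e_i\fqH(3\delta)e_i$ small while $e_1\fqH(3\delta)e_2$ and $e_2\fqH(3\delta)e_1$ are nonzero; counting standard tableaux then pins down the dimensions of all graded pieces. Second, I would determine the multiplication: the $\Z$‑grading together with the defining relations severely restricts which monomials in the $x_k$ and which products $\psi\cdots\psi$ can survive, and to certify that the surviving products of $\psi$'s are genuinely nonzero I would test them against explicit $\fqH(3\delta)$‑modules obtained by adjoining one box of residue $1$ to the $\fqH(2\delta+\alpha_0)$‑modules $\mathcal{V}$, $\mathcal{O}_0$, $\mathcal{O}_1$, $\widehat{\mathcal{O}}_1$ of Section \ref{Sec: repns for ell=1} (and, if necessary, one or two further ad hoc extensions), exactly as $\mathcal{M}_i,\mathcal{N}_i,\mathcal{O}_i$ were used in the lower‑rank cases. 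Third, this should yield a quiver‑with‑relations presentation of $e\fqH(3\delta)e$ — I expect two vertices each carrying loops together with an arrow in each direction, and relations of the shape occurring in \cite[I.10.8]{Erd90} — from which wildness of $e\fqH(3\delta)e$, hence of $\fqH(3\delta)$, follows; should the presentation not literally match the list, I would instead exhibit a surjection of $e\fqH(3\delta)e$ onto a manifestly wild algebra, e.g. a local algebra $\bR\langle x,y\rangle/(x^2,y^2,\dots)$ or a three‑arrow Kronecker algebra.

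An alternative route avoiding corner algebras is to argue with projectives: since $F_0,F_1$ preserve projectivity, $F_0\mathcal{Q}_j$ and $F_1\mathcal{Q}_j$ are projective $\fqH(3\delta)$‑modules, and biadjointness of $E_i,F_i$ together with $E_iF_j\simeq F_jE_i$ for $i\ne j$ and $E_iF_i\simeq F_iE_i\oplus(\cdots)$ computes all Hom‑spaces among them, hence the Cartan matrix of $\fqH(3\delta)$. Using self‑duality of $\fqH(3\delta)$ and the Loewy data of $\mathcal{Q}_0,\mathcal{Q}_1$ from Proposition \ref{Prop: radical series of R(2delta) l=1}, enough radical layers of one of the $F_i\mathcal{Q}_j$ can be read off to exhibit that the Gabriel quiver of $\fqH(3\delta)$ on its three simple modules is too large — a vertex with at least three loops, or a pair of vertices joined by at least three arrows in one direction, or more generally a quiver whose separated quiver (after passing to the radical‑square‑zero quotient) is not a disjoint union of Dynkin and Euclidean diagrams — which already forces wildness.

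I expect the main obstacle to be the second step of the first plan: fixing the exact relations of the corner algebra — which triple and quadruple products of $\psi$'s vanish, and the precise form of the commutator/``$(\beta\alpha)^2=\gamma^2$''‑type relations among the loops — without the bimodule‑tower shortcut. This calls for a sufficient supply of explicit $\fqH(3\delta)$‑modules to separate all the cases, and verification of the defining relations on each, a computation of the same flavour as those in Section \ref{Sec: repns for ell=1} but longer.
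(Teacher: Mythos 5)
Your sketches are plausible directions but neither comes close to a proof, and both miss the trick that makes the paper's argument short. The paper does not fix a small two-idempotent corner and try to determine its relations; it takes $e=\sum e(\nu,1)$, summing over all $\nu$ with $(\nu,1)\in I^{3\delta}$, so that $e\fqH(3\delta)e$ is exactly $E_1F_1\fqH(2\delta+\alpha_0)$ viewed as an algebra. The crucial observation is that $E_1\fqH(2\delta+\alpha_0)=0$, because no partition of $4$ has content $2\delta+\alpha_0-\alpha_1=3\alpha_0+\alpha_1$. Hence the Kang--Kashiwara bimodule isomorphism, applied with $\langle h_1,\Lambda_0-2\delta-\alpha_0\rangle=2$, degenerates to
$$
E_1F_1\fqH(2\delta+\alpha_0)\;\simeq\;\fqH(2\delta+\alpha_0)\otimes_{\bR}\bR[t]/(t^2),
$$
and passing to radical-square quotients gives
$$
e\fqH(3\delta)e/\Rad^2\bigl(e\fqH(3\delta)e\bigr)\;\simeq\;\fqH(2\delta+\alpha_0)\otimes_{\bR}\bR[t]\big/\bigl(t^2,\,t\Rad(\fqH(2\delta+\alpha_0)),\,\Rad^2(\fqH(2\delta+\alpha_0))\bigr).
$$
This is exactly the categorical replacement for the bimodule tower you correctly note cannot be run at $3\delta$. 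From this point, Lemma \ref{Lem: radical series for 2delta+alpha0} (the two uniserial $\fqH(2\delta+\alpha_0)$-modules with Loewy factors $\mathcal{O}_1,\mathcal{O}_1$ and $\mathcal{O}_1,\mathcal{O}_0$) immediately produces a vertex of the Gabriel quiver of $e\fqH(3\delta)e$ carrying two loops and receiving an arrow, a proper subquiver that is wild by \cite[I.10.8]{Erd90}. No determination of which $\psi$-products survive is needed at all: the step you flag as the chief obstacle never arises. Your second plan, computing the Cartan matrix and Loewy layers of $F_i\mathcal{Q}_j$, is closer in spirit to the paper's endgame but still requires working those radical layers out by hand; the paper sidesteps this by truncating to $e\fqH(3\delta)e$, where the radical-square quotient comes for free from categorification and the already-established Lemma \ref{Lem: radical series for 2delta+alpha0}.
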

\begin{proof}
It follows from Theorem \ref{Thm: dimension formula} that $\dim e(\nu,1) \fqH(2\delta + \alpha_0) = 0 $ for any $\nu  \in I^{2\delta}$.
Here, $e(\nu,1)$ is the idempotent corresponding to the concatenation of $\nu$ and $(1)$. Thus, if we define
$$ e = \sum_{\nu \in I^{2\delta} } e(\nu, 1) $$
then we have $ E_1 \fqH(2\delta + \alpha_0) = e\fqH(2\delta + \alpha_0) = 0$. Since $\langle h_1, \Lambda_0 - 2\delta - \alpha_0 \rangle =2$,
we have an bimodule isomorphism
\begin{align*}
E_1F_1 \fqH( 2\delta+\alpha_0) &\simeq F_1E_1 \fqH(2\delta + \alpha_0) \oplus \fqH(2\delta + \alpha_0) \otimes_{\bR} \bR[t]/(t^2) \\
&\simeq \fqH(2\delta + \alpha_0) \otimes_{\bR} \bR[t]/(t^2),
\end{align*}
by \cite[Theorem 5.2]{KK11}, and it induces an algebra isomorphism
\begin{align*}
e \fqH(3\delta) e  &/ \Rad^2( e \fqH(3\delta) e )  \\
& \simeq  \fqH(2\delta + \alpha_0) \otimes_{\bR} \bR[t]
/ (t^2, t \Rad( \fqH(2\delta + \alpha_0) ), \Rad^2( \fqH(2\delta + \alpha_0) ) ).
\end{align*}
Thus, if we denote the irreducible $\bR[t]/(t^2)$-module by $\mathcal{S}$, then $e \fqH(3\delta) e / \Rad^2( e \fqH(3\delta) e )$
has the irreducible modules $\mathcal{O}_0 \otimes \mathcal{S}$ and $\mathcal{O}_1 \otimes \mathcal{S}$,
where $\mathcal{O}_0$ and $\mathcal{O}_1$ are irreducible $\fqH(2\delta + \alpha_0)$-modules in Lemma \ref{Lem: irr of R(2delta+alpha0)}.
By Lemma \ref{Lem: radical series for 2delta+alpha0}, the projective cover of $\mathcal{O}_1 \otimes \mathcal{S}$ has
the following radical series:
$$ \begin{array}{ccc}
      & \mathcal{O}_1 \otimes \mathcal{S} &  \\
     \mathcal{O}_1 \otimes \mathcal{S} & \mathcal{O}_1 \otimes \mathcal{S} & \mathcal{O}_0 \otimes \mathcal{S}\ .
   \end{array}
$$
It implies that the quiver of $e \fqH(3\delta) e$ has
$$
\xy
(0,0) *{\begin{array}{c} \bullet \end{array}}="A", (20,0) *{ \begin{array}{c} \bullet \end{array} }="B"
\ar  "B";"A"
\ar @(r,u) "B";"B"
\ar @(d,r) "B";"B"
\endxy
$$
as a proper subquiver. It follows from \cite[I.10.8]{Erd90} that $e \fqH(3\delta) e$ is wild, and so is $\fqH(3\delta)$.
\end{proof}

\subsection{ Representation type of $\fqH(\beta)$}
In this section, we prove the Erdmann-Nakano type theorem for $\fqH(\beta)$ with arbitrary parameter value $\lambda\in\bR$.

Let $A$ and $B$ be finite dimensional $\bR$-algebras. If there exists a constant $C>0$ and functors
$$
F:\;A\text{\rm -mod} \rightarrow B\text{\rm -mod}, \quad
G:\;B\text{\rm -mod} \rightarrow A\text{\rm -mod}
$$
such that, for any $A$-module $M$,
\begin{itemize}
\item[(1)]
$M$ is a direct summand of $GF(M)$ as an $A$-module,
\item[(2)]
$\dim F(M)\le C\dim M$,
\end{itemize}
then wildness of $A$ implies wildness of $B$ {\cite[Prop.2.3]{EN02}}. As a corollary, we have the following lemma.

\begin{lemma}\label{wild case}
If $\fqH( k\delta + \alpha_0 \cdots + \alpha_{i-1} )$ is wild, so is $\fqH( k\delta + \alpha_0 \cdots + \alpha_{i} )$.
\end{lemma}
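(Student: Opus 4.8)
The plan is to invoke the criterion recalled just above the lemma (\cite[Prop.2.3]{EN02}) with $A=\fqH(\beta)$ for $\beta=k\delta+\alpha_0+\cdots+\alpha_{i-1}$, with $B=\fqH(\beta+\alpha_i)=\fqH(k\delta+\alpha_0+\cdots+\alpha_i)$, and with the pair of exact functors $F=F_i\colon A\text{-mod}\to B\text{-mod}$ and $G=E_i\colon B\text{-mod}\to A\text{-mod}$ categorifying the Chevalley generators. Here $F_i$ is given by tensoring with the $(\fqH(\beta+\alpha_i),\fqH(\beta))$-bimodule $\fqH(\beta+\alpha_i)e(\beta,i)$, where $e(\beta,i)=\sum_{\nu\in I^\beta}e(\nu*i)$, and $E_i(N)=e(\beta,i)N$.

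For condition (2), I would observe that $\fqH(\beta+\alpha_i)e(\beta,i)$ is finite dimensional, hence finitely generated as a right $\fqH(\beta)$-module, say by $r$ elements; then for every finite dimensional $A$-module $M$ the module $F_i(M)=\fqH(\beta+\alpha_i)e(\beta,i)\otimes_{\fqH(\beta)}M$ is spanned by the images of these $r$ generators tensored with $M$, so $\dim F_i(M)\le r\dim M$ and $C=r$ works. For condition (1), I would use the Kang--Kashiwara commutation isomorphism \cite[Theorem 5.2]{KK11} (the same input used in the proof of Proposition \ref{wild for ell=1}): since $\langle h_i,\Lambda_0-\beta\rangle\ge0$ one has a natural isomorphism
$$E_iF_i(M)\simeq F_iE_i(M)\oplus M^{\oplus\langle h_i,\Lambda_0-\beta\rangle}.$$
It then remains to check $\langle h_i,\Lambda_0-\beta\rangle\ge1$. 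From $\langle h_i,\Lambda_0\rangle=\delta_{i0}$ and $\langle h_i,\delta\rangle=0$ we get $\langle h_i,\Lambda_0-\beta\rangle=\delta_{i0}-\langle h_i,\alpha_0+\cdots+\alpha_{i-1}\rangle$, and reading off the affine Cartan matrix of type $A^{(1)}_\ell$ one finds that $\langle h_i,\alpha_0+\cdots+\alpha_{i-1}\rangle$ equals $0$ if $i=0$, equals $-1$ if $1\le i\le\ell-1$, and equals $-2$ if $i=\ell$ (the last value also covering the case $\ell=1$, $i=1$). Hence $\langle h_i,\Lambda_0-\beta\rangle\in\{1,2\}$ in every case, so $M$ is a direct summand of $E_iF_i(M)=GF(M)$.

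With both conditions established, \cite[Prop.2.3]{EN02} shows that wildness of $A=\fqH(k\delta+\alpha_0+\cdots+\alpha_{i-1})$ forces wildness of $B=\fqH(k\delta+\alpha_0+\cdots+\alpha_i)$, which is the assertion. The only point requiring a little care is that one needs $\langle h_i,\Lambda_0-\beta\rangle$ to be \emph{strictly} positive rather than merely nonnegative, since at weight $0$ the commutation isomorphism degenerates to $E_iF_i\simeq F_iE_i$ and no longer exhibits $M$ as a summand; the short Cartan-matrix computation above, including the edge cases $i=0$ and $\ell=1$, is precisely what rules this out.
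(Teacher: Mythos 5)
Your proof is correct and follows the same route as the paper: compute $\langle h_i,\Lambda_0-k\delta-\alpha_0-\cdots-\alpha_{i-1}\rangle\in\{1,2\}$ and then apply the Erdmann--Nakano criterion with $F=F_i$, $G=E_i$. The paper states this tersely (just the Cartan computation plus ``satisfies the assumptions''), while you helpfully spell out the two hypotheses: the dimension bound from finite-dimensionality of the bimodule realizing $F_i$, and the direct-summand property from the Kang--Kashiwara commutation isomorphism at strictly positive weight $\langle h_i,\Lambda_0-\beta\rangle$.
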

\begin{proof}
For $k\in \Z_{\ge0}$ and $0\le i\le\ell$, we have
$$
\langle h_i, \Lambda_0 - k\delta - \alpha_0 \cdots -\alpha_{i-1}  \rangle = \left\{
                                                                                \begin{array}{ll}
                                                                                  1 & \hbox{ if } 0 \le i \le \ell-1, \\
                                                                                  2 & \hbox{ if } i = \ell.
                                                                                \end{array}
                                                                              \right.
$$
Thus, the functor $F_i : \fqH( k\delta + \alpha_0 \cdots + \alpha_{i-1} )\text{-mod} \rightarrow
\fqH( k\delta + \alpha_0 \cdots + \alpha_{i} )\text{-mod}$ satisfies the assumptions (1) and (2) above.
\end{proof}

Recall that a weight $\mu$ with $V(\Lambda_0)_\mu \ne 0$ can be written as
$$\mu = \kappa - k \delta$$
 for some $\kappa \in \weyl \Lambda_0$ and $k\in \Z_{\ge0}$ and a weight $\mu$ of the above form always satisfies $V(\Lambda_0)_{\mu} \ne 0$.
Note that the pair $(\kappa, k)$ is determined uniquely by $\mu$.
Then, the following Erdmann-Nakano type theorem follows from Corollary \ref{Cor: fqH(delta) repn type},
Propositions \ref{Prop: fqH(2delta) is wild}, \ref{ell=1 case} and \ref{wild for ell=1}.

\begin{thm} \label{main theorem}
For $\kappa \in \weyl \Lambda_0$ and $k\in \Z_{\ge0}$, the finite quiver Hecke algebra $\fqH(\Lambda_0-\kappa+k\delta)$ of type $A_{\ell}^{(1)}$ ($\ell \ge 1$) is
\begin{enumerate}
\item simple if $k=0$,
\item of finite representation type but not semisimple if $k=1$,
\item of tame representation type if $\ell=1$ and $k=2$,
\item of wild representation type otherwise.
\end{enumerate}
\end{thm}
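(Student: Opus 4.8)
The plan is to reduce the question to the algebras $\fqH(k\delta)$ and then assemble the facts proved above. Write $\kappa = w\Lambda_0$ with $w\in\weyl$, so that $\Lambda_0-\kappa+k\delta = \Lambda_0 - w\Lambda_0 + k\delta \in \rlQ^+$; by Proposition \ref{Prop: repn type} the algebra $\fqH(\Lambda_0-\kappa+k\delta)$ has the same number of irreducible modules and the same representation type as $\fqH(k\delta)$ (this equivalence being, as noted, a derived equivalence). Hence it suffices to determine the representation type of $\fqH(k\delta)$ for every $k\in\Z_{\ge0}$, together with a mechanism to pass wildness from $\fqH(k\delta)$ to $\fqH((k+1)\delta)$.

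When $k=0$ we have $\Lambda_0-(\Lambda_0-\kappa)=\kappa\in\weyl\Lambda_0$, so $\fqH(\Lambda_0-\kappa)$ is simple, exactly as $\fqH(\delta-\alpha_i)$ is seen to be simple at the beginning of Section \ref{Sec: repns}; this proves (1). When $k=1$, Corollary \ref{Cor: fqH(delta) repn type} identifies $\fqH(\delta)$ with the Brauer tree algebra of the straight line of length $\ell$ without exceptional vertex, so it is of finite representation type, and since $\ell\ge1$ it is not semisimple (for instance $\mathcal{P}_1$ is uniserial of length $2$ by Theorem \ref{Thm: radical series}); transporting these properties along the derived equivalence of Proposition \ref{Prop: repn type} gives (2). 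When $k=2$, Proposition \ref{ell=1 case} gives tame type for $\ell=1$, proving (3), while Proposition \ref{Prop: fqH(2delta) is wild} gives wild type for $\ell\ge2$.

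It remains to prove wildness of $\fqH(k\delta)$ in the cases subsumed under ``otherwise'', namely $\ell\ge2$ with $k\ge2$ and $\ell=1$ with $k\ge3$. We argue by induction on $k$, the base cases being Proposition \ref{Prop: fqH(2delta) is wild} for $(\ell\ge2,\ k=2)$ and Proposition \ref{wild for ell=1} for $(\ell=1,\ k=3)$. For the inductive step, assume $\fqH(k\delta)$ is wild. Lemma \ref{wild case} applied successively for $i=0,1,\dots,\ell$, the instance $i=0$ being exactly $\fqH(k\delta)$, yields the wildness of $\fqH(k\delta+\alpha_0)$, then of $\fqH(k\delta+\alpha_0+\alpha_1)$, and finally of $\fqH(k\delta+\alpha_0+\cdots+\alpha_\ell)=\fqH((k+1)\delta)$. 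This closes the induction and completes the proof of (4).

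No individual step is hard, since all the substance already resides in Corollary \ref{Cor: fqH(delta) repn type} and Propositions \ref{Prop: fqH(2delta) is wild}, \ref{ell=1 case}, \ref{wild for ell=1}; the only points to be careful about are that the reduction through Proposition \ref{Prop: repn type} really does cover every admissible pair $(\kappa,k)$, and that for $\ell=1$ the wildness induction must be launched from $k=3$ rather than $k=2$, precisely because $\fqH(2\delta)$ is only tame in that case.
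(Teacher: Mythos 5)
Your argument is correct and follows the paper's intended proof exactly: reduce via Proposition \ref{Prop: repn type} to $\fqH(k\delta)$, read off cases $k=0,1$ from the extremal-weight observation and Corollary \ref{Cor: fqH(delta) repn type}, $k=2$ from Propositions \ref{ell=1 case} and \ref{Prop: fqH(2delta) is wild}, and propagate wildness upward by iterating Lemma \ref{wild case} along $i=0,\dots,\ell$, with the base of the $\ell=1$ induction correctly moved to $k=3$ via Proposition \ref{wild for ell=1}. The paper states the theorem as "follows from" the named results without spelling out the induction; your write-up simply makes that bookkeeping explicit.
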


When $\ell=1$ and $k=2$, we may study $\fqH(\Lambda_0-\kappa+\delta)$ in more detail. When $\lambda\ne0$, 
they are all biserial algebras. If they are special biserial algebras, we may classify them and 
they already appeared as tame block algebras of the Hecke algebras associated with the symmetric group. 
This is the topic of the next section.

\section{Symmetric quiver Hecke algebras of tame type}

We first classify two-point symmetric special biserial algebras up to Morita equivalence. 
For those who are not familiar with special biserial algebras, see \cite[II]{Erd90}.

\begin{thm} \label{first classification}
Suppose that a symmetric $\bR$-algebra $A=\bR Q/I$, where $Q$ is a connected quiver and $I$ is an admissible ideal,
has the following properties.
\begin{itemize}
\item[(a)]
$Q$ has two vertices, which we denote $Q_0=\{0,1\}$.
\item[(b)]
$A$ is a special biserial algebra.
\end{itemize}
Then, $A$ is one of the following algebras.
\begin{itemize}
\item[(1)]
$Q=$\begin{xy} (0,0) *{\bullet}="A", (15,0) *{\bullet}="B", \ar @/^4mm/ "A";"B"^\alpha \ar @/^4mm/ "B";"A"^\beta\end{xy}\quad
$(\alpha\beta)^m\alpha=(\beta\alpha)^m\beta=0$.
\item[(2)]
$Q=$\begin{xy} (0,0) *{\bullet}="A", (15,0) *{\bullet}="B", \ar @(lu,ld) "A";"A"_{\gamma} \ar @/^4mm/ "A";"B"^\alpha \ar @/^4mm/ "B";"A"^\beta \end{xy}
\quad such that the relations are either (2a) or (2b).
\begin{align*}
(2a)&\quad \beta\gamma=\gamma\alpha=0,\;\; \gamma^p=(\alpha\beta)^q. \\
(2b)&\quad \beta\alpha=\gamma^2=0,\;\; (\gamma\alpha\beta)^m=(\alpha\beta\gamma)^m.
\end{align*}
\item[(3)]
$Q=$\begin{xy} (0,0) *{\bullet}="A", (15,0) *{\bullet}="B", \ar @/^4mm/ "A";"B"^{\alpha,\alpha'} \ar @/^4mm/ "B";"A"^{\beta,\beta'} \end{xy}
\quad such that the relations are either (3a) or (3b).
\begin{align*}
(3a)&\quad \alpha\beta'=\beta'\alpha=\alpha'\beta=\beta\alpha'=0,\;\;(\alpha\beta)^p=(\alpha'\beta')^q,\;\;(\beta\alpha)^p=(\beta'\alpha')^q.\\
(3b)&\quad \alpha\beta'=\beta\alpha=\alpha'\beta=\beta'\alpha'=0,\;\;(\alpha\beta\alpha'\beta')^m=(\alpha'\beta'\alpha\beta)^m,\;\;
(\beta\alpha'\beta'\alpha)^m=(\beta'\alpha\beta\alpha')^m.
\end{align*}
\item[(4)]
$Q=$\begin{xy} (0,0) *{\bullet}="A", (15,0) *{\bullet}="B", \ar @(lu,ld) "A";"A"_{\gamma} \ar @/^4mm/ "A";"B"^\alpha \ar @/^4mm/ "B";"A"^\beta
\ar @(ru,rd) "B";"B"^\delta \end{xy}
\quad such that the relations are either (4a) or (4b) or (4c).
\begin{align*}
(4a)&\quad \beta\gamma=\gamma\alpha=\alpha\delta=\delta\beta=0,\;\; (\alpha\beta)^p=\gamma^q,\;\; (\beta\alpha)^p=\delta^r.\\
(4b)&\quad \beta\alpha=\gamma^2=\alpha\delta=\delta\beta=0,\;\; (\gamma\alpha\beta)^p=(\alpha\beta\gamma)^p,\;\;
(\beta\gamma\alpha)^p=\delta^q.\\
(4c)&\quad \alpha\beta=\beta\alpha=\gamma^2=\delta^2=0,\;\;(\beta\gamma\alpha\delta)^m=(\delta\beta\gamma\alpha)^m,\;\;
(\gamma\alpha\delta\beta)^m=(\alpha\delta\beta\gamma)^m.
\end{align*}
\end{itemize}
\end{thm}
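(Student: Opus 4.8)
The plan is to run a case analysis organized by the underlying quiver $Q$. First I would pin down $Q$ itself. Since $A$ is symmetric, $\dim_\bR\Ext^1(S_i,S_j)=\dim_\bR\Ext^1(S_j,S_i)$, so the number of arrows $i\to j$ equals the number of arrows $j\to i$; special biseriality forces at most two arrows into, and at most two arrows out of, each vertex; and $Q$ is connected with $Q_0=\{0,1\}$. A short bookkeeping argument (writing $a_{ij}$ for the number of arrows $i\to j$, so $a_{01}=a_{10}$, and noting that a connected symmetric algebra on two vertices has no simple projective) then shows that, up to interchanging $0$ and $1$, the quiver $Q$ is exactly one of the four quivers appearing in (1)--(4).

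Next, for a fixed $Q$ I would determine the admissible ideals $I$ for which $\bR Q/I$ is special biserial and symmetric. The special biserial axiom says that for each arrow $\beta$ there is at most one arrow $\alpha$ with $\alpha\beta\notin I$ and at most one arrow $\gamma$ with $\beta\gamma\notin I$; since $A$ is symmetric, hence self-injective with no simple block, these ``successor'' assignments assemble into a single permutation $\sigma$ of the arrow set, and the monomial relations are precisely $\alpha\beta\in I$ whenever $\beta\ne\sigma(\alpha)$. For the quivers carrying a pair of parallel arrows (cases (3) and (4)) I would first apply an invertible change of basis on each space of parallel arrows so that $\sigma$ becomes ``pure'' (it permutes the chosen basis arrows), which cuts the data of $\sigma$ down to finitely many combinatorial types. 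Enumerating these types vertex by vertex is what produces the splittings (2a)/(2b), (3a)/(3b) and (4a)/(4b)/(4c): each type records how many $\sigma$-cycles pass through each vertex and in which cyclic order. Once $\sigma$ is fixed, the symmetric hypothesis rigidly determines the non-monomial relations: for each vertex $i$ the projective $Ae_i$ has one-dimensional socle isomorphic to its top $S_i$, and $\Rad(Ae_i)/\Soc(Ae_i)$ is the direct sum of the (at most two) uniserial modules obtained by following the $\sigma$-cycles through $i$; matching socles under a symmetrizing form forces the two maximal paths $i\to i$ to agree up to a nonzero scalar and forces their lengths, hence the exponents, to be compatible. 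This yields presentations of the stated shape with as-yet-undetermined nonzero structure constants, which I would finally normalize to $1$ by rescaling each arrow $\alpha\mapsto c_\alpha\alpha$ (again using a $\mathrm{GL}_2(\bR)$ change of basis on parallel arrows where needed), arriving at exactly (1)--(4); conversely each listed algebra is immediately checked to be symmetric and special biserial, with parameters $m,p,q,r$ ranging over the positive integers. As a guiding principle and consistency check one may recall that symmetric special biserial algebras are precisely the Brauer graph algebras, and that (1)--(4) correspond to the connected Brauer graphs with two edges (a path of length two, a double edge, a loop attached to an edge, or two loops at a single vertex), with their various multiplicity choices.

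The step I expect to be the main obstacle is the handling of parallel arrows in cases (3) and (4). For the other quivers the arrow set is intrinsic and $\sigma$ takes finitely many values, so the analysis is a routine combination of the special biserial and symmetric axioms with scalar normalization; but when there are two arrows $i\to j$ they span a two-dimensional space carrying a $\mathrm{GL}_2(\bR)$-action, and ``choosing $\sigma$'' really means choosing a basis adapted to the multiplicative structure. One must argue that such an adapted basis exists, that the resulting finite list of types is exhaustive, and that distinct types on the list give non-isomorphic algebras (so that the classification is irredundant). Everything else reduces to careful but elementary bookkeeping.
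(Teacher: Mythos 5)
Your opening step is where the proposal breaks: the claim that $\dim_{\bR}\Ext^1(S_i,S_j)=\dim_{\bR}\Ext^1(S_j,S_i)$ for an arbitrary symmetric algebra is false. What symmetry forces is the Cartan matrix $(\dim e_jAe_i)$ to be symmetric, not the quiver. A counterexample already exists among symmetric special biserial algebras: the cyclic Nakayama algebra on three vertices $0\to1\to2\to0$ with relations killing all paths of length $4$ is symmetric (trace $=1$ on the three length-$3$ cycles, $0$ elsewhere), yet $\Ext^1(S_0,S_1)=\bR$ while $\Ext^1(S_1,S_0)=0$. So you cannot read off $a_{01}=a_{10}$ from a general principle. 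The conclusion is nevertheless correct in the two-vertex case, but it has to be proved, and the right mechanism is the one the paper uses: for a symmetric special biserial algebra, any nonzero maximal path is necessarily a cycle (see Lemma \ref{useful lemma}(2)). If, say, $a_{01}=2$ and $a_{10}=1$ with arrows $\alpha,\alpha'\colon 0\to 1$ and $\beta\colon 1\to 0$, the special biserial axiom forces $\alpha'\beta=0$, so $\alpha'$ is already maximal, but it is not a cycle — contradiction. The same device eliminates $a_{01}=1$, $a_{10}=0$ and the other asymmetric configurations. You should replace the $\Ext^1$-duality claim with this argument; without it, the list of quivers in your first paragraph is not established.

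Apart from that, the architecture of your plan — the successor permutation $\sigma$, the observation that self-injectivity makes $\sigma$ an honest permutation of the arrows, matching socles through a symmetrizing form to pin down exponents, and scalar renormalization of arrows — is a sound and fairly standard route (it is essentially the Brauer graph algebra dictionary you mention as a sanity check), and it is genuinely different in flavor from the paper's more hands-on case split by the total number of arrows backed up by Lemma \ref{useful lemma}. Your worry about $\mathrm{GL}_2$-ambiguity on parallel arrows, however, is mostly a red herring: special biseriality is a condition on the given presentation $\bR Q/I$, so $\sigma$ is already determined once $(Q,I)$ is fixed, and for the two-vertex quivers the enumeration of $\sigma$-types (two $2$-cycles vs.\ one $4$-cycle, etc.) is a small finite check with no need for a change of basis beyond the final scalar normalization. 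You also list irredundancy (distinct types give non-isomorphic algebras) as something to prove, but the theorem statement does not require it.
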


\begin{Rmk}
There are algebras from (3a) that appear in a different context: the principal blocks of restricted Lie algebras \cite{FS02}.
\end{Rmk}

When we asked her comment on our paper, 
Professor Erdmann informed us Donovan's work \cite{D79}. In page 189 of the paper, he claimed classification of Brauer graph algebras 
and its twisted forms with one or two irreducible modules,  
without the definition of twisted Brauer graph algebras and without the proof of his classification. Then his list is given in the same page. 
After excluding algebras which are not special biserial, there remain $A_{200}$ with $u=2$, $A_{210}, A_{220}, A_{230}, A_{310}, A_{320}$. 
The first algebra is (2a) with $(p,q)=(2,1)$ from our list.  $A_{210}, A_{220}, A_{230}$ are (4a), (4c) and (4b) respectively. 
(But the last exponent $\nu$ in $A_{230}$ should be $\mu$.) If we allow $\nu=1$ in $A_{210}$ and $A_{230}$, and eliminate a generator 
from the relations then we obtain (2a) and (2b). Note that the algebras from (2b) and (4b) are algebras of dihedral type: see 
\cite[Thm.VI.8.1(i)]{Erd90} for (2b) and \cite[Thm.VI.8.2(i)]{Erd90} for (4b). 
$A_{310}$ is (3a) with $p=q$, and $A_{320}$ is (3b). (Thus, last two $\mu$'s in $A_{320}$ seem to be $\nu$.) 
In any case, it is worth mentioning that our algebras are Brauer graph algebras.
See \cite{Kauer98} or \cite{Ro98} for general statement in this direction. 

A \emph{Brauer graph} is a finite graph which may have loops and multiple edges such that
\begin{itemize}
\item[(i)]
for each vertex, a natural number, the multiplicity of the vertex, is assigned,
\item[(ii)]
for each vertex, a cyclic ordering of the edges connected to the vertex is specified. 
\end{itemize}
In (ii), a loop is considered as two edges whose other ends are closed to the loop. Thus, a loop appears twice in the cyclic ordering. 
To define a Brauer graph algebra, 
we consider a quiver whose vertices and arrows are given by the edges of the Brauer graph and
the arrows of the cyclic orderings, respectively. Note that if the number of arrows connected to a vertex is one then 
the cyclic ordering defines a loop of the quiver. The relations are defined as follows.
\begin{itemize}
\item[(1)]
If $\beta$ does not appear immediately after $\alpha$ in any of the cyclic orderings, then $\alpha\beta=0$.
\item[(2)]
For each edge of the Brauer graph, let $\alpha_1\cdots\alpha_p$ and $\beta_1\cdots\beta_q$ be 
two cyclic orderings starting at the edge and let $m$, $n$ be the multiplicities of the two ends of the edge. Then 
$(\alpha_1\cdots\alpha_p)^m=(\beta_1\cdots\beta_q)^n$.
\end{itemize}
In particular, if the number of edges connected to a vertex is $1$ and the vertex has multiplicity $1$ then we may eliminate  
the loop which the cyclic ordering defines from the defining relations. 
\begin{itemize}
\item[(a)]
Let us consider a vertex with multiplicity $m$ and draw $4$ open end edges which connects to the vertex in a clockwise manner, 
and declare that this is the cyclic ordering. 
If we close two adjacent edges to a loop, and the remaining two edges to a loop, we obtain a Brauer graph with one vertex of multiplicity $m$ and $2$ loops. 
Its Brauer graph algebra is (4c). If we close two pairs of diagonal open end edges, 
we obtain another Brauer graph with one vertex of multiplicity $m$ and $2$ loops. Its Brauer graph algebra is (3b). 
\item[(b)]
If we consider two vertices of multiplicity $p$ and $q$ and a loop on the vertex of multiplicity $p$, we obtain (4b) if $q\ge2$ and (2b) if $q=1$. 
\item[(c)]
If we consider two vertices of multiplicity $p$ and $q$ and connect the two vertices with two edges, we obtain (3a). 
\item[(d)]
If we consider three vertices of multiplicity $p$, $q$, $r$ and connect the vertices of multiplicity $p$ and $q$, 
the vertices of multiplicity $p$ and $r$, then we obtain (4a) if $r\ge2$ and (2a) if $r=1$. 
\end{itemize}

The proof of Theorem \ref{first classification} will be given in subsequent sections. Note that the number of outgoing arrows
and incoming arrows at each vertex is at most $4$, so that the number in total is $8$ because the quiver has two vertices.
But each arrow is counted twice and we have that the number of arrows is at most $4$. The following lemma is useful.

\begin{lemma}\label{useful lemma}
Suppose that $A=\bR Q/I$, where $Q$ is a connected quiver and $I$ is an admissible ideal, is a symmetric special biserial algebra.
If $u=u_1\cdots u_r\ne0$, where $u_1,\dots,u_r$ are arrows, satisfies $u\delta=0$ in $A$, for any arrow $\delta$, then the following hold.
\begin{itemize}
\item[(1)]
$\delta u=0$, for any arrow $\delta$.
\item[(2)]
$u$ is a loop or a cycle.
\item[(3)]
If $u=u_1\cdots u_r\ne0$ and $v=v_1\cdots v_s\ne0$ are such that
\begin{itemize}
\item[(i)]
$u\delta=0$ and $v\delta=0$, for any arrow $\delta$,
\item[(ii)]
$u_1=v_1$,
\end{itemize}
then $r=s$ and $u_i=v_i$, for all $i$.
\item[(4)]
If $u=u_1\cdots u_r\ne0$ and $v=v_1\cdots v_s\ne0$ are such that
\begin{itemize}
\item[(i)]
$u\delta=0$ and $v\delta=0$, for any arrow $\delta$,
\item[(ii)]
$u$ and $v$ share their initial point.
\end{itemize}
then $u=cv$, for some $c\in\bR^\times$.
\item[(5)]
Let $\Tr: A \to \bR$ be a non-degenerate trace map. If $u=u_1\cdots u_r\ne0$ is such that
$u\delta=0$, for any arrow $\delta$, then $\Tr(u)\ne0$.
\end{itemize}
\end{lemma}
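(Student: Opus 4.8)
The plan is to derive all five assertions from the non-degenerate trace form $\Tr\colon A\to\bR$ of the symmetric algebra $A$ (the map appearing in (5)), together with its cyclicity $\Tr(xy)=\Tr(yx)$ and the two defining properties of a special biserial algebra: at each vertex at most two arrows start and at most two end, and for each arrow $\alpha$ there is at most one arrow $\beta$ with $\alpha\beta\notin I$ and at most one $\gamma$ with $\gamma\alpha\notin I$. I write $\Rad(A)$ for the span of the classes of paths of positive length; since every positive-length path starts with an arrow, the hypothesis ``$u\delta=0$ for every arrow $\delta$'' is the same as $u\cdot\Rad(A)=0$.

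The computational core will be the following observation: if $w=we_j$ for a vertex $j$ and $w\cdot\Rad(A)=0$, then $wa=c_j(a)\,w$ for every $a\in A$, where $c_j(a)$ is the coefficient of the trivial path $e_j$ in the path-basis expansion of $a$; hence $\Tr(wa)=c_j(a)\Tr(w)$. Indeed $wq=0$ for every path $q$ of positive length (as $q\in\Rad(A)$) and $we_i=\delta_{ij}w$. Combined with the elementary fact that $\Tr(e_iAe_j)=0$ for $i\ne j$, which follows from $\Tr(e_ixe_j)=\Tr(xe_je_i)=0$, this observation will dispatch (1), (2), (4) and (5). For (5) I would apply it with $w=u$ and $j$ the terminal vertex of $u$: non-degeneracy of $\Tr$ forces $\Tr(u)\ne0$. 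For (2), $u$ lies in a single block $e_iAe_j$ ($i$ its initial, $j$ its terminal vertex), and $\Tr(u)\ne0$ together with the vanishing of $\Tr$ off the diagonal blocks forces $i=j$, so $u$ is a loop or a cycle. For (1), I would use cyclicity directly: for any arrow $\delta$ and any $a\in A$, $\Tr(\delta u\,a)=\Tr(u\,(a\delta))=0$ because $a\delta\in\Rad(A)$ and $u\cdot\Rad(A)=0$; non-degeneracy then yields $\delta u=0$.

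Part (3) is purely combinatorial and uses only the special biserial axiom. Since a subpath of a nonzero path is nonzero (otherwise the whole path would lie in the two-sided ideal $I$), all prefixes $u_1\cdots u_k$ and the two-term factors $u_ku_{k+1}$ are nonzero in $A$, and likewise for $v$. Starting from $u_1=v_1$, I would induct on $k$: if $u_k=v_k$ and $k<\min(r,s)$, then $u_ku_{k+1}\notin I$ and $u_kv_{k+1}=v_kv_{k+1}\notin I$, so the special biserial axiom forces $u_{k+1}=v_{k+1}$. This gives $u_i=v_i$ for $i\le\min(r,s)$; and if, say, $r<s$, then $v_1\cdots v_rv_{r+1}=u_1\cdots u_r v_{r+1}=u\,v_{r+1}=0$ by hypothesis, contradicting that this prefix of $v$ is nonzero. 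Hence $r=s$ and $u=v$ as paths. Finally, for (4), by (2) both $u$ and $v$ are cycles, and sharing an initial point they lie in a common block $e_jAe_j$; by (5), $c:=\Tr(u)/\Tr(v)\in\bR^\times$, so $w:=u-cv$ satisfies $w=we_j$, $w\cdot\Rad(A)=0$ and $\Tr(w)=0$. The observation gives $\Tr(wa)=0$ for all $a\in A$, hence $w=0$ by non-degeneracy, i.e.\ $u=cv$.

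The main obstacle is bookkeeping rather than mathematics: one must fix the path-composition convention carefully so that ``$u\delta=0$ for every arrow $\delta$'' genuinely translates to $u\cdot\Rad(A)=0$ (and symmetrically in (1), one needs $a\delta\in\Rad(A)$), and one must verify cleanly that every subpath of a nonzero path is nonzero, which is exactly what legitimizes the induction in (3). Once these points are settled, each of the five statements is a short application of non-degeneracy and cyclicity of $\Tr$ together with the special biserial axiom.
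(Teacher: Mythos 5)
Your proof is correct, and it is more self-contained than the paper's, which relies on citing general facts about self-injective/symmetric algebras. For (1) the paper invokes the fact that $\Soc(A)$ is simultaneously the left and right socle for self-injective algebras (citing Curtis--Reiner), for (2) it uses $\Soc(e_iA)\simeq\Top(e_iA)$, and for (4) it uses $\dim\Soc(e_iA)=1$; your argument derives all of these from the single computational observation that $wa=c_j(a)w$ whenever $w=we_j$ and $w\cdot\Rad(A)=0$, together with cyclicity and non-degeneracy of $\Tr$. That observation does exactly the work of $\Soc(e_jA)\simeq\Top(e_jA)$ without appealing to the module-theoretic statement, and it cleanly subsumes the paper's (5) as a special case. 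Your (3) is essentially identical in substance to the paper's (the paper phrases it as ``the prefixes span a uniserial submodule,'' you unwind the induction using the special biserial axiom explicitly, and you carefully add the step excluding $r\ne s$, which the paper glosses over). One small thing worth flagging explicitly if this were to be written up: in (4) you use (2) to conclude that both $u$ and $v$ are cycles based at their shared initial vertex $j$, so that $u-cv$ is again supported in $e_jAe_j$ and killed on the right by $\Rad(A)$; this is exactly where the hypothesis ``share their initial point'' gets upgraded to ``share their terminal point,'' which is what your key observation needs, and it is worth saying so. Everything else checks out.
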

\begin{proof}
(1) follows from the fact that $\Soc(A)$ is the socle of the right and the left regular representations for
self-injective algebras \cite[Thm.(58.12)]{CR62}. To see (2),
let $i$ be the initial point of $u_1$. Then $u$ spans $\Soc(e_iA)\simeq \Top(e_iA)$, which implies that
$ue_i=u\ne0$ and $ue_j=0$, for $j\ne i$. Hence, $i$ is the endpoint of $u_r$.
To prove (3), observe that $\{u_1,u_1u_2,\dots,u\}$ span a uniserial $A$-submodule of $\Rad(e_iA)$. Thus, if $u_1=v_1$
then $\{v_1,v_1v_2,\dots, v\}$ span the same uniserial $A$-submodule. To prove (4), observe that $u, v\in\Soc(e_iA)$, for
the common initial point $i$. Then $\dim \Soc(e_iA)=1$ implies the result. (5) is clear because if $\Tr(u)=0$ then
$\Tr(ux)=0$, for all $x\in A$, which contradicts the assumptions that the trace map is non-degenerate and $u\ne0$.
\end{proof}

\begin{defn}
If $u=u_1\cdots u_r\ne0$, where $u_1,\dots,u_r$ are arrows, satisfies $u\delta=0$ in $\bR Q/I$, for any arrow $\delta$,
we call $u$ a {\it maximal path which extends $u_1$}.
\end{defn}

\noindent
Note that if $u$ is a maximal path, then $\delta u=0$ in $\bR Q/I$, for any arrow $\delta$, by Lemma \ref{useful lemma}(1).

Because of Lemma \ref{useful lemma}(2), we may exclude the case of one arrow
since the unique arrow must be a loop and the quiver $Q$ can not be connected.

\subsection{The case of two arrows}

This case is easy and we omit the proof. We obtain a symmetric Nakayama algebra, which is
the case (1) of Theorem \ref{first classification}. Note that
it can not be derived equivalent to $\fqH(2\delta)$ with $\ell=1$, since
symmetric Nakayama algebras are of finite representation type.

\subsection{The case of three arrows}

If there is no loop, then we may assume that two arrows, say $\alpha$ and $\alpha'$, start at the vertex $0$ and end at the vertex $1$, and the
other arrow, say $\beta$, starts at the vertex $1$ and ends at the vertex $0$.
Since $A$ is special biserial, we may assume $\alpha'\beta=0$ without loss of generality. Then, $\alpha'$ is a maximal path which is not a cycle,
contradicting Lemma \ref{useful lemma}(2). On the other hand, if there are two loops, then we extend the remaining arrow, say $\alpha$,
to a maximal path $u=u_1\cdots u_r$ with $u_1=\alpha$. But $u$ can not be a cycle, which contradicts Lemma \ref{useful lemma}(2) again.
As $Q$ is connected, it does not have three loops. Thus, $Q$ is as in the case (2) of Theorem \ref{first classification}.
We show that the relations are either (2a) or (2b).

Suppose that $\beta\alpha\ne0$. Then $\beta\gamma=\gamma\alpha=0$ since $A$ is special biserial.
We extend $\beta$ to a maximal path. Then it is of the form $(\beta\alpha)^n$, for some $n\ge1$.
Next we extend $\gamma$ and $\alpha$. They are of the form $\gamma^p$ and $(\alpha\beta)^q$, for some $p,q\ge1$, respectively.
Then Lemma \ref{useful lemma}(4) implies that $\gamma^p=c(\alpha\beta)^q$, for some $c\in\bR^\times$.
If $n>q$ then
$$ (\beta\alpha)^n=\beta\cdot(\alpha\beta)^q\cdot\alpha(\beta\alpha)^{n-q-1}=0 $$
by the maximality of $(\alpha\beta)^q$, which contradicts $(\beta\alpha)^n\ne0$. Simlilarly, $n<q$ leads to a contradiction and we have
$n=q$. We may list basis elements of $A$ as follows.
$$
\{ e_0, \gamma,\gamma^2,\dots,\gamma^{p-1}, \beta, \alpha\beta, \beta\alpha\beta, \dots, (\alpha\beta)^q;
e_1, \alpha, \beta\alpha, \alpha\beta\alpha, \dots, (\beta\alpha)^q \}
$$
We may check that the defining relations may be chosen as in (2a).
We define its trace map by the values on the basis elements. Let the values be $0$ except
$$ \Tr((\alpha\beta)^q)=\Tr((\beta\alpha)^q)=1. $$
Suppose that $\Tr(xy)\ne0$, for two basis elements $x$ and $y$. Then we may show that $(x,y)$ is one of the following.
\begin{gather*}
(e_0, (\alpha\beta)^q),\; (e_1, (\beta\alpha)^q),\\
(\gamma^i, \gamma^{p-i}),\\
((\alpha\beta)^j, (\alpha\beta)^{q-j}),\; (\beta, (\alpha\beta)^{q-1}\alpha),\; (\beta(\alpha\beta)^j, (\alpha\beta)^{q-j-1}\alpha),\\
((\beta\alpha)^j, (\beta\alpha)^{q-j}),\; (\alpha, (\beta\alpha)^{q-1}\beta),\; (\alpha(\beta\alpha)^j, (\beta\alpha)^{q-j-1}\beta),\\
((\alpha\beta)^q, e_0),\; ((\beta\alpha)^q, e_1),
\end{gather*}
for $1\le i\le p-1$ and $1\le j \le q-1$. Thus the trace map is well-defined and non-degenerate.
We have proved that $A$ is a symmetric algebra in case (2a).

Next suppose that $\beta\alpha=0$. We extend $\beta$ to a maximal path. As the maximal path is a cycle, it has the form $(\beta\gamma\alpha)^n$, for some $n\ge1$.
It then follows that $\beta\gamma\ne0$, $\gamma\alpha\ne0$ and we have $\gamma^2=0$.
If we extend $\alpha$ to a maximal path, it is either $\alpha\beta$ or $(\alpha\beta\gamma)^m$ or $(\alpha\beta\gamma)^m\alpha\beta$, for some $m\ge1$.
But our assumption is that the algebra $A$ is symmetric. Thus we have a non-degenerate trace map $\Tr: A \to \bR$ and both
$$ \Tr(\alpha\beta)=\Tr(\beta\alpha)=0, \quad \Tr(((\alpha\beta\gamma)^m\alpha\beta)=\Tr(\beta(\alpha\beta\gamma)^m\alpha)=0 $$
contradicts Lemma \ref{useful lemma}(5). Hence,
$(\alpha\beta\gamma)^m$ is the maximal path which extends $\alpha$.

Moreover, if $m\ge n+1$ then
$$ (\alpha\beta\gamma)^m = \alpha\cdot(\beta\gamma\alpha)^n\cdot(\beta\gamma\alpha)^{m-n-1}\beta\gamma=0, $$
which is a contradiction. If $m\le n-1$ then
$$ (\beta\gamma\alpha)^n = \beta\gamma\cdot(\alpha\beta\gamma)^m\cdot(\alpha\beta\gamma)^{n-m-1}\alpha=0, $$
which is a contradiction again. Thus, $m=n$ follows.
Next we extend $\gamma$ to a maximal path. Then it is either $\gamma$ or $(\gamma\alpha\beta)^l$ or $(\gamma\alpha\beta)^l\gamma$, for some $l\ge1$.
But we may exclude $(\gamma\alpha\beta)^l\gamma$ by $\Tr((\gamma\alpha\beta)^l\gamma)=\Tr(\gamma(\gamma\alpha\beta)^l)=0$.
If $(\gamma\alpha\beta)^l$ is a maximal path then $l=n$ because
\begin{gather*}
(\gamma\alpha\beta)^l=\gamma\alpha\cdot(\beta\gamma\alpha)^n\cdot(\beta\gamma\alpha)^{l-n-1}\beta=0, \quad \hbox{ if $l\ge n+1$,} \\
(\beta\gamma\alpha)^n=\beta\cdot(\gamma\alpha\beta)^l\cdot(\gamma\alpha\beta)^{n-l-1}\gamma\alpha=0. \quad \hbox{ if $l\le n-1$.}
\end{gather*}
Thus, if we extend $\gamma$ to a maximal path, then it is either $\gamma$ or $(\gamma\alpha\beta)^n$.

Recall that maximal paths which extends $\alpha$ and $\gamma$ coincide up to a nonzero scalar multiple by Lemma \ref{useful lemma}(4).
If $\gamma$ is a nonzero scalar multiple of $(\alpha\beta\gamma)^n$,
then we have $\gamma=0$, which contradicts $\beta\gamma\ne0$, $\gamma\alpha\ne0$. Therefore, we may exclude $\gamma$ and
$(\gamma\alpha\beta)^n$ is the maximal path which extends $\gamma$.
Now we write $(\alpha\beta\gamma)^n=c(\gamma\alpha\beta)^n$, for $c\in\bR^\times$. Then
$$ \Tr((\alpha\beta\gamma)^n)=\Tr((\gamma\alpha\beta)^n)\ne0 $$
by Lemma \ref{useful lemma}(5) and we deduce $c=1$.
As a result, the following elements form a basis of $A$ and we may choose (2b) as the defining relations:
\begin{align*}
e_0,\; &\beta,\; \alpha\beta,\; \beta(\gamma\alpha\beta)^i,\; \alpha\beta(\gamma\alpha\beta)^i,\; (\gamma\alpha\beta)^i, \\
&\gamma,\; \beta\gamma,\; \gamma(\alpha\beta\gamma)^i,\; \beta\gamma(\alpha\beta\gamma)^i,\; (\alpha\beta\gamma)^i,\; (\gamma\alpha\beta)^n, \\
e_1,\; &\alpha(\beta\gamma\alpha)^j,\; \gamma\alpha(\beta\gamma\alpha)^j,\; (\beta\gamma\alpha)^{j+1},
\end{align*}
for $1\le i\le n-1$ and $0\le j\le n-1$. To show that this algebra is indeed a symmetric algebra,
we define its trace map by the values on basis elements. Let the values be $0$ except
$$ \Tr((\gamma\alpha\beta)^n)=\Tr((\beta\gamma\alpha)^n)=1. $$
Suppose that $\Tr(xy)\ne0$, for basis elements $x$ and $y$. If $x=e_0$ or $e_1$, then $y=(\gamma\alpha\beta)^n$ and
$(\beta\gamma\alpha)^n$, respectively. Otherwise, $xy$ coincides with $(\alpha\beta\gamma)^n$ or $(\beta\gamma\alpha)^n$ or
$(\gamma\alpha\beta)^n$ as words in alphabet $\{\alpha, \beta, \gamma\}$, by Lemma \ref{useful lemma}(3). Hence, it is easy to check that the Gram matrix is a
non-singular symmetric block diagonal matrix whose block size is $1$ or $2$. Hence $A$ is symmetric in case (2b) as desired.

\subsection{The case of four arrows}
Suppose that $Q$ has no loop. Then, since $A$ is special biserial, we have two arrows, say $\alpha$ and $\alpha'$, which starts at the vertex $0$
and ends at the vertex $1$, and two arrows, say $\beta$ and $\beta'$, which starts at the vertex $1$
and ends at the vertex $0$. If $\alpha\beta=\alpha'\beta=\alpha\beta'=\alpha'\beta'=0$ then we can not have a cycle which starts at the vertex $0$,
a contradiction. Thus, we may assume that $\alpha\beta\ne0$ without loss of generality. Then, we have
$\alpha\beta'=0$ and $\alpha'\beta=0$. We consider the cases $\beta\alpha\ne0$ and $\beta\alpha=0$ separately.

Suppose that $\beta\alpha\ne0$. Then we have $\alpha\beta'=\beta'\alpha=\alpha'\beta=\beta\alpha'=0$. We are going to show that
we are in case (3a). If we extend $\alpha$ to a maximal path then it is of the form $(\alpha\beta)^p$, for some $p\ge1$.
If we extend $\alpha'$ to a maximal path then it is of the form $(\alpha'\beta')^q$, for some $q\ge1$.
$(\alpha\beta)^p$ is a nonzero scalar multiple of $(\alpha'\beta')^q$.
Renormalizing $\alpha'$ or $\beta'$, we may assume that $(\alpha\beta)^p=(\alpha'\beta')^q$. Similarly, let $(\beta\alpha)^{p'}$ and $(\beta'\alpha')^{q'}$ be
maximal paths which extends $\beta$ and $\beta'$ respectively. Then we have $p'=p$ by
\begin{gather*}
(\alpha\beta)^p=\alpha(\beta\alpha)^{p'}(\beta\alpha)^{p-p'-1}\beta, \quad \hbox{ if $p'\le p-1$,} \\
(\beta\alpha)^{p'}=\beta(\alpha\beta)^p(\alpha\beta)^{p'-p-1}\alpha, \quad \hbox{ if $p'\ge p+1$,}
\end{gather*}
and $q=q'$ by the similar argument. As $\Tr((\beta\alpha)^p)=\Tr((\beta'\alpha')^q)\ne0$, we
may conclude $(\beta\alpha)^p=(\beta'\alpha')^q$ as well. Thus, we are in case (3a) and
we may give a set of paths which form a basis of $A$.
Defining $\Tr: A \to \bR$ by $\Tr(x)=0$, for basis elements $x$, except for
$$ \Tr((\alpha\beta)^p)=\Tr((\beta\alpha)^p)=\Tr((\alpha'\beta')^q)=\Tr((\beta'\alpha')^q)=1, $$
we may show that $A$ is a symmetric algebra in case (3a).

Next suppose that $\beta\alpha=0$. If we extends $\beta$ to a maximal path, we deduce that $\beta\alpha'\ne0$. Thus,
we have $\alpha\beta'=\beta\alpha=\alpha'\beta=\beta'\alpha'=0$. We are going to show that
we are in (3b). We extend four arrows to get maximal paths
\begin{align*}
(\alpha\beta\alpha'\beta')^a \quad &\hbox{ or } \quad (\alpha\beta\alpha'\beta')^{a-1}\alpha\beta, \\
(\beta\alpha'\beta'\alpha)^b \quad &\hbox{ or } \quad (\beta\alpha'\beta'\alpha)^{b-1}\beta\alpha', \\
(\alpha'\beta'\alpha\beta)^c \quad &\hbox{ or } \quad (\alpha'\beta'\alpha\beta)^{c-1}\alpha'\beta', \\
(\beta'\alpha\beta\alpha')^d \quad &\hbox{ or } \quad (\beta'\alpha\beta\alpha')^{d-1}\beta'\alpha.
\end{align*}
However, the trace values at the elements on the right hand side are $0$, and they can not be maximal paths.
We may also deduce $a=b=c=d$ by the similar argument as above. Denote $a=b=c=d$ by $m$. Then we are in case (3b).
We may give a set of path which is a basis of $A$ and we may define $\Tr: A \to \bR$ by $\Tr(x)=0$, for basis elements $x$, except for
$$
\Tr((\alpha\beta\alpha'\beta')^m)=\Tr((\beta\alpha'\beta'\alpha)^m)=\Tr((\alpha'\beta'\alpha\beta)^m)=\Tr((\beta'\alpha\beta\alpha')^m)=1.
$$
Then the Gram matrix is a non-singular symmetric block diagonal matrix whose block size is $1$ or $2$. Hence, $A$ is a symmetric algebra in case (3b).
No loop cases are classified.

Suppose that $Q$ has one loop. We may assume that the loop is at the vertex $0$. Then, the other three arrows connect the vertices $0$ and $1$.
Hence, at the vertex $0$, we have either two outgoing arrows which ends at the vertex $1$ or two incoming arrows which starts at the vertex $1$.
If there are two outgoing arrows, say $\alpha$ and $\alpha'$, then $\alpha\beta=0$ or $\alpha'\beta=0$ holds, for the remaining arrow $\beta$,
since $A$ is special biserial. But then the maximal path which extends $\alpha$ or $\alpha'$ is
not a cycle, a contradiction. If there are two incoming arrows, the similar argument leads to a contradiction. Thus, $Q$ must have more than one loop.

Suppose that $Q$ has three loops, then the remaining arrow, say $\alpha$, is the only arrow which connects the vertices $0$ and $1$. Then,
a maximal path which extends $\alpha$ can not be a cycle.

As $Q$ is connected, it can not have four loops. Hence, we are left with two loop cases.

The remaining part is for classifying two loop cases.

Since $Q$ is connected, each vertex must have one loop.
The remaining two arrows, which we denote $\alpha$ and $\beta$, connect the vertices $0$ and $1$, and they must have opposite direction. Otherwise,
a maximal path which extends $\alpha$ and $\beta$ would not be a cycle. Hence the quiver $Q$ is as in Theorem \ref{first classification}(4).

Suppose that $\alpha\beta\ne0$ or $\beta\alpha\ne0$. We may assume $\alpha\beta\ne0$ without loss of generality.
Then $\alpha\delta=0$ and $\delta\beta=0$ hold. We consider the cases $\beta\alpha\ne0$ and $\beta\alpha=0$ separately.
We are going to show that we are in case (4a) in the former case, and case (4b) in the latter case.

If $\beta\alpha\ne0$ then $\beta\gamma=\gamma\alpha=\alpha\delta=\delta\beta=0$. Thus, a maximal path which extends $\alpha$ has the
form $(\alpha\beta)^p$, for some $p\ge1$, and a maximal path which extends $\gamma$ has the form $\gamma^q$, for some $q\ge1$.
Thus, we may assume $(\alpha\beta)^p=\gamma^q$. On the other hand, a maximal path which extends $\beta$ has the form
$(\beta\alpha)^{p'}$, for some $p'\ge1$, and a maximal path which extends $\delta$ has the form $\delta^r$, for some $r\ge1$, and
we may assume $(\beta\alpha)^{p'}=\delta^r$. Then,
\begin{align*}
(\alpha\beta)^p =\alpha\cdot(\beta\alpha)^{p'}\cdot(\beta\alpha)^{p-p'-1}\beta, \quad
&\hbox{ if $p'\le p-1$, }\\
(\beta\alpha)^{p'} =\beta(\alpha\beta)^{p'-p-1}\cdot(\alpha\beta)^p \cdot\alpha, \quad
&\hbox{ if $p'\ge p+1$, }
\end{align*}
implies $p'=p$. Thus we are in case (4a). We may show that $A$ is symmetric in this case.

If $\beta\alpha=0$, then we extend $\beta$ to a maximal path and obtain $\beta\gamma\ne0$ and $\gamma\alpha\ne0$. Thus, we have
$\beta\alpha=\gamma^2=\alpha\delta=\delta\beta=0$. The maximal path which extends $\beta$ has the form
$(\beta\gamma\alpha)^p$, for some $p\ge1$, and a maximal path which extends $\delta$ has the form $\delta^q$, for some $q\ge1$,
and we may assume $(\beta\gamma\alpha)^p=\delta^q$. We extend $\alpha$ to a maximal path. It has the form
$(\alpha\beta\gamma)^m$, for some $m\ge1$, and we may prove $m=p$. Similarly, We extend $\gamma$ to a maximal path. It has the form
$(\gamma\alpha\beta)^n$, for some $n\ge1$, and we may prove $n=p$. Since $(\alpha\beta\gamma)^p$ is a nonzero scalar multiple of
$(\gamma\alpha\beta)^p$ and $\Tr((\alpha\beta\gamma)^p)=\Tr((\gamma\alpha\beta)^p)\ne0$, $(\gamma\alpha\beta)^p=(\alpha\beta\gamma)^p$ holds,
and we are in case (4b). We may show that $A$ is symmetric in this case.

Finally, we consider the case $\alpha\beta=\beta\alpha=0$.
We extend $\alpha$ to a maximal path and obtain $\alpha\delta\ne0$ and $\delta\beta\ne0$.
We extend $\beta$ to a maximal path and obtain $\beta\gamma\ne0$ and $\gamma\alpha\ne0$. Thus, we have
$\alpha\beta=\beta\alpha=\gamma^2=\delta^2=0$.

Our task is to show that we are in case (4c). Note that the possibilities for maximal paths which extend four arrows are
\begin{align*}
(\alpha\delta\beta\gamma)^a \quad &\hbox{ or } \quad (\alpha\delta\beta\gamma)^{a-1}\alpha\delta\beta, \\
(\beta\gamma\alpha\delta)^b \quad &\hbox{ or } \quad (\beta\gamma\alpha\delta)^{b-1}\beta\gamma\alpha, \\
(\gamma\alpha\delta\beta)^c \quad &\hbox{ or } \quad (\gamma\alpha\delta\beta)^{c-1}\gamma, \\
(\delta\beta\gamma\alpha)^d \quad &\hbox{ or } \quad (\delta\beta\gamma\alpha)^{d-1}\delta.
\end{align*}
Then, we may exclude the elements on the right hand side because their trace values are $0$.
We have $c=a$ by
\begin{align*}
(\gamma\alpha\delta\beta)^c =\gamma\cdot(\alpha\delta\beta\gamma)^a\cdot(\alpha\delta\beta\gamma)^{c-a-1}\alpha\delta\beta, \quad
&\hbox{ if $c\ge a+1$, }\\
(\alpha\delta\beta\gamma)^a
=\alpha\delta\beta\cdot(\gamma\alpha\delta\beta)^c\cdot(\gamma\alpha\delta\beta)^{a-c-1}\gamma, \quad
&\hbox{ if $c\le a-1$.}
\end{align*}
It follows that $(\alpha\delta\beta\gamma)^a=(\gamma\alpha\delta\beta)^a$.
$(\delta\beta\gamma\alpha)^d$ is the maximal path which extends $\delta$, and it is easy to prove $d=a$. Similarly,
$(\beta\gamma\alpha\delta)^b$ is the maximal path which extends $\beta$, and it is easy to prove $b=a$ as well.
It follows that $(\beta\gamma\alpha\delta)^a=(\delta\beta\gamma\alpha)^a$, and if we denote $a=b=c=d$ by $m$, we are in case (4c).

\subsection{Classification}

Not all the algebras in the list of Theorem \ref{first classification} appear as a finite quiver Hecke algebra
$\fqH(\Lambda_0-\kappa+2\delta)$, for $\kappa\in\weyl\Lambda_0$. In this section,
we compute the center and the stable Auslander-Reiten quiver for those algebras from the list
and give classification when $\lambda\ne0$.
In the special case when we consider weight two blocks of the symmetric group or its Hecke algebra,
Scopes' equivalence shows that weight two blocks with $2$-core $(k,k-1,\dots,1)$ with $k\ge1$ are
Morita equivalent to each other, and we have only two Morita equivalent classes among them. We may generalize this fact.
Namely, if $\lambda\ne0$ then any special biserial finite quiver Hecke algebras 
$\fqH(\beta)$ of tame type is Morita equivalent to either the Hecke algebra $\mathcal{H}_4(q)|_{q=-1}$
associated with the symmetric group $S_4$, or the principal block of the
Hecke algebra $\mathcal{H}_5(q)|_{q=-1}$ associated with the symmetric group $S_5$.
If $\lambda=0$ then a special biserial finite quiver Hecke algebra of tame type must be Morita equivalent to one of the algebras from case (4b)
of Theorem \ref{first classification} with $q=2$,
but we do not pursue further to check if they actually occur as finite quiver Hecke algebras.

\begin{lemma} \label{stable AR quiver}
Suppose that $\ell=1$.

\begin{itemize}
\item[(1)]
If $\lambda=0$ then $\fqH(2\delta)$ is not of polynomial growth and its stable
Auslander-Reiten quiver has
\begin{enumerate}
\item[(i)]
the unique component of $\Z A_\infty/\langle \tau^3\rangle$,
\item[(ii)]
infinitely many components of $\Z A_\infty^\infty$,
\item[(iii)]
infinitely many components of homogeneous tubes.
\end{enumerate}
Further, its center is $5$ dimensional commutative local algebra.

\item[(2)]
If $\lambda\ne0$ then $\fqH(2\delta)$ is domestic and its stable
Auslander-Reiten quiver has
\begin{enumerate}
\item[(i)]
the unique component of $\Z\widetilde{A}_{2,2}$,
\item[(ii)]
two components of $\Z A_\infty/\langle \tau^2\rangle$,
\item[(iii)]
infinitely many components of homogeneous tubes.
\end{enumerate}
Further, its center is $5$ dimensional commutative local algebra.
\begin{proof}
(1) We follow the argument in \cite[Prop.5.6]{AP13}. Let $A$ be the basic algebra of $\fqH(2\delta)$.
Recall that if $\lambda=0$ then the quiver presentation of $A$ is
$$
\xy
(0,0) *{\begin{array}{c} 0 \end{array}}="A", (20,0) *{ \begin{array}{c} 1 \end{array} }="B"
\ar @/_/_{\alpha} "A";"B"
\ar @/_/_{\beta} "B";"A"
\ar @(lu,ld)_\gamma "A";"A"
\ar @(rd,ru)_\delta "B";"B"
\endxy
$$
with relations $\alpha\beta = \beta\gamma = \gamma\alpha = \delta^2 = 0$, $\gamma^2 = \alpha\delta\beta$, $\beta\alpha\delta = \delta\beta\alpha$.
Let $a=\alpha\delta^{-1}\beta\gamma^{-1}$ and $b=\alpha\delta^{-1}\beta$. Then, for each prime $q$,
$$ \{ x_1x_2\cdots x_q \mid x_i=a\;\text{or}\; b \}\setminus\{a^q, b^q\} $$
defines $(2^q-2)/q$ equivalence classes of bands, for the string algebra $A/\Rad(A)$. Hence, it suffices to compute the $\tau$-orbits of string modules
$Ae_1/A\alpha$, $Ae_0/A\beta$, $Ae_0/A\gamma$ and $Ae_1/A\delta$, in order to know the components of the stable Auslander-Reiten quiver of $A$, by the
general result \cite[Thm.2.2]{ES92}. Then, explicit computation shows
\begin{gather*}
\tau(Ae_1/A\alpha)=Ae_0/A\beta, \;\; \tau(Ae_0/A\beta)=Ae_0/A\gamma, \;\; \tau(Ae_0/A\gamma)=Ae_1/A\alpha, \\
\tau(Ae_1/A\delta)=Ae_1/A\delta,
\end{gather*}
where $Ae_0=\Span\{e_0, \beta, \gamma, \delta\beta, \gamma^2\}$ and
$Ae_1=\Span\{e_1, \alpha, \delta, \beta\alpha, \alpha\delta, \beta\alpha\delta\}$.

The elements that commutes with $e_0$ and $e_1$ are
$$ e_0Ae_0\oplus e_1Ae_1=\Span\{e_0, \gamma, \gamma^2, e_1, \delta, \beta\alpha, \beta\alpha\delta\}. $$
It is clear that $\Soc(A)=\Span\{ \gamma^2, \beta\alpha\delta\}$ is contained in the center. As the center is a local
algebra, it suffices to find central elements in $\Span\{\gamma, \delta, \beta\alpha\}$. Then, $\gamma$ and $\beta\alpha$ are central
and $\delta$ is not central. We have $Z(A)=\Span\{1, \gamma, \beta\alpha, \gamma^2, \beta\alpha\delta\}$.

\medskip
\noindent
(2) Let $A$ be the basic algebra of $\fqH(2\delta)$ as above and recall that if $\lambda\ne0$ then the quiver presentation of $A$ is
$$
\xy
(0,0) *{\begin{array}{c} 0 \end{array}}="A", (20,0) *{ \begin{array}{c} 1 \end{array} }="B"
\ar @/_/_{\alpha} "A";"B"
\ar @/_/_{\beta} "B";"A"
\ar @(rd,ru)_\gamma "B";"B"
\endxy
$$
with relations $ \alpha\gamma = \gamma\beta = 0 $, $(\beta\alpha)^2 = \gamma^2$. Then $\beta\alpha\gamma^{-1}$ is the unique band,
for the string algebra $A/\Rad(A)$. Hence, \cite[Thm.2.1]{ES92} tells that there are positive integers $m$, $p$, $q$ such that the stable Auslander-Reiten quiver has
$m$ components of tubes $\Z A_\infty/\langle \tau^p\rangle$, $m$ components of tubes $\Z A_\infty/\langle \tau^q\rangle$,
$m$ components of $\Z\widetilde{A}_{p,q}$, and the remaining components are infinitely many homogeneous tubes. Hence, it suffices to
compute the $\tau$-orbits of string modules $Ae_1/A\alpha$, $Ae_0/A\beta$, $Ae_1/A\gamma$, and $Ae_0/\Soc(Ae_0)$ because
$\Rad(Ae_0)/\Soc(Ae_0)$ is indecomposable. $Ae_0/A\beta$ is a simple $A$-module and explicit computation shows
$$ \tau(Ae_1/A\alpha)=Ae_0/A\beta, \quad \tau(Ae_0/A\beta)=Ae_1/A\alpha. $$
On the other hand, $\tau(Ae_1/A\gamma)=Ae_0/\Soc(Ae_0)$ by explicit computation, and the almost split sequence
$$ 0 \to \Rad(Ae_0) \to Ae_0\oplus \Rad(Ae_0)/\Soc(Ae_0) \to Ae_0/\Soc(Ae_0) \to 0 $$
shows that $\tau(Ae_0/\Soc(Ae_0))=\Rad(Ae_0)=Ae_1/A\gamma$. Thus, we may conclude that $m=1$ and $p=q=2$.
The computation of the center is similar to (1) and we obtain $Z(A)=\Span\{1, \alpha\beta+\beta\alpha, (\alpha\beta)^2, \gamma, \gamma^2\}$.
\end{proof}

\end{itemize}
\end{lemma}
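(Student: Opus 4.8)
The plan is to run the analysis on the basic algebra $A$ of $\fqH(2\delta)$, whose quiver and relations were determined in Proposition~\ref{ell=1 case}, exploiting that $A$ is a symmetric special biserial algebra in both cases so that the string/band machinery of \cite{ES92} applies. The overall template is that of \cite[Prop.~5.6]{AP13}.

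For part~(1), with $\lambda=0$, the relations $\alpha\beta=\beta\gamma=\gamma\alpha=\delta^2=0$ make $A$ a string algebra modulo its socle. Inverting $\gamma$ and $\delta$ I would introduce the generalized paths $a=\alpha\delta^{-1}\beta\gamma^{-1}$ and $b=\alpha\delta^{-1}\beta$ and check that, for each prime $q$, the length-$q$ words in $\{a,b\}$ other than $a^q,b^q$ represent $(2^q-2)/q$ pairwise inequivalent bands; this forces $A$ to have non-polynomial growth. To separate the non-homogeneous components I would invoke \cite[Thm.~2.2]{ES92}: it suffices to compute the Auslander--Reiten translate on the string modules $Ae_1/A\alpha$, $Ae_0/A\beta$, $Ae_0/A\gamma$, $Ae_1/A\delta$. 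I expect a $3$-cycle $\tau(Ae_1/A\alpha)=Ae_0/A\beta$, $\tau(Ae_0/A\beta)=Ae_0/A\gamma$, $\tau(Ae_0/A\gamma)=Ae_1/A\alpha$, together with the $\tau$-fixed module $Ae_1/A\delta$. The $3$-cycle yields the unique $\Z A_\infty/\langle\tau^3\rangle$ component, the $\tau$-periodic module $Ae_1/A\delta$ sits at the mouth of a homogeneous tube, and the remaining components (the $\Z A_\infty^\infty$'s and the other homogeneous tubes) come from the bands above. Finally I would compute $Z(A)$ by intersecting $e_0Ae_0\oplus e_1Ae_1$ with the centralizer of the arrows: the socle $\Span\{\gamma^2,\beta\alpha\delta\}$ is automatically central, and among $\{\gamma,\delta,\beta\alpha\}$ one checks $\gamma$ and $\beta\alpha$ are central while $\delta$ is not, giving the $5$-dimensional local center $Z(A)=\Span\{1,\gamma,\beta\alpha,\gamma^2,\beta\alpha\delta\}$.

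For part~(2), with $\lambda\ne0$, the relations $\alpha\gamma=\gamma\beta=0$, $(\beta\alpha)^2=\gamma^2$ give an associated string algebra with the single band $\beta\alpha\gamma^{-1}$, so \cite[Thm.~2.1]{ES92} forces the stable Auslander--Reiten quiver to consist of $m$ tubes $\Z A_\infty/\langle\tau^p\rangle$, $m$ tubes $\Z A_\infty/\langle\tau^q\rangle$, $m$ components $\Z\widetilde{A}_{p,q}$, and infinitely many homogeneous tubes; in particular $A$ is domestic. To pin down $m,p,q$ I would compute $\tau$ on $Ae_1/A\alpha$, $Ae_0/A\beta$, $Ae_1/A\gamma$ and $Ae_0/\Soc(Ae_0)$ (using that $\Rad(Ae_0)/\Soc(Ae_0)$ is indecomposable): the first two form a $\tau$-orbit of length $2$, and the almost split sequence $0\to\Rad(Ae_0)\to Ae_0\oplus\Rad(Ae_0)/\Soc(Ae_0)\to Ae_0/\Soc(Ae_0)\to 0$ shows $\tau(Ae_0/\Soc(Ae_0))=\Rad(Ae_0)=Ae_1/A\gamma$, whence $m=1$ and $p=q=2$. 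The center is computed exactly as before and comes out $5$-dimensional and local, $Z(A)=\Span\{1,\alpha\beta+\beta\alpha,(\alpha\beta)^2,\gamma,\gamma^2\}$.

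The main obstacle is the bookkeeping inside the string algebras: correctly enumerating the strings and bands and carrying out the explicit $\tau$-computations so that the three families of components and the periods $\tau^3$, respectively $\tau^2$, are identified unambiguously. By contrast, the band-counting that delivers non-polynomial growth in case~(1) and the two center computations are routine linear algebra.
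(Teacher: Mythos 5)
Your proposal follows essentially the same route as the paper's own proof: the same quiver presentations from Proposition~\ref{ell=1 case}, the same generalized paths $a=\alpha\delta^{-1}\beta\gamma^{-1}$ and $b=\alpha\delta^{-1}\beta$ to count bands and deduce non-polynomial growth in case~(1), the same appeals to \cite[Thm.~2.1,~2.2]{ES92}, the same four string modules whose $\tau$-orbits one computes (including the almost split sequence for $Ae_0/\Soc(Ae_0)$ in case~(2) to get $m=1$, $p=q=2$), and the same center computations. Your outline is correct and matches the paper's argument step by step.
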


\begin{thm} \label{second classification}
Any special biserial finite quiver Hecke algebra $\fqH(\Lambda_0-\kappa+2\delta)$ in affine type $A^{(1)}_1$ is Morita equivalent to

\begin{itemize}
\item[(1)]
one of the algebras from Theorem \ref{first classification}(4b) with $q=2$ if $\lambda=0$.

\item[(2)]
the algebra from Theorem \ref{first classification}(2a) with $p=q=2$, or (4a) with $p=1$ and $q=r=2$ if $\lambda\ne0$.

\end{itemize}
\end{thm}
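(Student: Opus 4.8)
The plan is to pin down the Morita type of $A:=\fqH(\Lambda_0-\kappa+2\delta)$ by combining the derived equivalence underlying Proposition~\ref{Prop: repn type} with the classification of Theorem~\ref{first classification} and the invariants computed in Lemma~\ref{stable AR quiver}. First I would record the basic structure of $A$: by the derived equivalence behind Proposition~\ref{Prop: repn type} (see \cite{AP12}) the algebra $A$ is derived equivalent to $\fqH(2\delta)$, hence $A$ is symmetric (being symmetric is an invariant of the derived equivalence class of a self-injective algebra, and $\fqH(2\delta)$ is symmetric by Proposition~\ref{ell=1 case}), $A$ has exactly two irreducible modules, and $A$ is of tame representation type by Theorem~\ref{main theorem}. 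Since $A$ is assumed to be special biserial, its basic algebra therefore appears in the list of Theorem~\ref{first classification}, and case~(1) is excluded immediately because symmetric Nakayama algebras are representation-finite.

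Second, I would use that the centre and the stable Auslander--Reiten quiver are invariants of the derived equivalence class, so that the data of Lemma~\ref{stable AR quiver} must hold for $A$ itself. When $\lambda\ne0$ this forces $A$ to be domestic, with a five-dimensional local centre, and with stable Auslander--Reiten quiver consisting of one component of type $\Z\widetilde{A}_{2,2}$, exactly two components of type $\Z A_\infty/\langle\tau^2\rangle$, and otherwise only homogeneous tubes. When $\lambda=0$ it forces $A$ to be not of polynomial growth, with a five-dimensional local centre, and with stable Auslander--Reiten quiver consisting of one component of type $\Z A_\infty/\langle\tau^3\rangle$, infinitely many components of type $\Z A_\infty^\infty$, and infinitely many homogeneous tubes.

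Third, I would run through the remaining families $(2a)$, $(2b)$, $(3a)$, $(3b)$, $(4a)$, $(4b)$, $(4c)$ of Theorem~\ref{first classification}, computing for each, as a function of the exponents $p,q,r,m$, the dimension of its centre and, by the same string-algebra techniques as in the proof of Lemma~\ref{stable AR quiver} (following \cite{ES92}), the shape of its stable Auslander--Reiten quiver. Comparing with the constraints of the previous paragraph should leave, in the $\lambda=0$ case, exactly the family $(4b)$ with $q=2$ — here $p$ is not further constrained, which is consistent with the statement since different $\kappa$ may well yield non-isomorphic but derived-equivalent members of this single family — and, in the $\lambda\ne0$ case, exactly $(2a)$ with $p=q=2$ and $(4a)$ with $p=1$, $q=r=2$, neither of which is singled out over the other by derived invariants since these two algebras are themselves derived equivalent. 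To connect the $\lambda\ne0$ conclusion with the known block algebras one identifies $(2a)$ with $p=q=2$ with $\mathcal{H}_4(q)|_{q=-1}$ and $(4a)$ with $p=1$, $q=r=2$ with the principal block of $\mathcal{H}_5(q)|_{q=-1}$ (the two Brauer graph algebras described in the Remark following Theorem~\ref{first classification}); both do occur, for instance as $\fqH(2\delta)$ and $\fqH(2\delta+\alpha_0)$ at the parameter value giving the Hecke algebra at $q=-1$.

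The main obstacle is the bookkeeping in the third step: one must push the centre and stable Auslander--Reiten quiver computations through all seven families and, in particular, track the exponents carefully enough to conclude that five-dimensionality of the centre together with the component data forces $q=2$ in family $(4b)$ and pins down $(p,q)=(2,2)$, respectively $(p,q,r)=(1,2,2)$, in families $(2a)$, respectively $(4a)$. The string-algebra machinery itself is routine, but the number of cases and the need to keep track of the exponents make this the technically heavy part of the argument.
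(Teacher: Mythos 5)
Your proposal follows essentially the same route as the paper: establish that derived invariants (center dimension and stable Auslander--Reiten quiver) of the basic algebra must agree with those of $\fqH(2\delta)$ as computed in Lemma~\ref{stable AR quiver}, then walk through each family of Theorem~\ref{first classification}, compute $\tau$-orbits of the string modules and the center, and eliminate or pin down the exponents. The only gap relative to the paper is that you leave the case-by-case computations to the reader, whereas the paper carries them out — in particular the exclusion of $(p,q)=(2,1)$ in case~(2a) (it is a Brauer tree algebra, hence representation-finite), the exclusion of $(p,q,r)=(2,2,2)$ in case~(4a) (its center is six-dimensional), and the exclusion of~(2b) (its center is four-dimensional) all depend on these explicit calculations, so they are not merely bookkeeping.
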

\begin{proof}
We compute the stable Auslander-Reiten quivers for the algebras from Theorem \ref{first classification}.

Suppose that we are in case (2a) and denote the algebra by $A$. The relations are
$$ \beta\gamma=\gamma\alpha=0, \;\; \gamma^p=(\alpha\beta)^q. $$
Then, as $A$ is symmetric, we may compute $\tau(Ae_1/A\alpha)$ by computing the kernel of $Ae_0\to Ae_1$ given by right multiplication by $\alpha$.
$$
\begin{array}{cc}
\qquad\qquad\qquad e_0 & \\
\gamma & \beta \\
\gamma^2 & \alpha\beta \\
\vdots & \vdots \\
\qquad\qquad\qquad (\alpha\beta)^q &
\end{array}
\quad
\stackrel{\cdot\alpha}{\longrightarrow}
\quad
\begin{array}{c}
e_1 \\
\alpha \\
\beta\alpha \\
\vdots \\
(\beta\alpha)^q
\end{array}
$$
We may compute $\tau(Ae_0/A\beta)$ and $\tau(Ae_0/A\gamma)$ in the similar way. We obtain
\begin{align*}
\tau(Ae_1/A\alpha)&\simeq\Ker(Ae_0\stackrel{\cdot\alpha}{\rightarrow}Ae_1)\simeq Ae_0/A\beta, \\
\tau(Ae_0/A\beta)&\simeq\Ker(Ae_1\stackrel{\cdot\beta}{\rightarrow}Ae_0)\simeq Ae_1/A\alpha, \\
\tau(Ae_0/A\gamma)&\simeq\Ker(Ae_0\stackrel{\cdot\gamma}{\rightarrow}Ae_0)\simeq Ae_1/\Soc(Ae_1),
\end{align*}
and $\tau(Ae_1/\Soc(Ae_1))\simeq Ae_0/A\gamma$. Thus, if it is the basic algebra of a finite quiver Hecke algebra of tame representation type,
we must have $\lambda\ne0$ by Lemma \ref{stable AR quiver} and $A$ must be domestic. However,
if $p\ge3$ then we may use $a=\alpha\beta\gamma^{-1}$ and $b=\alpha\beta\gamma^{-2}$ to show that $A$ is not of polynomial growth.
Similarly, if  $q\ge3$ then we may use $a=\alpha\beta\gamma^{-1}$ and $b=(\alpha\beta)^2\gamma^{-1}$ to show that $A$ is not of polynomial growth.
Thus, we have either $(p,q)=(2,1)$ or $(2,2)$. But if $(p,q)=(2,1)$ then $A$ is a Brauer tree algebra because
\begin{align*}
Ae_0\simeq
\begin{array}{c}  S_0 \\  S_0 \oplus S_1 \\  S_0 \end{array}, \qquad
Ae_1\simeq
\begin{array}{c}  S_1 \\  S_0 \\  S_1 \end{array}.
\end{align*}
Hence, we may exclude this case. The case $p=q=2$ is nothing but the basic algebra of $\fqH(2\delta)$, for $\lambda\ne0$.
We conclude that this is the only possibility in case (2a). Note that it is Morita equivalent to
the Hecke algebra $\mathcal{H}_4(q)|_{q=-1}$ associated with the symmetric group $S_4$.

Suppose that we are in case (2b). The relations are
$$ \beta\alpha=\gamma^2=0, \;\; (\gamma\alpha\beta)^m=(\alpha\beta\gamma)^m. $$
Then we consider the following map $Ae_0 \to Ae_1$, for computing $\tau(Ae_1/A\alpha)$,
$$
\begin{array}{cc}
\qquad\qquad\qquad e_0 & \\
\beta & \gamma \\
\alpha\beta & \beta\gamma \\
\gamma\alpha\beta & \alpha\beta\gamma \\
\vdots & \vdots \\
\qquad\qquad\qquad (\alpha\beta\gamma)^m &
\end{array}
\quad
\stackrel{\cdot\alpha}{\longrightarrow}
\quad
\begin{array}{c}
e_1 \\
\alpha \\
\gamma\alpha \\
\beta\gamma\alpha \\
\vdots \\
(\beta\gamma\alpha)^m
\end{array}
$$
and consider the similar maps for other arrows to obtain
\begin{align*}
\tau(Ae_1/A\alpha)&\simeq Ae_1/\Soc(Ae_1), \;\; \tau(Ae_1/\Soc(Ae_1))\simeq \Rad(Ae_1)\simeq Ae_0/A\beta, \\
\tau(Ae_0/A\beta)&\simeq Ae_1/A\alpha, \;\;
\tau(Ae_0/A\gamma)\simeq Ae_0/A\gamma.
\end{align*}
As there is a tube of period $3$, if $A$ is the basic algebra of a finite quiver Hecke algebra of tame representation type,
we must have $\lambda=0$ by Lemma \ref{stable AR quiver} and $A$ is not of polynomial growth.
Hence its center is $5$ dimensional. However, we have
$$ Z(A)=\Span\{1, \alpha\beta(\gamma\alpha\beta)^{m-1}, (\alpha\beta\gamma)^m, (\beta\gamma\alpha)^m\} $$
by explicit computation as in (1), and we conclude that case (2b) can not occur.

Suppose that we are in case (3a). The relations are
$$ \alpha\beta'=\beta'\alpha=\alpha'\beta=\beta\alpha'=0,\;\;(\alpha\beta)^p=(\alpha'\beta')^q,\;\;(\beta\alpha)^p=(\beta'\alpha')^q. $$
Then, we have
\begin{align*}
\tau(Ae_1/A\alpha)&\simeq Ae_1/A\alpha, \;\; \tau(Ae_1/A\alpha')\simeq Ae_1/A\alpha', \\
\tau(Ae_0/A\beta)&\simeq Ae_0/A\beta, \;\; \tau(Ae_0/A\beta')\simeq Ae_0/A\beta'.
\end{align*}
There is no tube of period greater than $1$, and case (3a) can not occur by Lemma \ref{stable AR quiver}.

Suppose that we are in case (3b). The relations are
$$
\alpha\beta'=\beta\alpha=\alpha'\beta=\beta'\alpha'=0,\;\;(\alpha\beta\alpha'\beta')^m=(\alpha'\beta'\alpha\beta)^m,\;\;
(\beta\alpha'\beta'\alpha)^m=(\beta'\alpha\beta\alpha')^m.
$$
Then, the radical series of $Ae_0$ and $Ae_1$ are as follows.
$$
\begin{array}{cc}
\qquad\qquad\qquad e_0 & \\
\beta & \beta' \\
\alpha\beta & \alpha'\beta' \\
\beta'\alpha\beta & \beta\alpha'\beta' \\
\vdots & \vdots
\end{array}
\qquad
\begin{array}{cc}
\qquad\qquad\qquad e_1 & \\
\alpha & \alpha' \\
\beta'\alpha & \beta\alpha' \\
\alpha'\beta'\alpha & \alpha\beta\alpha' \\
\vdots & \vdots
\end{array}
$$
It follows that
\begin{align*}
\tau(Ae_1/A\alpha)&\simeq Ae_1/A\alpha', \;\; \tau(Ae_1/A\alpha')\simeq Ae_1/A\alpha, \\
\tau(Ae_0/A\beta)&\simeq Ae_0/A\beta', \;\; \tau(Ae_0/A\beta')\simeq Ae_0/A\beta.
\end{align*}
Thus, Lemma \ref{stable AR quiver} implies that we must have $\lambda\ne0$ and $A$ is domestic. However,
we may use $a=\alpha\beta(\alpha'\beta')^{-1}$ and $b=\beta'\alpha(\beta\alpha')^{-1}$ to show that $A$ is not of polynomial growth.
We conclude that case (3b) can not occur.

Suppose that we are in case (4a). The relations are
$$ \beta\gamma=\gamma\alpha=\alpha\delta=\delta\beta=0,\;\; (\alpha\beta)^p=\gamma^q,\;\; (\beta\alpha)^p=\delta^r. $$
As before, we compute that the Auslander-Reiten translate swaps
$Ae_1/A\alpha$ and $Ae_0/A\beta$, $Ae_0/A\gamma$ and $Ae_1/A\delta$, respectively. Thus, Lemma \ref{stable AR quiver} implies that
we must have $\lambda\ne0$ and $A$ is domestic. If $q\ge3$ then we may use $a=\alpha\beta\gamma^{-1}$ and $b=\alpha\beta\gamma^{-2}$
to show that $A$ is not of polynomial growth. If $r\ge3$ then we may use $a=\beta\alpha\delta^{-1}$ and $b=\beta\alpha\delta^{-2}$
to show that $A$ is not of polynomial growth. Thus, $q=r=2$ follows. Similarly, if $p\ge3$ then
we may use $a=\alpha\beta\gamma^{-1}$ and $b=(\alpha\beta)^2\gamma^{-1}$ to show that $A$ is not of polynomial growth. Hence,
we have either $p=1$ or $p=2$. If $p=2$ and $q=r=2$ then
$$
Ae_0=\Span\{e_0, \beta, \alpha\beta, \beta\alpha\beta, \gamma, \gamma^2\}, \quad 
Ae_1=\Span\{e_1, \alpha, \beta\alpha, \alpha\beta\alpha, \delta, \delta^2\},
$$
and the center is $Z(A)=\Span\{1, \alpha\beta+\beta\alpha, \gamma,\gamma^2,\delta,\delta^2\}$, which is not $5$ dimensional. We conclude that
$p=1$ and $q=r=2$ is the only possibility in case (4a).
Note that it is Morita equivalent to the principal block of the
Hecke algebra $\mathcal{H}_5(q)|_{q=-1}$ associated with the symmetric group $S_5$.

Suppose that we are in case (4b). The relations are
$$
\beta\alpha=\gamma^2=\alpha\delta=\delta\beta=0,\;\; (\gamma\alpha\beta)^p=(\alpha\beta\gamma)^p,\;\;
(\beta\gamma\alpha)^p=\delta^q.
$$
Then, the radical series of $Ae_0$ and $Ae_1$ are as follows.
$$
\begin{array}{cc}
\qquad\qquad\qquad e_0 & \\
\beta & \gamma \\
\alpha\beta & \beta\gamma \\
\gamma\alpha\beta & \alpha\beta\gamma \\
\vdots & \vdots
\end{array}
\qquad
\begin{array}{cc}
\qquad\qquad\qquad e_1 & \\
\alpha & \delta \\
\gamma\alpha & \delta^2 \\
\beta\gamma\alpha & \delta^3 \\
\vdots & \vdots
\end{array}
$$
It follows that
\begin{align*}
\tau(Ae_1/A\alpha)&\simeq Ae_1/A\delta, \;\; \tau(Ae_1/A\delta)\simeq Ae_0/A\beta, \\
\tau(Ae_0/A\beta)&\simeq Ae_1/A\alpha, \;\; \tau(Ae_0/A\gamma)\simeq Ae_0/A\gamma.
\end{align*}
Thus, Lemma \ref{stable AR quiver} implies that we must have $\lambda=0$.
Further, we have
$$ Z(A)=\Span\{ 1, \alpha\beta(\gamma\alpha\beta)^{p-1}, (\gamma\alpha\beta)^p, \delta, \delta^2,\dots, \delta^q \}, $$
which forces $q=2$.
Here, we do not pursue further to determine which $p$ actually occur as a finite quiver Hecke algebra.
We have computed the first Hochshild cohomology group and the answer is $\dim HH^1(A)=q$, which does not determine $p$. 
But higher Hochshild cohomology groups might be helpful: use the method in \cite{Erd13} to compute them. 

Finally, we suppose that we are in case (4c). The relations are
$$
\alpha\beta=\beta\alpha=\gamma^2=\delta^2=0,\;\;(\beta\gamma\alpha\delta)^m=(\delta\beta\gamma\alpha)^m,\;\;
(\gamma\alpha\delta\beta)^m=(\alpha\delta\beta\gamma)^m.
$$
Then, the radical series of $Ae_0$ and $Ae_1$ are as follows.
$$
\begin{array}{cc}
\qquad\qquad\qquad e_0 & \\
\beta & \gamma \\
\delta\beta & \beta\gamma \\
\alpha\delta\beta & \delta\beta\gamma \\
\vdots & \vdots
\end{array}
\qquad
\begin{array}{cc}
\qquad\qquad\qquad e_1 & \\
\alpha & \delta \\
\gamma\alpha & \alpha\delta \\
\beta\gamma\alpha & \gamma\alpha\delta \\
\vdots & \vdots
\end{array}
$$
It follows that
\begin{align*}
\tau(Ae_1/A\alpha)&\simeq Ae_1/A\alpha, \;\; \tau(Ae_0/A\beta)\simeq Ae_0/A\beta, \\
\tau(Ae_0/A\gamma)&\simeq Ae_0/A\gamma, \;\; \tau(Ae_1/A\delta)\simeq Ae_1/A\delta.
\end{align*}
There is no tube of period greater than $1$, and
case (4c) can not occur by Lemma \ref{stable AR quiver}.
\end{proof}

\subsection{Tame finite quiver Hecke algebras}
In the previous subsection, we classified special biserial finite quiver Hecke algebras. In this subsection, 
we show that if $\lambda\ne0$ then any tame finite quiver Hecke algebras are biserial algebras. As finite quiver Hecke algebras of tame type is 
derived equivalent to $\fqH(2\delta)$, they are stably equivalent to the special biserial algebra given by the quiver
$$
\xy
(0,0) *{\begin{array}{c} 0 \end{array}}="A", (20,0) *{ \begin{array}{c} 1 \end{array} }="B"
\ar @/_/_{\alpha} "A";"B"
\ar @/_/_{\beta} "B";"A"
\ar @(rd,ru)_\gamma "B";"B"
\endxy
$$
with relations $ \alpha\gamma = \gamma\beta = 0 $, $(\beta\alpha)^2 = \gamma^2$. We denote the algebra by $A$. 
Let $S_0=\bR e_0$ and $S_1=\bR e_1$ be the irreducible $A$-modules and $P_0, P_1$ their projective covers, respectively. 
For a string $C$, we denote the corresponding string module by $M(C)$. See \cite[Def.5.2]{AP13}. 

In the terminology of the appendix, the stable equivalence to $\fqH(2\delta)$ 
defines a maximal system of orthogonal stable bricks for $A$. 
Hence, its classification allows us to determine the quivers of all tame finite quiver Hecke algebras for $\lambda\ne0$.

\begin{lemma}\label{s.o.s.b}
A system of orthogonal stable bricks for $A$ is one of the following pairs of string modules.
\begin{itemize}
\item[(1)]
$X_0=S_0, X_1=S_1$.
\item[(2)]
$X_0=M(\beta), X_1=M(\alpha\beta\alpha)$.
\item[(3)]
$X_0=M(\beta\alpha\beta), X_1=M(\alpha)$.
\item[(4)]
$X_0=M(\beta\alpha\beta), X_1=M(\alpha\beta\alpha)$. 
\end{itemize}
\end{lemma}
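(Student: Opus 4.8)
The plan is to read the list off the explicit structure of $A$, the classification of indecomposable modules over a special biserial algebra, and the stable Auslander--Reiten quiver of $A$ computed in Lemma \ref{stable AR quiver}(2). Recall that $A$ is a symmetric special biserial algebra in which $P_0$ is uniserial of Loewy length $5$ and $P_1$ is the unique non-uniserial (projective-injective) indecomposable, so that every indecomposable non-projective $A$-module is a string module $M(C)$ or a band module; recall also that, in the language of the appendix, a system of orthogonal stable bricks for $A$ is a pair $\{X_0,X_1\}$ of non-projective indecomposables with $\underline{\End}_A(X_i)\cong\bR$, with $\underline{\Hom}_A(X_0,X_1)=\underline{\Hom}_A(X_1,X_0)=0$, and satisfying the completeness condition spelled out there. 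The first step is to discard band modules: over $A$ these form homogeneous tubes, and one checks that no band module can belong to a system, so both $X_0$ and $X_1$ are string modules.

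The second step is to organise the string modules by syzygy. As $A$ is symmetric, $\underline{\End}_A(M(C))\cong\underline{\End}_A(\Omega M(C))$ and $\Omega$ is an autoequivalence of $\underline{A\text{-mod}}$ permuting the stable AR-components, so by Lemma \ref{stable AR quiver}(2) it suffices to understand the string modules in the two rank-$2$ tubes and those in the $\Z\widetilde{A}_{2,2}$-component. A direct computation of syzygies shows that the mouths of the two tubes are the $\tau$-orbits $\{S_0,M(\gamma)\}$ and $\{M(\beta\alpha\beta),M(\alpha\beta\alpha)\}$, which are interchanged by $\Omega$ (one has $\Omega S_0\cong M(\beta\alpha\beta)$ and $\Omega^2 S_0\cong M(\gamma)$), while $S_1$, $M(\alpha)$, $M(\beta)$ and their syzygies lie in the $\Z\widetilde{A}_{2,2}$-component.

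The last step is the case analysis. For a candidate pair $\{X_0,X_1\}$ of string modules one tests, in turn: that each $X_i$ is a stable brick, by computing $\End_A(X_i)$ --- spanned by graph endomorphisms --- and checking that every non-identity graph endomorphism factors through $P_0$ or $P_1$; that $X_0$ and $X_1$ are $\underline{\Hom}$-orthogonal in both directions, by writing each homomorphism between the two string modules as a sum of graph maps and checking that it factors through the injective hull of its source, which is $P_0=I(S_0)$ or $P_1=I(S_1)$; and finally the completeness condition, verified analogously after applying $\Omega^{\pm1}$. Organising the check by where $X_0$ and $X_1$ sit in the stable AR-quiver, one is left with exactly the four pairs in the statement; in particular the infinitely many further stable bricks in the $\Z\widetilde{A}_{2,2}$-component yield no new systems. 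The main obstacle I anticipate is the bookkeeping in this last step: deciding which graph maps factor through $P_0$ and which through $P_1$ is where the asymmetry between the uniserial $P_0$ and the non-uniserial $P_1$ enters, and it has to be tracked carefully both to confirm that the four displayed pairs satisfy all the conditions and to eliminate every other candidate.
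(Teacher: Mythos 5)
Your approach departs substantively from the paper's. The paper works directly with strings, making no use of the stable Auslander--Reiten quiver: it first argues that a stable brick's string cannot contain $\gamma$ (by exhibiting an endomorphism $X_i\twoheadrightarrow S_1\hookrightarrow X_i$ that it claims does not factor through a projective), so the string must be a direct walk on $\alpha,\beta$ of length at most $3$; after discarding $\alpha\beta$ and $\beta\alpha$ on the same grounds, only the six bricks $S_0,S_1,M(\alpha),M(\beta),M(\alpha\beta\alpha),M(\beta\alpha\beta)$ survive, and the four pairs are read off from a short pairwise $\underline\Hom$-check.

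The gap in your plan lies in the final step, and it is not the bookkeeping you flag but something structural. You correctly place $S_0,M(\gamma),M(\alpha\beta\alpha),M(\beta\alpha\beta)$ at the tube mouths and $S_1,M(\alpha),M(\beta)$ in the $\Z\widetilde{A}_{2,2}$-component, and you acknowledge there are ``infinitely many further stable bricks'' there --- but you then only assert that these produce no new systems. Your own set-up shows this cannot be taken for granted: $\Omega$ is an autoequivalence of the stable module category that preserves $\underline\Hom$-orthogonality and the condition $\tau M\not\simeq M$, so $\{\Omega^n X_0,\Omega^n X_1\}$ is again a (maximal) s.o.s.b.\ whenever $\{X_0,X_1\}$ is, and $\Omega$ has infinite order on the $\Z\widetilde{A}_{2,2}$-component. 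Already $\{\Omega S_0,\Omega S_1\}=\{\Rad(P_0),\Rad(P_1)\}$ involves the $5$-dimensional string module $\Rad(P_1)$, a stable brick whose string does contain $\gamma$ and which is nowhere on the list. So the claim you need --- that stable bricks off the list never occur in a rank-two system --- is precisely where the content of the lemma lies, and the AR-quiver organisation you set up provides no mechanism for proving it; if anything, it suggests the opposite and signals that the statement requires a more careful formulation. To close the gap you would have to supply a direct bound on which strings can carry a stable brick, which is exactly what the paper's combinatorial argument on $\gamma$-occurrences is designed to do and what your proposed route never produces.
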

\begin{proof}
Maximal paths in the quiver are $\alpha\beta\alpha, \beta\alpha\beta, \gamma$. Suppose that $\gamma$ appears in the string of $X_i$. 
Then, as $\gamma$ is a maximal path, $S_1$ appears in both $\Top(X_i)$ and $\Soc(X_i)$ and we have $r\in \Rad\End(X_i)$ defined by
$$
r: X_i\twoheadrightarrow S_1\hookrightarrow X_i.
$$
If it factors through a projective module, it factors through $P_1$. Let $f:X_i\to P_1$, $p_1:P_1\to S_1$ be such that 
$p: X_i\to \Im(r)\simeq S_1$ is $fp_1$. Then, there exists $g:P_1\to X_i$ with $gp=p_1$, so that $p_1=gfp_1$. It follows that 
$1-gf\in \Rad\End(P_1)$ is nilpotent and $gf$ is invertible. Then $P_1$ is a direct summand of $X_i$, a contradiction. 
Thus, $\underline\End(X_i)$ contains $\underline{1}$ and $\underline{r}$, which are linearly independent. Since 
$\underline\End(X_i)=\bR$, this is impossible. Hence the string does not contain $\gamma$. It implies that 
the string cannot contain a substring of the form $uv^{-1}, u^{-1}v$, for arrows $u, v$. Hence, the string is one of 
$$
e_0, e_1, \alpha, \beta, \alpha\beta, \beta\alpha, \alpha\beta\alpha, \beta\alpha\beta.
$$
We may delete the possibility of $\alpha\beta$ and $\beta\alpha$ because $S_0$ or $S_1$ appears in both $\Top(X_i)$ and $\Soc(X_i)$. 
On the other hand, $r\in\End(M(\alpha\beta\alpha))$ with $\Im(r)=\Rad^2(M(\alpha\beta\alpha))$ factors through $P_1$ and 
$\underline\End(M(\alpha\beta\alpha))=\bR$. Similarly, we have $\underline\End(M(\beta\alpha\beta))=\bR$. 

Suppose that $X_0=S_0$. If $X_1=S_1$, it satisfies $\underline\Hom(X_0,X_1)=\underline\Hom(X_1,X_0)=0$, which gives case (1). 
If $X_1=M(\beta)$ or $M(\beta\alpha\beta)$, we may find a nonzero element in $\underline\Hom(X_1, X_0)$. 
If $X_1=M(\alpha)$ or $M(\alpha\beta\alpha)$, we also have $\underline\Hom(X_0,X_1)\ne0$. 
Suppose that $X_1=S_1$ and we check the possibilities for
$$
X_0=M(\alpha),\; M(\beta),\; M(\alpha\beta\alpha),\; M(\beta\alpha\beta).
$$
However, we have $\underline\Hom(X_1,X_0)\ne0$ for $X_0=M(\beta)$ or $M(\beta\alpha\beta)$, and 
$\underline\Hom(X_0,X_1)\ne0$ for $X_0=M(\alpha)$ or $M(\alpha\beta\alpha)$. Next suppose that $X_0=M(\beta)$ and check 
the possibilities for $X_1=M(\alpha), M(\alpha\beta\alpha), M(\beta\alpha\beta)$. We have 
$\underline\Hom(X_0,X_1)\ne0$ for $M(\alpha)$ and $M(\beta\alpha\beta)$ on the one hand, $X_1= M(\alpha\beta\alpha)$ gives case (2). 
If $X_1=M(\alpha)$, $\underline\Hom(X_1,X_0)\ne0$ for $X_0=M(\alpha\beta\alpha)$, while $X_0=M(\beta\alpha\beta)$ gives case (3). 
Finally, we see that $\underline\Hom(X_0,X_1)=\underline\Hom(X_1,X_0)=0$ for the remaining case (4). 
\end{proof}

The case (4) from Lemma \ref{s.o.s.b} does not occur: the projective resolutions for $X_0$ and $X_1$ are
\begin{align*}
\cdots\cdots\to P_1\to P_0 \to P_0 \to X_0 &\to 0 \\
\cdots\cdots\to P_0\to P_1 \to P_1 \to X_1 &\to 0
\end{align*}
and we have $\Ext^1(X_i,X_i)=0$, for $i=0,1$, which implies that the finite quiver Hecke algebra is of finite type and not of tame type. 

Similarly, the case (3) does not occur: $0 \to M(\gamma^{-1}\beta\alpha) \to P_1 \to X_1\to 0$ gives
$$
\Hom(P_1,X_0) \to \Hom(M(\gamma^{-1}\beta\alpha), X_0) \to \Ext^1(X_1,X_0)\to 0.
$$
Then $\Hom(M(\gamma^{-1}\beta\alpha), X_0)=\bR$ is given by $M(\gamma^{-1}\beta\alpha)\twoheadrightarrow \Soc(X_0)$ and it is the image of 
$P_1\twoheadrightarrow \Rad(X_0)$. Thus, we have $\Ext^1(X_1,X_0)=0$. On the other hand, direct computation using the projective resolution of 
$X_0$ shows that $\Ext^1(X_0,X_1)=\bR$. However, 
as irreducible modules for finite quiver Hecke algebras are self-dual with respect to the anti-involution fixing the generators, 
we must have $\Ext^1(X_0,X_1)\simeq \Ext^1(X_1,X_0)$.

\begin{lemma}
Let $A$ be as above, and we consider maximal systems of orthogonal stable bricks corresponding to tame finite quiver Hecke algebras. 
Then one of the following holds. 
\begin{itemize}
\item[(1)]
$X_0\simeq S_0$, $X_1\simeq S_1$ and 
$$
\Ext^1(X_0,X_0)=0,\;\; \Ext^1(X_1,X_1)=\bR, \;\; \Ext^1(X_0,X_1)=\Ext^1(X_1,X_0)=\bR.
$$ 
\item[(2)]
$X_0\simeq M(\beta), X_1\simeq M(\alpha\beta\alpha)$ and 
$$
\Ext^1(X_0,X_0)=\bR,\;\; \Ext^1(X_1,X_1)=0, \;\; \Ext^1(X_0,X_1)=\Ext^1(X_1,X_0)=\bR.
$$
\end{itemize}
\end{lemma}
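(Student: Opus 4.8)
The plan is to combine the classification in Lemma~\ref{s.o.s.b} with the two paragraphs that follow it, and then to carry out the remaining $\Ext^1$-computations. A tame finite quiver Hecke algebra $H$ with $\lambda\ne0$ is stably equivalent to $A$ (being derived equivalent to $\fqH(2\delta)$, whose basic algebra is $A$), and such a stable equivalence carries the two simple $H$-modules to a maximal system of orthogonal stable bricks for $A$; by Lemma~\ref{s.o.s.b} this system is one of the pairs (1)--(4). The two paragraphs after Lemma~\ref{s.o.s.b} exclude (3) and (4): in case (4) the minimal projective resolutions force $\Ext^1(X_i,X_i)=0$, whence $H$ would have finite representation type, and in case (3) one gets $\dim\Ext^1(X_0,X_1)\ne\dim\Ext^1(X_1,X_0)$, contradicting self-duality of the simples of a finite quiver Hecke algebra. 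Thus the system is of type (1) or (2), and since a stable equivalence identifies $\underline\Hom$, hence $\Ext^1$, of the images, determining the quiver of $H$ amounts to computing the four groups $\Ext^1_A(X_i,X_j)$ in each of these two cases.

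For the computation I would use two things. First, $A$ is symmetric, so $\Ext^1_A(M,N)\cong\underline\Hom_A(\Omega M,N)$ is read off a minimal projective presentation once the indecomposable projectives are described: the relations $\alpha\gamma=\gamma\beta=0$ and $(\beta\alpha)^2=\gamma^2$ give that $P_0$ is uniserial with radical layers $S_0,S_1,S_0,S_1,S_0$, while $P_1$ is biserial of dimension $6$, obtained by gluing a uniserial module with layers $S_1,S_1,S_1$ (the $\gamma$-arm) and one with layers $S_1,S_0,S_1,S_0,S_1$ (the $\beta$-arm) at the common top $S_1$ and the common socle $S_1$, the socle identification being exactly $\gamma^2=(\beta\alpha)^2$. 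Second, the four relevant bricks --- $S_0,S_1$ in case (1) and $M(\beta),M(\alpha\beta\alpha)$ in case (2) --- are all self-dual $A$-modules, a string module depending only on its string up to reversal and $(-)^\vee$ being a contravariant exact auto-equivalence of $A$-mod; this forces $\Ext^1_A(X_0,X_1)\cong\Ext^1_A(X_1,X_0)$, so in each case only $\Ext^1(X_0,X_0)$, $\Ext^1(X_1,X_1)$ and one cross term are at issue.

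In case (1) these groups are visible from the quiver of $A$: there is no loop at $0$, one loop $\gamma$ at $1$, and one arrow in each direction; equivalently $\Omega S_0=\Rad P_0$ has top $S_1$ and $\Omega S_1=\Rad P_1$ has top $S_0\oplus S_1$. Hence $\Ext^1(S_0,S_0)=0$ and $\Ext^1(S_1,S_1)=\Ext^1(S_0,S_1)=\Ext^1(S_1,S_0)=\bR$, which is case (1) of the statement. In case (2) I would compute the first syzygies of the string modules from the explicit bases of $P_0$ and $P_1$: $M(\beta)$ is a length-two quotient of $P_0$, its syzygy a length-three quotient of $P_0$ with top $S_0$, and the observation that every homomorphism from this syzygy into $P_0$ has image contained in $\Rad^2 P_0$ --- hence composes to zero into $M(\beta)=P_0/\Rad^2 P_0$ --- shows that the canonical surjection onto $M(\beta)$ is not projective-trivial, so $\Ext^1(M(\beta),M(\beta))=\bR$. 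The analogous syzygy computation for $M(\alpha\beta\alpha)$ yields $\Ext^1(M(\alpha\beta\alpha),M(\alpha\beta\alpha))=0$, and comparing $\Omega M(\beta)$ with $M(\alpha\beta\alpha)$ through the structures of $P_0$ and $P_1$ gives $\Ext^1(M(\beta),M(\alpha\beta\alpha))=\bR$; with the duality symmetry this is case (2) of the statement.

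I expect the only real obstacle to be in case (2): because the defining relation is $(\beta\alpha)^2=\gamma^2$ rather than $(\beta\alpha)^2=0$, the socles of the two arms of $P_1$ coincide, so the minimal projective resolutions of $M(\beta)$ and of $M(\alpha\beta\alpha)$ are not governed by the naive string combinatorics; one must follow this identification through the first two syzygies and --- the genuinely fiddly part --- decide for each relevant homomorphism whether it factors through a projective. Everything else reduces to routine linear algebra in the five- and six-dimensional bases of $P_0$ and $P_1$.
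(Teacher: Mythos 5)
Your overall plan follows the paper's proof exactly: narrow the s.o.s.b.\ to cases (1) and (2) via Lemma~\ref{s.o.s.b} and the two exclusion paragraphs, then compute $\Ext^1$ from minimal projective presentations of the $X_i$. Your description of $P_0$ and $P_1$ is correct, your case~(1) computation is correct, and your syzygy computation of $\Ext^1(M(\beta),M(\beta))=\bR$ via $\Omega M(\beta)=\Rad^2 P_0$ and the observation that every map $\Rad^2 P_0\to P_0$ lands inside $\Rad^2 P_0$ is exactly the right argument.

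There is, however, one genuine error: the claim that $M(\beta)$ and $M(\alpha\beta\alpha)$ are self-dual $A$-modules, which you use to get the symmetry $\Ext^1(X_0,X_1)\cong\Ext^1(X_1,X_0)$ for free. They are \emph{not} self-dual. Since $\Top M(\beta)\cong S_0$ and $\Soc M(\beta)\cong S_1$ (this orientation is the one the paper uses in the proof of Lemma~\ref{s.o.s.b}), the duality on $A$-mod induced by the anti-involution $\alpha\leftrightarrow\beta$, $\gamma\mapsto\gamma$ sends $M(\beta)$ to the $2$-dimensional module with top $S_1$ and socle $S_0$, i.e.\ to $M(\alpha)$, and likewise $M(\alpha\beta\alpha)^\vee\cong M(\beta\alpha\beta)$. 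In other words, $A$-module duality carries the pair of case~(2) of Lemma~\ref{s.o.s.b} to the pair of case~(3), which the paper \emph{excludes} precisely by exhibiting an $\Ext$-asymmetry. So the naive self-duality argument cannot be valid in case~(2): if it were, the same argument would apply in case~(3) and there would be nothing to exclude. The symmetry $\Ext^1(X_0,X_1)\cong\Ext^1(X_1,X_0)$ does hold in case~(2), but the correct reason is the one the paper invokes for case~(3) --- self-duality of the \emph{simples of the finite quiver Hecke algebra}, transported through the stable equivalence, which identifies $\Ext^1_A(X_i,X_j)$ with $\Ext^1$ between those self-dual simples. Alternatively, and in line with what the paper actually does ("direct computation"), you should simply compute both cross-terms: one checks that $\Ext^1(M(\alpha\beta\alpha),M(\beta))\cong\underline\Hom(A\gamma,M(\beta))=\bR$, with the unique nonzero map $\gamma\mapsto\bar\beta$ not factoring through $P_0\oplus P_1$ because $\alpha\gamma=0$ forces any lift of $\gamma$ to land in the $\gamma$-arm of $P_1$, whose image in $M(\beta)$ is zero. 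Once that computation is added, the gap is closed.
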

\begin{proof}
(1) is clear. (2) follows from direct computation using resolutions of $X_0$ and $X_1$. 
\end{proof}

\begin{prop}
Let $A$ be as above. Suppose that the pair $X_0=M(\alpha), X_1=M(\beta\alpha\beta)$ is a maximal s.o.s.b. and let 
$M_0$ and $M_1$ be the corresponding s-projective $A$-modules. Then
\begin{itemize}
\item[(1)]
$M_0\simeq M(\beta\alpha\gamma^{-1})$ and 
$M(\beta)\oplus M(\alpha\beta\alpha\gamma^{-1}) \to M_0$ is minimal right almost split. 
\item[(2)]
$M_1\simeq S_0$ and $M(\beta^{-1}\gamma) \to M_1$ is minimal right almost split.
\end{itemize}
\end{prop}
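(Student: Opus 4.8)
The plan is to identify the s--projective modules $M_0,M_1$ explicitly from the structure of the indecomposable projective $A$--modules, and then to read off the Auslander--Reiten sequences ending at $M_0$ and $M_1$. First I would record the shape of $P_0$ and $P_1$ (from Proposition~\ref{Prop: radical series of R(2delta) l=1}(2), or directly from the relations $\alpha\gamma=\gamma\beta=0$, $(\beta\alpha)^2=\gamma^2$): the module $P_0$ is uniserial of Loewy length $5$ with Loewy factors $S_0,S_1,S_0,S_1,S_0$ from the top, while $P_1$ is $6$--dimensional with $\Rad(P_1)/\Rad^2(P_1)\cong S_0\oplus S_1$ induced by the arrows $\beta$ and $\gamma$; here the $\gamma$--arm of $P_1$ is the uniserial module $M(\gamma)$ with Loewy factors $S_1,S_1$, the $\beta$--arm is uniserial with Loewy factors $S_0,S_1,S_0,S_1$, and the two arms meet in $\Soc(P_1)\cong S_1$. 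Since $A$ is symmetric, $P_0$ and $P_1$ are also the indecomposable injectives, with socles $S_0$ and $S_1$ respectively, and the s--projective module $M_i$ attached to the given s.o.s.b.\ satisfies $\Omega_A M_i\cong X_i$; equivalently $M_i\cong\Omega_A^{-1}(X_i)$ is the cokernel of the injective envelope $X_i\hookrightarrow I(X_i)$.

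I would then prove (2). The module $X_1=M(\beta\alpha\beta)$ is uniserial of length $4$ with socle $S_0$, and $\Rad(P_0)\cong M(\beta\alpha\beta)$ (the same string), so $I(X_1)=P_0$ and, identifying $X_1$ with $\Rad(P_0)$, the injective envelope $X_1\hookrightarrow P_0$ has cokernel $P_0/\Rad(P_0)\cong S_0$; hence $M_1\cong S_0$. As $S_0$ is not projective, the minimal right almost split map onto $M_1$ is the middle map of the Auslander--Reiten sequence $0\to\tau S_0\to E\to S_0\to 0$. Using $\tau=\Omega_A^2$ for a symmetric algebra, $\Omega_A S_0=\Rad(P_0)\cong M(\beta\alpha\beta)$ and $\Omega_A^2 S_0=\Omega_A M(\beta\alpha\beta)$ is the kernel of the projective cover $P_1\twoheadrightarrow M(\beta\alpha\beta)$, which collapses the $\gamma$--arm, so this kernel is $M(\gamma)$. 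Thus $E$ is an indecomposable middle term of a non--split extension of $S_0$ by $M(\gamma)$, and one checks, using the description of Auslander--Reiten sequences for special biserial algebras employed in the proof of Lemma~\ref{stable AR quiver} (or by computing $\dim\Ext^1_A(S_0,M(\gamma))$), that $E\cong M(\beta^{-1}\gamma)$, which indeed contains $M(\gamma)$ as a submodule with quotient $S_0$; hence $M(\beta^{-1}\gamma)\to M_1$ is minimal right almost split.

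Next I would prove (1). The module $X_0=M(\alpha)$ is uniserial of length $2$ with socle $S_1$, so $I(X_0)=P_1$, and $X_0$ embeds as the unique $2$--dimensional submodule sitting at the bottom of the $\beta$--arm of $P_1$. Factoring it out kills the lowest two Loewy factors of that arm and leaves the $4$--dimensional string module with top $S_1$, a $\gamma$--branch $S_1$ and a branch with factors $S_0,S_1$, that is $M(\beta\alpha\gamma^{-1})$; hence $M_0\cong M(\beta\alpha\gamma^{-1})$. For the minimal right almost split map onto $M_0$ I would compute the Auslander--Reiten sequence $0\to\tau M_0\to E\to M_0\to 0$ by applying the hook/cohook operations at the two ends of the string $\beta\alpha\gamma^{-1}$: at one end deleting a cohook produces the summand $M(\beta)$, and at the other end adding a hook produces the summand $M(\alpha\beta\alpha\gamma^{-1})$, so that $E\cong M(\beta)\oplus M(\alpha\beta\alpha\gamma^{-1})$. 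As a check, $\Omega_A M_0\cong X_0=M(\alpha)$ and $\tau M_0\cong\Omega_A^2 M_0\cong\Rad^2(P_0)$ is $3$--dimensional, and $\dim M(\beta)+\dim M(\alpha\beta\alpha\gamma^{-1})=2+5=3+4=\dim\tau M_0+\dim M_0$.

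The main work, and the point where care is needed, is the Auslander--Reiten computation in the last step: one has to apply the hook/cohook rules correctly at both ends of the relevant strings while keeping in mind that $A$ is special biserial but not a string algebra (because of the binomial relation $(\beta\alpha)^2=\gamma^2$), so, exactly as in the proof of Lemma~\ref{stable AR quiver}, one should pass through the associated string algebra and check that no projective $A$--module intervenes in these particular Auslander--Reiten sequences; one must also fix the orientation conventions for strings over $A$ (around the loop $\gamma$ and the zero relations $\alpha\gamma=\gamma\beta=0$) consistently with the identifications $X_0=M(\alpha)$, $X_1=M(\beta\alpha\beta)$. An alternative that avoids the string combinatorics is to verify each ``minimal right almost split'' assertion by hand: show that the displayed map is not a split epimorphism, is right minimal, and that every morphism $N\to M_i$ from an indecomposable $N$ which is not a split epimorphism factors through it, via $\underline\Hom$--dimension computations in the stable module category of $A$.
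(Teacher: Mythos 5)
Your strategy coincides with the paper's (very terse) proof: use $M_i \cong \tau^{-1}\Omega(X_i)$ from Proposition~\ref{s-projectives}, note that $\tau^{-1}\Omega = \Omega^{-1}$ because $A$ is symmetric, compute $\Omega^{-1}(X_i)$ explicitly from the shapes of $P_0$ (uniserial of length $5$, factors $S_0,S_1,S_0,S_1,S_0$) and $P_1$ (top $S_1$, one arm of length $4$ with factors $S_0,S_1,S_0,S_1$ and a $\gamma$-arm of length $2$, meeting in $\Soc(P_1)\cong S_1$), and then read off the minimal right almost split morphisms from the hook/cohook combinatorics. The modules you obtain --- a $4$-dimensional string module for $M_0$ with top $S_1$ and two socle branches, and $M_1\cong S_0$ --- together with the middle terms $M(\beta)\oplus M(\alpha\beta\alpha\gamma^{-1})$ and $M(\beta^{-1}\gamma)$ are exactly what the subsequent proposition on biserialness uses, and your dimension checks confirm them.

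There is, however, a string-module convention mismatch worth flagging. You take $M(\alpha)$ to be the $2$-dimensional module with top $S_0$ and socle $S_1$, identify $M(\beta\alpha\beta)$ with $\Rad(P_0)$, and describe the two arms of $\Rad(P_1)$ as the ``$\beta$-arm'' and ``$\gamma$-arm''. In the convention actually in force in this section (left modules, paths composed left to right; note that in the exact sequence $0\to M(\gamma^{-1}\beta\alpha)\to P_1\to M(\alpha)\to 0$ used to exclude case (3) of Lemma~\ref{s.o.s.b}, $P_1$ must be the projective cover of $M(\alpha)$, forcing $\Top(M(\alpha))\cong S_1$), $M(\alpha)$ has top $S_1$ and socle $S_0$; the $2$-dimensional module with top $S_0$/socle $S_1$ is $M(\beta)$, $\Rad(P_0)\cong M(\alpha\beta\alpha)$, and the two arms of $\Rad(P_1)$ are generated by the arrows $\alpha$ and $\gamma$ ending at the vertex $1$. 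Read literally in that convention, the stated pair $X_0=M(\alpha)$, $X_1=M(\beta\alpha\beta)$ (which is case (3) of Lemma~\ref{s.o.s.b} with indices swapped, and was just shown not to occur) would give $I(X_0)=P_0$, hence $\Omega^{-1}(X_0)$ uniserial of dimension $3$, and $\Omega^{-1}(X_1)$ of dimension $2$ --- neither of which is $M(\beta\alpha\gamma^{-1})$ nor $S_0$. The pair for which the stated conclusion actually holds is $X_0=M(\beta)$, $X_1=M(\alpha\beta\alpha)$, i.e.\ case (2) of Lemma~\ref{s.o.s.b}, the case relevant to tame finite quiver Hecke algebras and the one the biserialness proof uses. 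So your reversed string convention happens to compensate for what appears to be a mis-labeling in the hypothesis; the module computations you carry out, and the Auslander--Reiten data you extract, are the correct ones. Keep the convention fixed when you pass to the next proposition, where the explicit string names $M(\beta)$, $M(\alpha\beta\alpha\gamma^{-1})$ and $M(\beta^{-1}\gamma)$ are used directly.
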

\begin{proof}
As $M_i\simeq \tau^{-1}\Omega(X_i)$ by Proposition \ref{s-projectives}, we use the combinatorial rule to give 
almost split sequences for string modules to obtain the result. 
\end{proof}

\begin{prop}
Let $\weyl$ be the Weyl group of type $A^{(1)}_1$. 
If $\lambda\ne0$ then tame finite quiver Hecke algebras $\fqH(\Lambda_0-w\Lambda_0+2\delta)$, for $w\in\weyl$, are biserial. 
\end{prop}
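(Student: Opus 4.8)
The plan is to run the stable-equivalence machinery of the appendix. Write $B:=\fqH(\Lambda_0-w\Lambda_0+2\delta)$. By Proposition \ref{Prop: repn type}, $B$ is derived equivalent, hence (being symmetric) stably equivalent, to $\fqH(2\delta)$, which is Morita equivalent to the special biserial algebra $A$. Fix such a stable equivalence and let $X_0,X_1$ be the images of the two simple $B$-modules; then $\{X_0,X_1\}$ is a maximal system of orthogonal stable bricks for $A$. By Proposition \ref{s-projectives}, this system together with the s-projective modules $M_i\simeq\tau^{-1}\Omega(X_i)$ and their minimal right almost split maps reconstructs $B$ up to Morita equivalence; in particular the Gabriel quiver of $B$ and the radical series of the indecomposable projective $B$-modules $P_0^B,P_1^B$ can be read off from this data. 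Since $\fqH(\beta)$ is isomorphic to its opposite algebra, it suffices to verify the biseriality condition --- the radical of each indecomposable projective is a sum of at most two uniserial submodules whose intersection is simple or zero --- for left modules.

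First I would pin down which systems can occur. By Lemma \ref{s.o.s.b} there are four candidates. Case (4) is excluded above, since it forces $\Ext^1_B(S_i^B,S_i^B)=0$ and hence $B$ of finite type; case (3) is excluded because the self-duality of the simple $B$-modules forces $\Ext^1_B(S_0^B,S_1^B)\cong\Ext^1_B(S_1^B,S_0^B)$, which fails for the corresponding bricks, and independently the s-projective module of that brick pair --- computed in the proposition preceding this one --- is $S_0$, which cannot be an $M_i$. So $\{X_0,X_1\}$ is either $\{S_0,S_1\}$ (case (1)) or $\{M(\beta),M(\alpha\beta\alpha)\}$ (case (2)). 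In case (1) one has $M_i=\tau^{-1}\Omega(S_i)=P_i/\Soc P_i$, so, $A$ being special biserial, $M_0$ is uniserial and $\Rad M_1$ is a direct sum of two uniserial $A$-modules. In case (2) I would apply the combinatorial rules for almost split sequences of string modules, exactly as in the preceding proposition, to identify $M_0$ and $M_1$ explicitly together with their minimal right almost split maps, and again find $\Rad M_i$ a direct sum of at most two uniserial modules. Feeding this through the reconstruction of the $P_i^B$, one checks in each case that the Gabriel quiver of $B$ has at most two arrows entering and at most two leaving each vertex, and that the relations make $\Rad P_i^B$ a sum of at most two uniserial submodules meeting in a simple module or in $0$; hence $B$ is biserial.

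The main obstacle is the passage from the $M_i$ back to $B$. A stable equivalence does not preserve the radical series, so the special biserial structure of $A$ cannot simply be transported to $B$; instead one must make precise how the data of $M_i$ together with its minimal right almost split map rebuilds $P_i^B$, and then carry the string-module computations of the $M_i$ --- in case (2), where the $M_i$ are not uniserial --- far enough to control $\Rad P_i^B$. This is bookkeeping of the same kind as, but heavier than, the computation in the proposition for the brick pair $X_0=M(\alpha),\ X_1=M(\beta\alpha\beta)$.
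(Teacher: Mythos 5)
You have the right broad setup: pass to the basic algebra $A$ of $\fqH(2\delta)$, note that $B$ is stably equivalent to $A$, take the images of the simple $B$-modules as a maximal system of orthogonal stable bricks, reduce via Lemma \ref{s.o.s.b} and the two exclusions to the cases $\{S_0,S_1\}$ and $\{M(\beta),M(\alpha\beta\alpha)\}$, and compute with string modules. Your exclusion of cases (3) and (4) and the identification of the two surviving s.o.s.b.'s are also correct.

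The gap is in the step you flag yourself as ``the main obstacle,'' and it is fatal as written. Proposition \ref{s-projectives} does \emph{not} say that the s.o.s.b.\ plus the s-projectives plus their minimal right almost split maps ``reconstruct $B$ up to Morita equivalence''; it only identifies $M_i\simeq\tau^{-1}\Omega(X_i)$. A stable equivalence in general does not determine the Morita class, so there is no procedure to ``read off the Gabriel quiver of $B$ and the radical series of $P_i^B$'' from this stable data, and you do not supply one. The paper avoids this entirely by invoking Proposition \ref{characterization of stably biserial}: that result gives three conditions (a), (b), (c) \emph{internal to the stable category} whose satisfaction forces $B$ to be Morita equivalent to a \emph{stably} biserial algebra; Proposition \ref{sosb} lets one translate those conditions into statements about the $X_i$ and the $M_i=\tau^{-1}\Omega(X_i)$ inside $A\text{-}\underline{\rm mod}$, which are then verified by the same string-module computations you propose. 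Finally Corollary \ref{biserialness} upgrades ``stably biserial'' to ``biserial'' using the anti-involution of $\fqH(\beta)$ fixing the generators, under which irreducibles are self-dual. Your write-up never invokes the characterization of stably biserial algebras or the stably-biserial $\Rightarrow$ biserial step, so it has no mechanism to bridge from data in $A\text{-}\underline{\rm mod}$ to a structural statement about $\Rad P_i^B$; the ``bookkeeping'' you defer to is precisely the content of the result you are missing.
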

\begin{proof}
We check the conditions (a) (b) (c) from Proposition \ref{characterization of stably biserial} and use Corollary \ref{biserialness}. 
By Proposition \ref{sosb}, we may check them by using a maximal system of orthogonal stable bricks $\{X_0, X_1\}$. If $\{X_0, X_1\}$ is 
$\{S_0, S_1\}$, they are clearly satisfied. Thus, we may assume that $X_0=M(\beta), X_1=M(\alpha\beta\alpha)$. 
First, the following (i) (ii) (iii) shows that the condition (a) holds. 
\begin{itemize}
\item[(i)]
If $N=M(\beta)$ then 
$$
\underline\Hom(N,X_0)=\underline\Hom(X_0,N)=\bR, \quad \underline\Hom(N,X_1)=\underline\Hom(X_1,N)=0.
$$ 
\item[(ii)]
If $N=M(\alpha\beta\alpha\gamma^{-1})$ then 
$$
\underline\Hom(N,X_0)=\underline\Hom(X_0,N)=0, \quad \underline\Hom(N,X_1)=\underline\Hom(X_1,N)=\bR.
$$
\item[(iii)]
If $N=M(\beta^{-1}\gamma)$ then 
$$
\underline\Hom(N,X_0)=\underline\Hom(X_0,N)=\bR, \quad \underline\Hom(N,X_1)=\underline\Hom(X_1,N)=0.
$$
\end{itemize}

We set $Y_1=M(\beta)$, $Y_2=M(\alpha\beta\alpha\gamma^{-1})$ and denote 
$w_1:Y_1\hookrightarrow M_0$ and $w_2:Y_2\twoheadrightarrow M_0$. Let $p$ be either 
$M_0=M(\beta\alpha\gamma^{-1})\to M(\beta)$ or $M(\beta^{-1}\gamma)$. We have to show that 
$w_1p$ or $w_2p$ factors through a projective module. By inspection, we see that $w_1p=0$. Hence the condition (b) holds. 
To prove the condition (c), we set $Y_0=M(\beta)$ and $Y_1=M(\beta^{-1}\gamma)$, because the pair is the unique pair 
which satisfies $\underline\Hom(Y_1,X_i)=\bR$ and $\underline\Hom(Y_2,X_i)=\bR$, for some $i$. We choose  
$p_i:M_0\to Y_i$ in such a way that its composition with $Y_i\to X_0$ is nonzero in $\underline\Hom(M_0,X_0)$. 
We have to show that $wp_0$ or $wp_1$ factors through a projective module, for  
$w: M(\beta)\hookrightarrow M_0$ and $w: M(\alpha\beta\alpha\gamma^{-1})\twoheadrightarrow M_0$. 
If $w:M(\beta)\hookrightarrow M_0$ then $wp_1=0$. If $w: M(\alpha\beta\alpha\gamma^{-1})\twoheadrightarrow M_0$ then 
$wp_0$ factors through $P_0$, proving the condition (c). 
\end{proof}

\section{Appendix}

This section is for explaining some results from \cite{Po94}. As the main results in \cite{Po94} are incorrect and the proofs for 
many parts in the paper are left to the reader, we explain the proofs of necessary materials.

\subsection{Stable biserial algebras}
We start by introducing stably biserial algebras. Our goal is Proposition \ref{characterization of stably biserial}.

\begin{defn}
Let $Q$ be a quiver, $I$ an admissible ideal of $\bR Q$. The algebra $A=\bR Q/I$ is called \emph{stably biserial}
if the following conditions are satisfied.
\begin{itemize}
\item[(a)]
$A$ is a self-injective $\bR$-algebra. In paricular, the socle of the right regular representation and the left regular representation coincide,
which we denote by $\Soc(A)$.
\item[(b)]
For each vertex $i\in Q_0$, the number of outgoing arrows and the number of incoming arrows are less than or equal to $2$.
\item[(c)]
For each arrow $\alpha\in Q_1$, there is at most one arrow $\beta$ that satisfies
$$ \alpha\beta\not\in \alpha\Rad(A)\beta+\Soc(A). $$
\item[(d)]
For each arrow $\alpha\in Q_1$, there is at most one arrow $\beta$ that satisfies
$$ \beta\alpha\not\in \beta\Rad(A)\alpha+\Soc(A). $$
\end{itemize}
\end{defn}

The following is clear.

\begin{lemma}\label{at most one arrow}
Suppose that $A=\bR Q/I$ is stably biserial. Then the following hold.
\begin{itemize}
\item[(1)]
If arrows $\alpha, \beta, \gamma$ satisfy $\alpha\beta\ne0, \alpha\gamma\ne0, \beta\ne\gamma$ then either
$$
\alpha\beta\in \alpha\Rad(A)\beta+\Soc(A) \quad\text{or}\quad
\alpha\gamma\in \alpha\Rad(A)\gamma+\Soc(A).
$$
\item[(2)]
If arrows $\alpha, \beta, \gamma$ satisfy $\beta\alpha\ne0, \gamma\alpha\ne0, \beta\ne\gamma$ then either
$$
\beta\alpha\in \beta\Rad(A)\alpha+\Soc(A)\quad \text{or}\quad
\gamma\alpha\in \gamma\Rad(A)\alpha+\Soc(A).
$$
\end{itemize}
\end{lemma}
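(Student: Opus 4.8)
The plan is to argue by contradiction, reading off the conclusion directly from clauses (c) and (d) of the definition of a stably biserial algebra. For part (1), I would assume that \emph{neither} displayed membership holds, that is, both
$$\alpha\beta\notin\alpha\Rad(A)\beta+\Soc(A)\quad\text{and}\quad\alpha\gamma\notin\alpha\Rad(A)\gamma+\Soc(A).$$
Then $\beta$ and $\gamma$ are two arrows, distinct by hypothesis, each of which satisfies the condition imposed on the arrow named $\beta$ in clause (c) applied to the arrow $\alpha$. This contradicts the ``at most one arrow'' assertion in (c), so at least one of the two memberships must hold, which is exactly the claim. The hypotheses $\alpha\beta\ne0$ and $\alpha\gamma\ne0$ play no logical role in the deduction; they are included only to make the statement non-vacuous, since if for instance $\alpha\beta=0$ then $\alpha\beta\in\Soc(A)$ and the conclusion is automatic.

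For part (2) I would run the identical argument with right multiplication replaced by left multiplication and with clause (d) in place of clause (c): assuming both $\beta\alpha\notin\beta\Rad(A)\alpha+\Soc(A)$ and $\gamma\alpha\notin\gamma\Rad(A)\alpha+\Soc(A)$ exhibits two distinct arrows violating the uniqueness in (d). I do not expect any genuine obstacle here; the only point that needs a moment's care is to match the existential quantifier in the definition (``at most one arrow'') with the two witnesses $\beta$ and $\gamma$ produced by the contradiction hypothesis, and this is immediate once $\beta\ne\gamma$ is assumed. Accordingly, the lemma is nothing more than a contrapositive restatement of conditions (c) and (d), which is why no further work is required.
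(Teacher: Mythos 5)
Your argument is correct and is exactly the intended one: the paper dismisses this lemma with ``The following is clear,'' and the contrapositive reading of clauses (c) and (d) that you supply is precisely that clear reasoning. Your remark that the hypotheses $\alpha\beta\ne0$, $\alpha\gamma\ne0$ are logically superfluous (since $0\in\Soc(A)$) is also accurate.
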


\begin{lemma}
Suppose that $A=\bR Q/I$ is stably biserial.
If two arrows $\beta$ and $\gamma$ start from the endpoint of an arrow $\alpha$ such that
\begin{itemize}
\item[(a)]
$\alpha\beta\not\in \Soc(A)$,
\item[(b)]
there is $r\in \Rad(A)$ such that $\alpha(1-r)\beta\in \Soc(A)$,
\end{itemize}
then the following hold.
\begin{itemize}
\item[(1)]
We have $\alpha(1-r')\gamma\not\in \Soc(A)$, for all $r'\in \Rad(A)$.
\item[(2)]
$\alpha\gamma\not\in \Soc(A)$.
\end{itemize}
\end{lemma}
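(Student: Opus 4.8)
The plan is to observe that assertion (2) is exactly assertion (1) in the special case $r'=0$ (note $0\in\Rad(A)$), so it suffices to prove (1), and I would do this by contradiction: assume $\alpha(1-r')\gamma\in\Soc(A)$ for some $r'\in\Rad(A)$, and aim to deduce $\alpha\beta\,\Rad(A)=0$, i.e.\ $\alpha\beta\in\Soc(A)$, contradicting hypothesis (a).

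The device I would use is to replace $\alpha$ by the twisted element $\tilde\alpha:=\alpha(1-r)$, where $r\in\Rad(A)$ is supplied by hypothesis (b). Writing $v$ for the endpoint of $\alpha$ and replacing $r$ by $e_vre_v$ (which changes neither $\alpha(1-r)\beta$ nor the fact that $r\in\Rad(A)$), we get $\tilde\alpha e_v=\tilde\alpha$, the element $1-r$ is a unit of $A$, and hypothesis (b) becomes the clean statement $\tilde\alpha\beta\in\Soc(A)$. Now I would write $\alpha(1-r')=\tilde\alpha\,(1-r)^{-1}(1-r')=\tilde\alpha(1+w_1)$ with $w_1\in\Rad(A)$, decompose $e_vw_1e_v=\beta a+\gamma b$ according to the first arrow (using that $\beta,\gamma$ are the only arrows out of $v$, by stable biseriality), and compute $\alpha(1-r')\gamma=\tilde\alpha\gamma+\tilde\alpha\beta\,a\gamma+\tilde\alpha\gamma\,b\gamma$. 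The middle summand lies in $\Soc(A)$ because $\tilde\alpha\beta$ does and $\Soc(A)$ is a two-sided ideal, while $e_{w'}+b\gamma$ (with $w'$ the endpoint of $\gamma$) is a unit in $e_{w'}Ae_{w'}$ since $b\gamma\in\Rad(A)$. Hence $\tilde\alpha\gamma(e_{w'}+b\gamma)\equiv\alpha(1-r')\gamma\equiv0\pmod{\Soc(A)}$, and multiplying on the right by $(e_{w'}+b\gamma)^{-1}$ yields $\tilde\alpha\gamma\in\Soc(A)$.

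With both $\tilde\alpha\beta$ and $\tilde\alpha\gamma$ in $\Soc(A)$, and $e_v\Rad(A)=\beta A+\gamma A$, I get $\tilde\alpha\,\Rad(A)=\tilde\alpha\beta A+\tilde\alpha\gamma A\subseteq\Soc(A)$, hence $\tilde\alpha\,\Rad^2(A)\subseteq\Soc(A)\Rad(A)=0$. Since $1-r$ is a unit and $\Rad^2(A)$ is a two-sided ideal, $\alpha\,\Rad^2(A)=\alpha(1-r)\Rad^2(A)=\tilde\alpha\,\Rad^2(A)=0$. Finally $\beta\in\Rad(A)$ gives $\alpha\beta\,\Rad(A)\subseteq\alpha\,\Rad^2(A)=0$, so $\alpha\beta\in\Soc(A)$, contradicting (a); this proves (1), and (2) follows.

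I expect the only real care needed to be in the idempotent bookkeeping: justifying the decomposition $e_vw_1e_v=\beta a+\gamma b$ with $a,b$ in the appropriate corners $e_{\mathrm{end}(\beta)}Ae_v$ and $e_{w'}Ae_v$, the identities $\tilde\alpha e_v=\tilde\alpha$ and $\tilde\alpha\gamma e_{w'}=\tilde\alpha\gamma$ used to collect terms, and checking that $b\gamma$ genuinely lies in $\Rad(e_{w'}Ae_{w'})=e_{w'}\Rad(A)e_{w'}$ so that $e_{w'}+b\gamma$ is invertible there. The conceptual step—passing to $\tilde\alpha$ so that (b) reads $\tilde\alpha\beta\in\Soc(A)$, and then running the socle/radical chase to force $\alpha\,\Rad^2(A)=0$—is otherwise formal, and notably does not seem to require conditions (c), (d) of stable biseriality, only self-injectivity and the bound on the number of arrows.
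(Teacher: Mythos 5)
Your proof is correct, and it follows a genuinely different route from the paper's. The paper argues by iterated substitution: from (a) and self-injectivity it picks an arrow $\delta$ with $\alpha\beta\delta\ne 0$, decomposes $r=\beta a_1+\gamma a_2$ and $r'=\beta a'_1+\gamma a'_2$ by first arrow, and repeatedly rewrites $\alpha\beta x=\alpha r\beta x$ and $\alpha\gamma x=\alpha r'\gamma x$ (for $x\in\Rad(A)$) so that each pass pushes the expression into higher powers of $\Rad(A)$, eventually forcing $\alpha\beta\delta=0$; it then gives a separate iteration for (2), even though, as you observe, (2) is just (1) at $r'=0$. You instead absorb the iteration into the unit $(1-r)^{-1}$: the twist $\tilde\alpha=\alpha(1-r)$ converts (b) into $\tilde\alpha\beta\in\Soc(A)$, a single first-arrow decomposition $e_vw_1e_v=\beta a+\gamma b$ together with the invertibility of $e_{w'}+b\gamma$ in $e_{w'}Ae_{w'}$ yields $\tilde\alpha\gamma\in\Soc(A)$, and then $\tilde\alpha\,\Rad(A)=\tilde\alpha\beta A+\tilde\alpha\gamma A\subseteq\Soc(A)$ gives $\alpha\,\Rad^2(A)=\tilde\alpha\,\Rad^2(A)=0$, whence $\alpha\beta\in\Soc(A)$, contradicting (a). Both arguments use only self-injectivity and the bound of two outgoing arrows per vertex (not conditions (c), (d) of the stably biserial definition), but yours replaces the ``lengths grow, so eventually zero'' iteration by a closed-form unit manipulation, avoids choosing the auxiliary arrow $\delta$, and makes it manifest that (2) is a special case of (1) rather than re-deriving it.
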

\begin{proof}
(1) The assumption (a) implies that there exists an arrow $\delta$ such that $\alpha\beta\delta\ne0$. On the
other hand, (b) implies $\alpha\beta\delta=\alpha r\beta\delta$, and we may assume that $r$ is a linear combination
of loops and cycles that starts and ends at the endpoint of $\alpha$. As the number of outgoing arrows is at most two,
we may write
$$ r=\beta a_1+ \gamma a_2 \quad(a_1, a_2\in A). $$
Suppose that $\alpha(1-r')\gamma\in \Soc(A)$, for some $r'\in \Rad(A)$. Then
$\alpha\gamma\delta=\alpha{r'}\gamma\delta$ and
$$ r'=\beta a'_1+ \gamma a'_2 \quad(a'_1, a'_2\in A). $$
Now, the following repeated use of $r=\beta a_1+ \gamma a_2$ and $r'=\beta a'_1+ \gamma a'_2$ makes the
length of paths that appear on the right hand side longer and longer, so that we may conclude that
$\alpha\beta\delta=0$, which is a contradiction.
\begin{align*}
\alpha\beta\delta&=\alpha{r}\beta\delta\\
&=\alpha\beta a_1\beta\delta+\alpha\gamma a_2\beta\delta\\
&=\alpha{r}\beta a_1\beta\delta+\alpha{r'}\gamma a_2\beta\delta\\
&=\cdots\cdots
\end{align*}
Hence, $\alpha(1-r')\gamma\not\in \Soc(A)$, for all $r'\in \Rad(A)$.

\bigskip
(2) We assume that $\alpha\gamma\in \Soc(A)$. Then $\alpha\gamma\Rad(A)=0$ and the similar argument shows
\begin{align*}
\alpha(1-r)\gamma\delta&=-\alpha{r}\gamma\delta=-\alpha(\beta a_1)\gamma\delta\\
&=-\alpha{r}\beta a_1\gamma\delta=-\alpha(\beta a_1)\beta a_1\gamma\delta \\
&=-\alpha{r}\beta a_1\beta a_1\gamma\delta=\cdots\cdots\\
&=0.
\end{align*}
Thus, we have $\alpha(1-r)\gamma\in \Soc(A)$. But (1) says that $\alpha(1-r)\gamma\not\in \Soc(A)$ and
we conclude that $\alpha\gamma\not\in \Soc(A)$.
\end{proof}

We may prove the following lemma by the same proof.

\begin{lemma}
Suppose that $A=\bR Q/I$ is stably biserial.
If two arrows $\beta$ and $\gamma$ end at the initial point of an arrow $\alpha$ such that
\begin{itemize}
\item[(a)]
$\beta\alpha\not\in \Soc(A)$,
\item[(b)]
there is $r\in \Rad(A)$ such that $\beta(1-r)\alpha\in \Soc(A)$,
\end{itemize}
then the following hold.
\begin{itemize}
\item[(1)]
We have $\gamma(1-r')\alpha\not\in \Soc(A)$, for all $r'\in \Rad(A)$.
\item[(2)]
$\gamma\alpha\not\in \Soc(A)$.
\end{itemize}
\end{lemma}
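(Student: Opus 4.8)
The strategy is to deduce this from the previous lemma by passing to the opposite algebra. Write $A^{\mathrm{op}}=\bR Q^{\mathrm{op}}/I^{\mathrm{op}}$; it is again self-injective with $\Soc(A^{\mathrm{op}})=\Soc(A)$ as a subspace, the number of arrows out of (resp.\ into) a vertex of $Q^{\mathrm{op}}$ is the number of arrows into (resp.\ out of) it in $Q$, so condition (b) is preserved, and since the product of $A^{\mathrm{op}}$ is $x\ast y=yx$, conditions (c) and (d) for $A^{\mathrm{op}}$ are conditions (d) and (c) for $A$. Thus $A^{\mathrm{op}}$ is stably biserial. An arrow $\alpha\colon i\to j$ of $Q$ becomes an arrow $j\to i$ of $Q^{\mathrm{op}}$ whose endpoint is the initial point $i$ of $\alpha$, so two arrows $\beta,\gamma$ ending at the initial point of $\alpha$ in $Q$ are exactly two arrows starting from the endpoint of $\alpha$ in $Q^{\mathrm{op}}$, and the products $\beta\alpha$, $\gamma\alpha$, $\beta(1-r)\alpha$ in $A$ are the products $\alpha\beta$, $\alpha\gamma$, $\alpha(1-r)\beta$ in $A^{\mathrm{op}}$. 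Hence hypotheses (a), (b) here are hypotheses (a), (b) of the previous lemma applied to $A^{\mathrm{op}}$, and its conclusions (1), (2) translate back verbatim to (1), (2) here.

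If one prefers, the same thing can be written out directly, which is how I would actually present it. Use (a): since $\beta\alpha\notin\Soc(A)=\{x:\Rad(A)x=0\}$ there is an arrow $\delta$ with $\delta\beta\alpha\ne0$. Use (b) and $\Rad(A)\Soc(A)=0$ to get $\delta\beta\alpha=\delta\beta r\alpha$; replacing $r$ by $e_v r e_v$, where $v$ is the initial point of $\alpha$ and the common terminal point of $\beta$ and $\gamma$, we may take $r$ to be a linear combination of cycles at $v$, and since $\beta$ and $\gamma$ are the only arrows into $v$, each such cycle ends with $\beta$ or $\gamma$, so $r=a_1\beta+a_2\gamma$ with $a_1,a_2\in A$. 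For (1), assume $\gamma(1-r')\alpha\in\Soc(A)$ and likewise write $r'=a_1'\beta+a_2'\gamma$; then run the substitution loop of the previous proof: in any term whose left factor lies in $\Rad(A)$, a trailing $\beta\alpha$ may be replaced by $\beta r\alpha$ and a trailing $\gamma\alpha$ by $\gamma r'\alpha$, using $\Rad(A)\Soc(A)=0$. Iterating from $\delta\beta\alpha=\delta\beta r\alpha$ strictly increases the lengths of the paths occurring, so $\dim_{\bR}A<\infty$ forces $\delta\beta\alpha=0$, a contradiction. For (2), if $\gamma\alpha\in\Soc(A)$ then $\epsilon\gamma\alpha=0$ for every arrow $\epsilon$, and the same iteration gives $\epsilon\gamma(1-r)\alpha=0$ for every arrow $\epsilon$, so $\gamma(1-r)\alpha\in\Soc(A)$, contradicting (1) with $r'=r$.

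No genuinely new idea is needed beyond the previous lemma. The only point requiring care is the left--right dictionary --- ``starts from the endpoint of $\alpha$'' is now ``ends at the initial point of $\alpha$'', and every concatenation is built on the opposite side --- together with the routine verifications that each prefix arising in the substitution lies in $\Rad(A)$, so that $\Rad(A)\Soc(A)=0$ applies, and that the iteration terminates by finite-dimensionality. This bookkeeping is the only (mild) obstacle.
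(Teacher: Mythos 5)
Your proof is correct and takes essentially the same approach as the paper: the authors simply assert that this dual lemma ``may be proved by the same proof'' as the preceding one, and you supply the details of that left--right mirroring. Both of your formulations --- passing to $A^{\mathrm{op}}$ (after checking that the opposite algebra is again stably biserial, with conditions (c) and (d) interchanged), and the direct rewrite in which $r,r'$ are expressed as left multiples $a_1\beta+a_2\gamma$ of the two incoming arrows at the common vertex and trailing $\beta\alpha$, $\gamma\alpha$ are repeatedly replaced via $\Rad(A)\Soc(A)=0$ until finite-dimensionality forces the expression to vanish --- are valid and match the paper's intent.
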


\begin{prop}\label{stably biserial}
If $A=\bR Q/I$ is stably biserial then we may choose the presentation of $A$ in such a way that
the following (1) and (2) hold.
\begin{itemize}
\item[(1)]
If $\alpha\beta\ne0, \alpha\gamma\ne0, \beta\ne\gamma$, for arrows $\alpha, \beta, \gamma$, then either
$\alpha\beta\in \Soc(A)$ or $\alpha\gamma\in \Soc(A)$.
\item[(2)]
If $\beta\alpha\ne0, \gamma\alpha\ne0, \beta\ne\gamma$, for arrows $\alpha, \beta, \gamma$, then either
$\beta\alpha\in \Soc(A)$ or $\gamma\alpha\in \Soc(A)$.
\end{itemize}
\end{prop}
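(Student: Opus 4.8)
The plan is to reach the desired normalization by a finite sequence of elementary changes of generators, each replacing a single arrow $\gamma$ by $\gamma':=(1-r)\gamma$ with $r\in\Rad(A)$. Such a replacement leaves $Q$ unchanged, because $(1-r)\gamma\equiv\gamma\pmod{\Rad^2(A)}$, so the modified arrows still form a basis of $\Rad(A)/\Rad^2(A)$; it only replaces the admissible ideal $I$ by a new admissible ideal $I'$.

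First I would identify which configurations actually need repairing. Fix an arrow $\alpha$ and two distinct arrows $\beta,\gamma$ starting at the endpoint $v$ of $\alpha$ with $\alpha\beta\ne0\ne\alpha\gamma$. By condition (c) in the definition of stably biserial (see also Lemma \ref{at most one arrow}), at least one of them---say $\gamma$---satisfies $\alpha\gamma\in\alpha\Rad(A)\gamma+\Soc(A)$, i.e.\ $\alpha(1-r)\gamma\in\Soc(A)$ for some $r\in\Rad(A)$; since only $e_v\Rad(A)e_v$ matters here, we may take $r$ to be a combination of cycles at $v$. If $\alpha\gamma$ already lies in $\Soc(A)$, nothing has to be done for this pair. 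Otherwise $\gamma$ satisfies the hypotheses of the first of the two lemmas preceding this proposition, which then forces the companion arrow $\beta$ to be \emph{rigid at} $\alpha$: $\alpha(1-r')\beta\notin\Soc(A)$ for all $r'\in\Rad(A)$. The dual lemma yields the analogous dichotomy for the left-sided products relevant to (2). So the only unrepaired situation is: $\gamma$ soft at $\alpha$ with $\alpha\gamma\notin\Soc(A)$, and there we set $\gamma':=(1-r)\gamma$, which makes $\alpha\gamma'\in\Soc(A)$ in the new presentation.

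The next step is to check that this repair behaves well. Because $1-r$ is invertible and $\Rad(A)\Soc(A)=\Soc(A)\Rad(A)=0$, the substitution $\gamma\mapsto\gamma'$ leaves every product that was zero or lay in $\Soc(A)$ untouched, and one checks that it can turn a formerly rigid product involving $\gamma$ into a soft one only if that product was already ``bad'' (soft but not in the socle). Concretely I would attach to each presentation the quantity
\[
 N(I)=\#\bigl\{(\alpha,\beta)\ :\ \alpha\beta\ne0,\ \alpha\beta\notin\Soc(A),\ \alpha(1-r)\beta\in\Soc(A)\text{ for some }r\in\Rad(A)\bigr\},
\]
together with a secondary weighting by the radical depth at which the witness $r$ can be chosen, and show that each repair strictly decreases this invariant; once it is $0$ both (1) and (2) hold. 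To obtain the symmetric statement simultaneously, I would carry out all right-sided repairs first and then the left-sided ones, using the two lemmas to see that a left-sided repair, being localized at the vertex where the defect occurs, is compatible with the right-sided rigidity already achieved at that vertex.

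The main obstacle is exactly this termination-and-compatibility analysis at a vertex $v$ of maximal valency, with two incoming arrows $\alpha,\alpha'$ and two outgoing arrows $\beta,\gamma$: a single arrow $\gamma$ may be soft with respect to both $\alpha$ and $\alpha'$, with a priori different witnesses, and after the substitution $\gamma\mapsto(1-r)\gamma$ the product $\alpha'\gamma'$ need not a priori be zero or in the socle. The delicate point is to show, using the rigidity forced by the two lemmas, that the witness can in fact be chosen to handle both incoming arrows at once (equivalently, that the newly created soft pair at $\alpha'$ is never ``bad''), so that the process above genuinely terminates. Everything outside this verification is the routine manipulation sketched above.
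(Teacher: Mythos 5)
Your proof diverges from the paper's at one decisive point: which arrow gets rescaled. To repair a bad pair at a vertex $v$ with $\alpha$ incoming and $\beta,\gamma$ outgoing, $\alpha\beta,\alpha\gamma\notin\Soc(A)$, and $\alpha(1-r)\beta\in\Soc(A)$, the paper replaces the \emph{incoming} arrow, $\alpha\mapsto\alpha(1-r)$; you replace the \emph{outgoing} one, $\gamma\mapsto(1-r)\gamma$. That asymmetry is not cosmetic. Under the paper's substitution, the only products whose socle membership could change are $\alpha(1-r)\beta$ and $\alpha(1-r)\gamma$ --- exactly the pair being repaired, with one landing in $\Soc(A)$ and the other kept out by the first of the two lemmas --- together with the incoming products $\delta\alpha(1-r)$, whose socle status is unchanged because $1-r$ is a unit and $\Soc(A)(1-r)=\Soc(A)$. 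There is no third class of affected products, so the count of bad pairs strictly decreases and nothing new is created. Under your substitution, the product $\alpha'(1-r)\gamma$ for the \emph{other} incoming arrow $\alpha'$ at $v$ can change socle status: here $1-r$ sits in the interior of the path rather than at an end, so the ``unit preserves socle'' argument simply does not apply. You flag this yourself as ``the delicate point'', but the resolution you sketch --- a ``secondary weighting'' and the assertion that a single witness $r$ can be chosen to serve both incoming arrows at $v$ --- is neither made precise nor proved, and the two lemmas do not visibly supply it. That is a genuine gap, not a detail to be filled in routinely; it is exactly the obstruction that the paper's asymmetric choice of which arrow to rescale is designed to sidestep.
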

\begin{proof}
Suppose that arrows $\alpha, \beta, \gamma$ are such that $\beta\ne\gamma$,
$\alpha\beta\not\in \Soc(A), \alpha\gamma\not\in \Soc(A)$.
Then, Lemma \ref{at most one arrow}(1) shows that there is $r\in\Rad(A)$ such that
$\alpha(1-r)\beta$ or $\alpha(1-r)\gamma$ belongs to $\Soc(A)$. As the argument is the same,
we assume that $\alpha(1-r)\beta\in \Soc(A)$. We may also assume that
$r$ is a linear combination of loops and cycles which start at the endpoint of $\alpha$. Thus, if
$i$ is the initial point of $\alpha$ and $j$ is the endpoint of $\alpha$, then we have
$$
e_k\alpha(1-r)=\delta_{ik}\alpha(1-r), \;\; \alpha(1-r)e_k=\delta_{jk}\alpha(1-r).
$$
It implies that we have a well-defined algebra homomorphism
$p: \bR Q \longrightarrow A$ defined by $\eta\mapsto \eta$, for arrows $\eta\ne\alpha$, and $\alpha\mapsto \alpha(1-r)$. Let
$I'=\Ker(p)$ and $A'=\bR Q/I'$. If arrows $\rho, \kappa, \eta$ are such that
$\rho\ne\alpha, \rho\kappa\in \Soc(A)$ or $\rho\eta\in \Soc(A)$ then $\rho\kappa\in \Soc(A')$ or $\rho\eta\in \Soc(A')$ holds because
$\Soc(A)(1-r)=\Soc(A)$. In this way, we may decrease
$$
\sharp\{(\alpha,\beta,\gamma) \mid \alpha\beta, \alpha\gamma\not\in\Soc(A), \beta\ne\gamma\}+
\sharp\{(\alpha,\beta,\gamma) \mid \beta\alpha, \gamma\alpha\not\in\Soc(A), \beta\ne\gamma\}
$$
to zero.
\end{proof}

\begin{cor}\label{biserialness}
Suppose that $A$ has an anti-involution which makes irreducible modules self-dual. 
If $A$ is stably biserial then $A$ is biserial and $A/\Soc(A)$ is special biserial.
\end{cor}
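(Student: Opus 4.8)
The plan is to reduce the corollary to a uniseriality statement for paths, using the normalized presentation of Proposition \ref{stably biserial}. First I would fix once and for all the presentation $A=\bR Q/I$ supplied by Proposition \ref{stably biserial}, so that for arrows with $\alpha\beta\ne0$, $\alpha\gamma\ne0$ and $\beta\ne\gamma$ one has $\alpha\beta\in\Soc(A)$ or $\alpha\gamma\in\Soc(A)$, and symmetrically on the left. Two facts will be used throughout: $\Soc(A)$ is a two-sided ideal (condition (a) of stably biserial says the left and right socles agree, so $\Soc(A)$ is simultaneously a left and a right ideal), and $A$ is self-injective, so every indecomposable projective $e_iA$ (resp.\ $Ae_i$) is indecomposable injective and hence has simple socle.

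For the assertion on $A/\Soc(A)$: since $\Soc(A)$ is an ideal, each relation $\alpha\beta\in\Soc(A)$ becomes the monomial relation $\alpha\beta=0$ in $A/\Soc(A)$, so Proposition \ref{stably biserial}(1),(2) say exactly that for each arrow $\alpha$ at most one arrow $\beta$ has $\alpha\beta\ne0$ in $A/\Soc(A)$, and at most one arrow $\gamma$ has $\gamma\alpha\ne0$. Together with condition (b) of stably biserial (at most two arrows in and out of each vertex) this is the definition of a special biserial algebra; the only point to check is that the resulting ideal is admissible, which is arranged by deleting from $Q$ the finitely many arrows that already lie in $\Soc(A)$.

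The substantive step is the claim that for every nonzero path $p$ of length $\ge1$ in $A$, the right submodule $pA$ is uniserial (and symmetrically $Ap$ on the left). I would prove this by induction on $\dim_\bR pA$. If $\dim pA=1$ then $p\in\Soc(A)$ and $pA$ is simple. Otherwise, with $j$ the terminal vertex of $p$ and $\beta,\gamma$ the at most two arrows out of $j$, we have $\Rad(pA)=p\Rad(A)=p\beta A+p\gamma A$. If at most one of $p\beta,p\gamma$ is nonzero, we conclude by induction. If both are nonzero, write $p=p'\eta$ with $\eta$ the last arrow of $p$; then $\eta\beta\ne0$, $\eta\gamma\ne0$, $\beta\ne\gamma$, so Proposition \ref{stably biserial}(1) forces, say, $\eta\beta\in\Soc(A)$, whence $p\beta=p'\eta\beta\in\Soc(A)$ and $p\beta A$ is simple. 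Letting $i$ be the initial vertex of $p$, simplicity of $\Soc(e_iA)$ then gives $p\beta A=\Soc(e_iA)=\Soc(p\gamma A)\subseteq p\gamma A$, so $\Rad(pA)=p\gamma A$, which is uniserial by induction. The left-hand version follows by the same argument using Proposition \ref{stably biserial}(2), or by transporting along the anti-involution, which sends right modules to left modules preserving uniseriality and socles; this is the only place the hypothesis on the anti-involution enters.

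To finish, for an indecomposable projective $e_iA$ let $\alpha,\alpha'$ be the (at most two) arrows out of $i$; then $\Rad(e_iA)=\alpha A+\alpha'A$ is a sum of at most two uniserial submodules whose intersection lies in the simple module $\Soc(e_iA)$, hence is zero or simple, and the same holds for $Ae_i$. Therefore $A$ is biserial. The hard part will be the inductive claim: one must keep the three cases at each step straight, and it is essential that self-injectivity makes $\Soc(e_iA)$ simple, so that the extra ``socle summand'' $p\beta A$ gets absorbed into $p\gamma A$. The admissibility bookkeeping for $A/\Soc(A)$ is a minor secondary issue.
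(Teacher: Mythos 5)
Your proof takes the same route as the paper's: normalize the presentation via Proposition \ref{stably biserial}, deduce that $A/\Soc(A)$ is special biserial, and lift uniseriality along the simple socles of the projective-injectives. The induction showing $pA$ (resp.\ $Ap$) is uniserial for every nonzero path $p$ of positive length is a careful and correct explicit version of the lifting that the paper states in one line, and the use of the simple socle of $e_iA$ to absorb the stray socle summand $p\beta A$ into $p\gamma A$ is exactly right.

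There is, however, a gap in the final step. You assert that $\alpha A\cap\alpha' A$ lies in the simple module $\Soc(e_iA)$, but all you have established is that $\alpha A$ and $\alpha'A$ are uniserial with common simple socle $\Soc(e_iA)$. This does not preclude them sharing a longer tail: a relation $\alpha p\equiv c\,\alpha'p'\pmod{\Soc(A)}$ between the two branches is consistent with the normalized presentation and would force $\Soc^2(\alpha A)=\Soc^2(\alpha'A)$, giving an intersection of length two and violating the intersection clause in the definition of biserial. Closing this is exactly where the extra hypothesis should earn its keep --- compare the paper's phrase ``the assumption implies that $A$ is biserial'' --- but you spend the anti-involution only on transporting the right-module statement to the left, a step already supplied by Proposition \ref{stably biserial}(2), and leave the intersection claim unsupported. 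One way to finish is a duality argument: if the intersection had length $\ge 2$, then $\Soc^2(e_iA)/\Soc(e_iA)$ is simple, so the dual indecomposable projective $D(e_iA)$ has exactly one arrow ending at its vertex; its radical is then cyclic and, by the other-sided half of your induction, uniserial, so $D(e_iA)$ and hence $e_iA$ is uniserial, contradicting the presence of the two arrows $\alpha,\alpha'$ out of $i$. Without some such argument the biserial conclusion is not established.
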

\begin{proof}
$A/\Soc(A)$ is special biserial by Proposition \ref{stably biserial}. Let $P$ be
an indecomposable projective $A$-module. Since $\Rad(P)/\Soc(P)$ is union of
two uniserial submodules, so is $\Rad(P)$ and the assumption implies that $A$ is biserial.
\end{proof}

In \cite[Thm.2.6]{Po94}, the author asserts that stably biserial algebras are special biserial. As we show in the next example, 
this assertion fails even for symmetric two-point stably biserial algebras which has an anti-involution making two 
irreducible modules self-dual. We thank the referee for pointing out the failure of  \cite[Thm.2.6]{Po94} by providing us 
with the local algebra with two loops $\alpha, \beta$ obeying the relations 
$\alpha^2=(\alpha\beta)^2=(\beta\alpha)^2$ and $\beta^2=0$.

\begin{Ex}
We consider the quiver
$$
Q=\begin{xy} (0,0) *{\bullet}="A", (15,0) *{\bullet}="B", \ar @(lu,ld) "A";"A"_{\gamma} \ar @/^4mm/ "A";"B"^\alpha \ar @/^4mm/ "B";"A"^\beta 
\end{xy}
$$
with relations $\gamma^2=\gamma\alpha\beta=\alpha\beta\gamma$, $\beta\gamma\alpha=\beta\alpha$, $\alpha\beta\alpha=\beta\alpha\beta=0$. 
It has the following basis. 
$$ \{ e_1, e_2, \alpha, \beta, \gamma, \alpha\beta, \beta\alpha, \gamma^2, \gamma\alpha, \beta\gamma\} $$
Then, it is stably biserial but not special biserial. Defining the trace map by 
\begin{align*}
\Tr(x)&=1 \quad \text{for } x\in\{e_1, e_2, \alpha\beta, \beta\alpha, \gamma^2\}, \\
\Tr(x)&=0 \quad \text{for } x\in\{ \alpha, \beta, \gamma, \gamma\alpha, \beta\gamma\}.
\end{align*}
we know that it is symmetric. It has the anti-involution which fixes $e_1, e_2, \gamma$ elementwise and swaps $\alpha$ and $\beta$. 
The anti-involution makes two irreducible modules $\bR e_1$ and $\bR e_2$ self-dual. 
\end{Ex}

The following proposition is one of the key observations by Pogorza{\l}y in \cite{Po94}, and we have used this result to show that 
tame finite quiver Hecke algebras in affine type A, for parameter values $\lambda\ne0$, are biserial algebras.

\begin{prop}\label{characterization of stably biserial}
If a self-injective algebra $B$ satisfies the following three conditions, then $B$ is Morita equivalent to a stably biserial algebra.
\begin{itemize}
\item[(a)]
For each indecomposable projective module $P$, we have $\Rad(P)/\Soc(P)=X'\oplus X''$, where $X'\ne0$, such that
$\Top(X'), \Top(X''), \Soc(X'), \Soc(X'')$ are simple modules.
\item[(b)]
Let $X=X'$ or $X''$, and let $Q$ be the projective cover of $X$. Then $X$ is non-projective and we denote
$p: Q/\Soc(Q) \to X$. Suppose that $\Rad(Q)/\Soc(Q)=Y_1\oplus Y_2$, where $Y_1$ and $Y_2$ are indecomposable modules.
Then, for irreducible homomorphisms
$$
w_1: Y_1\to Q/\Soc(Q),\quad w_2: Y_2\to Q/\Soc(Q),
$$
$w_1p$ or $w_2p$ factors through a projective module.
\item[(c)]
Let $X=X'$ or $X''$, and let $Y_1$ and $Y_2$ be an indecomposable direct summand of
$\Rad(Q_1)/\Soc(Q_1)$ and $\Rad(Q_2)/\Soc(Q_2)$, for indecomposable projective modules $Q_1$ and $Q_2$, respectively.
Suppose that both $Y_1$ and $Y_2$ have $P$ as their projective covers and we denote
$p_i: P/\Soc(P)\to Y_i$, for $i=1,2$.
Then, for an irreducible homomorphism $w: X\to P/\Soc(P)$, $wp_1$ or $wp_2$ factors through a projective module.
\end{itemize}
\end{prop}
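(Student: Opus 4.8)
The plan is to reduce immediately to the basic algebra $A$ of $B$, write $A\simeq\bR Q/I$ with $Q$ the Gabriel quiver of $A$ and $I$ an admissible ideal, and then verify that this presentation of $A$ is stably biserial; since ``being Morita equivalent to a stably biserial algebra'' is a Morita invariant, this suffices. Observe at the outset that $A$ is self-injective because $B$ is, so condition (a) in the definition of stably biserial holds and every indecomposable projective $P=Ae_i$ has simple socle. It then remains to establish condition (b) (the bound on the number of arrows) and conditions (c), (d) (the relations $\alpha\beta\notin\alpha\Rad(A)\beta+\Soc(A)$, resp.\ $\beta\alpha\notin\beta\Rad(A)\alpha+\Soc(A)$, for at most one $\beta$).

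\textbf{Step 1: the arrow count.} Hypothesis (a), applied to an indecomposable projective $P$, writes $\Rad(P)/\Soc(P)=X'\oplus X''$ with $X'$, $X''$ of simple top (possibly $X''=0$). Quotienting out $\Rad^2(P)/\Soc(P)$ — and dealing separately with the degenerate possibility $\Soc(P)\not\subseteq\Rad^2(P)$, in which case $\Rad(P)/\Rad^2(P)$ is only smaller — shows that $\Rad(P)/\Rad^2(P)$ has length at most $2$, i.e.\ at most two arrows leave each vertex. To bound the arrows on the other side I would pass to the opposite algebra $A^{op}$: it is again basic and self-injective, and the standard duality $D=\Hom_\bR(-,\bR)$ carries a decomposition $\Rad(P)/\Soc(P)=X'\oplus X''$ to $\Rad(D(P))/\Soc(D(P))=D(X')\oplus D(X'')$ with $D(X')$, $D(X'')$ again of simple top, so $A^{op}$ also satisfies (a). Applying the first half of the step to $A^{op}$ then bounds the incoming arrows at each vertex of $Q$ by $2$ as well, giving condition (b).

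\textbf{Step 2: the radical conditions.} This is where hypotheses (b) and (c) of the proposition are used. The mechanism is a dictionary between maps of subquotients of projectives and multiplication in $A$: for an arrow $\alpha$, the cyclic submodule $\langle\alpha\rangle$ it generates inside the relevant indecomposable projective, taken modulo the socle, is exactly one of the summands $X'$ or $X''$ that (a) attaches to that projective; its projective cover is the indecomposable projective $Q$ at the head of $\alpha$, and the canonical surjection $p\colon Q/\Soc(Q)\to\overline{\langle\alpha\rangle}$ is multiplication by $\alpha$. For a second arrow $\beta$ composable with $\alpha$, the summand $\overline{\langle\beta\rangle}$ is one of the two pieces $Y_1$, $Y_2$ of $\Rad(Q)/\Soc(Q)$, and the composite $\overline{\langle\beta\rangle}\hookrightarrow Q/\Soc(Q)\xrightarrow{p}\overline{\langle\alpha\rangle}$ is again multiplication by $\alpha$. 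A direct computation then shows that this composite factors through a projective module precisely when $\alpha\beta\in\alpha\Rad(A)\beta+\Soc(A)$. Substituting this equivalence into hypothesis (b) of the proposition — where $X=\overline{\langle\alpha\rangle}$ ranges over all arrows $\alpha$ arriving at a vertex and $w_1$, $w_2$ select the at most two arrows $\beta$ leaving that vertex — yields condition (c) of the definition, and the analogous substitution into hypothesis (c) yields condition (d). Since $A$ then satisfies (a)--(d), it is stably biserial, which proves the proposition.

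\textbf{The main obstacle.} The essential difficulty is Step 2: making the dictionary between homomorphisms of subquotients of projectives and multiplication in $A$ entirely precise, and in particular checking that ``$w_1p$ or $w_2p$ factors through a projective module'' translates verbatim into ``$\alpha\beta_1\in\alpha\Rad(A)\beta_1+\Soc(A)$ or $\alpha\beta_2\in\alpha\Rad(A)\beta_2+\Soc(A)$''. One must also track which of $X'$, $X''$ (resp.\ $Y_1$, $Y_2$) corresponds to which arrow, verify that every arrow arises as the top of such a summand, and dispose of the degenerate cases — a vertex with a single arrow on one side, so $X''=0$ or $Y_2=0$, and the exact position of $\Soc(P)$ inside $\Rad(P)$ — using self-injectivity of $A$ throughout to guarantee that the relevant maps factor through indecomposable projectives rather than merely through projective-injective summands.
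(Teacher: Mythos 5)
Your outline follows the same strategy as the paper: reduce to the basic algebra, read off the arrow bound from hypothesis~(a), and derive the two radical conditions from hypotheses~(b) and~(c) by identifying arrows with irreducible maps between subquotients of indecomposable projectives. Step~1 is fine; your route through $A^{\mathrm{op}}$ and the standard duality gives the incoming-arrow bound, while the paper reads it off directly as $\dim \Soc^{2}(I_i)/\Soc(I_i)\le 2$ (which, since $B$ is self-injective, is the same computation on the same projective module).

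The gap is exactly where you flag it, and it is not merely a matter of being ``entirely precise''. You assert a biconditional dictionary — that $w_1p$ factors through a projective module \emph{precisely when} $\alpha\beta_1\in\alpha\Rad(A)\beta_1+\Soc(A)$ — and then say ``a direct computation shows'' this. But this is the entire mathematical content of the proposition, and it is not a direct computation. The paper neither states nor needs the biconditional; it proves only the forward implication, and that takes an actual argument: if $wp_1$ factors as $X\xrightarrow{q}Q\xrightarrow{r}Y_1$ with $Q$ projective (hence projective-injective, since $B$ is self-injective), one first uses injectivity of $Q$ and the fact that $w:X\hookrightarrow P_j/\Soc(P_j)$ is a monomorphism to extend $q$ to $t:P_j/\Soc(P_j)\to Q$ with $q=wt$, then uses projectivity of $Q$ and surjectivity of $p_1$ to lift $r$ to $s:Q\to P_j/\Soc(P_j)$ with $r=sp_1$. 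The crucial observation is then that $ts$ cannot be an isomorphism of $P_j/\Soc(P_j)$ (otherwise $P_j/\Soc(P_j)$ would be a direct summand of the projective module $Q$, which it is not), hence $ts\in\Rad\End_B(P_j/\Soc(P_j))$. Translating $ts$ back to an element of $e_j\Rad(B/\Soc(B))e_j$ finally yields $\alpha\beta_1\in\alpha\Rad(B/\Soc(B))\beta_1$. Without this diagram chase — the existence of $t$ and $s$ via projective-injectivity, and the non-invertibility of $ts$ — the proof is incomplete. You should also drop the claim of the converse direction, which is unnecessary and not established.
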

\begin{proof}
For each vertex $i\in Q_0$, (a) implies that the number of incoming arrows is
$$ \dim \Soc^2(I_i)/\Soc(I_i)=\dim \Soc(\Rad(I_i)/\Soc(I_i))\le 2. $$
Similarly, the number of outgoing arrows is
$$ \dim \Rad(P_i)/\Rad^2(P_i)=\dim \Top(\Rad(P_i)/\Soc(P_i))\le 2. $$

For each arrow $\alpha\in Q_1$, we have to show that there is at most one arrow $\beta$ such that
$$ \alpha\beta\not\in \alpha\Rad(B)\beta+\Soc(B). $$
We shall prove that if $\alpha\beta_1\ne0$ and $\alpha\beta_2\ne0$ in $B/\Soc(B)$, then
either $\alpha\beta_1\in \alpha\Rad(B/\Soc(B))\beta_1$ or $\alpha\beta_2\in \alpha\Rad(B/\Soc(B))\beta_2$.

Let $i$ and $j$ be the initial point and the endpoint of $\alpha$, respectively, and let $P_i$ and $P_j$ be the
corresponding indecomposable projective modules. We write
$$ X'\oplus X''=\Rad(P_j)/\Soc(P_j) $$
as in (a). Then, we have $\alpha=pw$, for
$p: P_i/\Soc(P_i)\to X$, where $X=X'$ or $X''$, and $w:X\hookrightarrow P_j/\Soc(P_j)$. Note that $w$ is an irreducible
homomorphism as it is a direct summand of the right minimal almost split homomorphism
$\Rad(P_j)/\Soc(P_j)\oplus P_j\to P_j/\Soc(P_j)$.

Similarly, we may write $\beta_1$ and $\beta_2$ as
\begin{align*}
\beta_1&: P_j/\Soc(P_j) \stackrel{p_1}{\rightarrow} Y_1 \hookrightarrow Q_1/\Soc(Q_1), \\
\beta_2&: P_j/\Soc(P_j) \stackrel{p_2}{\rightarrow} Y_2 \hookrightarrow Q_2/\Soc(Q_2),
\end{align*}
where $Q_i$ is indecomposable projective and $Y_i$ is an indecomposable direct summand of $\Rad(Q_i)/\Soc(Q_i)$, for $i=1,2$.
Then, by (c), we may assume that $wp_1$ factors through a projective-injective $B$-module, say $Q$. Thus, we have
$q: X\to Q$ and $r: Q\to Y_1$ such that $qr=wp_1$. Then we have $q=wt$ and $r=sp_1$ as follows.

\medskip
\hspace{4.5cm}
\begin{xy}
(0,20) *{X}="A", (25,20) *{P_j/\Soc(P_j)}="B", (25,5) *{Q}="C", (50,20) *{Y_1}="D",

\ar "A";"B"^{w}
\ar "A";"C"_{q}
\ar @/_ 4mm/ "B";"C"_{\exists t}
\ar "B";"D"^{p_1}
\ar "C";"D"_{r}
\ar @/_ 4mm/ "C";"B"_{\exists s}
\end{xy}

\medskip
\noindent
If $ts$ was an isomorphism, then $P_j/\Soc(P_j)$ would be a direct summand of $Q$, a contradiction. Thus,
$ts\in \Rad\End_B(P_j/\Soc(P_j))$, and it follows that $\alpha\beta_1\in\alpha\Rad(B/\Soc(B))\beta_1$.

To show that there is at most one arrow $\beta$ such that
$$ \beta\alpha\not\in \beta\Rad(B)\alpha+\Soc(B), $$
we prove that if $\beta_1\alpha\ne0$ and $\beta_2\alpha\ne0$ in $B/\Soc(B)$, then
either $\beta_1\alpha\in \beta_1\Rad(B/\Soc(B))\alpha$ or $\beta_2\alpha\in \beta_2\Rad(B/\Soc(B))\alpha$.
Let
$$ \alpha: P_i/\Soc(P_i) \stackrel{p}{\rightarrow} X \hookrightarrow P_j/\Soc(P_j) $$
as before, and we write $\beta_1$ and $\beta_2$ as
\begin{align*}
\beta_1&: Q_1/\Soc(Q_1) \twoheadrightarrow Y_1 \hookrightarrow P_i/\Soc(P_i), \\
\beta_2&: Q_1/\Soc(Q_2) \twoheadrightarrow Y_2 \hookrightarrow P_i/\Soc(P_i),
\end{align*}
where $Q_1$ and $Q_2$ are indecomposable projective modules, and $\Rad(P_i)/\Soc(P_i)=Y_1\oplus Y_2$.

We denote $w_1: Y_1\hookrightarrow P_i/\Soc(P_i)$ and $w_2: Y_2\hookrightarrow P_i/\Soc(P_i)$. Then,
they are irreducible homomorphisms, and by (b), we may assume that $w_1p$ factors through a projective-injective module, say $Q$ again.
Thus,

\medskip
\hspace{4.5cm}
\begin{xy}
(0,20) *{Y_1}="A", (25,20) *{P_i/\Soc(P_i)}="B", (25,5) *{Q}="C", (50,20) *{X}="D",

\ar "A";"B"^{w_1}
\ar "A";"C"
\ar @/_ 4mm/ "B";"C"_{\exists t}
\ar "B";"D"^{p}
\ar "C";"D"
\ar @/_ 4mm/ "C";"B"_{\exists s}
\end{xy}

\medskip
\noindent
and $ts\in \Rad\End_B(P_i/\Soc(P_i))$. It follows that $\beta_1\alpha\in \beta_1\Rad(B/\Soc(B))\alpha$.
\end{proof}

\subsection{Notions for stable module categories}
We introduce several useful notions for stable module categories of self-injective algebras. They are, systems of orthogonal stable bricks,
s-top and s-socle of non-projective modules, s-projective and
s-injective modules.

\begin{defn}
Let $A$ be a self-injective algebra. A collection of indecomposable $A$-modules
${\mathcal M}=\{ M_i\}_{i\in I}$ is a \emph{system of orthogonal stable bricks}, or \emph{s.o.s.b.} for short, if
$\tau(M_i)\not\simeq M_i$, for all $i$, and the following holds in the stable category of $A$-modules.
$$
\underline\Hom_B(M_i,M_j)=
\begin{cases} \bR \quad&(i=j)\\ 0 \quad&(i\ne j)\end{cases}
$$
We call the cardinality of $I$ the \emph{stable rank} of the s.o.s.b. ${\mathcal M}=\{ M_i\}_{i\in I}$, and we denote it by
$\rank({\mathcal M})$. A s.o.s.b. ${\mathcal M}$ is called a \emph{maximal s.o.s.b.} if we have
$$
\underline\Hom_A(\oplus_{i\in I} M_i, N)\ne0 \quad \text{and}\quad
\underline\Hom_A(N, \oplus_{i\in I} M_i)\ne0,
$$
for all indecomposable non-projective $A$-modules $N$ such that $\tau(N)\not\simeq N$.
\end{defn}
%
%

Let ${\mathcal M}=\{ M_i\}_{i\in I}$ be a maximal s.o.s.b..
If $\underline\Hom_A(N, \oplus_{i\in I} M_i)=\bR$, there is a unique $i\in I$ such that $\underline\Hom_A(N,M_i)\ne0$.
If this is the case, we write $s\text{-}\Top(N)=M_i$. Similarly,
if $\underline\Hom_A(\oplus_{i\in I} M_i, N)=\bR$, there is a unique $i\in I$ such that $\underline\Hom_A(M_i, N)\ne0$.
If this is the case, we write $s\text{-}\Soc(N)=M_i$.

\begin{defn}
Let ${\mathcal M}=\{ M_i\}_{i\in I}$ be a maximal s.o.s.b.. An indecomposable non-projective $A$-module $N$ is
\emph{s-projective with respect to ${\mathcal M}$} if
\begin{itemize}
\item[(i)]
$\tau(N)\not\simeq N$.
\item[(ii)]
$\underline\Hom_A(N, \oplus_{i\in I} M_i)=\bR$.
\item[(iii)]
For any indecomposable non-projective $A$-module $X$ and $0\ne \underline f\in\underline\Hom_A(X, s\text{-}\Top(N))$, there exists
$0\ne \underline{g}\in \underline\Hom_A(N,X)$ such that $\underline{g}\underline{f}\ne0$.
\end{itemize}

Dually, an indecomposable non-projective $A$-module $N$ is \emph{s-injective with respect to ${\mathcal M}$} if
\begin{itemize}
\item[(i)]
$\tau(N)\not\simeq N$.
\item[(ii)]
$\underline\Hom_A(\oplus_{i\in I} M_i, N)=\bR$.
\item[(iii)]
For any indecomposable non-projective $A$-module $X$ and $0\ne \underline f\in\underline\Hom_A(s\text{-}\Soc(N), X)$, there exists
$0\ne \underline{g}\in \underline\Hom_A(X,N)$ such that $\underline{f}\underline{g}\ne0$.
\end{itemize}
\end{defn}

\begin{prop}\label{tau-orbit}
Let $B$ be an indecomposable self-injective algebra which is not a local Nakayama algebra. Then, we have the following.
\begin{itemize}
\item[(1)]
If $P$ is indecomposable projective, then $\tau(P/\Soc(P))\not\simeq P/\Soc(P)$.
\item[(2)]
If $S$ is irreducible, then $S$ is non-projective and $\tau(S)\not\simeq S$.
\end{itemize}
\end{prop}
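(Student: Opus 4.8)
The plan is to derive both parts from one core fact: \emph{a connected self‑injective algebra $B$ that is not a local Nakayama algebra has no simple module $S$ with $\tau S\cong S$.} Throughout I may assume $B$ is basic, say $B=\bR Q/I$, since self‑injectivity, the translate $\tau$, $\tau$‑periodicity and the property of being local Nakayama are all Morita invariant. First I dispose of the projectivity claim in (2): if a simple $S=S_i$ were projective it would also be injective, so $\Soc(P_i)=\Top(P_i)=S_i$, forcing $e_i\Rad(B)=0=\Rad(B)e_i$; since $e_iBe_j\subseteq\Rad(B)$ for $i\ne j$ in a basic algebra, this yields $e_iB(1-e_i)=(1-e_i)Be_i=0$, hence a ring decomposition $B=e_iBe_i\times(1-e_i)B(1-e_i)$ with $e_iBe_i\cong\bR$, and connectedness makes $B\cong\bR$, a local Nakayama algebra — excluded. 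So every $P_i$ has $\ell\ell(P_i)\ge2$, whence $P_i/\Soc(P_i)$ is nonzero and non‑projective (it equals $S_i$ if $\ell\ell(P_i)=2$, and is otherwise indecomposable with top $S_i$ but of dimension $\dim P_i-1$). Finally $0\to\Soc(P_i)\to P_i\to P_i/\Soc(P_i)\to0$ gives $\Omega(P_i/\Soc(P_i))\cong\Soc(P_i)$, and since $\tau\cong\Omega^2\circ\nu$ with $\nu$ exact and projective‑preserving, $\tau$ commutes with $\Omega$ on $\underline{\mathrm{mod}}\,B$; thus $\tau(P_i/\Soc(P_i))\cong P_i/\Soc(P_i)$ would force $\tau(\Soc(P_i))\cong\Soc(P_i)$. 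Hence (1) also follows from the core fact applied to the simple module $\Soc(P_i)$.

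For the core fact, suppose $S$ is simple with $\tau S\cong S$; by the above $S$ is non‑projective, so it admits an almost split sequence $0\to S\to E\to S\to0$ with $\dim E=2$. Since the end term of an almost split sequence is never a direct summand of the middle term, $E\not\cong S\oplus S$, so $E$ is the unique uniserial module with $\Top(E)=\Soc(E)=S$; in particular $[S]$ has exactly one neighbour $[E]$ in the stable AR quiver $\Gamma_s(B)$ (we may take $E$ non‑projective, as $E$ projective gives directly the contradiction reached below). Because $S$ is $\tau$‑periodic, its component $\Theta$ is a stable tube $\mathbb{Z}A_\infty/\langle\tau^n\rangle$, and $\tau S\cong S$ forces $n=1$; having a single AR‑neighbour, $[S]$ is then the unique module at the mouth of $\Theta$.

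Now the syzygy functor $\Omega$ is a self‑equivalence of $\underline{\mathrm{mod}}\,B$ commuting with $\tau$, so it restricts to a translation‑quiver automorphism of $\Theta\cong\mathbb{Z}A_\infty/\langle\tau\rangle$, which necessarily fixes the unique mouth vertex. Hence $\Omega S\cong S$ in $\underline{\mathrm{mod}}\,B$, i.e. $\Rad(P_S)\cong S$ stably; a projective summand of $\Rad(P_S)$ is impossible, since it would make $\Soc(\Rad(P_S))=\Soc(P_S)$ non‑simple, so $\Rad(P_S)\cong S$ as a module. Thus $P_S$ is uniserial of length $2$ with $\Rad(P_S)=\Soc(P_S)=S$, so the quiver of $B$ carries a loop at the vertex of $S$ and no other arrow incident to it; connectedness makes $B$ the one‑vertex algebra $\bR[x]/(x^2)$, a local Nakayama algebra — a contradiction. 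This establishes the core fact and therefore both (1) and (2).

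The step I expect to be the real point is the structural input in the last two paragraphs: that a component of $\Gamma_s(B)$ containing a $\tau$‑periodic module is a stable tube, and that $\Omega$ descends to an automorphism of that tube fixing its mouth. Both are standard (the former of Happel--Preiser--Ringel type; the latter uses only that $\Omega$ is a stable self‑equivalence commuting with $\tau$), but they carry the argument; everything else is short bookkeeping with dimensions, socles and tops. If one prefers to avoid stable‑tube theory, an alternative is purely homological: from $\tau S_i=\nu\,\Omega^2 S_i$ and $\Omega^2 S_i=\Omega(\Rad P_i)$ a dimension count shows $\Omega^2 S_i$ can be simple only when every vertex of $Q$ has exactly one outgoing and, dually, one incoming arrow — so $Q$ is a single oriented cycle and $B$ is a connected Nakayama algebra — and then the Kupisch series shows $\tau$ shifts the simples nontrivially unless $Q$ has one vertex, again the excluded case.
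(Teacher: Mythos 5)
Your reduction of (1) to the statement about simples via $\Omega(P/\Soc(P))\cong\Soc(P)$ and the fact that $\Omega$ and $\tau$ commute in the stable category is correct and is a nice streamlining compared to the paper, which treats (1) and (2) by separate direct arguments. The difficulty is in your proof of the core fact. You invoke the Happel--Preiser--Ringel style result and assert that \emph{the component $\Theta$ containing the $\tau$-periodic module $S$ is a stable tube $\mathbb{Z}A_\infty/\langle\tau^n\rangle$}. That is not what the theorem says: a stable AR-component of a self-injective algebra containing a $\tau$-periodic module is either $\mathbb{Z}A_\infty/\langle\tau^n\rangle$ \emph{or} $\mathbb{Z}\Delta/G$ with $\Delta$ a Dynkin diagram, and the second case occurs precisely when $B$ is representation-finite (in which case $\Theta$ is the whole finite stable AR quiver). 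Your argument for $\Omega S\cong S$ rests on the uniqueness of the mouth of a rank-one tube, so it simply does not apply in the Dynkin case. Since the proposition has to hold for representation-finite $B$ as well (e.g.\ Brauer tree algebras, which the paper later uses), this is a genuine gap, not merely a case you could wave away.

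Your suggested ``purely homological'' alternative also does not close the gap as sketched. From $\Omega^2 S_i$ simple one gets, by comparing $\Omega^2 S_i\subseteq\Soc(\bigoplus_j P_j^{a_{ij}})$ with its image inside $\Soc(\Rad P_i)$, only the bound $\sum_j a_{ij}\le 2$ on the outgoing arrows at the vertex of $S_i$ (and dually for incoming arrows), together with the single dimension identity $\sum_j a_{ij}\dim P_j=\dim P_i$; neither forces $\sum_j a_{ij}=1$, and the stated conclusion ``every vertex has exactly one outgoing and one incoming arrow'' applies a constraint at one vertex to all of them. By contrast, the paper's proof of (2) avoids component theory entirely: it builds the chain $X_0=S$, $X_1$, $X_2,\dots$ directly from almost split sequences, shows each $X_k$ has simple top $S$ and all composition factors $S$ with $[X_k:S]=k+1$, and observes that since $X_k$ is a quotient of $P_S$ its length is bounded, so the chain must terminate in a projective, forcing $P_S$ to be uniserial with all factors $S$ and hence $B$ local Nakayama. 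If you want to keep your reduction step, you would need to replace the tube argument by something like the paper's inductive construction (or else separately dispatch the representation-finite case by other means).
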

\begin{proof}
(1) If $\tau(P/\Soc(P))\simeq P/\Soc(P)$ then the almost split sequence
$$
0 \to \Rad(P) \to \Rad(P)/\Soc(P)\oplus P \to P/\Soc(P) \to 0
$$
tells $P/\Soc(P)\simeq \Rad(P)$ and we have a surjective homomorphism $P\to \Rad(P)$.
Hence we have surjective homomorphisms $\Rad^i(P)\to \Rad^{i+1}(P)$, for $i\ge1$.
As a result, $P$ is uniserial and all of the composition factors are isomorphic to $S=\Top(P)$.
If there is another indecomposable projective module $Q$, then the indecomposability of $B$
implies that we have a uniserial module of length two with composition factors $S$ and $T=\Top(Q)$.
But then it is either a submodule of $P$ or a quotient module of $P$, which contradicts
$[P:T]=0$. Therefore, $B$ is a local Nakayama algebra, which we have excluded in the assumption.

\medskip
(2) If $S$ was projective, $B$ would be a local Nakayama algebra. Thus, $S$ is non-projective.
Suppose that $\tau(S)\simeq S$. We set $X_0=S$. Then we have an almost split sequence
$$
0 \to X_0 \to X_1 \to X_0 \to 0.
$$
$X_1$ is indecomposable as it is uniserial.
If $X_1$ was projective, then $B$ would be a local Nakayama algebra. Thus, $X_1$ is non-projective. Let
$X_1\to M_1$ be a left minimal almost split homomorphism. Then, the irreducible homomorphism
$X_1\to X_0$ is a direct summand and we may write $M_1=X_0\oplus X_2$, for some module $X_2$.
Thus, $X_0\to \tau^{-1}(X_1)$ is an irreducible homomorphism and it is a direct summand of
the left minimal almost split homomorphism $X_0\to X_1$. We conclude that $\tau^{-1}(X_1)\simeq X_1$ and
we have the following almost split sequence
$$
0 \to X_1 \to X_0\oplus X_2 \to X_1 \to 0.
$$

Suppose that we have $B$-modules $X_0,\dots,X_{i+1}$ such that
\begin{itemize}
\item[(i)]
$\Top(X_k)\simeq S$, for $0\le k\le i$.
\item[(ii)]
We have almost split sequences
$$ 0 \to X_k \to X_{k-1}\oplus X_{k+1} \to X_k \to 0, $$
for $0\le k\le i$, where we understand $X_{-1}=0$.
\item[(iii)]
All the composition factors of $X_k$ are $S$ and $[X_k: S]=k+1$, for $0\le k\le i$.
\end{itemize}
Note that (i)-(iii) hold if $i=1$. We show that (i)-(iii) imply $\Top(X_{i+1})\simeq S$.
Consider
$$
0 \to \Hom_B(X_i,S)\to \Hom_B(X_{i-1},S)\oplus \Hom_B(X_{i+1},S)\to \Hom_B(X_i,S)=\bR.
$$
Then, $\Hom_B(X_{i+1},S)=\bR$ and, noting that (iii) holds for $k=i+1$, $\Top(X_{i+1})\simeq S$ follows.
Therefore, (i) and (iii) hold for $k=i+1$. Next we show that if $X_{i+1}$ is non-projective then we may increment $i$.
Indeed, if $X_{i+1}$ is non-projective then
we may take a left minimal almost split homomorphism $X_{i+1}\to M_{i+1}$, and the irreducible
homomorphism $X_{i+1}\to X_i$ is a direct summand. Thus, we may write $M_{i+1}=X_i\oplus X_{i+2}$, for some $B$-module
$X_{i+2}$, and we have the almost split sequence
$$
0 \to X_{i+1} \to X_i\oplus X_{i+2} \to \tau^{-1}(X_{i+1}) \to 0.
$$
Then, the irreducible homomorphism $X_i\to \tau^{-1}(X_{i+1})$ is a direct summand of the left minimal almost split
homomorphism $X_i\to X_{i-1}\oplus X_{i+1}$, and we have either $\tau^{-1}(X_{i+1})\simeq X_{i-1}$ or
$\tau^{-1}(X_{i+1})\simeq X_{i+1}$. But $\tau^{-1}(X_{i+1})\simeq X_{i-1}$ implies that
$X_{i+1}\simeq \tau(X_{i-1})\simeq X_{i-1}$, which contradicts $[X_k:S]=k+1$, for $k=i\pm1$. Thus, we have
$\tau(X_{i+1})\simeq X_{i+1}$ and (ii) for $k=i+1$ holds. As $\dim X_i$ grows, $X_{i+1}$ becomes a projective $B$-module at some $i$,
and we conclude that the projective cover $P$ of $S$ is uniserial and all the composition factors of $P$ are $S$.
It follows that $B$ is a local Nakayama algebra. Therefore, $\tau(S)\not\simeq S$ as desired.
\end{proof}

\begin{prop}\label{sosb}
Let $A$ and $B$ be indecomposable self-injective $\bR$-algebras which are not Nakayama algebras. Let $P_1,\dots,P_n$ be
a complete set of indecomposable projective $B$-modules, and we denote $S_i=\Top(P_i)$, for $1\le i\le n$.
Suppose that a functor $\Psi:B\text{-}\underline{\rm mod}\to A\text{-}\underline{\rm mod}$ gives stable equivalence.
Then, we have the following.
\begin{itemize}
\item[(1)]
Let $M_i=\Psi(S_i)$, for $1\le i\le n$. Then ${\mathcal M}=\{ M_i\}_{1\le i\le n}$ is a maximal s.o.s.b..
\item[(2)]
Let $N_i=\Psi(P_i/\Soc(P_i))$. Then, $N_i$ is s-projective and $s\text{-}\Top(N_i)\simeq M_i$, for $1\le i\le n$.
\end{itemize}
\end{prop}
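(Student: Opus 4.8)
The plan is to exploit that $\Psi$ is an equivalence of the stable module categories, hence preserves $\underline\Hom$-spaces, indecomposability, the zero objects (i.e.\ the projective modules), and --- since a stable equivalence between self-injective algebras induces an isomorphism of stable Auslander--Reiten quivers --- also $\tau$-periodicity of indecomposables. Granting this, the whole statement reduces to verifying intrinsic properties of the ``model'' $B$-modules $S_i$ and $P_i/\Soc(P_i)$, which is exactly where Proposition \ref{tau-orbit} is used; the transfer of $\tau$-periodicity along $\Psi$ is the one point that must be invoked with care, the remaining arguments being elementary module theory over a self-injective algebra.

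For (1): $S_i$ is non-projective with $\tau_B(S_i)\not\simeq S_i$ by Proposition \ref{tau-orbit}(2). Since $\bR$ is algebraically closed, $\Hom_B(S_i,S_j)=\delta_{ij}\bR$, and a nonzero endomorphism of $S_i$ is an isomorphism which cannot factor through a projective (otherwise $S_i$ would be a direct summand of a projective module); hence $\underline\End_B(S_i)=\bR$ and $\underline\Hom_B(S_i,S_j)=0$ for $i\ne j$. Transporting along $\Psi$ shows $\mathcal{M}=\{M_i\}$ is a system of orthogonal stable bricks. For maximality, let $N$ be an indecomposable non-projective $A$-module with $\tau_A(N)\not\simeq N$ and write $N\simeq\Psi(N')$ with $N'$ indecomposable non-projective. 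For any simple submodule $S_i\hookrightarrow N'$ this inclusion, being a monomorphism, cannot factor through a projective: it would then extend along the injective envelope $S_i\hookrightarrow I(S_i)$, and since $S_i=\Soc(I(S_i))$ is essential any such extension $I(S_i)\to N'$ is injective, forcing the indecomposable projective-injective $I(S_i)$ to split off $N'$. Dually, for any simple quotient $N'\twoheadrightarrow S_j$, a lift $N'\to P(S_j)$ would be surjective by Nakayama's lemma, hence split, making $P(S_j)$ a summand of $N'$. Both are absurd, so $\underline\Hom_B(\bigoplus_i S_i,N')\ne0$ and $\underline\Hom_B(N',\bigoplus_i S_i)\ne0$; transporting gives the maximality of $\mathcal{M}$.

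For (2): since $B$ is indecomposable and not a Nakayama algebra, no $P_i$ is simple, so $P_i/\Soc(P_i)\ne0$ with top $S_i$. Condition (i), $\tau_A(N_i)\not\simeq N_i$, is transported from Proposition \ref{tau-orbit}(1). For (ii), $\Hom_B(P_i/\Soc(P_i),S_j)=\delta_{ij}\bR$, and the canonical surjection $\pi_i\colon P_i/\Soc(P_i)\to S_i$ does not factor through a projective: a lift $P_i/\Soc(P_i)\to P_i$ would be surjective by Nakayama's lemma, hence split, so $P_i$ would be a summand of $P_i/\Soc(P_i)$, impossible by dimension. Thus $\underline\Hom_B(P_i/\Soc(P_i),\bigoplus_j S_j)=\bR$, and transporting gives $\underline\Hom_A(N_i,\bigoplus_j M_j)=\bR$ with $s\text{-}\Top(N_i)\simeq M_i$.

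It remains to verify condition (iii) for $N_i$, again on the $B$-side. Let $X'$ be an indecomposable non-projective $B$-module and $0\ne\underline{f'}\in\underline\Hom_B(X',S_i)$; we must produce $0\ne\underline{g'}\in\underline\Hom_B(P_i/\Soc(P_i),X')$ with $\underline{g'}\,\underline{f'}\ne0$ (the composite $P_i/\Soc(P_i)\to X'\to S_i$). A representative $f'\colon X'\to S_i$ is surjective, so $f'$ induces a nonzero, hence split-surjective on some $S_i$-summand, map $\Top(X')\to S_i$; lifting that summand of a projective cover of $X'$ yields $h\colon P_i\to X'$ with $hf'\ne0$, i.e.\ $hf'$ a nonzero scalar multiple of the canonical surjection $P_i\to S_i$. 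Now $h$ is not injective (else $P_i\hookrightarrow X'$ would split, $P_i$ being injective, forcing $P_i$ to be a summand of the indecomposable non-projective $X'$), and as $P_i$ is indecomposable injective, $\Soc(P_i)$ is simple and essential, so $0\ne\Ker(h)\supseteq\Soc(P_i)$ and $h$ factors as $P_i\twoheadrightarrow P_i/\Soc(P_i)\xrightarrow{\bar h}X'$. Then $\bar h f'$ is a nonzero scalar multiple of $\pi_i$, whose stable class generates $\underline\Hom_B(P_i/\Soc(P_i),S_i)\cong\bR$ by (ii), so $\underline{\bar h}\,\underline{f'}\ne0$; taking $g'=\bar h$ and transporting through $\Psi$ completes the proof. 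The genuinely delicate ingredients are thus the $\tau$-periodicity transfer and the automatic vanishing $h(\Soc(P_i))=0$, which rests on indecomposable projectives over a self-injective algebra having simple essential socle.
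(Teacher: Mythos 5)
Your proof is correct and follows essentially the same approach as the paper: transfer everything through the stable equivalence $\Psi$, verify the required properties on the $B$-side for the simple modules $S_i$ and the modules $P_i/\Soc(P_i)$, and invoke Proposition \ref{tau-orbit} for the $\tau$-periodicity conditions. You supply more detail than the paper in two places — the maximality of $\mathcal{M}$ in (1), which the paper waves off with ``it is not difficult to prove,'' and the explicit check that $\underline{\bar h}\,\underline{f'}\ne 0$ in (2)(iii) — and both fill-ins are sound, resting on the same key facts the paper uses implicitly (indecomposable projectives over a self-injective algebra are injective with simple essential socle, so a non-injective map out of $P_i$ kills $\Soc(P_i)$).
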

\begin{proof}
(1) The stable Auslander-Reiten quivers of $A$ and $B$ coincide by \cite[X. Cor.1.9]{ARS}
and it follows from  Proposition \ref{tau-orbit}(2) that
$\tau(M_i)\not\simeq M_i$, for $1\le i\le n$. Since
$$ \underline\Hom_A(M_i,M_j)\simeq \underline\Hom_B(S_i,S_j)=\begin{cases} \bR \quad&(i=j) \\ 0 \quad&(i\ne j) \end{cases} $$
${\mathcal M}$ is a system of orthogonal stable bricks. As it is not difficult to prove
$$ \underline\Hom_A(\oplus_{i=1}^n M_i, N)\ne0,\quad \underline\Hom_A(N, \oplus_{i=1}^n M_i)\ne0, $$
for indecomposable non-projective $A$-modules $N$ such that $\tau(N)\not\simeq N$, ${\mathcal M}$ is maximal.

\medskip
(2) We check the conditions (i)(ii)(iii) from the definition of s-projectivity. (i) follows from Proposition \ref{tau-orbit}(1). (ii) is clear and
$s\text{-}\Top(N_i)\simeq M_i$.
Let $0\ne \underline{f}:X\to s\text{-}\Top(N_i)$, for an indecomposable non-projective $A$-module $X$. Then, we have a
surjective homomorphism $\Psi^{-1}(X)\to S_i$, and $g:P_i\to \Psi^{-1}(X)$ such that their composition equals
the surjective homomorphism $p_i: P_i\to S_i$.

\medskip
\hspace{4.5cm}
\begin{xy}
(0,20) *{\Psi^{-1}(X)}="A", (20,20) *{S_i}="B", (20,0) *{P_i}="C",
\ar "A";"B"
\ar "C";"A"^{g}
\ar "C";"B"_{p_i}
\end{xy}

\medskip
\noindent
If $g(\Soc(P_i))\ne0$ then $g(P_i)\simeq P_i$ and we obtain
$P_i\simeq \Psi^{-1}(X)$, a contradiction. Thus, $g$ induces $P_i/\Soc(P_i)\to X$, and it follows (iii).
\end{proof}

\begin{prop}\label{s-projectives}
Let $A$ be a self-injective $\bR$-algebra, ${\mathcal M}=\{ M_i\}_{i\in I}$ a maximal s.o.s.b..
Then, we have the following.
\begin{itemize}
\item[(1)]
$\tau^{-1}\Omega(M_i)$ is s-projective, for all $i\in I$.
\item[(2)]
If $N$ is s-projective such that $s\text{-}\Top(N)\simeq M_i$, then $N\simeq \tau^{-1}\Omega(M_i)$.
\end{itemize}
\end{prop}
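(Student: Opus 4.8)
The plan is to prove (1) by verifying the three defining conditions of an s-projective module for $N:=\tau^{-1}\Omega(M_i)$, and then to deduce (2) from (1) by a short uniqueness argument in the stable category. Two standard facts will be used throughout: since $A$ is self-injective, $\Omega$ and $\tau$ are commuting autoequivalences of $A\text{-}\underline{\rm mod}$ (the AR translate differs from $\Omega^2$ by the Nakayama functor, which is exact and preserves projectives, hence commutes with $\Omega$), and the Auslander--Reiten formula holds in the self-injective form $\underline\Hom_A(X,Y)\cong D\,\underline\Hom_A(Y,\tau X)$ with $D=\Hom_\bR(-,\bR)$. For orientation, note that over a self-injective algebra $B$ one has $P/\Soc(P)\cong\tau^{-1}\Omega(\Top P)$ for every indecomposable projective $P$ --- a short computation with the Nakayama permutation, the two Nakayama twists cancelling --- so Proposition \ref{sosb} says that, transported through a stable equivalence, s-projective modules are exactly the $\tau^{-1}\Omega(M_i)$, and Proposition \ref{s-projectives} is the intrinsic counterpart of this.

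Condition (i) is immediate: $\tau N=\Omega M_i$, so $\tau N\simeq N$ would force $\tau\Omega(M_i)\simeq\Omega(M_i)$, whence, applying $\Omega^{-1}$ and using $\tau\Omega\simeq\Omega\tau$, $\tau(M_i)\simeq M_i$, contradicting the s.o.s.b.\ hypothesis. For condition (ii) I would compute, using the AR formula and then the autoequivalence $\Omega$,
$$\underline\Hom_A(N,M_j)\ \cong\ D\,\underline\Hom_A(M_j,\tau N)\ =\ D\,\underline\Hom_A(M_j,\Omega M_i)\ \cong\ D\,\underline\Hom_A(\Omega^{-1}M_j,M_i),$$
so the task is to show $\sum_j\dim\underline\Hom_A(M_j,\Omega M_i)=1$ with the single nonzero summand occurring at $j=i$; equivalently, that $\Omega M_i$ has a well-defined s-socle and it is $M_i$. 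Maximality of the s.o.s.b.\ already gives that the sum is at least $1$; ruling out dimension $\ge2$ and pinning down the index $j=i$ is to be carried out by feeding the orthogonality relations $\underline\Hom_A(M_j,M_k)=\delta_{jk}\bR$ through the projective-cover sequence $0\to\Omega M_i\to P(M_i)\to M_i\to0$ and the almost split sequence $0\to\Omega M_i\to E\to N\to0$ (both having $\Omega M_i$ as left-hand term). Granting this, $s\text{-}\Top(N)=M_i$ and $\underline\Hom_A(M_i,\Omega M_i)=\bR$; fix a generator $\underline\rho$ of the latter.

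For condition (iii), let $X$ be indecomposable non-projective and $0\ne\underline f\in\underline\Hom_A(X,M_i)$; I must find $\underline g\in\underline\Hom_A(N,X)$ with $\underline g\,\underline f\ne0$. As $\underline\Hom_A(N,M_i)=\bR$, this says post-composition with $\underline f$ is onto $\underline\Hom_A(N,M_i)$ on $\underline\Hom_A(N,X)$; by naturality of the AR formula this is equivalent to pre-composition with $\underline f$ being injective on $\underline\Hom_A(M_i,\tau N)=\underline\Hom_A(M_i,\Omega M_i)=\bR\underline\rho$, i.e.\ to $\underline\rho\,\underline f\ne0$ for every nonzero $\underline f:X\to M_i$. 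I would verify this by describing $\underline\rho$ via the morphism of short exact sequences relating the two sequences above (the inclusion $\Omega M_i\hookrightarrow P(M_i)$ is a non-split monomorphism, hence factors through the minimal left almost split map $\Omega M_i\to E$), and then using the right almost split property of $E\to N$ to see that a nonzero $\underline f$ stays nonzero after composing with $\underline\rho$. This dimension-count-and-cancellation step --- the point where the interaction of $\Omega$, $\tau$ and the maximality of the s.o.s.b.\ must be controlled --- is the part I expect to be the main obstacle; it is essentially the entire content of (1).

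Part (2) then follows formally. By (1), $N':=\tau^{-1}\Omega(M_i)$ is s-projective with $s\text{-}\Top(N')=M_i$. Let $N$ be any s-projective module with $s\text{-}\Top(N)=M_i$ and choose generators $\underline\pi$, $\underline\pi'$ of the one-dimensional spaces $\underline\Hom_A(N,M_i)$, $\underline\Hom_A(N',M_i)$. Condition (iii) for $N$, applied to $X=N'$ and $\underline f=\underline\pi'$, produces $\underline g\in\underline\Hom_A(N,N')$ with $\underline\pi'\underline g=c\,\underline\pi$, $c\in\bR^\times$; symmetrically, condition (iii) for $N'$ gives $\underline g'\in\underline\Hom_A(N',N)$ with $\underline\pi\,\underline g'=c'\,\underline\pi'$, $c'\in\bR^\times$. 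Then $\underline\pi(\underline g'\underline g)=cc'\underline\pi$; since $\underline\End_A(N)$ is local and any endomorphism annihilating the generator $\underline\pi$ lies in its radical, $\underline g'\underline g\in cc'\underline\id_N+\Rad\,\underline\End_A(N)$ is invertible, and likewise $\underline g\underline g'\in\underline\End_A(N')$ is invertible. Hence $\underline g$ is an isomorphism in $A\text{-}\underline{\rm mod}$, and as $N,N'$ are indecomposable and non-projective, $N\simeq N'=\tau^{-1}\Omega(M_i)$ as $A$-modules.
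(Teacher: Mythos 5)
Your overall architecture is the right one, and part (2) of your argument is correct and complete; it is actually a slicker route than the paper's, which instead argues by contradiction with the almost split sequence $0\to\Omega(M_i)\to Z\to\tau^{-1}\Omega(M_i)\to 0$ and a diagram chase. Your "maps in both directions whose composite acts invertibly on the one-dimensional Hom space, hence is a unit in the local stable endomorphism ring" argument is a clean and valid alternative. The aside that $P/\Soc(P)\simeq\tau^{-1}\Omega(\Top P)$ over a self-injective algebra (via $\tau^{-1}\simeq\Omega^{-2}\nu^{-1}$, with $\Omega^{-1}\Rad(\nu^{-1}P)\simeq\Top(\nu^{-1}P)$ and another cosyzygy) is also correct, and it is a good way to see why the statement "ought" to be true in light of Proposition \ref{sosb}.

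However, part (1) contains a genuine error and, as you yourself flag, two unfinished steps. The stated Auslander--Reiten formula $\underline\Hom_A(X,Y)\cong D\,\underline\Hom_A(Y,\tau X)$ is wrong: for a self-injective algebra the AR duality reads $\underline\Hom_A(X,Y)\cong D\Ext^1_A(Y,\tau X)\cong D\,\underline\Hom_A(\Omega Y,\tau X)$, not $D\,\underline\Hom_A(Y,\tau X)$ (equivalently, the Serre functor of the stable category is $\nu\Omega$, whereas $\tau=\nu\Omega^2$; you have lost one $\Omega$). Because of this your chain of isomorphisms stalls at $D\,\underline\Hom_A(M_j,\Omega M_i)$, which is not directly controlled by the orthogonality of the s.o.s.b., and you leave it "to be carried out." With the correct formula one gets immediately
$$\underline\Hom_A(\tau^{-1}\Omega M_i,M_j)\cong D\Ext^1_A(M_j,\Omega M_i)\cong D\,\underline\Hom_A(\Omega M_j,\Omega M_i)\cong D\,\underline\Hom_A(M_j,M_i)=\delta_{ij}\bR,$$
so (ii) and the identification $s\text{-}\Top(N_i)\simeq M_i$ are done; the paper achieves the same by running the sequence $0\to\Hom(M,\Omega M_i)\to\Hom(M,P)\to\Hom(M,M_i)\to\Ext^1(M,\Omega M_i)\to0$ coming from the projective cover of $M_i$, which gives $\Ext^1(M,\Omega M_i)\cong\underline\Hom(M,M_i)$. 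For condition (iii) you again only sketch the intended argument and identify it as "the main obstacle." The nondegenerate-pairing idea you gesture at does work once the AR formula is correct: the composition pairing $\underline\Hom(N_i,X)\times\underline\Hom(X,M_i)\to\underline\Hom(N_i,M_i)\cong\bR$ is perfect by AR duality, so any nonzero $\underline f\in\underline\Hom(X,M_i)$ pairs nontrivially with some $\underline g$. The paper instead gives a constructive proof: it forms the pushout of the projective cover sequence along $(f,\ell):X\oplus P\to M_i$, lifts along the almost split sequence $0\to\Omega(M_i)\to Z\to N_i\to 0$ to produce $g:N_i\to X$, and then shows $\underline g\,\underline f\neq0$ by observing that otherwise the almost split sequence would split. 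Either route is fine, but in your write-up condition (iii) (and condition (ii)) need to be actually carried out, and the AR formula corrected, before part (1) is a proof.
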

\begin{proof}
(1) Note that $\tau^{-1}\Omega(M_i)\simeq \Omega(M_i)$ if and only if $\tau(M_i)\simeq M_i$. Thus, $\tau(N_i)\not\simeq N_i$, for
$N_i=\tau^{-1}\Omega(M_i)$, and the condition (i) is satisfied. Let $P$ be a projective $A$-module such that
$$ 0\to \Omega(M_i) \to P \to M_i \to 0. $$
If $M=\oplus_{i\in I} M_i$ or $M=M_i$, then we have
$$
0\to \Hom_A(M,\Omega(M_i)) \to \Hom_A(M,P) \to \Hom_A(M,M_i) \to \Ext_A^1(M,\Omega(M_i)) \to 0.
$$
Thus, $\bR=\underline\Hom_A(M,M_i)\simeq \Ext_A^1(M,\Omega(M_i))$ and it follows that
$$
\underline\Hom_A(N_i,M)=
\underline\Hom_A(\tau^{-1}\Omega(M_i),M)\simeq D\Ext_A^1(M,\Omega(M_i))=\bR,
$$
where $D=\Hom_\bR(-,\bR)$. Hence, the condition (ii) is satisfied and $s\text{-}\Top(N_i)\simeq M_i$.

For $0\ne \underline{f}\in \underline\Hom_A(X, M_i)$, for an indecomposable non-projective $A$-module $X$, we find
$g: N_i=\tau^{-1}\Omega(M_i)\to X$ such that $\underline{g}\underline{f}\ne0$. Let $w:P\to X\oplus P$ be the natural
inclusion, and we define a homomorphism $j: \Omega(M_i)\to Y$ by the following commutative diagram.

\medskip
\hspace{4.5cm}
\begin{xy}
(0,15) *{0}="A", (15,15) *{\Omega(M_i)}="B", (30,15) *{P}="C", (45,15) *{M_i}="D", (60,15) *{0}="E",
(0,0) *{0}="AA", (15,0) *{Y}="BB", (30,0) *{X\oplus P}="CC", (45,0) *{M_i}="DD", (60,0) *{0}="EE",

\ar "A";"B"
\ar "B";"C"
\ar "C";"D"^{\ell}
\ar "D";"E"

\ar "AA";"BB"
\ar "BB";"CC"
\ar "CC";"DD"_{(f,\ell)}
\ar "DD";"EE"

\ar "B";"BB"^{j}
\ar "C";"CC"^{w}
\ar @{=} "D";"DD"
\end{xy}

\medskip
\noindent
If $j$ is split mono, then $Y=\Omega(M_i)\oplus Y'$, for an $A$-submodule $Y'$ of $Y$. Then
$$
0 \rightarrow Y/\Omega(M_i) \stackrel{\iota}{\rightarrow} X\oplus M_i \stackrel{(f,\id_{M_i})}{\rightarrow} M_i \rightarrow 0
$$
gives $\iota: Y'\simeq Y/\Omega(M_i)\simeq X'=\{(x,-f(x)) \mid x\in M_i\}$. Therefore, we have the following commutative diagram
where $X\oplus M_i\to M_i$ is the projection to the second factor.

\medskip
\hspace{4.5cm}
\begin{xy}
(0,15) *{X}="A", (15,15) *{Y'}="B", (40,15) *{X\oplus P}="C",
(15,0) *{X'}="BB", (40,0) *{X\oplus M_i}="CC",
(40,-15) *{M_i}="CCC",

\ar "A";"B"
\ar "B";"C"^{\rm incl}
\ar "B";"BB"^{\iota}
\ar "BB";"CC"^{\rm incl}
\ar "C";"CC"
\ar "CC";"CCC"
\end{xy}

\medskip
\noindent
In the diagram, $X\to M_i$ is $-f$ and the vertical homomorphism $X\oplus P\to M_i$ factors through $P$. Thus,
$f$ factors through a projective module and it contradicts $\underline{f}\ne0$. We conclude that $j$ is not split mono.
On the other hand, the snake lemma implies
$$
0=\Ker(\id_{M_i}) \to \Coker(j) \to \Coker(w)\to \Coker(\id_{M_i})=0
$$
Hence, $\Coker(j)\simeq X$ and we have the exact sequence
$$ 0 \to \Omega(M_i) \stackrel{j}{\to} Y \to X \to 0. $$
We consider the almost split sequence $0 \to \Omega(M_i) \to Z \to \tau^{-1}\Omega(M_i)\to 0$. Then we may define
$t: Z\to Y$ and $g:N_i=\tau^{-1}\Omega(M_i)\to X$ as follows, because $j$ is not split mono.

\medskip
\hspace{4.5cm}
\begin{xy}
(0,15) *{0}="A", (15,15) *{\Omega(M_i)}="B", (30,15) *{Z}="C", (45,15) *{N_i}="D", (60,15) *{0}="E",
(0,0) *{0}="AA", (15,0) *{\Omega(M_i)}="BB", (30,0) *{Y}="CC", (45,0) *{X}="DD", (60,0) *{0}="EE",

\ar "A";"B"
\ar "B";"C"^{i}
\ar "C";"D"^{p}
\ar "D";"E"

\ar "AA";"BB"
\ar "BB";"CC"_{j}
\ar "CC";"DD"_{s}
\ar "DD";"EE"

\ar "C";"CC"^{t}
\ar "D";"DD"^{g}
\ar @{=} "B";"BB"
\end{xy}

\medskip
\noindent
We shall prove that $\underline{g}\underline{f}\ne0$. Suppose that $gf$ factors through a projective $A$-module. Then, it factors
through $\ell: P\to M_i$ and we may write $gf=h\ell$, for some  $h: N_i\to P$. Thus,
$(g,-h): N_i \to X\oplus P$ factors through $Y\to X\oplus P$, because $(g,-h)(f,\ell)=gf-h\ell=0$.

\medskip
\hspace{4.5cm}
\begin{xy}
(30,30) *{N_i}="X",
(0,15) *{0}="A", (15,15) *{Y}="B", (30,15) *{X\oplus P}="C", (45,15) *{M_i}="D", (60,15) *{0}="E",
(-15,0) *{0}="AA", (0,0) *{\Omega(M_i)}="BB", (15,0) *{Y}="CC", (30,0) *{X}="DD", (45,0) *{0}="EE",

\ar "X";"C"^{(g,-h)}
\ar @{.>} "X";"B"_{\exists h'}
\ar "A";"B"
\ar "B";"C"
\ar "C";"D"_{(f,\ell)}
\ar "D";"E"

\ar "AA";"BB"
\ar "BB";"CC"^{j}
\ar "CC";"DD"^{s}
\ar "DD";"EE"

\ar @{=} "B";"CC"
\ar "DD";"C"_{\rm incl}
\end{xy}

\medskip
\noindent
It follows that $g=h's$ and $(t-ph')s=ts-pg=0$. In particular,  $\Im(t-ph')\subseteq \Omega(M_i)$. On the other hand,
$ip=0$ implies $i(t-ph')=it=j$ and $\Im(t-ph')=\Omega(M_i)$. Thus, $i(t-ph')$ is an isomorphism of $\Omega(M_i)$ and
it implies that $0\to \Omega(M_i) \to Z \to N_i \to 0$ splits, which is a contradiction.
Therefore, $\underline{g}\underline{f}\ne0$ and we have proved that $N_i$ is s-projective.

\medskip
(2) Let $N$ be s-projective such that $s\text{-}\Top(N)=M_i$. Then, for the homomorphism
$$ f: \tau^{-1}\Omega(M_i)\to M_i $$
such that $\underline{f}\ne0$,
there exists $g:N\to \tau^{-1}\Omega(M_i)$ such that $\underline{g}\underline{f}\ne0$. Suppose that $g$ is not an
isomorphism. Then, we may define $h: Z\to P$ and $f': \tau^{-1}\Omega(M_i)\to M_i$ as follows.

\medskip
\hspace{4.5cm}
\begin{xy}
(45,30) *{N}="X",
(0,15) *{0}="A", (15,15) *{\Omega(M_i)}="B", (30,15) *{Z}="C", (45,15) *{\tau^{-1}\Omega(M_i)}="D", (60,15) *{0}="E",
(0,0) *{0}="AA", (15,0) *{\Omega(M_i)}="BB", (30,0) *{P}="CC", (45,0) *{M_i}="DD", (60,0) *{0}="EE",

\ar "X";"D"^{g}
\ar @{.>} "X";"C"
\ar "A";"B"
\ar "B";"C"^{i}
\ar "C";"D"^{p}
\ar "D";"E"

\ar "AA";"BB"
\ar "BB";"CC"
\ar "CC";"DD"_{\ell}
\ar "DD";"EE"

\ar @{=} "B";"BB"
\ar "C";"CC"^{h}
\ar "D";"DD"^{f'}
\end{xy}

\medskip
\noindent
If $\underline{f}'=0$ then it factors through $\ell:P\to M_i$ and we have $f'':\tau^{-1}\Omega(M_i)\to P$ such that
$f'=f''\ell$. Then, $(pf''-h)\ell=pf'-h\ell=0$ implies $pf''-h: Z\to \Omega(M_i)$ and
$i(pf''-h)=-ih$ implies $\Im(pf''-h)=\Omega(M_i)$. Thus, $0\to \Omega(M_i) \to Z \to N_i \to 0$ splits, a contradiction.
Therefore, $\underline{f}'\ne0$ and it is a scalar multiple of $\underline{f}$. But the above diagram shows that
$\underline{g}\underline{f}'=0$ and we have $\underline{g}\underline{f}=0$, which contradicts
$\underline{g}\underline{f}\ne0$. We have proved $g:N\simeq \tau^{-1}\Omega(M_i)$.
\end{proof}


\bibliographystyle{amsplain}


\end{document}